\newcommand{\dd}{\, {\rm d}}
\newcommand{\abs}[1]{\left\vert#1\right\vert}
\newcommand{\Abs}[1]{\big\vert#1\big\vert}
\newcommand{\norm}[1]{\left\Vert#1\right\Vert}  
\newcommand{\Norm}[1]{\big\Vert#1\big\Vert}  
\newcommand{\R}{\ensuremath{{\mathbb R}}}
\newcommand{\N}{\ensuremath{{\mathbb N}}}
\newcommand{\Z}{\ensuremath{{\mathbb Z}}}
\newcommand{\beq}{\begin{equation}}
\newcommand{\eeq}{\end{equation}}
\newcommand{\beqs}{\begin{equation*}}
\newcommand{\eeqs}{\end{equation*}}
\newcommand{\bal}{\begin{equation}\begin{aligned}}
\newcommand{\eal}{\end{aligned}\end{equation}}
\newcommand{\bals}{\begin{equation*}\begin{aligned}}
\newcommand{\eals}{\end{aligned}\end{equation*}}
\newcounter{num} \numberwithin{num}{section}
\newtheorem{theorem}[num]{Theorem}
\newtheorem{proposition}[num]{Proposition}
\newtheorem{lemma}[num]{Lemma}
\theoremstyle{definition}
\newtheorem{definition}[num]{Definition}
\theoremstyle{remark}
\newtheorem{remark}[num]{Remark}
\numberwithin{equation}{section}
\title{Quantitative Schauder estimates for hypoelliptic equations}
\author{Amélie Loher}
\date{Mar 25, 2025}
\address[Amélie Loher]{DPMMS, University of Cambridge, Wilberforce road, Cambridge CB3 0WA, UK}
\email{ajl221@cam.ac.uk}
\keywords{Schauder estimates, hypoelliptic equations, integro-differential equations, regularity theory}
\subjclass{35K65, 45K05, 35B65, 82C40}
\begin{document}

\begin{abstract}
We derive Schauder estimates using ideas from Campanato's approach for a general class of local hypoelliptic operators and non-local kinetic equations. The method covers equations in divergence and non-divergence form. In particular our results are applicable to the inhomogeneous Landau and to the non-cutoff Boltzmann equation. The paper is self-contained.
\end{abstract}

\maketitle
\tableofcontents

\section{Introduction}
\subsection{Problem Formulation}
We consider functions $f: \R \times \R^d\times \R^d \to \R$ solving a kinetic Fokker-Planck-type equation either in divergence form
\beq
	\partial_t f + v \cdot \nabla_x f = \sum_{1 \leq i, j\leq d} \partial_{v_i} \big(a^{ij}\partial_{v_j}f \big) + \sum_{1 \leq i \leq d}b^i \partial_{v_i} f + c f+ h,
\label{eq:1.1loc_div}
\eeq 
or in non-divergence form 
\beq
	\partial_t f + v \cdot \nabla_x f = \sum_{1 \leq i, j\leq d}a^{ij}\partial^2_{v_iv_j}f + \sum_{1 \leq i \leq d}b^i \partial_{v_i} f + c f+ h,
\label{eq:1.1loc_nondiv}
\eeq
with diffusion coefficients $A = \big(a^{ij}(t, x, v)\big)_{i, j = 1, \dots, d}$, lower order terms $B = (b^i)_{i = 1, \dots, d}$, $c$, and source term $h$. We also consider a fractional analogue 
\beq
	\partial_t f + v \cdot \nabla_x f  = \mathcal L f + h,
\label{eq:1.1nonloc}
\eeq
where 
\beq
	\mathcal L f(t, x, v) = \int_{\R^d} \big[f(t,x,v') - f(t, x, v)\big]K(t, x, v, v') \dd v',
\label{eq:1.2}
\eeq
for some non-negative kernel $K = K(t, x, v, v')$. The integral is to be understood in a principal value sense. The solutions are functions of time, space and velocity $f = f(t, x, v)$.  In the local case \eqref{eq:1.1loc_nondiv}, we assume $A$ to be uniformly elliptic and Hölder continuous. Similarly, in the non-local case \eqref{eq:1.1nonloc} we require a suitably defined ellipticity condition on $K$ as well as Hölder continuity. In both cases, we also assume the source term $h$ and the lower order terms $B, c$ to be Hölder continuous. Our goal is to establish Schauder estimates for solutions of \eqref{eq:1.1loc_div}, \eqref{eq:1.1loc_nondiv} and \eqref{eq:1.1nonloc}, which means that we want to quantify the transfer of Hölder regularity from the coefficients onto the solution of the equation. 

The equation is set in a \textit{kinetic cylinder}
\beq\label{eq:cylinder}
	Q_R(z_0) := \left\{z = (t, x, v) : -R^{2s} \leq t - t_0 \leq 0, \abs{v - v_0} < R, \abs{x - x_0 -(t-t_0)v_0} < R^{1+2s}\right\}
\eeq
for some $R > 0$ and $z_0 = (t_0, x_0, v_0) \in \R\times \R^d\times \R^d$. The parameter $s \in (0, 1)$ will appear in the conditions on the non-local kernel below. 
It determines the non-locality of the operator $\mathcal L$ in \eqref{eq:1.2}. In the local case set $s = 1$. This choice of domain is motivated by the underlying Lie group structure of \eqref{eq:1.1loc_div}, \eqref{eq:1.1loc_nondiv}, \eqref{eq:1.1nonloc}. In fact, equation \eqref{eq:1.1nonloc} is invariant under the scaling defined by
\beq\label{eq:scaling}
	(t, x, v) \to \left(r^{2s}t, r^{1+2s}x, rv\right) =: (t, x, v)_r = z_r,
\eeq
in the sense that a function $f_r$ in these rescaled variables $f_r = f(z_r)$ is a solution to \eqref{eq:1.1nonloc} provided that $f = f(z)$ is, upon suitably rescaling the solution domain. This coincides with the scaling of the local analogues \eqref{eq:1.1loc_div} and \eqref{eq:1.1loc_nondiv} for $s = 1$. Furthermore, these equations \eqref{eq:1.1loc_div}, \eqref{eq:1.1loc_nondiv} and \eqref{eq:1.1nonloc} verify a Galilean invariance:
\beq\label{eq:galilean}
	(t_1, x_1, v_1) \circ (t_2, x_2, v_2) \to (t_1 + t_2, x_1 + x_2 +t_2v_1 , v_1 + v_2),
\eeq
for any two points $z_1, z_2 \in \R^{1+2d}$; that is a function $f_{z_2} = f(z_1 \circ z_2)$ translated according to this Galilean translation \eqref{eq:galilean} is a solution to \eqref{eq:1.1nonloc} (or \eqref{eq:1.1loc_div} / \eqref{eq:1.1loc_nondiv}), provided that $f = f(z_1)$ is, upon suitably translating the solution domain.

The notion of Hölder continuity that we work with takes these invariances into account. On the one hand, the Hölder norm in the velocity variable coincides with the usual notion of Hölder regularity, whereas the regularity in time and space directions is adjusted according to the scaling \eqref{eq:scaling}. On the other hand, we choose a Hölder norm with respect to a distance that is left-invariant by the underlying Lie group structure \eqref{eq:galilean}. We introduce the kinetic Hölder spaces, which defines a notion of Hölder continuity in all variables, in detail in Definition \ref{def:holder} below.

Before stating our main results, we discuss the assumptions that define the ellipticity class and the Hölder continuity of the coefficients. We want our results in the local case to be applicable to the inhomogeneous Landau equation; and in the fractional case, we work with a kernel general enough so that $\mathcal L f$ includes the non-cutoff Boltzmann collision operator.

\subsection{Assumptions and result: the local (non-fractional) case}\label{sec:assum_loc}
We consider \eqref{eq:1.1loc_div} and \eqref{eq:1.1loc_nondiv}, and we assume uniform ellipticity on the divergence coefficients, that is for some $\lambda_0 > 0$ there holds 
\beq
	\forall (t, x, v) \in \R\times\R^d\times \R^d, ~ \forall \xi \in \R^d, \quad \sum_{1 \leq i, j \leq d} a^{i, j}(t ,x, v)\xi_i\xi_j \geq \lambda_0 \abs\xi^2.
\label{eq:unifellip}
\eeq
Moreover, we work with coefficients $A, B, c$ and source term $h$ that are Hölder continuous in the sense of the kinetic Hölder regularity defined below in Definition \ref{def:holder}. 

\begin{theorem}[Schauder estimate for kinetic Fokker-Planck equations]\label{thm:non-div}
Let $\alpha \in (0, 1)$ be given. Let $m \geq 3$ be some integer. Suppose $A \in C_\ell^{m-3+ \alpha}(Q_1)$ satisfies \eqref{eq:unifellip} for some $\lambda_0 > 0$ and assume $B, c, h \in C_\ell^{m-3+\alpha}(Q_1)$. Let $f$ solve \eqref{eq:1.1loc_div} or \eqref{eq:1.1loc_nondiv} in $Q_1$. In the former case, we further assume $\nabla_v A \in C_\ell^{m-3+ \alpha}(Q_1)$.
Then we have
\beqs
	\norm{f}_{C_\ell^{m-1 + \alpha}(Q_{1/4})} \leq C \left(\norm{f}_{L^\infty(Q_1)}  + \norm{h}_{C_\ell^{m-3+\alpha}(Q_1)}\right),
\eeqs
for some $C$ depending on $d, \lambda_0, \alpha, \norm{A}_{C_\ell^{m-3+\alpha}},  \norm{B}_{C_\ell^{m-3+\alpha}},  \norm{c}_{C_\ell^{m-3+\alpha}}$, and for the divergence form case also on $\norm{\nabla_v A}_{C_\ell^{m-3+\alpha}}$. 
\end{theorem}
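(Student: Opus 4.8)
The plan is a Campanato–type perturbation scheme adapted to the kinetic (Kolmogorov) geometry. First I would make two reductions. In the divergence case \eqref{eq:1.1loc_div} I expand $\sum_{i,j}\partial_{v_i}\big(a^{ij}\partial_{v_j}f\big)=\sum_{i,j}a^{ij}\partial^2_{v_iv_j}f+\sum_j\big(\sum_i\partial_{v_i}a^{ij}\big)\partial_{v_j}f$, which is of the non-divergence form \eqref{eq:1.1loc_nondiv} with $B$ replaced by $B+\mathrm{div}_v A$; the hypothesis $\nabla_v A\in C_\ell^{m-3+\alpha}$ keeps the new coefficients in $C_\ell^{m-3+\alpha}$, so it suffices to treat \eqref{eq:1.1loc_nondiv}. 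Second, I would prove only the base case $m=3$ (i.e. $h,A,B,c\in C_\ell^{\alpha}\Rightarrow f\in C_\ell^{2+\alpha}$ with the quantitative bound) and deduce general $m$ by an induction at the end.

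The core is the frozen model operator: for a constant symmetric $A_0\geq\lambda_0\,\mathrm{Id}$, set $\mathcal L_0:=\partial_t+v\cdot\nabla_x-\sum_{i,j}a_0^{ij}\partial^2_{v_iv_j}$, a hypoelliptic Kolmogorov operator compatible with the scaling \eqref{eq:scaling} and the group law \eqref{eq:galilean}. The one genuinely analytic ingredient, which I would take from the earlier sections, is quantitative interior smoothing — obtained from the explicit Gaussian fundamental solution of $\mathcal L_0$ together with the bracket relation $[\nabla_v,\,v\cdot\nabla_x]=\nabla_x$ that recovers the missing $x$–diffusion: if $\mathcal L_0 g=0$ in $Q_1$, then $g\in C^\infty$ in the interior with all kinetic derivatives bounded by $\|g\|_{L^\infty(Q_1)}$. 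Expanding $g$ in a kinetic Taylor polynomial $P_{z_0}g$ of kinetic degree $\leq2$ at $z_0$ then gives
\beqs
   \big\|g-P_{z_0}g\big\|_{L^\infty(Q_r(z_0))}\ \leq\ C\,r^{3}\,\big\|g\big\|_{L^\infty(Q_{1/2}(z_0))},\qquad 0<r\leq\tfrac12,
\eeqs
and the exponent $3>2+\alpha$ is the gain that drives the iteration.

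Then comes the perturbation and dyadic iteration. Fix $z_0\in Q_{1/4}$ and a radius $r$, and put $A_0:=A(z_0)$; then $f$ solves $\mathcal L_0f=\tilde h_r$ in $Q_r(z_0)$ with $\tilde h_r:=h+\sum_{i,j}\big(a^{ij}-a_0^{ij}\big)\partial^2_{v_iv_j}f+B\cdot\nabla_vf+cf$. I would solve $\mathcal L_0g=\tilde h_r$ in $Q_r(z_0)$ with zero kinetic-boundary data, so that $w:=f-g$ satisfies $\mathcal L_0w=0$ (and the model estimate applies to $w$), while the maximum principle gives $\|g\|_{L^\infty(Q_r(z_0))}\leq C r^{2}\|\tilde h_r\|_{L^\infty(Q_r(z_0))}$. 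Since $|A-A_0|\leq[A]_{C_\ell^{\alpha}}\,r^{\alpha}$ on $Q_r(z_0)$, one has $\|\tilde h_r\|_{L^\infty(Q_r(z_0))}\leq\|h\|_{C_\ell^{\alpha}}+[A]_{C_\ell^{\alpha}}\,r^{\alpha}\,\|D^2_v f\|_{L^\infty(Q_r(z_0))}+C(\|B\|_{L^\infty}+\|c\|_{L^\infty})\,\|f\|_{C_\ell^{1+\alpha}(Q_r(z_0))}$. Taking $P_{z_0}w$ as the degree–$\leq 2$ approximant of $f$ on $Q_r(z_0)$ and combining with the model estimate yields an excess–decay inequality for $\Phi(z_0,\rho):=\inf_{P}\|f-P\|_{L^\infty(Q_\rho(z_0))}$ of the form $\Phi(z_0,\rho)\leq C(\rho/r)^{3}\Phi(z_0,r)+Cr^{2+\alpha}\big(\|h\|_{C_\ell^{\alpha}}+\|f\|_{L^\infty}\big)+C[A]_{C_\ell^{\alpha}}\,r^{2+\alpha}\|D^2_v f\|_{L^\infty(Q_r(z_0))}$, the infima over kinetic polynomials of kinetic degree $\leq2$. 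Iterating along $r=4^{-j}$ (the standard Campanato iteration lemma: the geometric factor beats $(\rho/r)^{3}$, and the $r^{\alpha}$ oscillation supplies the smallness needed to absorb the $D^2_v f$ term back to the left — after first noting that a bounded right-hand side already forces enough interior regularity for that term to be finite) produces
\beqs
   \sup_{z_0\in Q_{1/4}}\ \sup_{0<r\leq r_0}\ r^{-(2+\alpha)}\,\inf_{P}\big\|f-P\big\|_{L^\infty(Q_r(z_0))}\ \leq\ C\big(\|f\|_{L^\infty(Q_1)}+\|h\|_{C_\ell^{\alpha}(Q_1)}\big),
\eeqs
which, by the kinetic Campanato characterization of $C_\ell^{2+\alpha}$ established earlier, is exactly the $m=3$ assertion.

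Finally, for $m\geq4$ I would induct, differentiating \eqref{eq:1.1loc_nondiv} along the hierarchy of kinetic vector fields: $\partial_{v_k}$, the drift $Y:=\partial_t+v\cdot\nabla_x$, and $\partial_{x_k}$. Each such derivative $Df$ solves an equation of the same type with leading matrix $A$ and source $Dh$ plus products of a derivative of $(A,B,c)$ with a derivative of $f$; these are controlled, by the inductive hypothesis, at the order needed to apply the case-$m$ (or a lower-$m$) estimate to $Df$. The one delicate term is the commutator $[\partial_{v_k},Y]=\partial_{x_k}$, which produces an $x$–derivative of $f$ in the source of the equation for $\partial_{v_k}f$; this is exactly affordable because $x$–derivatives carry kinetic weight $3$ and are recovered by invoking the estimate at a lower value of $m$. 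Chaining these steps over a finite nested family $Q_1\supset\cdots\supset Q_{1/4}$ to restore the stated radii, and reducing the divergence form as in the first paragraph, closes the induction. The main obstacle is the model estimate of the second paragraph — quantitative hypoelliptic smoothing of $\mathcal L_0$ with a decay exponent strictly above $2+\alpha$, which is precisely where the $x$–degeneracy must be defeated through the commutator structure — together with setting up the kinetic Taylor/Campanato framework so that the approximating polynomials are the correct group polynomials and the iteration respects the non-commutative scaling; the freezing argument, the dyadic iteration, and the bookkeeping in the $m$–induction are then comparatively routine.
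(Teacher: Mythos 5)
Your outline for the base case $m=3$ is sound and in the same spirit as the paper's (the paper does an $L^2$ Campanato excess-decay via the regularity estimates of Proposition~\ref{prop:nonfractional_energy} and the fundamental-solution bound of Lemma~\ref{prop:2.1}, rather than your $L^\infty$ excess-decay with a boundary-value solve and the maximum principle, but both are standard realizations of Campanato's scheme). The reduction of the divergence form to a non-divergence form with $B+\mathrm{div}_v A$ also matches Subsection~\ref{sec:div-form-loc}.

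The genuine gap is the induction for $m\geq 4$. Differentiating \eqref{eq:1.1loc_nondiv} by $\partial_{v_k}$ gives
\begin{equation*}
\mathcal T(\partial_{v_k}f)-A:\nabla_v^2(\partial_{v_k}f)
=\partial_{v_k}h+(\partial_{v_k}A):\nabla_v^2f+(\partial_{v_k}B)\cdot\nabla_vf+(\partial_{v_k}c)f-\partial_{x_k}f,
\end{equation*}
and the obstruction is exactly the last term. Suppose, inductively, that you know $f\in C_\ell^{m-2+\alpha}$ from the $(m-1)$ case. Then $\partial_{x_k}f\in C_\ell^{m-5+\alpha}$ by Lemma~\ref{lem:2.7} (an $x$-derivative drops the kinetic exponent by $1+2s=3$), whereas all the other source terms are in $C_\ell^{m-4+\alpha}$. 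Feeding a source that is only $C_\ell^{m-5+\alpha}$ into the Schauder estimate for $\partial_{v_k}f$ returns $\partial_{v_k}f\in C_\ell^{m-3+\alpha}$, i.e.\ $v$-regularity of order $m-2+\alpha$ for $f$ — which is what you started with. The claim that the $\partial_{x_k}f$ term is ``exactly affordable because $x$-derivatives carry kinetic weight $3$ and are recovered by invoking the estimate at a lower value of $m$'' does not survive this accounting: the commutator costs one more unit of $x$-regularity than the lower-$m$ estimate supplies, so the differentiation induction stalls rather than closing. This is precisely the difficulty flagged in the remark after Definition~\ref{def:campanatoH} (``to gain higher Hölder continuity the equation was just differentiated\dots\ this would not work as easily for our equations''), and it is why the earlier literature (Henderson--Snelson's Theorem~2.12, quoted for $m\in\{3,4\}$) does not reach arbitrary $m$ by the naive route.

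The paper's way around this is to \emph{never} differentiate the equation. Instead of proving a degree-$2$ excess decay with exponent $3$ and inducting, Section~\ref{sec:campanato-inequality} proves a higher-order Campanato inequality directly: subtracting the degree-$(m-1)$ kinetic polynomial $p_{m-1}$ satisfying \eqref{eq:giaq1}, iterating Poincaré, invoking \eqref{eq:campnonloc1}, Sobolev embedding, and the constant-coefficient regularity estimates (Proposition~\ref{prop:nonfractional_energy}, which is stated for arbitrary order $l$ and built on Bouchut's hypoelliptic gain rather than fundamental-solution pointwise bounds) produces
\begin{equation*}
\int_{Q_r(z_0)}\abs{f-p_{m-1}}^2\dd z\ \leq\ C\,r^{\,n+2m}\,\norm{f}^2_{L^2(Q_R(z_0))},
\end{equation*}
i.e.\ a control of the degree-$(m-1)$ Campanato seminorm in one stroke. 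Combined with the freezing of coefficients, the fundamental-solution scaling for the perturbation $g_2$, and the Campanato--Hölder equivalence (Theorem~\ref{thm:equicampholderhigh}), this yields $C_\ell^{m-1+\alpha}$ for every $m\geq 3$ without ever facing the commutator term. So the base-case machinery you propose is fine, but you would need to replace the ``differentiate and induct'' step by this higher-order Campanato argument (or some equivalent device) to obtain the stated result for general $m$.
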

\begin{remark}
In fact, since our approach is constructive, it is straightforward to check that the constant in Theorem \ref{thm:non-div} depends only on the upper bound of the Hölder continuity of the coefficients. 
\end{remark}
We recover Theorem 3.9 of Imbert and Mouhot \cite{IM} when $m = 3$ and Theorem 2.12 of Henderson-Snelson \cite{henderson-snelson} when $m \in \{3, 4\}$. Since we require $\nabla_v A \in C_\ell^{m-3+\alpha}(Q_1)$ for the non-divergence form equation \eqref{eq:1.1loc_nondiv}, this is merely a sub-case of the divergence-form equation \eqref{eq:1.1loc_div} with a Hölder continuous drift term. 
Our approach is robust enough to cover higher order hypoelliptic equations, or also Dini-regular coefficients; we refer to Theorem \ref{thm:hypo} and Theorem \ref{thm:dini} in Appendix \ref{app:hypo-sec}. 

\subsection{Assumptions and results: the non-local (fractional) case}\label{sec:assum_nonloc}
For the non-local equation \eqref{eq:1.1nonloc}, we specify the following notion of ellipticity and Hölder continuity. We consider some $s \in (0, 1)$. To be consistent with the previous work of Imbert-Silvestre \cite{ISschauder}, we consider a non-negative kernel $K = K(t, x, v, v')$ that maps $(t, x, v)$ into a non-negative Radon density $K_{(t, x, v)}$ in $\R^d \setminus \{0\}$ with 
\beqs
	K_{(t, x, v)}(w) := K(t, x, v, v+w).
\eeqs
For any $(t, x, v) \in \R^{1+2d}$ we require the existence of some $0 < \lambda_0 < \Lambda_0$ such that the following conditions hold true.
For all $r > 0$, we assume the upper bound
\beq
	\int_{B_r}\abs{w}^2K_{(t, x, v)}(w)\dd w \leq \Lambda_0 r^{2-2s}.
\label{eq:upperbound}
\eeq
We further require a coercivity condition for any $r > 0$ and any $\varphi \in C^2(B_{2r})$
\beq
	\lambda_0 \int_{B_r}\int_{B_r} \frac{\abs{\varphi(v)-\varphi(v')}^2}{\abs{v-v'}^{d+2s}} \dd v\dd v' \leq \int_{B_{2r}}\int_{B_{2r}} \big[\varphi(v)-\varphi(v')\big]K_{(t, x, v)} (v'-v) \varphi(v) \dd v'\dd v + \Lambda_0 \norm{\varphi}_{L^2(B_{2r})}.
\label{eq:coercivity}
\eeq
Moreover, we will impose a certain notion of symmetry on the kernel, which can be understood as the distinction between divergence and non-divergence form equations in the fractional case. We either work with the following symmetry condition, which is the non-local analogue of \textit{non-divergence form} equations
\beq
	K_{(t, x, v)} (w) = K_{(t, x, v)}(-w).
\label{eq:symmetry}
\eeq
Or else, if we consider the \textit{divergence form} analogue instead, we require
\beq
	\forall v \in \R^d \quad \Bigg\vert \textrm{PV} \int_{\R^d} \big(K(v, v') - K(v', v)\big)\dd v'\Bigg\vert \leq \Lambda_0,
\label{eq:cancellation1}
\eeq
and if $s \geq \frac{1}{2}$ we assume that  for all $r > 0$
\beq
	\forall v \in \R^d \quad \Bigg\lvert \textrm{PV} \int_{B_r(v)} (v -v') K(v, v')\dd v'\Bigg\rvert \leq \Lambda_0 r^{1-2s}.
\label{eq:cancellation2}
\eeq

Finally we want $K$ to be Hölder continuous with exponent $\alpha \in (0, +\infty)$: given $z_1 = (t_1, x_1, v_1)$ and $z_2 = (t_2, x_2, v_2)$ we assume that there is some $A_0 > 0$ such that for any $r > 0$ 
\beq
	\int_{B_r} \Abs{K_{z_1}(w) - K_{z_2}(w)}\abs{w}^2 \dd w \leq A_0r^{2 - 2s} d_\ell(z_1, z_2)^\alpha,
\label{eq:holder}
\eeq
where $d_\ell$ denotes the kinetic distance defined below in Definition \ref{def:kin_distance}. In the divergence form case, we require in addition to \eqref{eq:holder} for any $r > 0$
\beq
	\Bigg\vert\textrm{PV}\int_{B_r} w \big(K_{z_1}(w) - K_{z_2}(w)\big) \dd w\Bigg\vert \leq A_0r^{1 - 2s} d_\ell(z_1, z_2)^\alpha.
\label{eq:holder_div}
\eeq

\begin{remark}
We observe that, as a consequence of \eqref{eq:upperbound} and \eqref{eq:holder}, we obtain for all $r > 0$ and some $C > 0$
\beqs
	\int_{B_r\setminus B_{r/2}} \Abs{K_{z_1}(w) - K_{z_2}(w)}\dd w \leq C A_0 r^{-2s}d_\ell(z_1, z_2)^\alpha,
\eeqs
which in turn implies
\bal
	&\int_{B_1} \abs{w}^{2s+\alpha} \Abs{K_{z_1}(w) - K_{z_2}(w)}\dd w \leq C  A_0 d_\ell(z_1, z_2)^\alpha,\\
	&\int_{\R^d \setminus B_1} \Abs{K_{z_1}(w) - K_{z_2}(w)}\dd w \leq C  A_0 d_\ell(z_1, z_2)^\alpha.
\label{eq:3.10holder}
\eal
\end{remark}

For integro-differential equations in non-divergence form we recover Theorem 1.6 of \cite{ISschauder}, but in contrast to the methods employed by Imbert-Silvestre, our proof is quantitative. 
\begin{theorem}[Imbert-Silvestre\protect{\cite[Theorem 1.6]{ISschauder}}]\label{thm:non-div-fractional}
Let $0 < s < 1$ and let $0 < \gamma < \min(1, 2s)$. Assume $K$ is a non-negative kernel that is elliptic and Hölder continuous in the sense that it satisfies \eqref{eq:upperbound}-\eqref{eq:symmetry} for some $0 < \lambda_0 < \Lambda_0$ and \eqref{eq:holder} for $\alpha = \frac{2s}{1+2s}\gamma$, for some $A_0 > 0$ and for each $z \in Q_1$. Then any solution $f \in C_\ell^{\gamma}([-1, 0] \times B_1 \times \R^d)$ of \eqref{eq:1.1nonloc} in $Q_1$ satisfies
\beqs
	\norm{f}_{C_\ell^{2s + \alpha}(Q_{1/4})} \leq C\big(\norm{f}_{C_\ell^{\gamma}([-1, 0] \times B_1 \times \R^d)} + \norm{h}_{C_\ell^{\alpha}(Q_1)}\big),
\eeqs
for some constant $C = C(d, s, \lambda_0, \Lambda_0, A_0)$.
\end{theorem}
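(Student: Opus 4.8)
\emph{Sketch of the approach.} The plan is to run the Campanato iteration of this paper with the kernel frozen at a point. By the equivalence between the kinetic Hölder norm of Definition~\ref{def:holder} and the associated Campanato-type quantity, it is enough to prove that for every $z_0 \in Q_{1/4}$ there is a kinetic polynomial $P_{z_0}$ of kinetic degree strictly less than $2s+\alpha$ such that, for all $r \leq r_0$,
\beqs
	\norm{f - P_{z_0}}_{L^\infty(Q_r(z_0))} \leq C\, r^{2s+\alpha}\big(\norm{f}_{C_\ell^{\gamma}([-1,0]\times B_1\times\R^d)} + \norm{h}_{C_\ell^{\alpha}(Q_1)}\big),
\eeqs
with $C$ and $r_0 > 0$ independent of $z_0$; together with the trivial bound on $Q_1$ this gives $f \in C_\ell^{2s+\alpha}(Q_{1/4})$ and the stated estimate. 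Using the Galilean invariance~\eqref{eq:galilean} and the scaling~\eqref{eq:scaling} we may take $z_0 = 0$.

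First I would analyse the \emph{model equation} $\partial_t g + v\cdot\nabla_x g = \mathcal L_0 g$, where $\mathcal L_0$ has the frozen, $v$-translation invariant kernel $w \mapsto K(0,0,0,w)$, which still satisfies the ellipticity bounds~\eqref{eq:upperbound}, \eqref{eq:coercivity} and the symmetry~\eqref{eq:symmetry}. Combining the De Giorgi--Nash--Moser regularity available for such kernels with the hypoelliptic smoothing produced by the transport term --- differentiating the equation along the group-invariant directions and iterating incremental-quotient estimates, so that commuting $v\cdot\nabla_x$ with $\partial_v$ recovers control of $\nabla_x$ --- I would show that any solution $g$ of the model equation in $Q_1$ with at most polynomial growth in $v$ satisfies, for all $\theta \leq \tfrac12$,
\beqs
	\inf_{\deg_\ell P < 2s+\alpha} \norm{g - P}_{L^\infty(Q_\theta)} \leq C\,\theta^{2s+\alpha}\,\norm{g}_{\star},
\eeqs
where $\norm{g}_{\star}$ controls $g$ on $Q_1$ together with the $v$-tails entering $\mathcal L_0 g$; the minimiser is the kinetic Taylor polynomial of $g$ at $0$.

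Next I would run the perturbative iteration at $z_0 = 0$. For the dyadic radii $\rho_k = \theta^k$ I solve the frozen exterior-value problem $\partial_t g_k + v\cdot\nabla_x g_k = \mathcal L_0 g_k$ in $Q_{\rho_k}$ with $g_k = f$ outside $Q_{\rho_k}$, whose well-posedness and $L^\infty$-stability follow from the model theory. Then $w_k := f - g_k$ solves the model equation with right-hand side $(\mathcal L - \mathcal L_0)f + h$; splitting $f = \Pi_k + (f - \Pi_k)$ with $\Pi_k$ a near-optimal kinetic polynomial at scale $\rho_k$, the term $(\mathcal L - \mathcal L_0)\Pi_k$ is lower order by the symmetry~\eqref{eq:symmetry} and the Hölder bound~\eqref{eq:holder}, while $(\mathcal L - \mathcal L_0)(f - \Pi_k)$ is estimated dyadically from~\eqref{eq:holder}, \eqref{eq:3.10holder} and the running Campanato bound, the series converging thanks to $2s+\alpha$ lying below the next kinetic degree (this is where the hypotheses $\gamma < \min(1,2s)$ and $\alpha = \tfrac{2s}{1+2s}\gamma$ are used). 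Using also $\norm{h - h(0)}_{L^\infty(Q_{\rho_k})} \leq C\rho_k^{\alpha}\norm{h}_{C_\ell^{\alpha}}$, the comparison principle and $L^\infty$ estimate for the model equation give $\norm{w_k}_{L^\infty(Q_{\theta\rho_k})} \leq C\rho_k^{2s+\alpha}(M_k + \text{data})$, where $M_k$ denotes the Campanato quantity at scale $\rho_k$; applying the polynomial-approximation estimate above to $g_k$ on $Q_{\theta\rho_k}$ then yields $M_{k+1} \leq C\theta^{\alpha} M_k + C(\text{data})$. Choosing $\theta$ so small that $C\theta^{\alpha} \leq \tfrac12$ closes the iteration and produces the decay displayed in the first paragraph.

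I expect the main obstacle to be the first step: proving the sharp interior $C_\ell^{2s+\alpha}$ regularity for the frozen model equation. This requires the non-local De Giorgi--Nash--Moser estimates to be combined with a bootstrap respecting the anisotropic kinetic scaling, and --- most delicately --- a uniform control of the $v$-tails of $\mathcal L_0$ must be propagated through every step, since along the iteration one only knows $f$ minus its Taylor polynomial to polynomial (hence unbounded) precision at infinity, which is exactly why the hypothesis $f \in C_\ell^{\gamma}([-1,0]\times B_1\times\R^d)$ carries the $v$-variable over all of $\R^d$. A secondary difficulty is the bookkeeping in the perturbation step: telescoping the corrections $\Pi_k$ across scales and checking that $(\mathcal L - \mathcal L_0)$ sends a kinetic polynomial of degree $< 2s+\alpha$ to a genuinely lower-order term, for which the symmetry~\eqref{eq:symmetry} and the precise scaling in~\eqref{eq:holder} are indispensable.
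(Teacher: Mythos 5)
Your plan would yield the theorem, but it takes a genuinely different path from the paper's Campanato argument, so let me set the two side by side. For the frozen model equation $\mathcal T g = \mathcal L_0 g$ you propose to reach the sharp $C_\ell^{2s+\alpha}$ bound through De~Giorgi--Nash--Moser plus an incremental-quotient bootstrap along the hypoelliptic vector fields; the paper instead never invokes De~Giorgi--Nash--Moser theory at all, deriving the polynomial-approximation decay (Subsection~\ref{subsec:cc_nonloc}) from the combination of a kinetic Poincar\'e inequality, Sobolev embedding, and the Bouchut-type $L^2$ energy estimates of Proposition~\ref{prop:fractional_energy}. For the perturbation step you run a dyadic iteration of frozen-kernel exterior-value problems $g_k$ on cylinders $Q_{\rho_k}$ and contract an $L^\infty$-Campanato quantity $M_k$; the paper does a single splitting $\tilde f - p[\tilde f] = g_1 + g_2$, controls $g_1$ by the constant-coefficient Campanato inequality and $g_2$ by the fundamental-solution scaling of Lemma~\ref{prop:2.1}, and then passes through $L^2$-Campanato norms and the equivalence Theorem~\ref{thm:equicampholderhigh}. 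Both routes are quantitative (unlike the Imbert--Silvestre blow-up), but the paper's route is deliberately more elementary in its regularity input and transfers with almost no change to the divergence-form and higher-order local cases, which is the stated novelty; your route stays closer to the classical pointwise Schauder iteration and, as you correctly note, front-loads essentially the whole difficulty into establishing the model-equation estimate by bootstrapping --- the precise step whose avoidance motivates the paper's use of energy estimates. One small caution if you flesh out the iteration: the gain $C\theta^\alpha$ in $M_{k+1} \le C\theta^\alpha M_k + C(\text{data})$ relies on $\alpha < s$, which is exactly where $\alpha = \tfrac{2s}{1+2s}\gamma$ with $\gamma < \min(1,2s)$ enters (compare the paper's use of $r^{n+6s} \le r^{n+4s+2\alpha}$), so that hypothesis is doing more than just keeping $2s+\alpha$ below the next kinetic degree.
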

For divergence form kinetic integro-differential equations we establish the following result.
\begin{theorem}[Schauder estimates for kinetic integro-differential equations in divergence form]\label{thm:div-fractional}
Let $0 < s < 1$ and let $0 < \gamma < \min(1, 2s)$. Assume $K$ is a non-negative kernel that is elliptic in the sense that it satisfies \eqref{eq:upperbound}, \eqref{eq:coercivity}, the (weak) divergence form symmetry \eqref{eq:cancellation1}-\eqref{eq:cancellation2} for some $0 < \lambda_0 < \Lambda_0$. Assume also that $K$ is Hölder continuous in the sense that \eqref{eq:holder}-\eqref{eq:holder_div} are satisfied for $\alpha = \frac{2s}{1+2s}\gamma$, for some $A_0 > 0$ and for each $z \in Q_1$. Then any solution $f \in C_\ell^{\gamma}([-1, 0] \times B_1 \times \R^d)$ of \eqref{eq:1.1nonloc} in $Q_1$ satisfies
\beqs
	\norm{f}_{C_\ell^{2s + \alpha}(Q_{1/4})} \leq C\big(\norm{f}_{C_\ell^{\gamma}([-1, 0] \times B_1 \times \R^d)} + \norm{h}_{C_\ell^{\alpha}(Q_1)}\big),
\eeqs
for some constant $C = C(d, s, \lambda_0, \Lambda_0, A_0)$.
\end{theorem}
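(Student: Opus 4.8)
The plan is to establish the estimate by a Campanato-type iteration over dyadic scales, following the scheme used for the non-divergence form Theorem~\ref{thm:non-div-fractional} but incorporating the modifications forced by the weak symmetry conditions \eqref{eq:cancellation1}--\eqref{eq:cancellation2} and the additional first-moment Hölder bound \eqref{eq:holder_div}. By a routine regularization argument it suffices to prove the a priori estimate for sufficiently smooth $f$, with a constant independent of the regularization. Throughout one exploits the kinetic dilations \eqref{eq:scaling} and the Galilean shifts \eqref{eq:galilean} to normalize an arbitrary reference point $z_0 \in Q_{1/4}$ to the origin and an arbitrary scale to $1$; since $\mathcal L$ is nonlocal in $v$, the contribution of large velocities has to be tracked separately at every scale, which is precisely where the global $C_\ell^\gamma$-norm of $f$ on the slab $[-1,0]\times B_1\times\R^d$ enters.

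The first ingredient is the frozen (constant-coefficient) theory. For a fixed $z_0$ consider $\mathcal L_{z_0}\varphi(v) := \mathrm{PV}\int_{\R^d}[\varphi(v+w)-\varphi(v)]K_{z_0}(w)\dd w$. Since $K_{z_0}$ need not be even, \eqref{eq:cancellation1} and (for $s\geq\tfrac12$) \eqref{eq:cancellation2} are exactly what make the resulting drift-type contribution meaningful, while \eqref{eq:upperbound} and \eqref{eq:coercivity} furnish boundedness and coercivity of the associated bilinear form, hence well-posedness of the frozen kinetic equation $\partial_t g + v\cdot\nabla_x g = \mathcal L_{z_0}g$. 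The key output is a decay estimate: there are $\rho\in(0,1)$ and $\theta<1$, depending only on the structural constants, such that if $g$ solves this equation in $Q_1$ then
\[
	\inf_{P}\norm{g-P}_{L^\infty(Q_\rho)} \;\leq\; \theta\,\rho^{2s+\alpha}\Big(\inf_{P}\norm{g-P}_{L^\infty(Q_1)} + \mathcal T(g)\Big),
\]
where $P$ ranges over polynomials of the appropriate kinetic degree (strictly below $2s+\alpha$) and $\mathcal T(g)$ collects the velocity-tail terms. This follows by first obtaining interior regularity for the frozen divergence-form equation — a De Giorgi--Nash--Moser type estimate provides an initial Hölder modulus, which is then bootstrapped by differentiating along the vector fields adapted to the group structure — combined either with a higher-order Taylor expansion or with a compactness/Liouville classification of entire solutions of controlled growth as kinetic polynomials.

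The second ingredient is the perturbative step. Given $z_0\in Q_{1/4}$ and a scale $r$, choose a polynomial $P_r$ of the appropriate kinetic degree that realizes, up to a fixed factor, $\inf_{P}\norm{f-P}_{L^\infty(Q_r(z_0))}$ and that is compatible with (the relevant part of) the Taylor expansion of $h$ at $z_0$ for the frozen operator. After rescaling, $f-P_r$ solves the divergence-form equation with kernel $K_z$; writing $\mathcal L_z = \mathcal L_{z_0} + (\mathcal L_z-\mathcal L_{z_0})$, the function $f-P_r$ solves the frozen equation with right-hand side $E := (\mathcal L_z-\mathcal L_{z_0})(f-P_r) + \big(h-\pi_{z_0}h\big)$, where $\pi_{z_0}h$ is the Taylor polynomial of $h$ at $z_0$. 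One estimates the first term, in the weak (dual) norm dictated by the divergence structure, using \eqref{eq:holder_div} for the non-even part of $K_z-K_{z_0}$ and \eqref{eq:3.10holder} for the remainder, against the $C_\ell^\gamma$-seminorm of $f$; the second is $O(r^\alpha\norm{h}_{C_\ell^\alpha})$. Decomposing $f-P_r = g + w$ with $g$ the solution of the frozen homogeneous equation sharing the lateral and initial data of $f-P_r$ and $w$ absorbing $E$ through an energy estimate (here the coercivity \eqref{eq:coercivity} of the frozen form is used), and applying the decay estimate to $g$, one obtains, for $\omega(r) := r^{-(2s+\alpha)}\inf_{P}\norm{f-P}_{L^\infty(Q_r(z_0))}$,
\[
	\omega(\rho r) \;\leq\; \tfrac12\,\omega(r) + C\big(\norm{f}_{C_\ell^\gamma([-1,0]\times B_1\times\R^d)} + \norm{h}_{C_\ell^\alpha(Q_1)}\big),
\]
with $\rho<1$ and $C$ independent of $z_0$. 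Iterating over dyadic scales gives $\sup_{0<r<r_0}\omega(r)<\infty$ uniformly in $z_0\in Q_{1/4}$, which by the Campanato characterization of $C_\ell^{2s+\alpha}$ is the asserted estimate.

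I expect the main difficulty to lie in the frozen divergence-form theory and in matching it with the perturbation bound. Unlike the non-divergence case, where one can differentiate the equation directly and iterate classical interior estimates, here the asymmetry of $K_{z_0}$ generates a drift-type term that must be handled through the cancellation conditions \eqref{eq:cancellation1}--\eqref{eq:cancellation2}, and the regularity has to be built starting from a De Giorgi--Nash--Moser estimate before any bootstrap is possible; correspondingly, the freezing error $(\mathcal L_z-\mathcal L_{z_0})(f-P_r)$ must be measured in a norm compatible with the weak formulation, which is exactly why the extra bound \eqref{eq:holder_div} is imposed. The bookkeeping of polynomial degrees when $s\geq\tfrac12$ (so that $2s\geq1$ and quadratic velocity terms interact with \eqref{eq:cancellation2}), together with the uniform control of the velocity tails across all scales, are the other delicate points.
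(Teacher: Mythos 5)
Your proposal follows the same overall blueprint as the paper's argument (freeze the kernel at a reference point, split the solution into a constant-coefficient piece plus a correction, control the correction using the Hölder conditions on $K$ including \eqref{eq:holder_div}, and iterate the decay to obtain the Campanato/Hölder bound), but it diverges in both places where the actual work is done, and one of those divergences creates a gap. For the frozen constant-coefficient theory you propose either a De Giorgi--Nash--Moser estimate followed by bootstrap, or a compactness/Liouville classification; the latter is exactly the non-constructive route (Simon, Imbert--Silvestre) the paper explicitly sets out to replace, and the former is not as simple as ``differentiating along the vector fields,'' since regularity in $x$ and $t$ for the kinetic Kolmogorov operator does not come from commuting derivatives but from a hypoelliptic averaging argument; the paper's Proposition~\ref{prop:fractional_energy} builds this from Bouchut's lemma (Proposition~\ref{prop:bouchut}) and combines it with Poincar\'e and Sobolev embedding, which is what produces the precise Campanato decay needed.

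The concrete gap is a norm mismatch in your perturbative step. You define $\omega(r) = r^{-(2s+\alpha)}\inf_P\norm{f-P}_{L^\infty(Q_r(z_0))}$ and apply your decay estimate to $g$ in $L^\infty$, yet you control the correction $w$ through an energy estimate based on the coercivity \eqref{eq:coercivity}, which only yields a bound in $L^2_t H^s_{x,v}$. To feed $w$ back into $\omega(\rho r)\leq \tfrac12\omega(r)+C(\cdots)$ you would need a local $L^\infty$ bound on $w$, which is an extra step you do not supply (it would require a local boundedness result for the inhomogeneous frozen equation). The paper avoids this entirely by writing the correction $g_2$ as the kinetic convolution of the frozen fundamental solution with the H\"older-continuous source and invoking Lemma~\ref{prop:2.1} to get $\norm{g_2}_{L^\infty(Q_r)}\lesssim r^{2s+\alpha}[\tilde h]_{C^\alpha_\ell}$ directly, then formulating the whole argument in the $L^2$ Campanato norm (so that the decay from Subsection~\ref{subsec:cc_nonloc} and the fundamental-solution bound for $g_2$ combine without conversion). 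Similarly, the weak/dual norm you propose for the freezing error is not needed: the paper shows the error $A(z)=\int\big(f(v')-f(v)\big)\big(K_z-K_0\big)\dd v'$ is pointwise well-defined and lies in $C^\alpha_\ell$, using \eqref{eq:cancellation1}--\eqref{eq:cancellation2} and \eqref{eq:holder_div} to handle the first moment. To repair your argument you should either replace $\omega(r)$ by the $L^2$ Campanato quantity (the natural choice given you only have an energy bound for $w$), or add a local boundedness lemma for $w$ before combining with the $L^\infty$ decay of $g$.
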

\begin{remark}
We emphasise that Theorem \ref{thm:non-div}, Theorem \ref{thm:non-div-fractional} and Theorem \ref{thm:div-fractional} are applicable to the inhomogeneous Landau and the Boltzmann equation without cut-off, respectively. On the one hand, the Landau equation is given by
\beq\label{eq:landau}
	\partial_t f + v \cdot \nabla_x f = \nabla_v \cdot \Bigg(\int_{\R^d} a(v-w) \big[f(w) \nabla f(v) - f(v) \nabla f(w) \big] \dd w \Bigg),
\eeq
where
\beqs
	a(z) = a_{d, \gamma} \abs{z}^{\gamma + 2} \Big( I - \frac{z \otimes z}{\abs{z}^2}\Big),
\eeqs
for $\gamma \geq - d$, $a_{d, \gamma} > 0$. It can be rewritten in divergence \eqref{eq:1.1loc_div} or non-divergence form \eqref{eq:1.1loc_nondiv} for suitable coefficients $A, B, c$, as stated, for example, on page one in \cite{henderson-snelson}.
The Boltzmann equation, on the other hand, is given by
\beq\label{eq:boltzmann}
	\partial_t f + v \cdot \nabla_x f = \int_{\R^d} \int_{\mathbb S^{d-1}}  \big[f(w_*)f(w)  - f(v_*)f(v)  \big] B(\abs{v-v_*}, \cos \theta) \dd v_* \dd \sigma,
\eeq
where 
\beqs
	w = \frac{v+v_*}{2} + \frac{\abs{v-v_*}}{2} \sigma, \qquad w_* =  \frac{v+v_*}{2}  -  \frac{\abs{v-v_*}}{2} \sigma,
\eeqs
and where $\theta$ is the deviation angle between $v$ and $w$. The non-cutoff kernels $B$ are given by 
\beqs
	B(r, \cos \theta) = r^\gamma b(\cos \theta), \qquad b(\cos \theta) \sim \abs{\sin(\theta/2)}^{-d + 1 - 2s},
\eeqs
for $\gamma > -d$ and $s \in (0,1)$. Using Carleman coordinates and the cancellation lemma, we can rewrite this as \eqref{eq:1.1nonloc}, for some specific kernel $K$. 

In a certain conditional regime upon which we do not elaborate here, we can check that the coefficients in the Landau equation and the kernel of the Boltzmann equation satisfy the ellipticity assumptions made in Section \ref{sec:assum_loc} and Section \ref{sec:assum_nonloc}, respectively. In particular, any Hölder continuous solution $f$ of \eqref{eq:landau} or \eqref{eq:boltzmann} with mass, energy and entropy bounded above, and mass bounded below, satisfies the Schauder estimate in Theorem \ref{thm:non-div} or \ref{thm:non-div-fractional}, respectively. We refer the reader to \cite[Theorem 1.2]{henderson-snelson} for the Landau equation, and \cite[Section 4]{ISglobal} for the Boltzmann equation.
\end{remark}

\subsection{Contribution}
Our contribution consists of a quantitative and unified approach to Schauder estimates for kinetic equations with either non-fractional or fractional coefficients, in either non-divergence or divergence form. In this respect it improves upon the previous results on kinetic Schauder estimates in the local case by Imbert-Mouhot \cite{IM} and Henderson-Snelson \cite{henderson-snelson}, and in the non-local case by Imbert-Silvestre \cite{ISschauder}. On the one hand, in the non-fractional case we manage to gain two orders of Hölder regularity \textit{at any smoothness $m \geq 3$}. On the other hand, we establish Schauder estimates for \textit{divergence form equations} in Theorem \ref{thm:non-div} and \ref{thm:div-fractional}, which, to the best of our knowledge, is a novelty in the fractional case. Moreover, our approach is fully quantitative, which, in the fractional case, avoids the blow-up argument used in \cite{ISschauder}. Finally, in the non-fractional case, the method is robust enough to deal with hypoelliptic operators of any order, and it works even more generally for Dini-regular coefficients, see Theorem \ref{thm:hypo} and Theorem \ref{thm:dini}, respectively. To the best of our knowledge this is the first use of Campanato spaces in a kinetic context to deduce Schauder estimates in all variables. We are inspired from elliptic regularity theory and extend it to the hypoelliptic setting. The robustness of the methods permits to deal with a variety of problems with a similar structure, from local to non-local equations, from one Hörmander commutator to any number of commutators, and from Hölder-continuous coefficients to mere Dini-continuity. 

\subsection{Previous Literature Results}
All the works on Schauder estimates have to be classified according to the notion of Hölder continuity that is used and the assumptions on the coefficients that are made. 

In the local case, there is the work by Imbert and Mouhot \cite{IM}, which adapts Krylov's approach \cite{krylov} to the kinetic setting. Furthermore, in \cite{henderson-snelson}, Henderson-Snelson discuss a $C^\infty$-smoothing estimate for the Landau equation by iteratively applying their Schauder estimates. There are also two articles \cite{henderson-wang, dong-yas} for kinetic Fokker-Planck equations, which assume less regularity in time, and deduce partial Schauder estimates for space and velocity only. Their goal is to reduce the regularity assumptions needed on time. 
However, the Hölder norms defined in \cite{henderson-wang, dong-yas} differ from our notion of Hölder continuity, since theirs do not take the Hölder continuity in the temporal variable into account.

In the non-local case, the work that inspired us most is Imbert and Silvestre \cite{ISschauder}. In particular, the definition of kinetic Hölder spaces, the notion of distance and degree of a kinetic polynomial all stem from their seminal contribution on regularity for the non-cutoff Boltzmann equation \cite{ISschauder, ISglobal, IS, IS?}. Their approach to Schauder estimates consists of first proving a Liouville-type theorem, then using a blow-up argument. Their work is inspired from Ros-Oton-Serra \cite{rosoton-serra}, who have used these techniques for non-local operators that are generators of stable and symmetric Lévy processes. Note, however, that this method is non-constructive, as it relies on compactness arguments.
The structure of this argument comes from Simon \cite{simon}, who used a scaling argument to derive a Liouville theorem for general hypoelliptic operators, from which he deduces the Schauder estimate by a compactness argument.

We follow Campanato's approach. This method was first established for elliptic equations. A nice reference is the book by Giaquinta and Martinazzi \cite[Chapter 5]{GiaquintaMartinazzi}. 
The idea is to use the scaling stemming from a combination of a Poincaré inequality, Sobolev and regularity estimates on the constant coefficient equation. In contrast, Simon's scaling argument \cite[Lemma 1]{simon} replaces the Sobolev inequality and regularity estimates by a reasoning of Hörmander \cite[Theorem 3.7]{hormander} based on the closed graph theorem and the homogeneity of the operator; let us refer the reader to Appendix \ref{app:hypo}. Through the characterisation of Hölder norms by Campanato norms, we replace the blow-up argument of Simon by a constructive method.

\subsection{Strategy}

We consider a solution of either the local or non-local equation, and freeze coefficients: the part which solves a constant coefficient equation with zero source term is considered separately from the rest. The latter can be viewed as a \textit{lower order source term} with the expected bounds due to the Hölder continuity of the coefficients.  For the constant coefficient solution, we subtract a certain polynomial constructed from the vector fields of the equation of degree up to the order of our equation, such that we have a zero-averaged function. We then apply Poincaré's inequality repeatedly as long as the zero-average condition is satisfied and the integrand is orthogonal to the kernel of the Poincaré inequality, that is one order higher than the equation itself. We then use an $L^\infty$-bound and Sobolev's embedding. But then, since we consider a solution to a constant coefficient equation, regularity estimates yield a bound uniform in the Hölder norm of the coefficients. These regularity estimates are proved by using Hölder's inequality in Fourier variables, and they rely on a transfer of regularity from the velocity variable onto the spatial variable due to the hypoelliptic character of the equation. Eventually, the combination of all these ideas results in a higher order Campanato norm on the left hand side, which characterises Hölder norms. The transfer of regularity from the coefficients onto the solution arises from the scaling of the equation.

\begin{tikzpicture}[scale =1.2]
\draw (0, 0) -- node[above] {\scriptsize{\textbf{Functional Inequalities}}}(4, 0);
\draw (4, 0) --  node[above] {\scriptsize{\textbf{Constant Coefficients}}}(8, 0);
\draw [->](8, 0) --  node[above] {\scriptsize{\textbf{Variable Coefficients}}}(12, 0);
\draw (0, -0.2) -- (0, 0.3) node[above] {\tiny{Step 1}};
\draw (4, -0.2) -- (4, 0.3) node[above] {\tiny{Step 2}};
\draw (8, -0.2) -- (8, 0.3) node[above] {\tiny{Step 3}};
\draw (12, -0.2) -- (12, 0.2);
\draw (0.3, 0) -- (0.3, -1.8);
\draw (0.3, -0.6) -- (0.6, -0.6)node[anchor=west] {\scriptsize{Poincaré}};
\draw (0.3, -1.2) -- (0.6, -1.2)node[anchor=west] {\scriptsize{$L^\infty$-bound}};
\draw (0.3, -1.8) -- (0.6, -1.8)node[anchor=west] {\scriptsize{Sobolev}};
\draw (4.3, 0) -- (4.3, -1.2);
\draw (4.3, -0.6) -- (4.6, -0.6) node[anchor=west] {\scriptsize{Regularity estimates}};
\draw (4.3, -1.2) -- (4.6, -1.2);
\node[align=left, anchor=west]  at (4.6, -1.51)  {\scriptsize{Characterisation of}\\ \scriptsize{Hölder by Campanato}\\ \scriptsize{norms}};
\draw (8.3, 0) -- (8.3, -0.6);
\draw (8.3, -0.6) -- (8.6, -0.6)node[anchor=west] {\scriptsize{Freeze coefficients}};
\draw (9.5, -0.8) -- (9, -1.05);
\draw (10.5, -0.8) -- (11, -1.05);
\node[align=left, anchor=north] at (9, -1.05) {\tiny{Non-divergence} \\ \tiny{form equation}};
\node[align=left, anchor=north] at (11, -1.05) {\tiny{Divergence form}\\ \tiny{equation}};
\end{tikzpicture}

Section \ref{sec:prelims} introduces the notion of Hölder spaces that we work with. We state the equivalence of Hölder and Campanato norms in Theorem \ref{thm:equicampholderhigh}, whose proof is postponed to the Appendix \ref{app:campanato}. In Section \ref{sec:toolbox} we assemble tools that are setting the framework for Campanato's approach. In particular, we derive regularity estimates \ref{prop:fractional_energy} for the constant coefficient equation.
Section \ref{sec:campanato-inequality} is devoted to the proof of Campanato's inequality. Section \ref{sec:schauder_nonfrac} proves the Schauder estimates in the non-fractional case, whereas Section \ref{sec:schauder_frac} treats the fractional case.

\subsection{Notation}
Whenever a statement holds both in the local and the non-local case, we will state the non-local result and we ask the reader to set $s = 1$ to obtain the local analogue. 

We write $z = (t, x, v)$ for an element of $\R \times \R^d \times \R^d$. Moreover, we let $n = 2s +2d(s +1)$ denote the total dimension respecting the scaling of the equation.

The transport operator will be denoted as $\mathcal T = \partial_t + v\cdot \nabla_x$. 

We use the floor function $\lfloor a\rfloor$ for $a\in \R$ to denote the greatest integer $k \in \Z$ such that $k \leq a$. We further use the abbreviation $a \lesssim b$ for $a, b \in \R$ if there exists a constant $C > 0$ such that $a \leq C b$. Similarly, $a \gtrsim b$ denotes $a \geq C b$ for some $C >0$. Finally, $a \sim b$ if $a \lesssim b$ and $a \gtrsim b$.

For a domain $\Omega \subset \R^{1+2d}$ we denote by $\Omega^v$ the temporal and spatial domain at fixed velocity $v$, that is for $z = (t, x, v) \in \Omega$ we have $(t, x) \in \Omega^v$ for any $v$ in the velocity domain of $\Omega$.

\section{Preliminaries}\label{sec:prelims}
\subsection{Definition of kinetic Hölder spaces}
To define the Hölder spaces that we are working with, we first need to understand the underlying Lie group structure of \eqref{eq:1.1loc_div}, \eqref{eq:1.1loc_nondiv} and \eqref{eq:1.1nonloc}. These equations are invariant under Galilean transformations \eqref{eq:galilean}, in the sense that if $f$ solves \eqref{eq:1.1loc_div}, \eqref{eq:1.1loc_nondiv} or \eqref{eq:1.1nonloc} then $f(z_1 \circ z_2)$ is also a solution of the respective equation with a translated right hand side and a translated kernel. The translated kernel will still be elliptic. Furthermore, both equations are invariant under scaling \eqref{eq:scaling} for a rescaled right hand side. The rescaled kernel will again be elliptic. The notion of distance that we introduce respects these invariances. It has been used by Imbert-Silvestre \cite[Def. 2.1]{ISschauder} before.
\begin{definition}[Kinetic distance]\label{def:kin_distance}
For $z_1 = (t_1, x_1, v_1), z_2 = (t_2, x_2, v_2) \in \R^{1+2d}$ we define
\beqs
	d_\ell(z_1, z_2) := \min_{w \in \R^d} \Big\{\max\Big[ \abs{t_1-t_2}^{\frac{1}{2s}}, \abs{x_1 - x_2 - (t_1-t_2)w}^{\frac{1}{2s}}, \abs{v_1-w}, \abs{v_2 - w}\Big]\Big\}.
\eeqs
Moreover we define
\beqs
	\norm{z} = \max\Big\{\abs{t}^{\frac{1}{2s}}, \abs{x}^{\frac{1}{1+2s}}, \abs{v}\Big\}.
\eeqs
This is not a norm in the mathematical sense.
\end{definition} 
\begin{remark}
This notion of distance should not be confused with the distance function towards the grazing set as introduced in \cite[Def. 1]{Guo}, which apart from the name does not have any connection to this distance here.
\end{remark}
Let us observe that this distance is left invariant in the sense that $d_\ell(z \circ z_1, z \circ z_2) = d_\ell(z_1, z_2)$ for any $z, z_1, z_2 \in \R^{1+2d}$. We can also reformulate it as $d_\ell$ being the infimum value of $r > 0$ such that both $z_1, z_2$ belong to $Q_r(z_0)$ for some $z_0 \in \R^{1+2d}$. Other equivalent formulations are
\bals
	d_\ell(z_1, z_2) \sim \norm{z_2^{-1} \circ z_1} \sim\norm{z_1^{-1} \circ z_2}\sim \inf_{w\in\R^d} \abs{t_2 - t_1}^{\frac{1}{2s}} + \abs{x_2 - x_1-(t_2 - t_1)w}^{\frac{1}{1+2s}} + \abs{v_1 - w} + \abs{v_2 - w}.
\eals
For more remarks on this distance we refer the reader to \cite[Section 2]{ISschauder}. 

In addition to the kinetic distance, we use the notion of kinetic degree of a monomial $m_j \in \R[t, x, v]$ introduced in \cite[Subsection 2.2]{ISschauder} as
\beqs
	\textrm{deg}_{\textrm{kin}} m_j = 2s\cdot j_0 + (1+2s)\Bigg(\sum_{i=1}^{d} j_i \Bigg)+ \sum_{i = d+1}^{2d} j_i = 2s\cdot j_0 + (1+2s)\cdot\abs{J_1} + \abs{J_2} =: \abs{J},
\eeqs	
where we denote a multi-index $j \in \N^{1+2d}$ with $j = (j_0, J_1, J_2)$ where $J_1 = (j_1, \dots, j_d)$ and $J_2 = (j_{d+1}, \dots, j_{2d})$. 
Under scaling a monomial $m_j$ behaves as
\beqs
	m_j(z_R) = R^{2sj_0}t^{j_0}R^{(1+2s)\abs{J_1}}x^{J_1} R^{\abs{J_2}}v^{J_2} = R^{\abs{J}} z^j, \quad R > 0, 
\eeqs
and its degree is precisely $\abs{J} = 2sj_0 + (1+2s)\abs{J_1} + \abs{J_2}$. 
We denote with $\mathcal P_k$ the space of $k$ degree polynomials. Note that in the non-local case $k$ is in the discrete set $k \in \N + 2s\N$, and we will write $k = 2s\cdot k_0 + (1+2s)\cdot k_1 + k_2$ for $k_0, k_1, k_2 \in \N$. An element $p \in \mathcal P_k$ is written as
\beq\label{eq:poly}
	p(t, x, v) = \sum_{\substack{j \in \N^{1+2d},\\ \abs{J} \leq k}} a_j m_j(z).
\eeq
The sum is taken over $j_0\in [0, k_0], \abs{J_1}\in [0, k_1], \abs{J_2} \in [0, k_2]$. We will abbreviate this and write $\abs{J} \leq k$. In the local case there is no ambiguity.

Our notion of Hölder continuity leans on \cite[Def. 2.2]{IM} and \cite[Def. 2.3]{ISschauder}. 
\begin{definition}[Hölder spaces]
Given an open set $\Omega \subset \R \times \R^d \times \R^d$ and $\beta \in (0, \infty)$ we say that $f: \Omega \to \R$ is $C_\ell^{\beta}(\Omega)$ at a point $z_0 \in \R^{1+2d}$ if there is a polynomial $p \in \R[t, x, v]$ with kinetic degree $\textrm{deg}_{\textrm{kin }} p < \beta$ and a constant $C > 0$ such that
\beq
	\forall r > 0 \qquad \norm{f - p}_{L^\infty(Q_r(z_0)\cap \Omega)} \leq C r^{\beta}.
\label{eq:holderdef}
\eeq
When this property holds at every point $z_0 \in \Omega$ we say that $f \in C_\ell^{\beta}(\Omega)$. The semi-norm $[f]_{C_\ell^{\beta}(\Omega)}$ is the smallest $C$ such that \eqref{eq:holderdef} holds for all $z_0 \in \Omega$. We equip $C_\ell^{\beta}(\Omega)$ with the norm
\beqs
	\norm{f}_{C_\ell^{\beta}(\Omega)} = \norm{f}_{L^\infty(\Omega)} + [f]_{C_\ell^{\beta}(\Omega)}.
\eeqs
\label{def:holder}
\end{definition}
\begin{remark}
This definition coincides with the definition of \cite[Def. 2.2]{IM}. As the authors point out, it is equivalent to ask that for any $z \in \Omega$
\beqs
	\abs{f(z) - p(z)} \leq Cd_\ell(z, z_0)^{\beta}.
\eeqs
We can further rephrase Hölder regularity of $f$ at $z_0$ due to the left-invariance as follows \cite{ISschauder}. For any $z \in \R^{1+2d}$ such that $z_0 \circ z \in \Omega$ we have
\beqs
	\abs{f(z_0 \circ z) - p_{z_0}(z)} \leq C\norm{z}^{\beta},
\eeqs
where $p_{z_0}(z) = p(z_0 \circ z)$. The polynomial $p_{z_0}$ will be the expansion of $f$ at $z_0$. 
\end{remark}

Hölder spaces can also be characterised in terms of Campanato spaces. These have been introduced by Campanato himself \cite{Camp, Camp1, Camp2} in the elliptic context. We adapt his notion to the kinetic setting. 
\begin{definition}[Higher order Campanato spaces]
Let $\Omega \subset \R^{1+2d}$ be an open subset. 
For $1 \leq p \leq \infty, ~ \lambda \geq 0, ~ k \geq 0$ we define the Campanato space $\mathcal L^{p, \lambda}_k\big(\Omega\big)$ as 
\begin{equation}
    \mathcal L_k^{p, \lambda}\big(\Omega\big):= \Bigg\{ f \in L^p\big(\Omega\big) : \sup_{z \in \Omega, r > 0} r^{-\lambda}\inf_{P \in \mathcal P_k}\int_{Q_r(z)\cap \Omega}\abs{f -  P}^p\dd z < +\infty\Bigg\}
\end{equation}
where $\mathcal P_k$ is the space of polynomials of kinetic degree less or equal $k$. We endow it with the seminorm 
\begin{equation}
    [f]_{\mathcal L^{p, \lambda}_k}^p := \sup_{z \in \Omega, r > 0} r^{-\lambda}\inf_{P \in \mathcal P_k}\int_{Q_r(z)\cap \Omega}\abs{f - P}^p\dd z 
\end{equation}
and the norm
\begin{equation}
    \norm{f}_{\mathcal L^{p, \lambda}_k} = [f]_{\mathcal L^{p, \lambda}_k}+ \norm{f}_{L^p}.
\end{equation}
\label{def:campanatoH}
\end{definition}
\begin{remark}
\begin{enumerate}[i.]
	\item We observe that for the local case $k \in \N$, whereas in the non-local case $k \in \N + 2s\N$. 
	\item Campanato's spaces are most commonly known for $k = 0$. Such spaces have been used for Schauder estimates in the elliptic context \cite{GiaquintaMartinazzi}. To gain higher Hölder continuity ($k \geq 1$) the equation was just differentiated. This would not work as easily for our equations. A method inspired from Campanato's approach with $k=0$ has been developed for partial Schauder estimates in the kinetic setting in \cite{dong-yas}, however without establishing Hölder continuity in time. Even if the use of the higher-order Campanato spaces are a natural step if the equation cannot be differentiated easily, we are unaware of literature that employs these spaces to derive higher-order Schauder estimates.  
\end{enumerate}
\end{remark}
The next subsection states a characterisation of Hölder continuity in terms of Campanato's norms. 

\subsection{Relation between Hölder and Campanato spaces}
Hölder spaces can be characterised through Campanato spaces, and vice versa. This equivalence has been established by Campanato himself in \cite{Camp1} for the lowest order Campanato space, and in \cite{Camp} for higher order Campanato spaces. Following Campanato's arguments, we can show the following relation between Campanato and Hölder spaces defined in Definition \ref{def:holder} and Definition \ref{def:campanatoH}. We refer the reader to the proof in Appendix \ref{app:campanato}.
\begin{theorem}[Campanato]
Let $\tilde z_0 \in \R^{1+2d}$ and $R > 0$, and write $\Omega = Q_R(\tilde z_0)$. Then, 
for $n + kp < \lambda \leq n + (k+1)p$ and $\beta = \frac{\lambda - n}{p}$ we have $\mathcal L_k^{p, \lambda}(\Omega) \cong C_\ell^{\beta}(\bar \Omega)$, where $n = 2s + 2d(s+1)$.
\label{thm:equicampholderhigh}
\end{theorem}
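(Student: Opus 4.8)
\section*{Proof proposal for Theorem \ref{thm:equicampholderhigh}}

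The plan is to establish the two continuous inclusions $C_\ell^{\beta}(\bar\Omega)\hookrightarrow\mathcal L_k^{p,\lambda}(\Omega)$ and $\mathcal L_k^{p,\lambda}(\Omega)\hookrightarrow C_\ell^{\beta}(\bar\Omega)$, following Campanato's original argument transcribed to the kinetic geometry. The first (easy) inclusion: given $f\in C_\ell^{\beta}(\bar\Omega)$ and $z\in\Omega$, take the expansion polynomial $p_z$ of Definition \ref{def:holder}, which has kinetic degree $<\beta\le k+1$ and hence lies in $\mathcal P_k$; then $\|f-p_z\|_{L^\infty(Q_r(z)\cap\Omega)}\le[f]_{C_\ell^{\beta}}\,r^{\beta}$, so
\[
  r^{-\lambda}\inf_{P\in\mathcal P_k}\int_{Q_r(z)\cap\Omega}\abs{f-P}^p\dd z
  \;\le\; r^{-\lambda}\,\abs{Q_r(z)\cap\Omega}\,[f]_{C_\ell^{\beta}}^p\,r^{\beta p}
  \;\lesssim\;[f]_{C_\ell^{\beta}}^p ,
\]
using $\abs{Q_r(z)}\sim r^n$ and $\lambda=n+\beta p$; together with the trivial bound on $\|f\|_{L^p(\Omega)}$ this gives $\|f\|_{\mathcal L_k^{p,\lambda}}\lesssim\|f\|_{C_\ell^{\beta}}$.

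For the converse I would set up the standard projection machinery. For $f\in\mathcal L_k^{p,\lambda}(\Omega)$, $z\in\Omega$, $r>0$, let $P_{z,r}\in\mathcal P_k$ be the $L^p(Q_r(z)\cap\Omega)$-projection of $f$ onto $\mathcal P_k$ (equivalently a near-minimiser of the Campanato infimum), so $\|f-P_{z,r}\|_{L^p(Q_r(z)\cap\Omega)}\lesssim[f]_{\mathcal L_k^{p,\lambda}}\,r^{\lambda/p}$. The first auxiliary ingredient is a \emph{polynomial scaling lemma}: since $\mathcal P_k$ is finite dimensional all norms on it are equivalent, and combining this with the dilations $z\mapsto z_r$ from \eqref{eq:scaling} and the Galilean translations $z_0\circ\cdot$ from \eqref{eq:galilean} — under which kinetic cylinders and Lebesgue measure transform explicitly and a monomial $m_j$ picks up a factor $r^{\abs{J}}$ — one obtains, uniformly in $z\in\Omega$ and $r\in(0,2R]$,
\[
  \|P\|_{L^\infty(Q_r(z)\cap\Omega)}\le Cr^{-n/p}\|P\|_{L^p(Q_r(z)\cap\Omega)},
  \qquad \abs{a_j(P)}\,r^{\abs{J}+n/p}\le C\|P\|_{L^p(Q_r(z)\cap\Omega)},
\]
for every $P\in\mathcal P_k$, where $a_j(P)$ are the coefficients of $P$ expanded around $z$ as in \eqref{eq:poly}. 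This relies on $Q_R$ being itself a kinetic cylinder, so that $Q_r(z)\cap\Omega$ always contains a sub-cylinder of comparable size, i.e.\ $\abs{Q_r(z)\cap\Omega}\ge c\,r^n$ with $c$ independent of $z,r$.

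The second ingredient is the \emph{dyadic telescoping}. Comparing $P_{z,r}$ and $P_{z,r/2}$ on $Q_{r/2}(z)\cap\Omega$: their difference lies in $\mathcal P_k$ and has $L^p$-norm $\lesssim r^{\lambda/p}$ there, so the scaling lemma gives $\|P_{z,r}-P_{z,r/2}\|_{L^\infty(Q_{r/2}(z)\cap\Omega)}\lesssim r^{(\lambda-n)/p}=r^{\beta}$ and, coefficientwise, $\abs{a_j(P_{z,r})-a_j(P_{z,r/2})}\lesssim r^{\beta-\abs{J}}$. Since $\beta>0$ the dyadic series converges and $P_{z,2^{-m}r}\to P_z$ in $\mathcal P_k$ with $\|P_{z,r}-P_z\|_{L^\infty(Q_r(z)\cap\Omega)}\lesssim r^{\beta}$ and $\|f-P_z\|_{L^p(Q_r(z)\cap\Omega)}\lesssim r^{\lambda/p}$. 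A Lebesgue-differentiation argument adapted to the cylinders $\{Q_r(z)\}$ (which form a differentiation basis for Lebesgue measure) shows that $f$ agrees a.e.\ with a continuous function for which $f(z)=P_z(z)$, so $P_z$ is the candidate kinetic expansion. To see it witnesses $f\in C_\ell^{\beta}(\bar\Omega)$, for $z'\in Q_r(z)\cap\Omega$ write $\abs{f(z')-P_z(z')}\le\abs{f(z')-P_{z'}(z')}+\abs{P_{z'}(z')-P_z(z')}=\abs{P_{z'}(z')-P_z(z')}$; since both $P_{z'}$ and $P_z$ approximate $f$ in $L^p$ on a common sub-cylinder of $Q_{Cr}(z)$ one gets $\|P_{z'}-P_z\|_{L^p}\lesssim r^{\lambda/p}$ and, once more by the scaling lemma, $\|P_{z'}-P_z\|_{L^\infty(Q_{Cr}(z))}\lesssim r^{\beta}$. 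Hence $\|f-P_z\|_{L^\infty(Q_r(z)\cap\Omega)}\lesssim r^{\beta}$, which is \eqref{eq:holderdef}; evaluating the same chain of bounds at scale $r\sim R$ also yields $\|f\|_{L^\infty(\Omega)}\lesssim\|f\|_{\mathcal L_k^{p,\lambda}}$, so the inclusion is continuous.

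The main obstacle, relative to the classical elliptic/parabolic case, is the non-Euclidean, anisotropic geometry: one must track carefully how the dilations $z\mapsto z_r$ and the Galilean multiplication interact when comparing projections at different centres and scales — concretely, that $z'\in Q_r(z)$ forces a nesting $Q_{cr}(z')\subset Q_{Cr}(z)$ with overlap of measure $\sim r^n$ — and one must deal with the boundary of the fixed cylinder $\Omega=Q_R$, where $Q_r(z)\cap\Omega$ degenerates to a thin slice unless one exploits that $Q_R$ is a John-type domain for $d_\ell$, which secures both the uniform density $\abs{Q_r(z)\cap\Omega}\gtrsim r^n$ and the chaining needed to compare $P_{z_1}$ and $P_{z_2}$ for points separated along $\partial\Omega$. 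Once these geometric facts and the polynomial scaling lemma are in hand, the iteration is exactly Campanato's.
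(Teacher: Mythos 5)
Your proposal is correct and follows essentially the same route as the paper's Appendix B proof: the easy inclusion by volume-counting, and for the hard inclusion the De Giorgi-type polynomial scaling lemma (the paper's Lemma B.1), dyadic comparison of best $L^p$-approximating polynomials across scales, uniform convergence of the coefficients as $r\to 0$, Hölder continuity of the limit coefficients via comparison of projections centred at two nearby points, and identification $f=P_z(z)$ a.e.\ by Lebesgue differentiation. The only cosmetic difference is that you track the whole limit polynomial $P_z$ at once rather than coefficient-by-coefficient (the paper's $g_j$) and you bypass the explicit verification that the limit coefficients satisfy the kinetic differential relations (the paper's Theorem B.6), which is not needed once you argue directly from the $L^\infty$ bound $\|f-P_z\|_{L^\infty(Q_r(z)\cap\Omega)}\lesssim r^\beta$ against Definition \ref{def:holder}.
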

\begin{remark}
For the local case, setting $s = 1$ yields the same result.
\end{remark}

\subsection{Differential operators}\label{sec:diffops}
In this section, we show how to relate Hölder norms to kinetic differential operators. We reprove Lemma 2.7 of \cite{ISschauder} to make our paper self-contained. 
\begin{lemma}[Imbert-Silvestre\protect{\cite[Lemma 2.7]{ISschauder}}]
Let $D = \mathcal T, D = \nabla_x$ or $D = \nabla_v$. Let $f \in C_\ell^{\beta}(Q)$ for $\beta \in (0, \infty)$ and $Q$ some kinetic cylinder. Then $D^l f \in C_\ell^{\beta - k}(Q)$ where $k$ is the kinetic degree of $D^l$, $l \in \N$, and
\beqs
	[D^lf]_{C_\ell^{\beta - k}(Q)} \leq C[f]_{C_\ell^{\beta}(Q)}. 
\eeqs
\label{lem:2.7}
\end{lemma}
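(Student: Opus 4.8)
The plan is to reduce to the case of a single application of one of the three operators $\mathcal{T}$, $\nabla_x$, $\nabla_v$, and then iterate. So first I would fix $z_0 \in Q$ and use the hypothesis $f \in C_\ell^\beta(Q)$ to get a polynomial $p_{z_0}$ with $\mathrm{deg}_{\mathrm{kin}}\, p_{z_0} < \beta$ such that $\|f - p_{z_0}\|_{L^\infty(Q_r(z_0)\cap Q)} \le [f]_{C_\ell^\beta}\, r^\beta$ for all $r > 0$. The natural candidate for the Taylor expansion of $D^l f$ at $z_0$ is $D^l p_{z_0}$ (with $D^l$ interpreted in the left-invariant frame, i.e.\ applied to $z \mapsto p_{z_0}(z)$ in the $z$-variable after the left translation by $z_0$). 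The key algebraic observation is that $\mathcal{T}$ lowers kinetic degree by $2s$, while $\nabla_x$ lowers it by $1+2s$ and $\nabla_v$ lowers it by $1$; hence $\mathrm{deg}_{\mathrm{kin}}\, D^l p_{z_0} < \beta - k$ where $k$ is the kinetic degree of $D^l$, so $D^l p_{z_0}$ is an admissible competitor for the Hölder condition of $D^l f$ at $z_0$ at order $\beta - k$.

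Next I would establish the quantitative bound $\|D^l f - D^l p_{z_0}\|_{L^\infty(Q_{r/2}(z_0)\cap Q)} \lesssim [f]_{C_\ell^\beta}\, r^{\beta - k}$. The cleanest way is a rescaling argument: set $g = f - p_{z_0}$, which satisfies $\|g\|_{L^\infty(Q_r(z_0))} \le [f]_{C_\ell^\beta}\, r^\beta$. After a left translation moving $z_0$ to the origin and the parabolic rescaling $z \mapsto z_r$ from \eqref{eq:scaling}, the rescaled function $g_r(z) := g(z_0 \circ z_r)$ satisfies $\|g_r\|_{L^\infty(Q_1)} \le [f]_{C_\ell^\beta}\, r^\beta$, and under this rescaling each derivative $\mathcal{T}$, $\nabla_x$, $\nabla_v$ picks up exactly the factor $r^{-2s}$, $r^{-(1+2s)}$, $r^{-1}$ matching the kinetic degrees, so $D^l g_r = r^{-k}(D^l g)(z_0 \circ z_r)$. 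Since $g_r$ is small in $L^\infty(Q_1)$ and $f$ (hence $g_r$) is smooth enough away from the boundary — the point being that $\mathcal{T}$, $\nabla_x$, $\nabla_v$ are constant-coefficient (in the left-invariant frame) operators, so one can bound $\|D^l g_r\|_{L^\infty(Q_{1/2})}$ by $\|g_r\|_{L^\infty(Q_1)}$ using interior derivative estimates — one obtains $\|D^l g_r\|_{L^\infty(Q_{1/2})} \lesssim [f]_{C_\ell^\beta}\, r^\beta$, and undoing the scaling gives the desired $r^{\beta-k}$ bound on the ball of radius $r/2$. Actually, since $f$ need only be assumed Hölder and not a priori differentiable, the honest route is to note that Definition \ref{def:holder} forces $f$ to agree with a polynomial to high order at every point, which furnishes the required derivatives in the left-invariant sense (finite differences converge); alternatively one invokes that the statement is applied in the paper only to functions already known to be regular, and the estimate is then the content of the inequality.

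Finally, to get arbitrary $D^l$ I would iterate: having shown the result for one derivative, apply it successively, each time reducing $\beta$ by the kinetic degree of the operator applied, and collect the constants, which depend only on $d$, $s$, $\beta$ and the number of derivatives. The main obstacle I anticipate is bookkeeping the \emph{left-invariant} interpretation of the operators $\nabla_x$, $\nabla_v$: because the group law \eqref{eq:galilean} is non-commutative, the polynomial expansion $p_{z_0}$ lives in the translated coordinates $z \mapsto p(z_0 \circ z)$, and one must check that differentiating in these coordinates is compatible with the plain operators $\mathcal{T} = \partial_t + v\cdot\nabla_x$, $\nabla_x$, $\nabla_v$ acting on $f$ — equivalently, that $\mathcal{T}$, $\nabla_x$, $\nabla_v$ are left-invariant vector fields for the group structure (which they are: this is exactly why the kinetic cylinders and the distance $d_\ell$ are natural), and that applying them to a monomial of kinetic degree $|J|$ yields a sum of monomials of kinetic degree $|J| - k$. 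That degree-counting is the crux; the rest is routine rescaling.
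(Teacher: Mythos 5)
Your plan correctly identifies the candidate expansion of $D^l f$ at $z_0$, namely $D^l p_{z_0}$, and the kinetic-degree bookkeeping for $\mathcal T$, $\nabla_x$, $\nabla_v$ is right. The gap is in the estimation step. You set $g = f - p_{z_0}$, rescale to $g_r$, and propose to bound $\|D^l g_r\|_{L^\infty(Q_{1/2})}$ by $\|g_r\|_{L^\infty(Q_1)}$ by ``interior derivative estimates'' on the grounds that the operators are constant-coefficient in the left-invariant frame. But $g$ satisfies no equation: $f$ is merely Hölder, so $g$ is just a small $L^\infty$ function with no differential constraint, and a small $L^\infty$ function can have arbitrarily large derivatives (take $\varepsilon \sin(\cdot/\varepsilon^2)$). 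Interior regularity bounds of the kind you want require a (hypo)elliptic structure that is simply not present. You half-acknowledge this in the parenthetical about ``the honest route,'' but that fallback only explains why the pointwise quantities $D^l f(z_0)$ are well defined as Taylor coefficients; it does not deliver the quantitative Hölder estimate.

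The paper's proof avoids differentiating the residual $f - p_{z_0}$ entirely. It fixes two points $z_1, z_2$ with $r = d_\ell(z_1, z_2)$ and takes the two Taylor expansions of $f$, one at each point, both of which approximate $f$ to within $[f]_{C_\ell^\beta}\,r^\beta$ on a common cylinder of radius $\sim r$. Their difference is therefore bounded by $Cr^\beta$ in $L^\infty$ there, and by equivalence of norms on the finite-dimensional space of polynomials of kinetic degree at most $k$ — applied after scaling the cylinder to unit size — the coefficients of that difference, which after scaling are $\big(D^j f(z_1) - D^j f(z_2)\big)\,r^{|J|}$, are also bounded by $Cr^\beta$. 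Reading off the coefficient with $|J| = k$ gives $|D^l f(z_1) - D^l f(z_2)| \leq C r^{\beta - k}$. Your approach can be repaired along exactly these lines: instead of differentiating $f - p_{z_0}$, compare the expansion of $f$ at a nearby point $w$ with $p_{z_0}$ via their $L^\infty$ difference and the finite-dimensional norm equivalence; that is the missing step that replaces the unavailable interior estimate.
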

\begin{proof}
Let $z_1, z_2 \in Q$. Since $f \in C_\ell^{\beta}(Q)$ there exists a polynomial $p$ with degree $k = \rm{deg}_{\rm{kin}}p <\beta$  so that for $z \in Q$ with $\norm{z}  \leq d_\ell(z_1, z_2) = r$
\bal
	&\abs{f(z_1 \circ z) - p(z_1 \circ z)} \leq C r^{\beta},\\
	&\abs{f(z_2 \circ z) - p(z_2 \circ z)} \leq C r^{\beta},
\label{eq:fholderaux}
\eal
where $C = [f]_{C_\ell^{\beta}(Q)}$.
We can compute that 
\beqs
	p(z_1 \circ z) = f(z_1) + \mathcal Tf(z_1)  t + \nabla_x f(z_1)\cdot x + \nabla_v f(z_1) \cdot v + \dots
\eeqs
By equivalence of norms in finite dimensional spaces, we know that if $\sup_{\abs{z} \leq 1}\abs{p(z)} \leq C_0$ then the coefficients of $p$ denoted by $a_j$ will satisfy $\sup_{j}\abs{a_j} \leq C C_0$ for some constant $C$ depending on $k$ and $n$. Scaling this argument yields together with \eqref{eq:fholderaux}
\bals
	\Abs{D^lf(z_1) - D^lf(z_2)} r^{k} \leq C r^{\beta}, 
\eals
where $D^l$ is the differential operator of degree $k$. 
\end{proof}
We will need a similar estimate for the fractional operator \eqref{eq:1.2}. We start with a global bound, see \cite[Lemma 3.6]{ISschauder} for kernels in non-divergence form \eqref{eq:symmetry}.
\begin{lemma}
Assume $0< \alpha < \min(1, 2s)$. For any non-negative kernel $K$ satisfying \eqref{eq:upperbound}, and either satisfy \eqref{eq:symmetry} or \eqref{eq:cancellation1}, \eqref{eq:cancellation2}. Then for $f \in C_\ell^{2s +\alpha}(\R^{1+2d})$ there holds
\beqs
	[\mathcal Lf]_{C_\ell^{\alpha}(\R^{2d+1})} \leq C [f]_{C_\ell^{2s + \alpha}(\R^{2d+1})}. 
\eeqs
\label{lem:3.6}
\end{lemma}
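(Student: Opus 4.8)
The plan is to prove the estimate pointwise in the spirit of Lemma \ref{lem:2.7}: fix two points $z_1, z_2 \in \R^{1+2d}$, set $r = d_\ell(z_1, z_2)$, and show $\abs{\mathcal L f(z_1) - \mathcal L f(z_2)} \leq C [f]_{C_\ell^{2s+\alpha}} r^\alpha$, since $\mathcal L f$ is evaluated at a single point and membership in $C_\ell^\alpha$ with $\alpha < 1$ just requires the Hölder-difference bound plus an $L^\infty$ bound (the latter being the ``global bound'' alluded to, proved for the symmetric case in \cite[Lemma 3.6]{ISschauder}). The key to controlling a single value $\mathcal L f(z_0)$ is to exploit the principal value structure: because we may subtract the kinetic Taylor polynomial $p_{z_0}$ of $f$ at $z_0$ (degree $< 2s+\alpha$, so up to first order in $v$ when $2s > 1$ and only the constant when $2s \leq 1$), we can write, using that $\mathcal L$ annihilates constants and — in the non-divergence / symmetric case \eqref{eq:symmetry} or under the cancellation conditions \eqref{eq:cancellation1}--\eqref{eq:cancellation2} — essentially annihilates or suitably controls the linear term,
\[
\mathcal L f(z_0) = \int_{B_1} \big[f(z_0, v_0 + w) - f(z_0, v_0) - \mathbbm{1}_{\{2s>1\}}\nabla_v f(z_0)\cdot w\big] K_{z_0}(w)\dd w + \int_{\R^d\setminus B_1}\big[f(z_0,v_0+w) - f(z_0,v_0)\big]K_{z_0}(w)\dd w.
\]
On $B_1$ the bracket is bounded by $[f]_{C_\ell^{2s+\alpha}}\abs{w}^{2s+\alpha}$ (Hölder expansion at $z_0$), which pairs against $\abs{w}^2 K(w)$ after dividing by $\abs{w}^{2-2s-\alpha}$; splitting into dyadic annuli $B_{2^{-k}}\setminus B_{2^{-k-1}}$ and using \eqref{eq:upperbound} on each annulus gives a convergent geometric series precisely because $2s+\alpha - 2 > -2$, i.e. $\alpha > -2s$, which always holds, and convergence at the origin needs $2s + \alpha > \max(1, 2s)$ handled by the polynomial subtraction. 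On the far piece the bracket is bounded by $2\norm{f}_{L^\infty}$ and $\int_{\R^d\setminus B_1}K(w)\dd w \leq C\Lambda_0$ again by dyadic summation of \eqref{eq:upperbound}. This yields the $L^\infty$ bound on $\mathcal L f$.

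For the Hölder difference, write $\mathcal L f(z_1) - \mathcal L f(z_2)$ and split it as $[\mathcal L_{z_1} - \mathcal L_{z_2}]$ acting on a common function plus $\mathcal L_{z_2}$ acting on $f(z_1,\cdot) - f(z_2,\cdot)$ — more precisely, I would subtract the Taylor polynomial $p_{z_1}$ of $f$ at $z_1$ from both evaluations, writing $\mathcal L f(z_i) = \mathcal L(f - p_{z_1})(z_i) + \mathcal L p_{z_1}(z_i)$, and treat the polynomial term by hand using the cancellation hypotheses. For the main term, add and subtract to get
\[
\mathcal L(f - p_{z_1})(z_1) - \mathcal L(f - p_{z_1})(z_2) = \int \big[g_{z_1}(w) - g_{z_2}(w)\big]K_{z_1}(w)\dd w + \int g_{z_2}(w)\big[K_{z_1}(w) - K_{z_2}(w)\big]\dd w,
\]
where $g_{z_i}(w)$ denotes the appropriate second-order increment of $f - p_{z_1}$ at $z_i$. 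The second integral is controlled by the Hölder continuity of the kernel \eqref{eq:holder} (and \eqref{eq:holder_div} in the divergence case), together with the consequences \eqref{eq:3.10holder}, paired against $\abs{g_{z_2}(w)} \lesssim [f]_{C_\ell^{2s+\alpha}}\min(\abs{w}^{2s+\alpha}, 1)$, yielding a bound $\lesssim A_0 [f]_{C_\ell^{2s+\alpha}} d_\ell(z_1,z_2)^\alpha$. The first integral is the delicate one: one splits $\R^d$ at radius $\rho \sim r = d_\ell(z_1,z_2)$; for $\abs{w} \lesssim r$ one uses $\abs{g_{z_1}(w)}, \abs{g_{z_2}(w)} \lesssim [f]\abs{w}^{2s+\alpha}$ separately (so the difference is $\lesssim [f]\abs{w}^{2s+\alpha}$) against $\abs{w}^2K(w)$ and dyadically sums to get $\lesssim [f] r^\alpha$; for $\abs{w} \gtrsim r$ one instead estimates $\abs{g_{z_1}(w) - g_{z_2}(w)}$ by the difference of the Hölder expansions of $f$ at $z_1$ and $z_2$, which since $d_\ell(z_1, z_2) = r$ and $f \in C_\ell^{2s+\alpha}$ with the two expansion polynomials agreeing up to $O(r^{2s+\alpha - k})$ in their degree-$k$ coefficients, gives $\abs{g_{z_1}(w) - g_{z_2}(w)} \lesssim [f] r \abs{w}^{2s+\alpha-1}$ (from the linear-in-$v$ discrepancy, when $2s>1$) or $\lesssim [f] r^{2s+\alpha}\abs{w}^{-?}$-type bounds, integrated against $K$ on $\{\abs{w} > r\}$ using \eqref{eq:upperbound} and its dyadic consequence $\int_{\R^d\setminus B_r}K \lesssim r^{-2s}$, again producing $\lesssim [f]r^\alpha$. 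The bookkeeping of which polynomial degrees survive depends on whether $2s < 1$, $2s = 1$, or $2s > 1$, and in the divergence-form case the linear term requires \eqref{eq:cancellation1}, \eqref{eq:cancellation2}, \eqref{eq:holder_div} rather than plain symmetry.

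I expect the main obstacle to be the treatment of the first-order (linear in $w$) term of the Taylor polynomial when $2s \geq 1$: in the symmetric case \eqref{eq:symmetry} the PV integral of $w K(w)$ over balls vanishes by oddness, but in the divergence-form case it is only bounded, via \eqref{eq:cancellation2}, by $\Lambda_0 r^{1-2s}$, and pairing this with the $O(r)$ size of the coefficient discrepancy $\abs{\nabla_v f(z_1) - \nabla_v f(z_2)} \lesssim [f]r^{2s+\alpha - 1}$ must be arranged to land exactly on $r^\alpha$ — this is the step where the precise scaling exponents in \eqref{eq:cancellation2} and \eqref{eq:holder_div} are essential and where a careless split would lose the estimate. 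A secondary technical point is making the ``common polynomial'' trick rigorous: one must verify that $p_{z_1}$, transported to $z_2$ via the group law, still has controlled coefficients (as in the equivalence-of-norms argument in the proof of Lemma \ref{lem:2.7}), so that $\mathcal L p_{z_1}(z_2)$ is meaningful and the increments $g_{z_2}$ of $f - p_{z_1}$ at $z_2$ still enjoy the $\abs{w}^{2s+\alpha}$ bound near the origin up to an acceptable $r$-dependent error. Once these are in place the estimate assembles by summing the finitely many contributions, each shown to be $\lesssim A_0[f]_{C_\ell^{2s+\alpha}}d_\ell(z_1,z_2)^\alpha$.
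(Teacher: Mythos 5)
Your overall strategy (pointwise increment bound, close/far split at scale $d_\ell(z_1,z_2)$, symmetrise or use cancellation near the origin, compare kinetic Taylor expansions of $f$ on the far part) is the right shape, and you have correctly flagged the two delicate spots: the linear term in $w$ in the divergence case, whose scaling must be matched against \eqref{eq:cancellation2}, and the need to transport the Taylor polynomial along the group law.

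The genuine gap is your decomposition
\[
\mathcal L(f-p_{z_1})(z_1) - \mathcal L(f-p_{z_1})(z_2) = \int \big[g_{z_1}(w) - g_{z_2}(w)\big]K_{z_1}(w)\dd w + \int g_{z_2}(w)\big[K_{z_1}(w) - K_{z_2}(w)\big]\dd w,
\]
whose second integral you propose to control by the Hölder conditions \eqref{eq:holder}, \eqref{eq:holder_div}. Those conditions are \emph{not} among the hypotheses of Lemma \ref{lem:3.6} --- the lemma assumes only \eqref{eq:upperbound} together with \eqref{eq:symmetry} or \eqref{eq:cancellation1}, \eqref{eq:cancellation2} --- so you have no bound on $K_{z_1}-K_{z_2}$, and the constant you produce depends on $A_0$, which the lemma's constant does not. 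The paper avoids this entirely by never forming the kernel difference. On the close ball the symmetrised second difference at each of $z\circ\xi$ and $z$ is reduced to a term proportional to $\nabla^2_v f$ paired against $\int\abs{w}^2 K(w)\dd w$; since both pair against the \emph{same} measure, the two contributions combine into $\abs{\nabla^2_v f(z\circ\xi) - \nabla^2_v f(z)}\,R^{2-2s}$, controlled via Lemma \ref{lem:2.7} by $[f]_{C_\ell^{2s+\alpha}}\norm{\xi}^{2s+\alpha-2}R^{2-2s}$. On the far part the five terms $I_1,\dots,I_5$ each attach a single kernel to a polynomial-versus-$f$ discrepancy, and the discrepancies such as $\abs{p_{z\circ\psi}(\xi)-p_z(\xi)}$ are bounded by the Hölder modulus of the Taylor coefficients $a_j$ of $f$, never by any regularity of $K$. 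If you want to keep your decomposition you must either observe that $\mathcal L$ is applied with one fixed translation-invariant kernel at both points (as in every use of this lemma, e.g.\ $K_0$ in Section \ref{sec:campanato-inequality} or the truncated kernel in Lemma \ref{lem:3.7}), in which case the second integral vanishes identically, or else accept that you are proving a strictly weaker statement under the additional hypothesis \eqref{eq:holder}, with an $A_0$-dependent constant --- neither of which matches the lemma as stated.
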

\begin{proof}
Let $z, \xi \in \R^{1+2d}$. We know that
\beq
	\abs{f(z \circ \xi) - p_z(\xi)} \leq [f]_{C_\ell^{2s + \alpha}}\norm{\xi}^{2s+\alpha}.
\label{eq:auxfholder}
\eeq
We need to estimate
\bals
	\mathcal Lf(z\circ \xi) - \mathcal Lf(z) &= \int_{\R^d} \big[f(z \circ \xi \circ (0, 0, v' - v - \xi_v)) - f(z\circ \xi)\big]K(z\circ \xi, v') \dd v' \\
	&\quad- \int_{\R^d} \big[f\big(z \circ (0, 0, v' - v)\big) - f(z) \big]K(z, v')\dd v'.
\eals
We distinguish the close and the far part. Let $R > 0$ and write for ease of notation $\phi = (0, 0, v' - v - \xi_v)$ and $\psi = (0, 0, v' - v)$ for $\xi = (\xi_t, \xi_x, \xi_v)$.

If we assume symmetry in the non-divergence form \eqref{eq:symmetry}, then we can symmetrise the integral and remove the principal value. We find
\bal\label{eq:Lf-aux}
	\textrm{PV}\int_{B_R(v)} &\big[f(z \circ \psi) - f(z) \big]K(z, v')\dd v' \\
	&= \frac{1}{2} \int_{B_R(v)} \big[f(z \circ \psi) +f(z \circ -\psi)  - 2f(z) \big]K(z, v')\dd v' \\
	&= \frac{1}{2}\int_{B_R(v)} \left[f(z \circ \psi)) - p_z(\psi)\right]K(z, v')\dd v' + \frac{1}{2}\int_{B_R(v)} \left[p_z(\psi) - f(z)\right]K(z, v')\dd v'\\
	&\quad + \frac{1}{2}\int_{B_R(v)} \left[f(z \circ -\psi) - p_z(-\psi)\right]K(z, v')\dd v' + \frac{1}{2}\int_{B_R(v)} \left[p_z(-\psi) - f(z)\right]K(z, v')\dd v'.
\eal
The polynomial $p_z(\psi)$ is given by 
\[
	p_z(\psi) = f(z) + \nabla_v f(z)\cdot (v'-v) + (v'-v)^T\cdot\nabla_v^2 f(z) \cdot (v'-v).
\]
Any higher order terms vanish since $\textrm{deg } p < 2s+\alpha$. The terms involving $t$ or $x$ vanish when evaluated at $\psi$. The first order terms in the integrand above will vanish due to \eqref{eq:symmetry}. Thus we further bound \eqref{eq:Lf-aux}
\bals
	\textrm{PV}\int_{B_R(v)} &\big[f(z \circ \psi) - f(z) \big]K(z, v')\dd v' \\
	&\leq [f]_{C_\ell^{2s + \alpha}}\int_{B_R(v)}\abs{v'-v}^{2s+\alpha}K(z, v') \dd v' + \Abs{\nabla^2_v f(z)} \int_{B_R(v)} \abs{v'-v}^2 K(z, v') \dd v'\\
	&\lesssim_\Lambda [f]_{C_\ell^{2s + \alpha}}R^\alpha + \Abs{\nabla^2_v f(z)} R^{2-2s}.
\eals
The last inequality uses for the second order term, the upper bound \eqref{eq:upperbound}. 
All estimates are independent of $z \in \R^{1+2d}$ so that we similarly obtain
\bals
	\textrm{PV} \int_{B_R(v + \xi_v)} \big[f(z \circ \xi \circ\phi) - f(z\circ \xi) \big]K(z\circ \xi, v')\dd v' \lesssim_\Lambda [f]_{C_\ell^{2s + \alpha}}R^\alpha + \Abs{\nabla^2_v f(z\circ \xi)} R^{2-2s}.
\eals
Therefore
\bals
	\int_{B_R(v+\xi_v)} \big[f(z \circ \xi \circ \phi) - f(z\circ \xi)\big]K(z\circ \xi, v') \dd v' &- \int_{B_R(v)} \big[f(z \circ \psi) - f(z) \big]K(z, v')\dd v' \\
	&\lesssim_\Lambda [f]_{C_\ell^{2s + \alpha}}R^\alpha +  \Abs{\nabla^2_v f(z\circ \xi) - \nabla^2_v f(z)} R^{2-2s}\\
	&\lesssim_\Lambda [f]_{C_\ell^{2s + \alpha}}R^\alpha +  \norm{\xi}^{2s+\alpha - 2} R^{2-2s}[f]_{C_\ell^{2s + \alpha}}.
\eals
We used Lemma \ref{lem:2.7} for the last inequality.
Choosing $R = \norm{\xi}$ therefore yields
\bals
	\int_{B_R(v+\xi_v)} \big[f(z \circ \xi \circ \phi) - f(z\circ \xi)\big]K(z\circ \xi, v') \dd v' &- \int_{B_R(v)} \big[f(z \circ \psi) - f(z) \big]K(z, v')\dd v' \lesssim_\Lambda [f]_{C_\ell^{2s + \alpha}}R^\alpha.
\eals

If, instead of \eqref{eq:symmetry}, we assume \eqref{eq:cancellation1} and \eqref{eq:cancellation2}, then we bound
\bals
	\Bigg\vert\textrm{PV}\int_{B_R(v)} \big[f(z \circ \psi) - f(z) \big]K(z, v')\dd v'\Bigg\vert &=\Bigg\vert\textrm{PV}\int_{B_R(v)} \big[f(z \circ \psi) -p_z(\psi) - \big(f(z)-p_z(\psi)\big) \big]K(z, v')\dd v'\Bigg\vert \\
	&\leq \Bigg\vert\textrm{PV}\int_{B_R(v)}\abs{f(z \circ \psi) - f(z)}K(z, v') \dd v'\Bigg\vert\ \\
	&\quad+ \Bigg\vert\textrm{PV}\int_{B_R(v)} D_v f(z)\cdot \big(v-v'\big)  K(z, v') \dd v'\Bigg\vert \\
	&\quad+  \Bigg\vert\textrm{PV}\int_{B_R(v)} \Abs{D_v^2 f(z)} \abs{v-v'}^2 K(z, v') \dd v'\Bigg\vert\\
	&\leq [f]_{C^{2s+\alpha}_\ell} \int_{B_R(v)}\abs{v'-v}^{2s+\alpha} K(z, v') \dd v' \\
	&\quad+ C\Lambda  \Abs{D_v f(z)} R^{1-2s}+ C \Lambda\Abs{D_v^2 f(z)} R^{2-2s}\\
	&\lesssim_\Lambda [f]_{C^{2s+\alpha}_\ell} R^\alpha +  \Abs{D_v f(z)} R^{1-2s}+ \Abs{D_v^2 f(z)} R^{2-2s}.
\eals
We again used \eqref{eq:upperbound} and \eqref{eq:auxfholder}. The same computations yield
\bals
	\Bigg\vert\textrm{PV} \int_{B_R(v + \xi_v)} &\big[f(z \circ \xi \circ\phi) - f(z\circ \xi) \big]K(z\circ \xi, v')\dd v'\Bigg\vert \\
	&\lesssim_\Lambda [f]_{C_\ell^{2s + \alpha}}R^\alpha +  \Abs{D_v f(z\circ \xi)} R^{1-2s}+ \Abs{D_v^2 f(z\circ \xi)} R^{2-2s},
\eals
so that as before, we obtain with Lemma \ref{lem:2.7}
\bals
	\int_{B_R(v+\xi_v)} \big[f(z \circ \xi \circ \phi) &- f(z\circ \xi)\big]K(z\circ \xi, v') \dd v' - \int_{B_R(v)} \big[f(z \circ \psi) - f(z) \big]K(z, v')\dd v' \\
	&\lesssim_\Lambda [f]_{C_\ell^{2s + \alpha}}R^\alpha +  \Abs{\nabla_v f(z\circ \xi) - \nabla_v f(z)} R^{1-2s}+  \Abs{\nabla^2_v f(z\circ \xi) - \nabla^2_v f(z)} R^{2-2s}\\
	&\lesssim_\Lambda [f]_{C_\ell^{2s + \alpha}}R^\alpha +  \norm{\xi}^{2s+\alpha - 1} R^{1-2s}[f]_{C_\ell^{2s + \alpha}} + \norm{\xi}^{2s+\alpha - 2} R^{2-2s}[f]_{C_\ell^{2s + \alpha}}\\
	&\lesssim_\Lambda [f]_{C_\ell^{2s + \alpha}}R^\alpha,
\eals	 
by choosing $\norm{\xi} = R$.

For the far part we do not need to distinguish non-divergence form from divergence form. In both cases we separate the integral into different terms
\beqs
	\int_{\R^d \setminus B_R(v + \xi_v)} \big[f(z \circ \xi \circ \phi) - f(z\circ \xi)\big]K(z\circ \xi, v') \dd v' - \int_{\R^d \setminus B_R(v)} \big[f(z \circ \psi) - f(z) \big]K(z, v')\dd v' \leq \sum_{i = 1}^5 I_i,
\eeqs
with 
\bals
	&I_1 = \int_{\R^d \setminus B_R(v + \xi_v)} \Abs{f(z \circ \xi \circ \phi) - p_{z \circ \phi}(\phi^{-1}\circ \xi\circ \phi)}K(z \circ \xi, v') \dd v',\\
	&I_2 = \int_{\R^d \setminus B_R(v + \xi_v)} \Abs{f(z \circ \xi) - p_{z}(\xi)}K(z \circ \xi, v') \dd v',\\
	&I_3 = \int_{\R^d \setminus B_R(v + \xi_v)} \Abs{p_{z \circ \phi}(\phi^{-1}\circ \xi\circ \phi) - p_{z}(\xi)}K(z \circ \xi, v') \dd v',\\
	&I_4 = \int_{\R^d \setminus B_R(v)} \Abs{p_{z\circ\psi}(\xi) - p_{z}(\xi) - f(z\circ \psi) + f(z)} K(z, v') \dd v',\\
	&I_5 = \int_{\R^d \setminus B_R(v)} \Abs{p_{z\circ\psi}(\xi) - p_{z}(\xi)} K(z, v') \dd v'.
\eals
Using that 
\bals
	\Abs{f(z \circ \xi \circ \phi) - p_{z \circ \phi}(\phi^{-1}\circ \xi\circ \phi)} &\leq [f]_{C_\ell^{2s + \alpha}}\norm{\phi^{-1}\circ \xi \circ \phi}^{2s+\alpha}\\
 	&\leq [f]_{C_\ell^{2s + \alpha}}\Big(\norm{\xi} +\abs{v'-v-\xi_v}^{\frac{1}{1+2s}}\norm{\xi_t}^{\frac{2s}{1+2s}}\Big)^{2s+\alpha},
\eals
we bound the first term with \eqref{eq:upperbound} by
\bals
	I_1 &\leq C [f]_{C_\ell^{2s + \alpha}}\Bigg(\norm{\xi}^{2s+\alpha}R^{-2s} +\norm{\xi}^{\frac{2s(2s+\alpha)}{1+2s}} \int_{\R^d\setminus B_R(v + \xi_v)} \abs{v'-v-\xi_v}^{\frac{2s+\alpha}{1+2s}}K(z \circ \xi, v')\dd v'\Bigg)\\
	&\leq C [f]_{C_\ell^{2s + \alpha}}R^{-2s}\Big(\norm{\xi}^{2s+\alpha} + \norm{\xi}^{\frac{2s(2s+\alpha)}{1+2s}} R^{\frac{2s+\alpha}{1+2s}}\Big).
\eals
For $I_2$ we get 
\bals
	I_2 &\leq C [f]_{C_\ell^{2s + \alpha}}\norm{\xi}^{2s+\alpha}R^{-2s}.
\eals
We further notice that $I_4$ is the same as $I_5$ without the lowest order term of $p_{z\circ \psi} - p_z$. To estimate $I_5$ we write $p_z(\xi) = \sum a_j(z) m_j(\xi)$. Note that by Lemma \ref{lem:2.7} the coefficients $a_j$ satisfy
\beqs
	[a_j]_{C_\ell^{2s - j + \alpha}} \leq C[f]_{C_\ell^{2s + \alpha}},
\eeqs
where $j$ is the degree of the corresponding monomial. Thus
\beqs
	I_5 \leq C[f]_{C_\ell^{2s + \alpha}}\Big(R^{\alpha} + R^{\alpha-1}\norm{\xi} +R^{\alpha-2s}\norm{\xi}^{2s} + R^{\alpha - 2}\norm{\xi}^2\Big),
\eeqs
and
\beqs
	I_4 \leq C[f]_{C_\ell^{2s + \alpha}}\Big(R^{\alpha-1}\norm{\xi} +R^{\alpha-2s}\norm{\xi}^{2s} + R^{\alpha - 2}\norm{\xi}^2\Big).
\eeqs
For $I_3$ we notice that $\phi^{-1} \circ \xi \circ \phi^{-1} = \big(\xi_t, \xi_x + \xi_t(v' - v-\xi_v), \xi_v\big)$. Apart from the space variable this coincides with $\xi$. But since we only conisder polynomial expansion up to order $2s+\alpha < 2s + 1$ the space variable won't appear, so that in fact $\abs{I_3} = \abs{I_5}$. 
We now choose $R = \norm{\xi}$ so that all terms are bounded by $I_i \leq C [f]_{C_\ell^{2s + \alpha}}\norm{\xi}^{\alpha}$ for all $i = 1, \dots, 5$. 
\end{proof}
To localise Lemma \ref{lem:3.6} we follow the proof of Imbert and Silvestre in \cite[Lemma 3.7]{ISschauder}. Here we also cover the non-divergence form symmetry \eqref{eq:cancellation1}-\eqref{eq:cancellation2}.
\begin{lemma}[Imbert-Silvestre\protect{\cite[Lemma 3.7]{ISschauder}}]
Let $0 < \alpha \leq \gamma < \min(1, 2s)$ and let $K$ satisfy \eqref{eq:upperbound} and either \eqref{eq:symmetry} or \eqref{eq:cancellation1}, \eqref{eq:cancellation2}. Then 
\beqs
	[\mathcal Lf]_{C_\ell^\alpha(Q_{\frac{1}{2}})} \leq C\Big( [f]_{C_\ell^{2s+\alpha}(Q_{\frac{1}{2}})} +  [f]_{C_\ell^\gamma((-1, 0] \times B_1 \times \R^d)}\Big),
\eeqs
for some $C$ depending on $n, s, \Lambda_0$ and $A_0$. 
\label{lem:3.7}
\end{lemma}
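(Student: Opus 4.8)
The plan is to localise the global estimate of Lemma \ref{lem:3.6} by cutting $f$ into a near part and a far part with a smooth cutoff. Fix $z_0 \in Q_{1/2}$ and a radius $r > 0$. Let $\chi$ be a smooth cutoff, equal to $1$ on $Q_{3/4}$ and vanishing outside $Q_1$ (more precisely, one that localises the velocity variable to a ball of radius comparable to $1$ around the relevant point while leaving room), and write $f = f_1 + f_2$ with $f_1 = \chi f$ extended by a kinetic polynomial expansion of $f$ at $z_0$ outside the support, so that $f_1 \in C_\ell^{2s+\alpha}(\R^{1+2d})$ with $[f_1]_{C_\ell^{2s+\alpha}(\R^{1+2d})} \lesssim [f]_{C_\ell^{2s+\alpha}(Q_{1/2})}$, and $f_2 = f - f_1$ supported away from $Q_{3/4}$ in the velocity variable. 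Apply Lemma \ref{lem:3.6} to $f_1$ to get $[\mathcal L f_1]_{C_\ell^\alpha(Q_{1/2})} \lesssim [f_1]_{C_\ell^{2s+\alpha}(\R^{1+2d})} \lesssim [f]_{C_\ell^{2s+\alpha}(Q_{1/2})}$.

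It then remains to estimate $[\mathcal L f_2]_{C_\ell^\alpha(Q_{1/2})}$. For $z$ in (a slightly shrunk version of) $Q_{1/2}$, the point $v$ is at distance $\gtrsim 1$ in velocity from the support of $f_2$, so $\mathcal L f_2(z) = \int_{\R^d} f_2(t,x,v') K_z(v'-v) \dd v'$ with the integrand supported where $\abs{v'-v} \gtrsim 1$; here the principal value and the cancellation/symmetry conditions play no role because we are integrating away from the singularity. I would bound this using the tail estimate: $\int_{\R^d \setminus B_\rho} K_z(w) \dd w \lesssim \Lambda_0 \rho^{-2s}$ (a consequence of \eqref{eq:upperbound} obtained by dyadic summation), together with the fact that $f_2$ is controlled on its support by $\norm{f_2}_{L^\infty}$ where $f$ is defined, which on $(-1,0]\times B_1 \times \R^d$ is governed by $[f]_{C_\ell^\gamma((-1,0]\times B_1 \times \R^d)}$ after subtracting a constant; more carefully one subtracts the appropriate polynomial expansion at $z_0$ to extract the correct powers. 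To get the $C_\ell^\alpha$ seminorm, one differences $\mathcal L f_2$ at two points $z_1, z_2 \in Q_{1/2}$: since the kernel $w \mapsto K_z(w)\mathbbm{1}_{\abs{w}\gtrsim 1}$ and its dependence on $z$ are both Hölder continuous in the sense of \eqref{eq:holder} and \eqref{eq:3.10holder}, and $f_2$ is smooth away from its support on the relevant region (it agrees with $f$, which is only $C_\ell^\gamma$, but the integration region stays away from singularities so we can afford the weaker regularity $\gamma$), one obtains $\abs{\mathcal L f_2(z_1) - \mathcal L f_2(z_2)} \lesssim \big([f]_{C_\ell^{2s+\alpha}(Q_{1/2})} + [f]_{C_\ell^\gamma((-1,0]\times B_1\times\R^d)}\big) d_\ell(z_1, z_2)^\alpha$.

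I would follow the bookkeeping of \cite[Lemma 3.7]{ISschauder} closely for the far-part estimate, being careful that all manipulations there only used \eqref{eq:upperbound} and the tail bounds, never the symmetry \eqref{eq:symmetry}, so that the argument goes through verbatim when \eqref{eq:symmetry} is replaced by \eqref{eq:cancellation1}-\eqref{eq:cancellation2} — the cancellation conditions were only needed in the near part, which is handled by Lemma \ref{lem:3.6}. Combining the near and far estimates gives the claim with $C = C(n, s, \Lambda_0, A_0)$.

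The main obstacle is the bookkeeping in the far part: one has to subtract the right kinetic polynomial expansions of $f$ at the two base points $z_1, z_2$ (and at the shifted points $z_i \circ \psi$), track how the Galilean shift $\phi = (0,0,v'-v-\xi_v)$ interacts with the space variable through terms like $\xi_t(v'-v)$, and verify that after choosing the free radius equal to $\norm{\xi} = d_\ell(z_1,z_2)$ every term carries exactly the power $\norm{\xi}^\alpha$. The integrability at infinity is not an issue because $f$ need only be controlled by its $L^\infty$-type norm on the slab $(-1,0]\times B_1\times\R^d$ and the tail of $K$ decays like $\rho^{-2s}$; but one must make sure the $C_\ell^\gamma$ norm (rather than a full $C_\ell^{2s+\alpha}$ norm on the unbounded velocity domain, which is not assumed) suffices, which it does precisely because the far region is bounded away from the diagonal.
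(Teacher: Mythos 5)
You decompose the \emph{function} $f = f_1 + f_2$ with a smooth cutoff $\chi$, whereas the paper decomposes the \emph{kernel}: it writes $\mathcal L f = \tilde{\mathcal L} f + C$, where $\tilde{\mathcal L}$ is built from the truncated kernel $\tilde K(v,v') = \mathbbm 1_{B_\rho(v)}(v')K(v,v')$ for a small $\rho>0$ and $C$ collects the tail. These are genuinely different localisation strategies, and yours has a gap precisely where they diverge.

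The problem is the claim $[f_1]_{C^{2s+\alpha}_\ell(\R^{1+2d})} \lesssim [f]_{C^{2s+\alpha}_\ell(Q_{1/2})}$. Your cutoff transitions between $Q_{3/4}$ and $Q_1$, so $f_1 = \chi f$ (however you extend it outside the support by a polynomial) depends on values of $f$ in $Q_1 \setminus Q_{1/2}$. There the hypotheses give only $f \in C^\gamma_\ell$, not $C^{2s+\alpha}_\ell$: multiplying a $C^\gamma_\ell$ function by a smooth cutoff produces a $C^\gamma_\ell$ function, not a $C^{2s+\alpha}_\ell$ one, so $f_1$ is not in the domain of applicability of Lemma \ref{lem:3.6} and the near part cannot be estimated this way. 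Even if you pull $\chi$ inside $Q_{1/2}$ to stay in the $C^{2s+\alpha}_\ell$ region, the product rule introduces $\norm{f}_{L^\infty}$ and cross terms that are not on the right-hand side of the lemma (the conclusion is a pure seminorm bound, which is preserved by the paper's argument because $\mathcal L$ and $\tilde{\mathcal L}$ annihilate constants and no cutoff is ever multiplied onto $f$), and $\mathcal L f_1$ agrees with $\mathcal L f$ only on a strictly smaller cylinder. The kernel truncation sidesteps all of this: for $z \in Q_{1/2}$, $\tilde{\mathcal L} f(z)$ sees $f$ only within velocity distance $\rho$, so with $\rho$ small the near part uses $f$ only where the $C^{2s+\alpha}_\ell$ control holds, with no cutoff and hence no product-rule debris. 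Your far-part plan is sound in spirit and parallels the paper's estimate of $C(z_0\circ z) - C(z)$ (tail bound $\int_{\R^d\setminus B_\rho} K \lesssim \Lambda_0 \rho^{-2s}$, pairing of $f$-values into differences bounded by $[f]_{C^\gamma_\ell}$, and the Galilean shift computation $d_\ell\big(z\circ(0,0,w), z_0\circ z\circ(0,0,w)\big) \lesssim d_\ell(z, z_0\circ z)^{\frac{2s}{1+2s}}(1+\abs{w}^{\frac{1}{1+2s}})$), but the near-part issue is structural and is exactly why the paper, following Imbert--Silvestre, truncates the kernel rather than the function.
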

\begin{proof}
We write $\mathcal L f(z) = \tilde{\mathcal L}f(z) + C(z)$ where $\tilde{\mathcal L}f(z)$ corresponds to the non-local operator in \eqref{eq:1.2} with kernel $\tilde K(v, v') = \mathbbm 1_{B_\rho(v)}(v')K(v, v')$ and $C(z)$ corresponds to $\mathcal Lf$ with kernel $\big[1-\mathbbm 1_{B_\rho(v)}(v') \big]K(v, v')$ for some small $\rho>0$. Then by Lemma \ref{lem:3.6} we have
\beqs
	[\tilde{\mathcal L} f]_{C_\ell^{\alpha}(Q_{\frac{1}{2}})} \leq C [f]_{C_\ell^{2s+\alpha}(Q_1)}.
\eeqs
Now we consider $z_0, z \in Q_{\frac{1}{2}}$ such that $z_0 \circ z \in Q_{\frac{1}{2}}$. If we write $\phi = (0, 0, v'-v - v_0)$ and $\psi = (0, 0, v'-v)$ we have for $K(w) = K(v, v+w)$
\bals
	C(z_0 \circ z) &- C(z) \\
	&= \int_{\R^d\setminus B_\rho(v+v_0)} \big[ f(z_0 \circ z \circ \phi)- f(z_0 \circ z)\big]K(z_0 \circ z, v') \dd v' - \int_{\R^d\setminus B_\rho(v)} \big[ f(z \circ\psi) -f(z)\big]K(z, v') \dd v'\\
	&= \int_{\R^d\setminus B_\rho} \big[ f(z)- f(z_0 \circ z)\big]K(w) \dd w - \int_{\R^d \setminus B_\rho} \big[f\big(z \circ(0, 0, w)\big) -f\big(z_0 \circ z \circ(0, 0, w)\big)\big]K(w) \dd w\\
	&\leq C \Lambda_0 \rho^{-2s} [f]_{C_\ell^\gamma} d_\ell(z, z_0 \circ z)^\alpha +C [f]_{C_\ell^\gamma}\int_{\R^d\setminus B_\rho} d_\ell\big(z\circ (0, 0, w), z_0 \circ z\circ (0, 0, w)\big)^\gamma K(w) \dd w, 
\eals
since $\alpha \leq \gamma$. But now we compute 
\bals
	d_\ell(z\circ (0, 0, w), z_0 \circ z\circ (0, 0, w)) &= \Norm{ (0, 0, w)^{-1} \circ z^{-1} \circ z_0^{-1} \circ z\circ (0, 0, w)}\\ 
	&= d_\ell((z_0 \circ z)^{-1}, z) - (0, t_0w, 0)\\ 
	&\lesssim d_\ell(z, z_0 \circ z) + \abs{t-t_0}^{\frac{1}{1+2s}} \abs{w}^{\frac{1}{1+2s}}\\
	&\lesssim d_\ell(z, z_0 \circ z)^{\frac{2s}{1+2s}}\big(1 +  \abs{w}^{\frac{1}{1+2s}}\big).
\eals
Therefore, since $\alpha \leq \frac{2s}{1+2s}$ and since $K$ satisfies the upper bound \eqref{eq:upperbound} we find
\bals
	C(z_0 \circ z) - C(z) &\leq  C \Lambda_0 [f]_{C_\ell^\gamma} \rho^{-2s}d_\ell(z, z_0 \circ z)^\alpha.
\eals
This concludes the proof.
\end{proof}
\subsection{Interpolation}
We also have an interpolation inequality, see \cite[Prop. 2.10]{ISschauder}. Unlike the other preliminary results that we have stated in Subsection \ref{sec:diffops}, the proof of the following proposition is verbatim the same as in \cite[Prop. 2.10]{ISschauder}. For the sake of self-containment we recall it in Appendix \ref{app:interpolation}.
\begin{proposition}[Imbert-Silvestre\protect{\cite[Prop. 2.10]{ISschauder}}]
Given $\beta_1 < \beta_2 < \beta_3$ so that $\beta_2 = \theta\beta_1 +(1-\theta)\beta_3$, then for any $f \in C_{\ell}^{\beta_3}(Q_1)$ there holds
\beqs
	[f]_{C_{\ell}^{\beta_2}(Q_1)} \leq [f]^\theta_{C_{\ell}^{\beta_1}(Q_1)}[f]^{1-\theta}_{C_\ell^{\beta_3}(Q_1)} + [f]_{C_{\ell}^{\beta_1}(Q_1)}.
\eeqs
In particular for all $\varepsilon > 0$
\beqs
	[f]_{C_\ell^{\beta_2}(Q_1)} \leq C(\varepsilon) [f]_{C_\ell^{\beta_1}(Q_1)}+ \varepsilon [f]_{C_\ell^{\beta_3}(Q_1)}.
\eeqs
\label{prop:interpolation}
\end{proposition}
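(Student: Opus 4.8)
The plan is to run the classical Campanato-space interpolation argument, adapted to the kinetic scaling \eqref{eq:scaling}, exactly along the lines of \cite[Prop. 2.10]{ISschauder}. Fix a point $z_0 \in Q_1$. By Definition \ref{def:holder} there are kinetic polynomials $p^{(1)}_{z_0}$ and $p^{(3)}_{z_0}$, of kinetic degree $< \beta_1$ and $< \beta_3$ respectively, with $\Abs{f(z) - p^{(i)}_{z_0}(z)} \le [f]_{C_\ell^{\beta_i}(Q_1)}\, d_\ell(z, z_0)^{\beta_i}$ for all $z \in Q_1$, $i \in \{1,3\}$. Write $A := [f]_{C_\ell^{\beta_1}(Q_1)}$ and $B := [f]_{C_\ell^{\beta_3}(Q_1)}$. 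I would then take as candidate $\beta_2$-expansion of $f$ at $z_0$ the truncation $q_{z_0}$ of $p^{(3)}_{z_0}$ to monomials of kinetic degree $< \beta_2$; since $\beta_1 < \beta_2 < \beta_3$, the difference $q_{z_0} - p^{(1)}_{z_0}$ is exactly the truncation of $P := p^{(3)}_{z_0} - p^{(1)}_{z_0}$ to degree $< \beta_2$.

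First I would estimate the coefficients of $P$. Adding and subtracting $f$ and using the two Hölder bounds gives, for every $0 < r \le 1$,
\beqs
	\norm{P}_{L^\infty(Q_r(z_0)\cap Q_1)} \le B r^{\beta_3} + A r^{\beta_1}.
\eeqs
By equivalence of norms on the finite-dimensional space of kinetic polynomials of bounded degree, combined with the scaling \eqref{eq:scaling} (the same reasoning used in the proof of Lemma \ref{lem:2.7}), the coefficient $a_k$ of a degree-$k$ monomial of $P$ satisfies $\abs{a_k} \lesssim r^{-k}\big(B r^{\beta_3} + A r^{\beta_1}\big)$ for every $0 < r \le 1$. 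Letting $r \to 0$ forces $a_k = 0$ for $k < \beta_1$, so $q_{z_0} - p^{(1)}_{z_0}$ only carries degrees $\beta_1 \le k < \beta_2$, while the part of $f - q_{z_0}$ beyond $f - p^{(3)}_{z_0}$ carries degrees $\beta_2 \le k < \beta_3$ with the above coefficients. Optimising the free scale with $\rho := (A/B)^{1/(\beta_3-\beta_1)}$ (set $\rho = +\infty$ if $B = 0$): if $\rho \le 1$, taking $r = \rho$ gives $\abs{a_k} \lesssim A^{(\beta_3-k)/(\beta_3-\beta_1)} B^{(k-\beta_1)/(\beta_3-\beta_1)}$; if $\rho > 1$, taking $r = 1$ gives $\abs{a_k} \lesssim A + B \lesssim A$.

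Next I would bound $\norm{f - q_{z_0}}_{L^\infty(Q_r(z_0)\cap Q_1)}$ at all scales simultaneously, recording the identities $\beta_3 - \beta_2 = \theta(\beta_3-\beta_1)$ and $\beta_2 - \beta_1 = (1-\theta)(\beta_3-\beta_1)$, which are equivalent to $\beta_2 = \theta\beta_1 + (1-\theta)\beta_3$. If $\rho \le 1$ and $0 < r \le \rho$, decompose $f - q_{z_0} = (f - p^{(3)}_{z_0}) + (p^{(3)}_{z_0} - q_{z_0})$ and estimate by $B r^{\beta_3} + \sum_{\beta_2 \le k < \beta_3}\abs{a_k} r^k$; plugging in the coefficient bounds, every term is $\lesssim A^\theta B^{1-\theta} r^{\beta_2}$. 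If $\rho < r \le 1$, decompose instead $f - q_{z_0} = (f - p^{(1)}_{z_0}) + (p^{(1)}_{z_0} - q_{z_0})$ and estimate by $A r^{\beta_1} + \sum_{\beta_1 \le k < \beta_2}\abs{a_k} r^k$; since now $r > \rho$, the same exponent arithmetic again gives $\lesssim A^\theta B^{1-\theta} r^{\beta_2}$. If $\rho > 1$ (so $A \ge B$), then for every $0 < r \le 1$ the decomposition $f - q_{z_0} = (f - p^{(3)}_{z_0}) + (p^{(3)}_{z_0} - q_{z_0})$ together with $\abs{a_k} \lesssim A$ and $r^k \le r^{\beta_2}$ (valid as $k \ge \beta_2$, $r \le 1$) yields $\norm{f - q_{z_0}}_{L^\infty(Q_r(z_0))} \lesssim A r^{\beta_2}$ — this is where the additive term $[f]_{C_\ell^{\beta_1}}$ in the statement is needed. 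Scales $r \gtrsim 1$ reduce to $r \sim 1$ since then $Q_r(z_0)\cap Q_1 = Q_1$ and $r^{\beta_2} \sim 1$, and the bound on all of $Q_1$ follows from the constant-scale case via a covering of $Q_1$ by finitely many unit cylinders. Hence $q_{z_0}$ witnesses $f \in C_\ell^{\beta_2}(Q_1)$ with $[f]_{C_\ell^{\beta_2}(Q_1)} \lesssim A^\theta B^{1-\theta} + A$, which is the first inequality; the second follows from Young's inequality $A^\theta B^{1-\theta} \le C(\varepsilon)A + \varepsilon B$.

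I expect the main difficulty to be purely bookkeeping: keeping track of which monomial degrees occur in $q_{z_0}$, $p^{(1)}_{z_0}$ and $p^{(3)}_{z_0}$, verifying that the optimised coefficient bounds recombine with the powers of $r$ to produce exactly $r^{\beta_2}$ (the content of the identities $\beta_3-\beta_2 = \theta(\beta_3-\beta_1)$ and $\beta_2-\beta_1 = (1-\theta)(\beta_3-\beta_1)$), and cleanly separating the regime $[f]_{C_\ell^{\beta_1}} \le [f]_{C_\ell^{\beta_3}}$, where the geometric mean controls everything, from the opposite regime, where the additive term is essential. No new analytic input is required: the scaling equivalence between the Hölder description and the polynomial-oscillation description, plus finite-dimensional norm equivalence, is already available from the argument in Lemma \ref{lem:2.7}.
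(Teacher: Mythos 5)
Your argument is correct and follows essentially the same route as the paper's Appendix E proof: estimate the coefficients of $p^{(3)}_{z_0}-p^{(1)}_{z_0}$ at every scale via finite-dimensional norm equivalence, optimise the free scale at $\rho=(A/B)^{1/(\beta_3-\beta_1)}$, and reassemble with the appropriate decomposition on either side of $\rho$, finishing with Young's inequality. The only organisational difference is that the paper first reduces to the case where a single intermediate kinetic degree $\bar\beta$ lies in $[\beta_1,\beta_3)$ (so that $p^2_z$ coincides with $p^1_z$ or $p^3_z$), whereas you treat all intermediate degrees $\beta_1\le k<\beta_3$ uniformly with the same coefficient bound — this avoids the initial reduction step but is otherwise the identical computation.
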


\subsection{Non-local product rule}
We denote by 
\[
	\abs{D_v}^{ks} = (-\Delta_v)^{\frac{ks}{2}}.
\]
Following Lemmata 4.10, 4.11 in \cite{IS} we prove:
\begin{lemma}[Higher order commutator estimates]\label{lem:commutator}
Let $k \geq 2$. Let $D$ be a closed set and $\Omega$ open such that $D \Subset \Omega \subset B_{\bar R/2} \subset \R^d$ for $0 < \bar R \leq 2$. Let $\varphi$ be a smooth function with support in $D$, let $f \in H^s(\Omega) \cap L^\infty(\R^d)$ and let $\rho = \frac{\textrm{dist}(D, \R^d\setminus \Omega)}{2}$. We write 
\beqs
	\abs{D_v}^{ks} [\varphi f] - \varphi \abs{D_v}^{ks} f = h_1 + h_2, 
\eeqs
where $h_1, h_2$ are given by
\bals
	&h_1(v) = \int_{\R^d\setminus B_\rho(v)} f(w)\frac{ \big(\varphi(w) - \varphi(v)\big)}{\abs{v-w}^{d+ks}} \dd w, \\
	&h_2(v) = \int_{B_\rho(v)} f(w)\frac{ \big(\varphi(w) - \varphi(v)\big) }{\abs{v-w}^{d+ks}} \dd w.
\eals
Then, by construction $h_2 = 0$ outside $\Omega$, and there holds
\beq\label{eq:h1}
	\norm{h_1}_{L^2(\R^d\setminus \Omega)} \leq \Lambda  \rho^{-ks} \norm{\varphi}_{L^\infty} \norm{f}_{L^2(D)}.
\eeq
Moreover, if $s \in \left(0, \frac{1}{k}\right)$, there holds
\beq\label{eq:h2}
	\norm{h_2}_{L^2(\R^d)} \leq \Lambda \rho^{1-ks} \norm{\varphi}_{C^1} \norm{f}_{L^2(\Omega)}.
\eeq
Else if $s \in \left[\frac{1}{k}, 1\right)$, then there exists $h_{22}, h_{23} \in L^2(\R^d)$ such that 
\beqs
	h_2 = h_{22} + (-\Delta_v)^{\frac{(k-1)s}{2}} h_{23},
\eeqs
with
\bal\label{eq:h2-2}
	\Norm{h_{22}}_{L^2(\R^d)} &\leq \Lambda \rho^{2-2s} \norm{\varphi}_{C^2} \norm{f}_{L^2(\Omega)} +\Lambda \rho^{1-s}  \norm{\varphi}_{C^1}\norm{f}_{H^{(k-1)s}(\Omega)}, \\
	\Norm{h_{23}}_{L^2(\R^d)} &\leq \Lambda \rho^{1-s}  \norm{\varphi}_{C^1}\norm{f}_{L^2(\Omega)}.
\eal
\end{lemma}
We reprove this lemma to make the dependence on $\rho$ in \eqref{eq:h2} and \eqref{eq:h2-2} precise. 
\begin{proof}
We let $E = D + B_\rho$ so that $D \Subset E \Subset \Omega$ with $\textrm{dist}(D, \R^d \setminus E) = \rho$ and $\textrm{dist}(E, \R^d \setminus \Omega) = \rho$.

To bound $h_1$, we notice that $\varphi(v) = 0$ for $v \notin D$. Thus if $v \notin \Omega \supset D$, then
\beqs
	h_1 =\int_{\R^d\setminus B_\rho(v)} \frac{ f(w)\varphi(w) }{\abs{v-w}^{d+ks}} \dd w = \int_{D}  \frac{ f(w)\varphi(w) }{\abs{v-w}^{d+ks}} \dd w.
\eeqs
Therefore, using Cauchy-Schwarz, \eqref{eq:upperbound} and Fubini's theorem
\bals
	\int_{\R^d \setminus  \Omega} h_1^2 \dd v &= \int_{\R^d \setminus  \Omega} \Bigg( \int_{D}  \frac{ f(w)\varphi(w) }{\abs{v-w}^{d+ks}}  \dd w\Bigg)^2 \dd v\\
	&\leq \norm{\varphi}_{L^\infty}^2 \int_{\R^d \setminus  \Omega}\Bigg(\int_{D}   \frac{ f^2(w) }{\abs{v-w}^{d+ks}}  \dd w\Bigg)\Bigg(\int_{D}  \frac{1}{\abs{v-w}^{d+ks}} \dd w\Bigg)  \dd v\\
	&\leq \Lambda \rho^{-ks} \norm{\varphi}_{L^\infty}^2\int_{D} f^2(w) \int_{ \abs{v-w} \geq 2\rho}  \frac{ 1 }{\abs{v-w}^{d+ks}} \dd v \dd w\\
	&\leq \Lambda^2 \rho^{-2ks} \norm{\varphi}_{L^\infty}^2 \norm{f}_{L^2(D)}^2.
\eals
This yields \eqref{eq:h1}.

To bound $h_2$ we first consider $s \in \left(0, \frac{1}{k}\right)$. We use Cauchy-Schwarz, \eqref{eq:upperbound} for $s < \frac{1}{k}$ and Fubini
\bals
	\norm{h_2}_{L^2(\R^d)}^2&= \int_E \Bigg(\int_{B_\rho(v)} f(w) \frac{\big(\varphi(w) - \varphi(v)\big)}{\abs{v-w}^{d+ks}} \dd w\Bigg)^2 \dd v\\
	&\leq   \int_E \Bigg(\int_{B_\rho(v)} f^2(w)\frac{\Abs{\varphi(w) - \varphi(v)} }{\abs{v-w}^{d+ks}} \dd w\Bigg) \Bigg(\int_{B_\rho(v)}  \frac{\abs{\varphi(w) - \varphi(v)}}{\abs{v-w}^{d+ks}} \dd w\Bigg)  \dd v\\
	&\leq  \norm{\varphi}_{C^1}^2 \int_E \Bigg(\int_{B_\rho(v)}  \frac{f^2(w)}{\abs{v-w}^{d+ks-1}}\dd w\Bigg) \Bigg(\int_{B_\rho(v)} \frac{1}{\abs{v-w}^{d+ks-1}} \dd w\Bigg)  \dd v\\
	&\leq  \Lambda \rho^{1-ks} \norm{\varphi}_{C^1}^2 \int_{\Omega} f^2(w)\int_{E \cap B_\rho(w)}  \frac{1}{\abs{v-w}^{d+ks-1}} \dd v \dd w\\
	&\leq \Lambda^2 \rho^{2-2ks} \norm{\varphi}_{C^1}^2\norm{f}_{L^2(\Omega)}^2.
\eals
This yields \eqref{eq:h2}.

Second we consider $s \in \left[\frac{1}{k}, 1\right)$.
We estimate $h_2$ by duality. Let $g \in H^s(\R^d)$. Then, since $\textrm{supp }h_2 \subseteq E$, we have
\bals
	\int_E h_2(v) g(v) \dd v &= \int_E \int_{B_\rho(v)} g(v) f(w) \frac{\big(\varphi(w) - \varphi(v)\big)}{\abs{v-w}^{d+ks}} \dd w \dd v\\
	&= \frac{1}{2} \int_{\Omega} \int_{\Omega \cap \abs{v-w} < \rho} f(v) \big(g(v) - g(w)\big)\frac{\big(\varphi(w) - \varphi(v)\big)}{\abs{v-w}^{d+ks}}\dd w \dd v\\ 
	&\quad +\frac{1}{2} \int_{\Omega} \int_{\Omega \cap \abs{v-w} < \rho} g(v) \big(f(w) - f(v)\big)  \frac{\big(\varphi(w) - \varphi(v)\big)}{\abs{v-w}^{d+ks}}\dd w \dd v.
\eals
Thus by Cauchy-Schwarz, \eqref{eq:upperbound} 
\bals
	\int_E h_2(v) g(v) \dd v &\leq \norm{f}_{L^2(\Omega)} \Bigg\{\int_{\Omega} \Bigg(\int_{\Omega \cap \abs{v-w} < \rho}  \big(g(v) - g(w)\big)  \frac{\big(\varphi(w) - \varphi(v)\big)}{\abs{v-w}^{d+ks}}\dd w\Bigg)^2 \dd v\Bigg\}^{\frac{1}{2}}\\
	&\quad + \norm{g}_{L^2(\Omega)} \Bigg\{\int_{\Omega} \Bigg(\int_{\Omega \cap \abs{v-w} < \rho}  \big(f(w) - f(v)\big)  \frac{\big(\varphi(w) - \varphi(v)\big)}{\abs{v-w}^{d+ks}} \dd w\Bigg)^2 \dd v\Bigg\}^{\frac{1}{2}}\\
	&\leq \norm{f}_{L^2(\Omega)} \Bigg\{\int_{\Omega}\Bigg(\int_{\Omega \cap \abs{v-w} < \rho}  \frac{\big(g(v) - g(w)\big)^2}{\abs{v-w}^{d+2ks-2s}} \dd w\Bigg) \Bigg(\int_{\Omega \cap \abs{v-w} < \rho} \frac{\abs{\varphi(w) - \varphi(v)}^2}{\abs{v-w}^{d+2s}} \dd w \Bigg)\dd v\Bigg\}^{\frac{1}{2}}\\
	&\quad + \norm{g}_{L^2(\Omega)} \Bigg\{\int_{\Omega}\Bigg(\int_{\Omega \cap \abs{v-w} < \rho} \frac{ \big(f(w) - f(v)\big)^2}{\abs{v-w}^{d+2ks-2s}} \dd w\Bigg) \\
	&\qquad \qquad \qquad\qquad \qquad\times \Bigg(\int_{\Omega \cap \abs{v-w} < \rho} \frac{\abs{\varphi(w) - \varphi(v)}^2}{\abs{v-w}^{d+2s}} \dd w \Bigg)\dd v\Bigg\}^{\frac{1}{2}}\\
	&\leq \Lambda^{\frac{1}{2}} \rho^{1-s} \norm{f}_{L^2(\Omega)} \norm{\varphi}_{C^1} \Bigg\{\int_{\Omega}\int_{\Omega \cap \abs{v-w} < \rho}  \frac{\big(g(v) - g(w)\big)^2}{\abs{v-w}^{d+2ks-2s}}  \dd w \dd v\Bigg\}^{\frac{1}{2}}\\
	&\quad + \Lambda^{\frac{1}{2}} \rho^{1-s} \norm{g}_{L^2(\Omega)} \norm{\varphi}_{C^1} \Bigg\{\int_{\Omega}\int_{\Omega \cap \abs{v-w} < \rho}   \frac{ \big(f(w) - f(v)\big)^2}{\abs{v-w}^{d+2ks-2s}} \dd w \dd v\Bigg\}^{\frac{1}{2}}\\
	&\leq \Lambda \rho^{1-s}  \norm{\varphi}_{C^1} \Big( \norm{f}_{L^2(\Omega)}\norm{g}_{H^{(k-1)s}(\Omega)} + \norm{g}_{L^2(\Omega)}\norm{f}_{H^{(k-1)s}(\Omega)}\Big). 
\eals
This estimate implies that there exists $h_{22}, h_{23}$ such that
\beqs
	h_2 = h_2^{\textrm{sym}} = h_{22}^{\textrm{sym}} + (-\Delta_v)^{\frac{k-s}{2}} h_{23}^{\textrm{sym}},
\eeqs
with
\bals
	\Norm{h_{22}}_{L^2(\R^d)} \leq  \Lambda \rho^{1-s}  \norm{\varphi}_{C^1}\norm{f}_{H^{(k-1)s}(\Omega)}, \qquad \Norm{h_{23}}_{L^2(\R^d)}\leq  \Lambda\rho^{1-s}  \norm{\varphi}_{C^1}\norm{f}_{L^2(\Omega)}.
\eals
\end{proof}

\section{Toolbox}\label{sec:toolbox}
Campanato's approach is a scaling argument, consisting of a clever combination of several tools that permit to gain as much regularity as can be gained from the equation. In short, we combine Poincaré's inequality with Sobolev embedding, and close the argument with regularity estimates. In this section we assemble the tools that are used in both the non-fractional and the fractional case. 

\subsection{Functional inequalities}

Similar to the elliptic case in \cite{Giaquinta}, for $f \in W^{m, p}(Q_R(z_0))$ there exists a unique polynomial $p_{m-1} = p_{m-1}(z_0, R, f, z)$ of degree less or equal to $m-1$ so that 
\bal
	\fint_{Q_R(z_0)} D^\phi(f - p_{m-1})\dd z = 0 \qquad \forall \Phi \textrm{ with } \abs \Phi \leq m-1.
\label{eq:giaq1}
\eal
Here $m \in \N + 2s\N$ and $D^\phi$ is a kinetic differential whose order is in the discrete set $\N + 2s\N$ as well. The polynomial is given by 
\beqs
	p_{m-1}(z) = \sum_{\psi \in \N^{1+2d}, \abs \Psi \leq m-1} \frac{c_\psi}{\psi!} (z - z_0)^\psi
\eeqs
with 
\beqs
	c_\psi = \sum_{\substack{\phi \in \N^{1+2d}, \\2\abs \Phi\leq m-1-\abs{\Psi}}} c_{\psi, \phi}R^{-n+2\abs\phi}\int_{Q_R(z_0)}D^{\psi+2\phi}f \dd z,
\eeqs
where $n = 2s + 2d(s+1)$. Recall that for $\psi = (\psi_0, \Psi_1, \Psi_2) \in \N^{1+2d}$ we denote by $\abs{\Psi}$ the size of $\psi$ respecting the scaling, i.e. $\abs{\Psi} = 2s\cdot\psi_0 + (1+2s)\abs{\Psi_1} + \abs{\Psi_2}$. Here $\psi!$ denotes the element-wise operation $\psi! = \psi_0!\psi_1! \cdots \psi_{2d}!$. 

The idea is to use \eqref{eq:giaq1} in order to apply the standard Poincaré-inequality \cite[Prop 3.12]{GiaquintaMartinazzi} to $D^\phi(f - p_{m-1})$ for $\abs \Phi = 0, \dots, m-1$. Moreover, we have for any non-negative function $f \in L^2(Q_r(z_0))$ 
\beq
	\int_{Q_r(z_0)} f^2 \dd z \leq Cr^n \norm{f}^2_{L^\infty(Q_r(z_0))},
\label{eq:campnonloc1}
\eeq
where $r > 0 $ and $n = 2s + 2d(s+1)$.
Combined with Sobolev's embedding and regularity estimates, we obtain an estimate commonly referred to as Campanato's (first) inequality, which will be the first tool to tackle the Schauder estimates. For reference, in the elliptic case, Campanato's first inequality reads
\beqs
	\int_{B_r} \abs{u}^2 \dd x \leq C \Big(\frac{r}{R}\Big)^d \int_{B_R} \abs{u}^2 \dd x,
\eeqs
for a solution $u: \R^d \to \R$ of a second order elliptic equation, see \cite[Section 5]{GiaquintaMartinazzi}.

\subsection{Regularity estimates}
The second key step are regularity estimates for the constant coefficient equation. 
We consider solutions $f$ of the constant coefficient Kolmogorov equation
\beq\label{eq:kolmogorov_energy_loc}
	\partial_t f + v \cdot \nabla_x f - A^0\Delta_v f = h
\eeq
in $Q_R(z_0)$ for some $z_0 \in \R^{1+2d}$ and $R > 0$. Here $A^0$ is some constant such that $A^0 \geq \lambda_0$ with $\lambda_0$ from \eqref{eq:unifellip}.
The fractional analogue reads
\beq\label{eq:kolmogorov_energy}
	\partial_t f + v \cdot \nabla_x f +\mathcal L_0f = h,
\eeq
where $\mathcal L_0$ is the non-local operator \eqref{eq:1.2} associated to a non-negative, translation-invariant kernel $K_0$ such that
\beq\label{eq:cc_kernel}
	\frac{\lambda_0}{\abs{w}^{d+2s}} \leq K_0(w) \leq \frac{\Lambda_0}{\abs{w}^{d+2s}},
\eeq
and $K_0(w) = K_0(-w)$ is independent of $z$.
We derive inductive regularity estimates relying on Bouchut's Proposition \ref{prop:bouchut}, which captures the regularising effect of the transport operator in the space variable. For the sake of brevity we will introduce the notation $\abs{D}^\gamma := (-\Delta)^{\frac{\gamma}{2}}$ for any $\gamma \geq 0$.
\begin{proposition}[Local (non-fractional) regularity estimates]\label{prop:nonfractional_energy}\label{prop:energy}
Let $f$ be a non-negative solution in $Q_R(z_0)$ of \eqref{eq:kolmogorov_energy_loc} with $s = 1$. Let $l \in \N_0$, $0 < r < R \leq 1$ and write $\delta := R-r > 0$. Then we have
\bals
	\Norm{D^{l+2} f}_{L^2(Q_r(z_0))} \leq C \delta^{-(l+2)} \norm{f}_{L^2(Q_R(z_0))} + C\delta^{-l}  \norm{D^l h}_{L^2(Q_R(z_0))},
\eals
where $D^l$ is a pseudo-differential of order $l \geq 0$, and $C = C(n, \lambda_0)$. In particular, if $h = 0$, then
\bals
	\norm{\abs{D_v}^{l+2} f}_{L^2(Q_r(z_0))} &+\norm{\abs{D_t}^{\frac{l+2}{2}} f}_{L^2(Q_r(z_0))}  + \norm{\abs{D_x}^{\frac{l+2}{3}} f}_{L^2(Q_r(z_0))} \lesssim   \delta^{-(l+2)} \norm{f}_{L^2(Q_R(z_0))}.
\eals
\end{proposition}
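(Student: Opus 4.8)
The plan is to prove the estimate by induction on $l\in\N_0$; the gain of exactly two anisotropic orders mirrors the fact that the Kolmogorov operator $\mathcal T-A^0\Delta_v$ has order $2$ in the scaling \eqref{eq:scaling} with $s=1$ (time $\sim$ velocity$^2$, space $\sim$ velocity$^3$). The engine is the interplay of two complementary mechanisms: an energy, or Caccioppoli-type, estimate, which uses only the lower ellipticity bound in \eqref{eq:unifellip} and trades one power of $\delta$ for one extra velocity derivative; and Bouchut's Proposition \ref{prop:bouchut}, which converts velocity regularity into regularity along the transport field, i.e.\ in the $t$ and $x$ variables, at the cost of a further power of $\delta$. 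The constant will depend only on $n$ and $\lambda_0$ because $A^0$ enters the energy identities below solely through the lower bound $A^0\ge\lambda_0$, after dividing by $A^0$; the base case $l=0$, where $f$ is only known in $L^2$, is obtained directly by a few such cycles.

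\emph{Step 1: one energy step.} For radii $r\le\rho<\rho'\le R$ I would pick a cutoff $\chi$ with $\chi\equiv1$ on $Q_\rho(z_0)$ and $\mathrm{supp}\,\chi\subset Q_{\rho'}(z_0)$, adapted to the shear of the cylinders so that $(\partial_t+v_0\cdot\nabla_x)\chi\equiv0$, $\norm{\nabla_v\chi}_{L^\infty}\lesssim(\rho'-\rho)^{-1}$ and $\norm{(v-v_0)\cdot\nabla_x\chi}_{L^\infty}\lesssim(\rho'-\rho)^{-2}$ (the last using $\abs{v-v_0}\le R\le1$ on $\mathrm{supp}\,\chi$ together with $\abs{\nabla_x\chi}\lesssim(\rho'-\rho)^{-3}$, as $s=1$). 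Testing \eqref{eq:kolmogorov_energy_loc} against $f\chi^2$, integrating by parts in $v$ for the diffusion, using that $\partial_t+v\cdot\nabla_x$ is divergence free in $(t,x)$ for the transport, absorbing the cross term by Young's inequality and dividing by $A^0\ge\lambda_0$, gives the Caccioppoli inequality
\bals
\norm{\nabla_vf}_{L^2(Q_\rho(z_0))}^2\lesssim(\rho'-\rho)^{-2}\norm{f}_{L^2(Q_{\rho'}(z_0))}^2+\lambda_0^{-1}\norm{h}_{L^2(Q_{\rho'}(z_0))}^2.
\eals
Since $\partial_t$ and $\nabla_x$ commute with $\mathcal T-A^0\Delta_v$, the same bound holds for $\partial_t^a\nabla_x^bf$ with $h$ replaced by $\partial_t^a\nabla_x^bh$. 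The only non-commuting derivative is $\partial_{v_i}$, for which $[\partial_{v_i},\mathcal T]=\partial_{x_i}$: applying $\partial_{v_i}$ turns $f$ into a Kolmogorov solution with source $\partial_{v_i}h-\partial_{x_i}f$, and the commutator term $\partial_{x_i}f$ carries weighted order three less than that of $f$, hence is already controlled on a slightly larger cylinder by the inductive hypothesis.

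\emph{Step 2: transport transfer and iteration.} Writing $A^0\Delta_vf=\nabla_v\cdot(A^0\nabla_vf)$, the equation reads $\mathcal Tf=\nabla_v\cdot G+h$ with $G\in L^2$ as soon as $\nabla_vf\in L^2$; Bouchut's Proposition \ref{prop:bouchut} then upgrades this, on a smaller cylinder and at the cost of another power of $\delta$, to a gain in the transported variables of the form $\abs{D_x}^{\beta}f,\abs{D_t}^{\beta/2}f\in L^2$ with $\beta$ a fixed fraction of the current velocity regularity. I would then iterate Step 1 and Step 2 alternately — each round using as new data the Kolmogorov equation satisfied by a fractional $(t,x)$-derivative of $f$, which again commutes with the operator — to climb up the anisotropic Sobolev scale to $\abs{D_v}^{l+2}f\in L^2$, and then, via the transport transfer, to $\abs{D_x}^{(l+2)/3}f\in L^2$ and $\abs{D_t}^{(l+2)/2}f\in L^2$; for the time derivative one also uses the equation itself, $\partial_tf=A^0\Delta_vf+h-v\cdot\nabla_xf$, whose right-hand side is under control once the velocity Hessian and the transport term have been estimated. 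Choosing the intermediate radii as a chain $r=\rho_0<\dots<\rho_N=R$ with $\rho_{i+1}-\rho_i\sim\delta/N$ and $N=N(l)$, the $O(l)$ Caccioppoli and Bouchut steps contribute powers of $\delta$ that compound to the claimed $\delta^{-2(l+2)}$, with constants depending only on $n$ and $\lambda_0$; a general order-$(l+2)$ operator $D^{l+2}$ is then handled by composing these bounds with order-zero operators bounded on $L^2$.

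\emph{Main obstacle.} The genuinely hypoelliptic point is the coupling of velocity diffusion with transport: a naive differentiation in $v$ produces the $\partial_{x_i}f$ commutator, which is of \emph{higher} order in a plain derivative count, so the induction must be run in the anisotropic scale, interleaving Caccioppoli steps (improving $v$-regularity) with Bouchut steps (converting $v$-regularity into $t,x$-regularity), and the bookkeeping of the intermediate radii and fractional orders must be arranged so that exactly two anisotropic orders are gained and the $\delta$-powers collapse to $\delta^{-2(l+2)}$. The other technical point I expect is the quantitative localisation in Bouchut's lemma, i.e.\ commuting the non-local operators $\abs{D_x}^{\beta},\abs{D_t}^{\beta/2}$ with spatial cutoffs without losing the gain.
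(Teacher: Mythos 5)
Your high-level plan correctly identifies the two essential tools (an energy/Caccioppoli mechanism in $v$ and Bouchut's averaging to transfer regularity to $t,x$), but the route you propose is not the one the paper takes, and it has gaps that would need real work to close. The paper does not alternate local Caccioppoli and Bouchut steps: it first establishes a \emph{global} estimate on $\R^{1+2d}$ by testing the constant-coefficient equation against a single, carefully chosen anisotropic Fourier multiplier of the form $\big(\delta+\abs{D_v}^{2(l+1)}+\abs{D_x}^{2(l+1)/(1+2s)}+\sum_j\abs{D_v}^{2j}\abs{D_x}^{\cdots}\big)^s\bar f+\abs{D_t}^l\partial_t\bar f$ (plus the two preliminary tests giving $\|\abs{D_v}^s f\|$ and $\|\abs{D_v}^s\abs{D_x}^{(l+1)s/(1+2s)}f\|$); this yields all of $\abs{D_v}^{l+2}f,\abs{D_x}^{(l+2)/3}f,\abs{D_t}^{(l+2)/2}f$ simultaneously, after which the local statement follows from a single cutoff $g=f\theta$ and commutator estimates (as the paper notes, in the non-fractional case this localization is elementary). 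Setting $s=1$ in that Step~1 is exactly how Proposition~\ref{prop:nonfractional_energy} is proved.

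Two points in your iteration do not survive scrutiny. First, you invoke Proposition~\ref{prop:bouchut} with a divergence-form source $\mathcal Tf=\nabla_v\cdot G+h$, $G\in L^2$; but the proposition as stated (and as proved in Appendix~\ref{app:bouchut}) requires $S=\mathcal Tf\in L^2$, so at the moment you only know $\nabla_vf\in L^2$ you do not yet have an admissible $S$. A divergence-source version of Bouchut's estimate exists, but it is not the one the paper provides, and it yields a different (weaker) exponent $\beta/(2+\beta)$ in $x$, which changes your bookkeeping. Second, and more seriously, you assert that the commutator $[\partial_{v_i},\mathcal T]=\partial_{x_i}$ produces a term of ``weighted order three less'' than $f$: the opposite is true, $\partial_{x_i}$ raises the kinetic degree by $1+2s=3$. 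Consequently, the naive ``differentiate in $v$ and induct'' scheme has a chicken-and-egg problem already at the base case: to bound $\nabla_v^2 f$ (degree $2$) you must control $\nabla_x f$ (degree $3$), and to get $\nabla_x f$ from Bouchut you need $\mathcal Tf=A^0\Delta_vf+h\in L^2$, i.e.\ $\Delta_vf\in L^2$, which is what you are trying to prove. The paper's global multiplier controls the $v$, $x$ and $t$ derivatives \emph{simultaneously} precisely to avoid this circularity, with the mixed transport term handled on the Fourier side by Hölder/Young and absorption. If you want to keep a local-iteration proof, you would need to (i) replace Proposition~\ref{prop:bouchut} by an explicit divergence-source variant with the correct exponent; (ii) climb the anisotropic Sobolev ladder in increments small enough ($\abs{D_x}^{1/3}$, $\abs{D_x}^{2/3}$, $\dots$) that at each stage the commutator is genuinely of lower order than the quantity being estimated; and (iii) track the $\delta$-powers through $O(l)$ nested cutoffs and the (nontrivial) localization of Bouchut, which you currently flag only as ``a technical point.''
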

For the fractional case, the right hand side involves a norm on the whole velocity space.
\begin{proposition}[Non-local (fractional) regularity estimates]\label{prop:fractional_energy}
Let $l \in \N_0$, $0 < r < R \leq 1$ and write $\delta = R-r > 0$. Let $Q_R(z_0)$ be the kinetic cylinder defined in \eqref{eq:cylinder} and write $Q_R(z_0) =: \mathcal I \times \Omega_x \times \Omega_v$. 
Suppose $f \in L^\infty(\R^{1+2d})$ is a non-negative solution in $Q_R(z_0)$ of \eqref{eq:kolmogorov_energy} with $s \in (0, 1)$. 
Then there holds
\bal\label{eq:frac-regularity-estimate}
	\norm{D^{(l+2)s} f}_{L^2(Q_r(z_0))} \leq C \delta^{-(l+2)s} \norm{f}_{L^\infty( \R^{1+2d})}+ C\delta^{-ls}\left( \norm{D^{ls} h}_{L^2(Q_R(z_0))} + \norm{h}_{L^\infty(\R^{1+2d})}\right),
\eal
where $D^{ls}$ is a pseudo-differential of order $ls \geq 0$ and $C = C(n, s, \Lambda_0, \lambda_0)$.
\end{proposition}
\begin{remark}\begin{enumerate}[i.]
\item The proof of Proposition \ref{prop:nonfractional_energy} is similar to the proof of Proposition \ref{prop:fractional_energy}. In fact, for Step 1 in the proof Proposition \ref{prop:fractional_energy}, we can just set $s = 1$ and obtain the global version of the energy estimate for the non-fractional case. Steps 2, 3 and 4 are much simpler for the non-fractional case: it suffices to localise with some smooth cut-off $\theta \in C^\infty_c(Q_R(z_0))$, and then consider the equation solved by $g := f \theta$. Since the equation solved by $f$ is non-fractional, $g$ solves an equation with a right hand side that can be bounded by $\norm{f}_{L^2(Q_R(z_0))}$ using the induction hypothesis. Since this case is comparatively simpler, we will focus on the proof of Proposition \ref{prop:fractional_energy}. 
\item With slightly more work, we would possibly also be able to deduce a similar result for a general diffusion coefficient that is uniformly elliptic and satisfies $D^l A \in L^2(Q_R(z_0))$ with $l \in \N_0$ as in the statement. For our purposes, the constant coefficient case suffices.
\end{enumerate}
\end{remark}
The proof builds upon the work of Alexandre and Bouchut \cite{Alexandre, Bouchut}. In particular, we will make use of the following proposition\cite[Proposition 1.1]{Bouchut}.
\begin{proposition}[Bouchut]\label{prop:bouchut}
Assume that $f, S \in L^2(\R^{1+2d})$ satisfy
\beq\label{eq:transport}
	\partial_t f + v \cdot \nabla_x f = S,
\eeq
and $\abs{D_v}^{\beta}f \in L^2(\R^{1+2d})$ for some $\beta \geq 0$. Then $\abs{D_x}^{\frac{\beta}{1+\beta}} f \in L^2(\R^{1+2d})$, and
\beq\label{eq:bouchut}
	\norm{\abs{D_x}^{\frac{\beta}{1+\beta}} f}_{L^2(\R^{1+2d})} \leq C \norm{\abs{D_v}^\beta f}_{L^2(\R^{1+2d})}^{\frac{1}{1+\beta}}\norm{S}_{L^2(\R^{1+2d})}^{\frac{\beta}{1+\beta}}, 
\eeq
for some universal constant $C > 0$.
\end{proposition}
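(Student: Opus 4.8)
The statement is the classical regularisation-by-velocity-averaging estimate of Bouchut, and the natural route is a Fourier-side argument in the $(t,x)$ variables combined with a one-parameter optimisation. Throughout I keep the $v$ variable untransformed.

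\emph{Step 1: Fourier transform and reduction.} I would Fourier-transform \eqref{eq:transport} in $(t,x)$: writing $F(\tau,\xi,v)=\widehat f(\tau,\xi,v)$ and $G(\tau,\xi,v)=\widehat S(\tau,\xi,v)$, the equation becomes the purely algebraic relation $i(\tau+v\cdot\xi)F=G$. By Plancherel and Fubini, $\Norm{\abs{D_x}^{a}f}^2=\int_{\tau,\xi}|\xi|^{2a}\Norm{F(\tau,\xi,\cdot)}_{L^2_v}^2$ with $a=\beta/(1+\beta)$, so it suffices to establish the pointwise-in-$(\tau,\xi)$ bound
$$|\xi|^{2a}\Norm{F(\tau,\xi,\cdot)}_{L^2_v}^2\;\lesssim\;\Norm{\abs{D_v}^{\beta}F(\tau,\xi,\cdot)}_{L^2_v}^{\frac{2}{1+\beta}}\Norm{G(\tau,\xi,\cdot)}_{L^2_v}^{\frac{2\beta}{1+\beta}}$$
and then integrate it in $(\tau,\xi)$ using Hölder's inequality with exponents $1+\beta$ and $(1+\beta)/\beta$; the exponents $2a$, $\tfrac{2}{1+\beta}$, $\tfrac{2\beta}{1+\beta}$ are arranged exactly so that this reproduces \eqref{eq:bouchut}. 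The case $\xi=0$ is trivial (the left side vanishes), and for $\xi\neq0$ a translation $v\mapsto v-(\tau/|\xi|^2)\xi$ kills $\tau$, while a dilation $v\mapsto v/|\xi|$ plus a rotation reduces to $\xi=e_1$; one checks all three quantities pick up the same power of $|\xi|$, so the claim becomes: if $iv_1F(v)=G(v)$ then $\Norm{F}_{L^2_v}^2\lesssim\Norm{\abs{D_v}^{\beta}F}^{\frac{2}{1+\beta}}\Norm{G}^{\frac{2\beta}{1+\beta}}$.

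\emph{Step 2: cutoff split.} For a smooth cutoff $\chi$ equal to $1$ on $[-1,1]$ and supported in $[-2,2]$ and a free parameter $\rho>0$ I would write
$$F(v)=\chi\!\Big(\tfrac{v_1}{\rho}\Big)F(v)+\frac{1-\chi\!\big(\tfrac{v_1}{\rho}\big)}{iv_1}\,G(v)=:F_1+F_2 .$$
The far part is immediate: on $\{|v_1|\ge\rho\}$ one has $|F_2|\le\rho^{-1}|G|$, hence $\Norm{F_2}_{L^2_v}\le\rho^{-1}\Norm{G}_{L^2_v}$. The near part is supported in the slab $\{|v_1|\le2\rho\}$, and the core of the proof is the \emph{slab estimate} $\int_{\{|v_1|\le2\rho\}}|F|^2\dd v\lesssim\rho^{2\beta}\Norm{\abs{D_v}^{\beta}F}_{L^2_v}^2$. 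Granting this, $\Norm{F}_{L^2_v}^2\lesssim\rho^{2\beta}\Norm{\abs{D_v}^{\beta}F}^2+\rho^{-2}\Norm{G}^2$, and optimising $\rho$ (choosing $\rho^{2+2\beta}=\Norm{G}^2/\Norm{\abs{D_v}^{\beta}F}^2$) gives precisely the normalised claim; undoing the normalisation restores $|\xi|^{2a}$ and Step~1 concludes.

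\emph{Step 3: the slab estimate, which is the main obstacle.} By Fubini it reduces to the one-dimensional statement $\int_{|r|\le2\rho}|u(r)|^2\dd r\lesssim\rho^{2\beta}\Norm{\abs{D}^{\beta}u}_{L^2(\R)}^2$, after which one integrates over the transverse velocities using $\Norm{\abs{D_{v_1}}^{\beta}u}_{L^2}\le\Norm{\abs{D_v}^{\beta}F}$. For $0<\beta<\tfrac12$ I would split $u$ into frequencies $|\zeta|<1/\rho$ and $|\zeta|\ge1/\rho$: the high part is controlled by $\rho^{2\beta}\Norm{\abs{D}^{\beta}u}^2$ directly, while for the low part Cauchy–Schwarz against $|\zeta|^{-2\beta}$ on $\{|\zeta|<1/\rho\}$ (integrable exactly because $\beta<\tfrac12$) gives $\Norm{u_{<}}_{L^\infty}^2\lesssim\rho^{2\beta-1}\Norm{\abs{D}^{\beta}u}^2$, and the slab has length $\lesssim\rho$. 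For $\beta\ge\tfrac12$ one instead bounds the slab by its length times the one-dimensional Gagliardo–Nirenberg inequality $\Norm{u}_{L^\infty}^2\lesssim\Norm{u}_{L^2}^{2-1/\beta}\Norm{\abs{D}^{\beta}u}_{L^2}^{1/\beta}$; this reintroduces a factor $\Norm{u}_{L^2}$, but after integrating over $v_\perp$ and inserting into $\Norm{F}_{L^2_v}^2\le2\Norm{F_1}^2+2\Norm{F_2}^2$ one absorbs the resulting $\Norm{F}_{L^2_v}^{2-1/\beta}$-term into the left-hand side via Young's inequality, which regenerates the clean bound $\rho^{2\beta}\Norm{\abs{D_v}^{\beta}F}^2$ (the case $\beta=0$ being trivial). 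Beyond this absorption step everything is bookkeeping of exponents, all of which are forced by the scaling invariance noted in Step~1; the one genuinely delicate point is handling the regime $\beta\ge\tfrac12$ so that $\Norm{F}$ is absorbed rather than lost.
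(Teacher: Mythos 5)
Your approach is a genuine alternative to the paper's. Both reduce via the Fourier transform in $(t,x)$ to a pointwise estimate in the dual variables and then integrate back with H\"older, but the decomposition and the engine differ. The paper mollifies $\hat f$ in velocity with a kernel $\rho_\varepsilon$ whose first $\lfloor\beta\rfloor$ moments vanish; the difference $\hat f-\rho_\varepsilon\ast_v\hat f$ is controlled by $\varepsilon^\beta\Norm{\abs{D_v}^\beta\hat f}$ via $\abs{1-\hat\rho_\varepsilon(\zeta)}\lesssim\abs{\varepsilon\zeta}^\beta$, while the mollified part is handled by the resolvent identity $\hat f=(\lambda\hat f+\hat S)/(\lambda+i(\eta+v\cdot k))$ together with the bound $\int\abs{\rho_\varepsilon(v-\xi)}\,\abs{1+i(\eta+\xi\cdot k)/\lambda}^{-2}\dd\xi\lesssim\lambda/(\varepsilon\abs k)$, followed by optimising first in $\lambda$ and then in $\varepsilon$. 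Your hard cut-off in $v_1$, after normalising $\xi=e_1$ by translation, rotation and dilation, avoids the resolvent manipulation and is more elementary, at the cost of having to establish the slab estimate by hand and of a case distinction in $\beta$ that the paper's argument does not need. The reduction, the far-part bound $\abs{F_2}\le\rho^{-1}\abs G$, the Fubini step, the low/high split for $0<\beta<1/2$, and the final $\rho$-optimisation and H\"older integration are all correct.

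The one place the proposal has a real gap is the endpoint $\beta=1/2$. You are right that the slab estimate $\int_{\abs r\le2\rho}\abs u^2\lesssim\rho^{2\beta}\Norm{\abs D^\beta u}^2$ fails once $\beta\ge1/2$ (take a fixed smooth $u$ with $u(0)\neq0$ and let $\rho\to0$), and your Gagliardo--Nirenberg-plus-absorption fix does work for $\beta>1/2$. But the inequality $\Norm u_{L^\infty}^2\lesssim\Norm u_{L^2}^{2-1/\beta}\Norm{\abs D^\beta u}^{1/\beta}$ is \emph{false} at $\beta=1/2$ exactly, where it would assert $H^{1/2}(\R)\hookrightarrow L^\infty(\R)$ --- the embedding that fails logarithmically --- and the Young exponents $p=2\beta/(2\beta-1)$, $q=2\beta$ you need for the absorption degenerate there as well. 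Your low/high split does not cover $\beta=1/2$ either, since $\int_{\abs\zeta<1/\rho}\abs\zeta^{-2\beta}\dd\zeta$ diverges at the origin for $\beta\ge1/2$. This endpoint is in scope here because the proposition is invoked with $\beta=2s$, $s\in(0,1)$, so $s=1/4$ hits it. The gap is patchable --- for instance replace $L^\infty$ by $L^p$ with $p<\infty$ large (using $H^{1/2}(\R)\hookrightarrow L^p(\R)$) and retrack exponents, or run your $\beta<1/2$ argument for some fixed $\beta'<1/2$ and interpolate $\Norm{\abs{D_v}^{\beta'}F}\lesssim\Norm{\abs{D_v}^{1/2}F}^{2\beta'}\Norm F^{1-2\beta'}$ before absorbing --- but as written the argument does not close at $\beta=1/2$, whereas the paper's mollifier argument is uniform in $\beta$.
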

We recall the proof of Proposition \ref{prop:bouchut} in Appendix \ref{app:bouchut}.
\begin{proof}[Proof of Proposition \ref{prop:fractional_energy}]
With no loss of generality, we assume $A^0 = 1$ and $K_0(w) = \frac{1}{\abs{w}^{d+2s}}$ (otherwise we can either perform a constant change of variable or just use the pointwise bounds on the kernel). We start with global estimates, and then we localise the result. 

\textit{Step 1: Global estimate}. Assume for now that $f$ solves \eqref{eq:kolmogorov_energy} on $\R^{1+2d}$ with a source term $h \in L^2(\R^{1+2d})$, that is 
\beq\label{eq:kolmogorov_global}
	\mathcal T f + \abs{D_v}^{2s}f = h.
\eeq
To prove the global statement \eqref{eq:frac-regularity-estimate} in its full generality, we will need to assume that $\abs{D_v}^{ls} h, \abs{D_x}^{\frac{ls}{1+2s}} h, \abs{D_t}^{\frac{ls}{2s}} h\in L^2(\R^{1+2d})$. 

First note that testing \eqref{eq:kolmogorov_global} with $f$ yields
\beqs
	\norm{\abs{D_v}^s f}^2_{L^2(\R^{1+2d})} \leq \norm{h}_{L^2(\R^{1+2d})}\norm{f}_{L^2(\R^{1+2d})}. 
\eeqs

Second, we note that any solution $f$ of \eqref{eq:kolmogorov_global} satisfies 
\beqs
	\mathcal T \big(\abs{D_x}^{\frac{ls}{1+2s}} f\big) = - \abs{D_v}^{2s} \abs{D_x}^{\frac{ls}{1+2s}} f + \abs{D_x}^{\frac{ls}{1+2s}} h.
\eeqs
Then Bouchut's Proposition \ref{prop:bouchut} applied to $\abs{D_x}^{\frac{ls}{1+2s}} f$ yields for $\beta = 2s\geq 0$
\beqs
	\norm{\abs{D_x}^{\frac{(l+2)s}{1+2s}} f}_{L^2(\R^{1+2d})} \lesssim \norm{\abs{D_v}^{2s}\abs{D_x}^{\frac{ls}{1+2s}} f}_{L^2(\R^{1+2d})} + \norm{\abs{D_v}^{2s} \abs{D_x}^{\frac{ls}{1+2s}} f}_{L^2(\R^{1+2d})}^{\frac{1}{1+2s}}\norm{\abs{D_x}^{\frac{ls}{1+2s}} h}^{\frac{2s}{1+2s}}_{L^2(\R^{1+2d})}.
\eeqs
Now we use Hölder's inequality in Fourier variables $(k, \xi)$ of $(x, v)$ to bound
\beqs
	\norm{\abs{D_v}^{2s}\abs{D_x}^{\frac{ls}{1+2s}} f}_{L^2} = \norm{\abs{D_x}^{\frac{\theta \cdot (l+2)s}{1+2s}}\abs{D_v}^{(1-\theta) \cdot (l+2)s} f}_{L^2} \leq \norm{\abs{D_x}^{\frac{(l+2)s}{1+2s}} f}^{\theta}_{L^2}  \norm{\abs{D_v}^{(l+2)s} f}^{1-\theta}_{L^2},
\eeqs
where $\theta = \frac{l}{l+2}$. Thus 
\bals
	\norm{\abs{D_x}^{\frac{(l+2)s}{1+2s}} f}_{L^2} &\lesssim \norm{\abs{D_x}^{\frac{(l+2)s}{1+2s}} f}^{\theta}_{L^2}  \Norm{\abs{D_v}^{(l+2)s} f}^{1-\theta}_{L^2} \\
	&\quad+ \norm{\abs{D_x}^{\frac{(l+2)s}{1+2s}} f}^{\frac{\theta}{1+2s}}_{L^2}  \norm{\abs{D_v}^{(l+2)s} f}_{L^2}^{\frac{1-\theta}{1+2s}}\norm{\abs{D_x}^{\frac{ls}{1+2s}} h}^{\frac{2s}{1+2s}}_{L^2},
\eals
from which we deduce by dividing by $\norm{\abs{D_x}^{\frac{(l+2)s}{1+2s}} f}^{\frac{\theta}{1+2s}}$ and using Hölder for some $\varepsilon \in (0, 1)$
\bals
	\norm{\abs{D_x}^{\frac{(l+2)s}{1+2s}} f}_{L^2} &\lesssim  \norm{\abs{D_x}^{\frac{(l+2)s}{1+2s}} f}_{L^2}^{\frac{2s\theta}{1+2s-\theta}} \norm{\abs{D_v}^{(l+2)s} f}^{\frac{(1-\theta)(1+2s)}{1+2s-\theta}}_{L^2}\\
	&\quad + \norm{\abs{D_v}^{(l+2)s} f}^{\frac{1-\theta}{1+2s-\theta}}_{L^2}\norm{\abs{D_x}^{\frac{ls}{1+2s}} h}^{\frac{2s}{1+2s-\theta}}_{L^2} \\
	&\leq \varepsilon^{\frac{1+2s-\theta}{2s\theta}}\norm{\abs{D_x}^{\frac{(l+2)s}{1+2s}} f}_{L^2} + C_\varepsilon \norm{\abs{D_v}^{(l+2)s} f}_{L^2} \\
	&\quad+ \norm{\abs{D_v}^{(l+2)s} f}_{L^2}^{\frac{1-\theta}{1+2s-\theta}}\norm{\abs{D_x}^{\frac{ls}{1+2s}} h}^{\frac{2s}{1+2s-\theta}}_{L^2}.
\eals
Thus absorbing the first term on the right hand side to the left hand side and using $\theta = \frac{l}{l+2}$ we have
\beqs
	\norm{\abs{D_x}^{\frac{(l+2)s}{1+2s}} f}_{L^2}\lesssim \norm{\abs{D_v}^{(l+2)s} f}_{L^2} + \norm{\abs{D_v}^{(l+2)s} f}^{\frac{1}{1+sl + 2s}}_{L^2} \norm{\abs{D_x}^{\frac{ls}{1+2s}} h}^{\frac{s(l+2)}{1+2s+sl}}_{L^2}.
\eeqs

Third, we test \eqref{eq:kolmogorov_global} with $\abs{D_x}^{\frac{(l+1)2s}{1+2s}}f$. Then
\beqs
	\norm{\abs{D_v}^{s}D_x^{\frac{(l+1)s}{1+2s}} f}_{L^2} \leq \norm{\abs{D_x}^{\frac{l s}{1+2s}} h}_{L^2}^{\frac{1}{2}}\norm{\abs{D_x}^{\frac{(l+2)s}{1+2s}} f}^{\frac{1}{2}}_{L^2}.
\eeqs


Since we will use these three observations to proceed, we collect them here:
\begin{itemize}
\item There holds \beqs
	\norm{\abs{D_v}^s f}^2_{L^2} \leq \norm{h}_{L^2}\norm{f}_{L^2}.
\eeqs

\item Moreover,  \beq\label{eq:step2_l}
	\norm{\abs{D_x}^{\frac{(l+2)s}{1+2s}} f}_{L^2} \lesssim \norm{\abs{D_v}^{(l+2)s} f}_{L^2} + \norm{\abs{D_v}^{(l+2)s} f}^{\frac{1}{1+sl + 2s}}_{L^2}\norm{\abs{D_x}^{\frac{ls}{1+2s}} h}^{\frac{s(l+2)}{1+2s+sl}}_{L^2}.
\eeq

\item Finally, \beq\label{eq:step3_l}
	\norm{\abs{D_v}^{s}D_x^{\frac{(l+1)s}{1+2s}} f}_{L^2}\leq \norm{\abs{D_x}^{\frac{l s}{1+2s}} h}_{L^2}^{\frac{1}{2}}\norm{\abs{D_x}^{\frac{(l+2)s}{1+2s}} f}^{\frac{1}{2}}_{L^2}.
\eeq
\end{itemize}

Now we test \eqref{eq:kolmogorov_global} with 
\beqs
	\left(\delta + \abs{D_v}^{2(l+1)} + \abs{D_x}^{\frac{2(l+1)}{1+2s}} + \sum_{j =1}^l \abs{D_v}^{2j} \abs{D_x}^{\frac{2l + 2 - 2j}{1+2s}}\right)^sf + \abs{D_t}^l\partial_tf
\eeqs
for some small $\delta \in (0, 1)$.
We get
\bal\label{eq:aux0_l}
	\int&\left\{\left(\delta + \abs{D_v}^{2(l+1)} + \abs{D_x}^{\frac{2(l+1)}{1+2s}} + \sum_{j =1}^l \abs{D_v}^{2j} \abs{D_x}^{\frac{2l + 2 - 2j}{1+2s}}\right)^s + \abs{D_t}^l\partial_t \right\} f\cdot \left(\abs{D_v}^{2s} f +\partial_t f \right)\dd z \\
	&= - \int \left\{\left(\delta + \abs{D_v}^{2(l+1)} + \abs{D_x}^{\frac{2(l+1)}{1+2s}} + \sum_{j =1}^l \abs{D_v}^{2j} \abs{D_x}^{\frac{2l + 2 - 2j}{1+2s}}\right)^s + \abs{D_t}^l\partial_t\right\} f v \cdot \nabla_x f \dd z \\
	&\qquad+ \int\left\{\left(\delta + \abs{D_v}^{2(l+1)} + \abs{D_x}^{\frac{2(l+1)}{1+2s}} + \sum_{j =1}^l \abs{D_v}^{2j} \abs{D_x}^{\frac{2l + 2 - 2j}{1+2s}}\right)^s + \abs{D_t}^l\partial_t\right\}f h \dd z\\
	&=: I_1 + I_2.
\eal

For the left hand side of \eqref{eq:aux0_l} we find
\bal\label{eq:aux0.5-l}
	\int&\left\{\left(\delta+ \abs{D_v}^{2(l+1)} + \abs{D_x}^{\frac{2(l+1)}{1+2s}} + \sum_{j =1}^l \abs{D_v}^{2j} \abs{D_x}^{\frac{2l + 2 - 2j}{1+2s}}\right)^s + \abs{D_t}^l\partial_t\right\}f\cdot \left(\abs{D_v}^{2s} f +\partial_t f \right)\dd z \\
	&\gtrsim \norm{\abs{D_v}^{(l+2)s} f}^2_{L^2}   +  \norm{\abs{D_t}^{\frac{l}{2}}\partial_t f}^2_{L^2}  +  \norm{\abs{D_t}^{\frac{l+1}{2}}\abs{D_v}^s f}^2_{L^2} \\
	&\qquad+ \sum_{j = 1}^l \norm{\abs{D_v}^{(j+1)s} \abs{D_x}^{\frac{(l + 1 - j)s}{1+2s}} f}^2_{L^2}  + \norm{\abs{D_v}^{s} \abs{D_x}^{\frac{(l+1)s}{1+2s}} f}^2_{L^2} .
\eal

On the other hand we get with \eqref{eq:step2_l}
\bal\label{eq:aux1_l}
	I_2 &\lesssim \norm{f}_{L^2}\norm{h}_{L^2} +\Norm{\abs{D_v}^{(l+2)s} f}_{L^2}\Norm{\abs{D_v}^{ls} h}_{L^2} + \Norm{\abs{D_x}^{\frac{(l+2)s}{1+2s}} f}_{L^2}\Norm{\abs{D_x}^{\frac{ls}{1+2s}}h}_{L^2} \\
	&\quad+ \Norm{\abs{D_t}^{\frac{ls}{2s}}\partial_t f}_{L^2}\Norm{\abs{D_t}^{\frac{ls}{2s}} h}_{L^2} \\
	&\quad+ \sum_{j =1}^l \Norm{\abs{D_v}^{(j+1)s} \abs{D_x}^{\frac{(l+1-j)s}{1+2s}} f}_{L^2}\Norm{\abs{D_v}^{(j-1)s}\abs{D_x}^{\frac{(l+1- j)s}{1+2s}}h}_{L^2}\\
	&\lesssim \norm{f}_{L^2}\norm{h}_{L^2} +\Norm{\abs{D_v}^{(l+2)s} f}_{L^2}\Norm{\abs{D_v}^{ls} h}_{L^2} +  \Norm{\abs{D_v}^{(l+2)s}f}_{L^2}\Norm{\abs{D_x}^{\frac{ls}{1+2s}}h}_{L^2} \\
	&\quad +  \Norm{\abs{D_v}^{(l+2)s}f}^{\frac{1}{1+2s+sl}}_{L^2}\Norm{\abs{D_x}^{\frac{ls}{1+2s}}h}^{1 + \frac{(l+2)s}{1+2s+sl}}_{L^2} +\Norm{\abs{D_t}^{\frac{ls}{2s}}\partial_t f}_{L^2}\Norm{\abs{D_t}^{\frac{ls}{2s}} h}_{L^2} \\
	&\quad+ \sum_{j =1}^l \Norm{\abs{D_v}^{(j+1)s} \abs{D_x}^{\frac{(l+1-j)s}{1+2s}} f}_{L^2}\Norm{\abs{D_v}^{(j-1)s}\abs{D_x}^{\frac{(l+1- j)s}{1+2s}}h}_{L^2}\\
	&\lesssim \norm{f}_{L^2}\norm{h}_{L^2} +\Norm{\abs{D_v}^{(l+2)s} f}_{L^2}\Norm{\abs{D_v}^{ls} h}_{L^2} +  \Norm{\abs{D_v}^{(l+2)s}f}_{L^2}\Norm{\abs{D_x}^{\frac{ls}{1+2s}}h}_{L^2} \\
	&\quad +  \Norm{\abs{D_v}^{(l+2)s}f}^{\frac{1}{1+2s + ls}}_{L^2}\Norm{\abs{D_x}^{\frac{ls}{1+2s}}h}^{1 + \frac{(l+2)s}{1+2s+sl}}_{L^2} +\Norm{\abs{D_t}^{\frac{ls}{2s}}\partial_t f}_{L^2}\Norm{\abs{D_t}^{\frac{ls}{2s}} h}_{L^2} \\
	&\quad+ \sum_{j =1}^l \Norm{\abs{D_v}^{(j+1)s} \abs{D_x}^{\frac{(l+1-j)s}{1+2s}} f}_{L^2}\Norm{\abs{D_v}^{ls}h}^{\frac{j-1}{l}}_{L^2}\Norm{\abs{D_x}^{\frac{ls}{1+2s}}h}^{\frac{l+1-j}{l}}_{L^2}\\
	&\lesssim \norm{f}_{L^2}\norm{h}_{L^2} +\Norm{\abs{D_v}^{(l+2)s} f}_{L^2}\Norm{\abs{D_v}^{ls} h}_{L^2} +  \Norm{\abs{D_v}^{(l+2)s}f}_{L^2}\Norm{\abs{D_x}^{\frac{ls}{1+2s}}h}_{L^2} \\
	&\quad +  \Norm{\abs{D_v}^{(l+2)s}f}^{\frac{1}{1+2s+ls}}_{L^2}\Norm{\abs{D_x}^{\frac{ls}{1+2s}}h}^{1+\frac{(l+2)s}{1+2s+ls}}_{L^2} + \Norm{\abs{D_t}^{\frac{ls}{2s}}\partial_t f}_{L^2}\Norm{\abs{D_t}^{\frac{ls}{2s}} h}_{L^2} \\
	&\quad+ \Big(\Norm{\abs{D_v}^{ls}h}_{L^2} + \Norm{\abs{D_x}^{\frac{ls}{1+2s}}h}_{L^2}\Big) \sum_{j =1}^l \Norm{\abs{D_v}^{(j+1)s} \abs{D_x}^{\frac{(l+1-j)s}{1+2s}} f}_{L^2},
\eal
where in the second last inequality we again used Hölder in Fourier and for the last line we used Young's inequality. Note that the last sum can be absorbed on the left hand side of \eqref{eq:aux0_l} eventually. 

For $I_1$ in \eqref{eq:aux0_l} we  Fourier-transform $(t, x, v) \to (\eta, k, \xi)$ so that we get
\bals
	I_1 &= -  \left\langle\left\{\left(\delta  + \abs{D_v}^{2(l+1)} + \abs{D_x}^{\frac{2(l+1)}{1+2s}} + \sum_{j =1}^l \abs{D_v}^{2j} \abs{D_x}^{\frac{2l + 2 - 2j}{1+2s}}\right)^s+ \abs{D_t}^l\partial_t\right\}  f, v \cdot \nabla_x f  \right\rangle\\
	&=  - \left\langle\left\{\left(\hat \delta +\abs{\xi}^{2(l+1)} + \abs{k}^{\frac{2(l+1)}{1+2s}} +\sum_{j =1}^l \abs{\xi}^{2j}\abs{k}^{\frac{2l+2-2j}{1+2s}}\right)^s + \abs{\eta}^{l+1}\right\} {\hat f}, k_i \partial_{\xi_i}\hat f  \right\rangle\\
	&= 2s  \Bigg\langle\left(\hat \delta +\abs{\xi}^{2(l+1)} + \abs{k}^{\frac{2(l+1)}{1+2s}} +\sum_{j =1}^l \abs{\xi}^{2j}\abs{k}^{\frac{2l+2-2j}{1+2s}}\right)^{s-1} \\
	&\qquad\qquad \times \left((l+1) \abs{\xi}^{2l}  + \sum_{j = 1}^l  j\abs{k}^{\frac{2l+2-2j}{1+2s}}\abs{\xi}^{2j-2}\right)\xi_i {\hat f}, k_i \hat f  \Bigg\rangle \\
	&\quad+ \left\langle \left\{\left(\hat \delta +\abs{\xi}^{2(l+1)} + \abs{k}^{\frac{2(l+1)}{1+2s}} +\sum_{j =1}^l \abs{\xi}^{2j}\abs{k}^{\frac{2l+2-2j}{1+2s}}\right)^{s} +\abs{\eta}^{l+1}\right\}\partial_{\xi_i}{\hat f}, k_i \hat f  \right\rangle.
\eals
Thus
\bals
	I_1 &= s\Bigg\langle\left(\hat \delta +\abs{\xi}^{2(l+1)} + \abs{k}^{\frac{2(l+1)}{1+2s}} +\sum_{j =1}^l \abs{\xi}^{2j}\abs{k}^{\frac{2l+2-2j}{1+2s}}\right)^{s-1}\\
	&\qquad \times \left((l+1) \abs{\xi}^{2l}  + \sum_{j = 1}^l  j\abs{k}^{\frac{2l+2-2j}{1+2s}}\abs{\xi}^{2j-2}\right)\xi_i {\hat f}, k_i \hat f  \Bigg\rangle\\
	&\lesssim \int {\hat f} \hat f\cdot \left(\abs{k}^{\frac{2(l+1)}{1+2s}} +\sum_{j =1}^{l+1} \abs{\xi}^{2j}\abs{k}^{\frac{2l+2-2j}{1+2s}}\right)^{s-1}  \left(\abs{\xi}^{2l}  + \sum_{j = 1}^l \abs{k}^{\frac{2l+2-2j}{1+2s}}\abs{\xi}^{2j-2}\right) \abs{\xi}\abs{k} \dd z.
\eals
We claim that we can bound
\bal\label{eq:aux_I1_l}
	I_1 \lesssim  \int {\hat f} \hat f \cdot \sum_{j = 1}^l \abs{\xi}^{2(j-1)s + s}\abs{k}^{\frac{2ls+ 3s - 2(j-1)s}{1+2s}} \dd z +  \int {\hat f} \hat f \cdot \abs{\xi}^{2ls + s}\abs{k}^{\frac{3s}{1+2s}} \dd z.
\eal
Indeed, if $\abs{\xi} \sim \abs{k}^{\frac{1}{1+2s}}$ then one can check that the homogeneity is kept. 
Else assume first that $\abs{\xi}\ll \abs{k}^{\frac{1}{1+2s}}$. Then we have
\beqs
	I_1 \lesssim \int {\hat f} \hat f \cdot \abs{\xi} \sum_{j = 1}^l \abs{k}^{\frac{2l+2-2j}{1+2s}}\abs{\xi}^{2j-2}\abs{k}^{\frac{2(l+1)(s-1)}{1+2s}+1}\dd z = \int {\hat f} \hat f \cdot\sum_{j = 1}^l \abs{k}^{\frac{2ls-2j+4s+1}{1+2s}}\abs{\xi}^{2j-1}\dd z.
\eeqs
Comparing the exponents of $\abs{\xi}$ and $\abs{k}$ gives $2l$ conditions that need to be satisfied,
\beqs
	2j -1 \geq 2(j-1)s +s, \qquad 2ls + 4s - 2j + 1 \leq 2ls - 2(j-1)s + 3s, \quad \forall j \in \{1, \dots, l\},
\eeqs
which holds since $s \leq 1$. 
Now assume on the other hand that $\abs{k}^{\frac{1}{1+2s}}\ll \abs{\xi}$. Then we have
\beqs
	I_1 \lesssim \int {\hat f} \hat f \cdot\abs{\xi}^{2l + 1 + 2(l+1)(s-1)} \abs{k}\dd z =  \int {\hat f} \hat f \cdot\abs{\xi}^{2ls - 1 + 2s} \abs{k}\dd z.
\eeqs
Thus we need
\beqs
	2ls + 2s - 1 \leq 2ls+s, \qquad 1 \geq \frac{3s}{1+2s},
\eeqs
which both clearly holds for $s \leq 1$.

From \eqref{eq:aux_I1_l} we further estimate 
\bal\label{eq:aux2_l}
	I_1 &\lesssim  \int {\hat f} \hat f \cdot\sum_{j = 1}^{l+1} \abs{\xi}^{2(j-1)s + s}\abs{k}^{\frac{2ls+ 3s - 2(j-1)s}{1+2s}} \dd z \\
	&\lesssim  \sum_{j = 1}^{l+1}  \norm{\abs{D_v}^{js}\abs{D_x}^{\frac{ls + 2s- js}{1+2s}} f}_{L^2}\norm{\abs{D_v}^{(j-1)s} \abs{D_x}^{\frac{ls+3s - js}{1+2s}} f}_{L^2}.
\eal
For each $j \in \{1, \dots, l\}$ we will look for parameters $\theta_j \in (0, 1)$ such that we can express the right hand side of \eqref{eq:aux2_l} in terms of
\beqs
	\norm{ \big(\abs{D_v}^s \abs{D_x}^{\frac{(l+1)s}{1+2s}}\big)^{1-\theta_j}\big(\abs{D_v}^{(l+1)s} \abs{D_x}^{\frac{s}{1+2s}}\big)^{\theta_j} f}_{L^2},
\eeqs
which we bound using Hölder in Fourier:
 \bals
	&\norm{ \big(\abs{D_v}^s \abs{D_x}^{\frac{(l+1)s}{1+2s}}\big)^{1-\theta_j}\big(\abs{D_v}^{(l+1)s} \abs{D_x}^{\frac{s}{1+2s}}\big)^{\theta_j} f}_{L^2} \\
	&\quad\leq\norm{ \abs{D_v}^s \abs{D_x}^{\frac{(l+1)s}{1+2s}}f}_{L^2}^{1-\theta_j}\norm{\abs{D_v}^{(l+1)s} \abs{D_x}^{\frac{s}{1+2s}} f}_{L^2}^{\theta_j}.
\eals
Then we want to use \eqref{eq:step3_l} and \eqref{eq:step2_l} in order to get a right hand side in terms of our source term, 
\bal\label{eq:aux-holder-in-fourier}
	&\norm{\big(\abs{D_v}^s \abs{D_x}^{\frac{(l+1)s}{1+2s}}\big)^{1-\theta_j}\big(\abs{D_v}^{(l+1)s} \abs{D_x}^{\frac{s}{1+2s}}\big)^{\theta_j} f}_{L^2}\\
	 &\qquad\leq\norm{ \abs{D_v}^s \abs{D_x}^{\frac{(l+1)s}{1+2s}}f}^{1-\theta_j}_{L^2}\norm{\abs{D_v}^{(l+1)s} \abs{D_x}^{\frac{s}{1+2s}} f}^{\theta_j}_{L^2}\\
	&\qquad\lesssim \norm{\abs{D_x}^{\frac{(l+2)s}{1+2s}}f}^{\frac{1-\theta_j}{2}}_{L^2}\norm{\abs{D_x}^{\frac{ls}{1+2s}} h}^{\frac{1-\theta_j}{2}} \norm{\abs{D_v}^{(l+1)s} \abs{D_x}^{\frac{s}{1+2s}} f}_{L^2}^{\theta_j}\\
	&\qquad\lesssim \left(\norm{\abs{D_v}^{(l+2)s}f} + \norm{\abs{D_v}^{(l+2)s}f}_{L^2}^{\frac{1}{1+(l+2)s}}\norm{\abs{D_x}^{\frac{ls}{1+2s}} h}_{L^2}^{\frac{(l+2)s}{1+(l+2)s}}\right)^{\frac{1-\theta_j}{2}}\\
	&\qquad\qquad\times\norm{\abs{D_x}^{\frac{ls}{1+2s}} h}_{L^2}^{\frac{1-\theta_j}{2}} \norm{\abs{D_v}^{(l+1)s} \abs{D_x}^{\frac{s}{1+2s}} f}_{L^2}^{\theta_j}.
\eal

We now apply \eqref{eq:aux-holder-in-fourier} on each term in the right hand side of \eqref{eq:aux2_l}. 
For each $j \in \{2, \dots, l+1\}$ we write
\beqs
	\norm{\abs{D_v}^{(j-1)s} \abs{D_x}^{\frac{ls+3s - js}{1+2s}} f}_{L^2} = \norm{\big(\abs{D_v}^s \abs{D_x}^{\frac{(l+1)s}{1+2s}}\big)^{1-\theta_j}\big(\abs{D_v}^{(l+1)s} \abs{D_x}^{\frac{s}{1+2s}}\big)^{\theta_j} f}_{L^2},
\eeqs
where $\theta_j = \frac{j-2}{l}$. Then using \eqref{eq:aux-holder-in-fourier} and Young's inequality $ab \lesssim_{p, q} a^p + b^q$ with $\frac{1}{p} + \frac{1}{q} = 1$, we bound
\bals
	\sum_{j = 2}^{l+1}  &\norm{\abs{D_v}^{js}\abs{D_x}^{\frac{ls + 2s- js}{1+2s}} f}_{L^2}\norm{\abs{D_v}^{(j-1)s} \abs{D_x}^{\frac{ls+3s - js}{1+2s}} f}_{L^2} \\
	&\lesssim \sum_{j = 2}^{l+1} \left(\norm{\abs{D_v}^{(l+2)s}f}_{L^2} + \norm{\abs{D_v}^{(l+2)s}f}_{L^2}^{\frac{1}{1+(l+2)s}}\norm{\abs{D_x}^{\frac{ls}{1+2s}} h}_{L^2}^{\frac{(l+2)s}{1+(l+2)s}}\right)^{\frac{1-\theta_j}{2}}\\
	&\qquad\times\Norm{\abs{D_x}^{\frac{ls}{1+2s}} h}_{L^2}^{\frac{1-\theta_j}{2}} \norm{\abs{D_v}^{(l+1)s} \abs{D_x}^{\frac{s}{1+2s}} f}^{\theta_j}_{L^2}\norm{\abs{D_v}^{js}\abs{D_x}^{\frac{ls + 2s- js}{1+2s}} f}_{L^2}\\
	&\lesssim \norm{\abs{D_v}^{(l+2)s}f}_{L^2}^{\frac{4}{3}} \norm{\abs{D_x}^{\frac{ls}{1+2s}} h}_{L^2}^{\frac{2}{3}}  + \norm{\abs{D_v}^{(l+2)s}f}_{L^2}^{\frac{2}{1+(l+2)s}}\norm{\abs{D_x}^{\frac{ls}{1+2s}} h}_{L^2}^{\frac{2(l+2)s}{1+(l+2)s}} \\
	&\quad +\sum_{j = 2}^{l+1} \Bigg[\norm{\abs{D_v}^{(l+1)s} \abs{D_x}^{\frac{s}{1+2s}} f}_{L^2}^{\frac{8\theta_j}{5+3\theta_j}}\norm{\abs{D_x}^{\frac{ls}{1+2s}} h}_{L^2}^{\frac{2(1-\theta_j)}{5+3\theta_j}} \norm{\abs{D_v}^{js}\abs{D_x}^{\frac{ls + 2s- js}{1+2s}} f}_{L^2}^{\frac{8}{5+3\theta_j}}\\
	&\quad +\norm{\abs{D_v}^{(l+1)s} \abs{D_x}^{\frac{s}{1+2s}} f}_{L^2}^{\frac{4\theta_j}{3+\theta_j}}\norm{\abs{D_v}^{js}\abs{D_x}^{\frac{ls + 2s- js}{1+2s}} f}_{L^2}^{\frac{4}{3+\theta_j}}\norm{\abs{D_x}^{\frac{ls}{1+2s}} h}^{\frac{2(1-\theta_j)}{3+\theta_j}}_{L^2}\Bigg] \\
	&\lesssim \norm{\abs{D_v}^{(l+2)s}f}^{\frac{4}{3}}_{L^2}\norm{\abs{D_x}^{\frac{ls}{1+2s}} h}_{L^2}^{\frac{2}{3}}  + \norm{\abs{D_v}^{(l+2)s}f}^{\frac{2}{1+(l+2)s}}_{L^2}\norm{\abs{D_x}^{\frac{ls}{1+2s}} h}_{L^2}^{\frac{2(l+2)s}{1+(l+2)s}} \\
	&\quad +\sum_{j = 2}^{l+1} \Bigg[\norm{\abs{D_v}^{(l+1)s} \abs{D_x}^{\frac{s}{1+2s}} f}_{L^2}^{\frac{16\theta_j}{1+7\theta_j}}\norm{\abs{D_x}^{\frac{ls}{1+2s}} h}^{\frac{2(1-\theta_j)}{1+7\theta_j}}_{L^2} \\
	&\quad + \norm{\abs{D_x}^{\frac{ls}{1+2s}} h}^{\frac{2(1-\theta_j)}{9-\theta_j}}_{L^2} \norm{\abs{D_v}^{js}\abs{D_x}^{\frac{ls + 2s- js}{1+2s}} f}^{\frac{16}{9-\theta_j}}_{L^2} \\
	&\quad+\norm{\abs{D_v}^{(l+1)s} \abs{D_x}^{\frac{s}{1+2s}} f}_{L^2}^{\frac{8\theta_j}{1+3\theta_j}}\norm{\abs{D_x}^{\frac{ls}{1+2s}} h}^{\frac{2(1-\theta_j)}{1+3\theta_j}}_{L^2} \\
	&\quad+ \norm{\abs{D_v}^{js}\abs{D_x}^{\frac{ls + 2s- js}{1+2s}} f}_{L^2}^{\frac{8}{5-\theta_j}}\norm{\abs{D_x}^{\frac{ls}{1+2s}} h}^{\frac{2(1-\theta_j)}{5-\theta_j}}_{L^2}\Bigg] \\
	&\lesssim \varepsilon \norm{\abs{D_v}^{(l+2)s}f}^2_{L^2}  + \varepsilon \sum_{j =2}^{l+1} \norm{\abs{D_v}^{js}\abs{D_x}^{\frac{ls + 2s- js}{1+2s}} f}^2_{L^2}+ C_\varepsilon \norm{\abs{D_x}^{\frac{ls}{1+2s}} h}_{L^2}^2,
\eals
for some $\varepsilon \in (0, 1)$. (Note the second inequality uses Young's inequality twice, once with $p_1 = \frac{8}{3(1-\theta_j)}$ so that $q_1 = \frac{8}{5+3\theta_j}$ and once with $p_2 = \frac{4}{1-\theta_j}$ so that $q_2= \frac{4}{3+\theta_j}$.)


Finally, the only remaining term is when $j = 1$ in \eqref{eq:aux2_l}, which we estimate using \eqref{eq:step3_l} and \eqref{eq:step2_l}
\bals
	 \norm{\abs{D_v}^{s}\abs{D_x}^{\frac{(l+1)s}{1+2s}} f}_{L^2}\norm{\abs{D_x}^{\frac{(l+2)s}{1+2s}} f}_{L^2} &\lesssim \norm{\abs{D_x}^{\frac{ls}{1+2s}} h}_{L^2}^{\frac{1}{2}} \norm{\abs{D_x}^{\frac{(l+2)s}{1+2s}} f}_{L^2}^{\frac{3}{2}} \\
	 &\lesssim \norm{\abs{D_x}^{\frac{ls}{1+2s}} h}_{L^2}^{\frac{1}{2}} \norm{\abs{D_v}^{(l+2)s} f}_{L^2}^{\frac{3}{2}} \\
	 &\quad+  \norm{\abs{D_x}^{\frac{ls}{1+2s}} h}_{L^2}^{\frac{1}{2}+\frac{3(l+2)s}{2(1+(l+2)s)}}  \norm{\abs{D_v}^{(l+2)s} f}_{L^2}^{\frac{3}{2(1+(l+2)s)}}\\
	 &\lesssim  \varepsilon \norm{\abs{D_v}^{(l+2)s}f}_{L^2}^2 + C_\varepsilon  \norm{\abs{D_x}^{\frac{ls}{1+2s}} h}_{L^2}^2.
\eals

Therefore, we have shown
\beq\label{eq:aux3_l}
	I_1 \lesssim \varepsilon \norm{\abs{D_v}^{(l+2)s}f}^2_{L^2} + \varepsilon \sum_{j =2}^{l+1}  \norm{\abs{D_v}^{js}\abs{D_x}^{\frac{ls + 2s- js}{1+2s}} f}_{L^2}^2 + C_\varepsilon  \norm{\abs{D_x}^{\frac{ls}{1+2s}} h}_{L^2}^2.
\eeq
Note that for each $j \in \{1, \dots, l\}$ we can eventually absorb the term $\Norm{\abs{D_v}^{(j+1)s}\abs{D_x}^{\frac{ls + s- js}{1+2s}} f}_{L^2}$ on the left hand side of \eqref{eq:aux0_l}.

We combine \eqref{eq:aux0_l}, \eqref{eq:aux0.5-l}, \eqref{eq:aux1_l} and \eqref{eq:aux3_l} to get
\bals
	 \norm{\abs{D_v}^{(l+2)s}f}^2_{L^2} + \norm{\abs{D_t}^{\frac{ls}{2s}}\partial_t f}^2_{L^2}&+ \sum_{j =0}^l  \norm{\abs{D_v}^{(j+1)s}\abs{D_x}^{\frac{ls + s- js}{1+2s}} f}^2_{L^2} + \norm{\abs{D_t}^{\frac{l+1}{2}}\abs{D_v}^s f}^2_{L^2}\\
	 &\lesssim \norm{f}^2_{L^2} + \norm{h}^2_{L^2} +  \norm{\abs{D_v}^{l s} h}^2_{L^2}+ \Norm{\abs{D_x}^{\frac{ls}{1+2s}}h}_{L^2}^2 +\norm{\abs{D_t}^{\frac{ls}{2s}} h}_{L^2}^2.
\eals
Thus, by \eqref{eq:step2_l} we have
\bals
	\norm{\abs{D_x}^{\frac{(l+2)s}{1+2s}} f}^2_{L^2} \lesssim \norm{f}^2_{L^2} + \norm{h}^2_{L^2} +  \Norm{\abs{D_v}^{l s} h}^2_{L^2}+ \Norm{\abs{D_x}^{\frac{ls}{1+2s}}h}_{L^2}^2 +\Norm{\abs{D_t}^{\frac{ls}{2s}} h}_{L^2}^2.
\eals
We conclude
\bal\label{eq:conclusio-1}
	 \norm{\abs{D_v}^{(l+2)s}f}^2_{L^2}+ &\norm{\abs{D_t}^{\frac{ls}{2s}}\partial_t f}^2_{L^2}+\Norm{\abs{D_x}^{\frac{(l+2)s}{1+2s}} f}^2_{L^2}\\
	 &+ \sum_{j =0}^l  \Norm{\abs{D_v}^{(j+1)s}\abs{D_x}^{\frac{ls + s- js}{1+2s}} f}^2_{L^2}  +\norm{\abs{D_t}^{\frac{l+1}{2}}\abs{D_v}^s f}^2_{L^2}\\
	  &\qquad\lesssim \norm{f}^2_{L^2} + \norm{h}^2_{L^2} +  \Norm{\abs{D_v}^{l s} h}^2_{L^2}+ \Norm{\abs{D_x}^{\frac{ls}{1+2s}}h}_{L^2}^2 +\Norm{\abs{D_t}^{\frac{ls}{2s}} h}_{L^2}^2.
\eal

\textit{Step 2: Local estimates.}
Let $0 < r < R$ and let $\delta = R -r > 0$ be from the statement of the theorem. With no loss in generality set $z_0 = (0, 0, 0)$ and assume $f$ solves \eqref{eq:kolmogorov_energy} in $Q_R(z_0)$. We introduce smooth functions $\theta = \theta(v) \in C_c^\infty(\R^{d})$ and $\eta = \eta(t, x) \in C_c^\infty(\R^{1+d})$ such that $\theta = 1$ in $B_r$ and $\theta = 0$ outside $B_R$, such that $\eta = 1$ in $(-r^{2s}, 0) \times B_{r^{1+2s}}$ and $\eta = 0$ outside $(-R^{2s}, 0) \times B_{R^{1+2s}}$, and so that $\abs{D_v} \theta \lesssim \delta^{-1}, \abs{D_x}^{\frac{1}{1+2s}} \eta \lesssim \delta^{-1}, \abs{D_t}^{\frac{1}{2s}} \eta\lesssim \delta^{-1}$.
Then we let
\beqs
	g = f \theta\eta,
\eeqs
so that $g$ satisfies
\beq\label{eq:local_txv}
	\mathcal T g + \abs{D_v}^{2s} g = f \theta\big(\mathcal T \eta\big) + h \theta\eta + \abs{D_v}^{2s} g - \big(\abs{D_v}^{2s} f\big) \theta\eta. 
\eeq
in $\R^{1+2d}$. The final two terms form a commutator like in Lemma \ref{lem:commutator}.

\textit{Step 3-(i): The base case.}
We start with $l = 0$. 
The global case \eqref{eq:conclusio-1} for $l = 0$ gives
\beqs
	\norm{\abs{D_v}^{2s} g}_{L^2}+\Norm{\partial_t g}_{L^2} +\norm{\abs{D_x}^{\frac{2s}{1+2s}} g}_{L^2} +  \norm{\abs{D_v}^{s}\abs{D_x}^{\frac{s}{1+2s}} g}^2_{L^2} \lesssim \norm{H}_{L^2}.
\eeqs

It remains to bound the right hand side. 
We have by the standard energy estimate (see \cite[Proposition 9]{JGCM} for $s=1$ and \cite[Lemma 6.2]{IS} or \cite[Proposition 3.3]{AL} for the fractional case $s \in (0, 1)$)
\beqs
	\norm{\abs{D_v}^s f}_{L^2(Q_r)} \lesssim \norm{h}_{L^2(Q_R)} + \delta^{-s}\norm{f}_{L^2_{t, x}L^\infty_v(Q_R^v\times \R^d)}. 
\eeqs
Moreover, we see
\beqs
	\norm{f \theta\mathcal T \eta}_{L^2} \lesssim \delta^{-2s}\norm{f}_{L^2(Q_R)}.
\eeqs
The remaining part is a commutator of the form 
\beqs	
	 \abs{D_v}^{2s} g - \big(\abs{D_v}^{2s} f \big) \theta\eta = \eta(t, x) \int_{\R^d} f(w) \frac{\big(\theta(v) - \theta(w)\big)}{\abs{v-w}^{d+2s}} \dd w.
\eeqs	 
We write
\beqs
	h_1 =\eta \int_{\R^d\setminus B_r(v)} f(w) \frac{\big(\theta(v) - \theta(w)\big)}{\abs{v-w}^{d+2s}} \dd w,
\eeqs
and
\beqs
	\tilde h_2 =\eta \int_{B_r(v)} f(w) \frac{\big(\theta(v) - \theta(w)\big)}{\abs{v-w}^{d+2s}} \dd w.
\eeqs
Then we get for any $v \in \R^d$
\beqs
	\norm{h_1}_{L^2} \lesssim \delta^{-2s}\norm{f}_{L^2( \mathcal I \times \Omega_x; L^\infty(\R^d))}.
\eeqs
Moreover, by Lemma \ref{lem:commutator}, we write $h_2 = h_{22} + \abs{D_v}^s h_{23}$ for some $h_{22}, h_{23}$ that satisfy
\beqs
	\norm{h_{22}}_{L^2} \lesssim \delta^{-s}\norm{f}_{L^2_{t, x}H^s_v(Q_R)}, \qquad \norm{h_{23}}_{L^2} \lesssim \delta^{-s}\norm{f}_{L^2_{t, x, v}(Q_R)}.
\eeqs
Thus
\bals
	\norm{H}_{L^2} &\lesssim \Big(\delta^{-s}\norm{f}_{L^2_{t, x}H^s_v(Q_R)} + \delta^{-2s}\norm{f}_{L^2( \mathcal I \times \Omega_x; L^\infty(\R^d))}\Big) +\norm{h}_{L^2(Q_R)} \\
	&\lesssim \delta^{-2s}\norm{f}_{L^2( \mathcal I \times \Omega_x; L^\infty(\R^d))} + \norm{h}_{L^2(Q_R)}.
\eals

Finally, since $g = f$ in $Q_r$ we conclude
\bals
	\Norm{\partial_t f}_{L^2(Q_r)} + \Norm{\abs{D_v}^{2s} f}_{L^2(Q_r)} + \Norm{\abs{D_x}^{\frac{2s}{1+2s}} f}_{L^2(Q_r)} &+  \Norm{\abs{D_v}^{s}\abs{D_x}^{\frac{s}{1+2s}} f}_{L^2(Q_r)}\\
	&\lesssim \delta^{-2s}\norm{f}_{L^2( \mathcal I \times \Omega_x; L^\infty(\R^d))} + \norm{h}_{L^2(Q_R)}.
\eals

\textit{Step 3-(ii): The general case.}
Now let $l \in \N_0$.
We proceed by induction. Let $l \geq 1$ and assume the conclusion holds for $l -1$, that is we have
\bal\label{eq:ind-hypo}
	&\norm{\abs{D_t}^{\frac{(l+1)s}{2s}} f}_{L^2(Q_r)} + \norm{\abs{D_v}^{(l+1)s} f}_{L^2(Q_r)}+\norm{\abs{D_x}^{\frac{(l+1)s}{1+2s}} f}_{L^2(Q_r)} \\
	&\quad+ \norm{\abs{D_t}^{\frac{ls}{2s}} \abs{D_v}^sf}_{L^2(Q_r)} +\sum_{j =0}^{l-1}  \norm{\abs{D_v}^{(j+1)s}\abs{D_x}^{\frac{ls + s- js}{1+2s}} f}^2_{L^2(Q_r)} \\
	&\qquad\lesssim  \delta^{-(l+1)s}\norm{f}_{L^\infty(\R^{1+2d})} + \delta^{-(l+1)s} \norm{f}_{L^2(Q_{2r})} + \delta^{-(l-1)s} \norm{h}_{L^\infty(\R^{1+2d})}\\
	&\qquad\quad+ \delta^{-{(l-1)s}} \left(\norm{\abs{D_v}^{(l-1)s}h}_{L^2(Q_R)}+  \norm{\abs{D_x}^{\frac{(l-1)s}{1+2s}}h}_{L^2(Q_R)}+  \norm{\abs{D_t}^{\frac{l-1}{2}}h}_{L^2(Q_R)}\right),
\eal
where $\delta = R-r$.

From \eqref{eq:conclusio-1} we have
\bals
	\norm{\abs{D_v}^{(l+2)s} g}_{L^2} +&\norm{\abs{D_t}^{\frac{l}{2}}\partial_t g}_{L^2}  + \norm{\abs{D_x}^{\frac{(l+2)s}{1+2s}} g}_{L^2} \\
	&+ \sum_{j =0}^l  \norm{\abs{D_v}^{(j+1)s}\abs{D_x}^{\frac{ls + s- js}{1+2s}} g}^2_{L^2} +\norm{\abs{D_t}^{\frac{(l+1)s}{2s}} \abs{D_v}^sg}_{L^2} \\
	&\qquad\lesssim  \norm{\abs{D_v}^{ls}H}_{L^2}+  \norm{\abs{D_x}^{\frac{ls}{1+2s}}H}_{L^2} +  \norm{\abs{D_t}^{\frac{l}{2}}H}_{L^2}.
\eals

To estimate the right hand side of \eqref{eq:local_txv}, we compute
\bals
	\abs{D}^{ls} H 
	&=\abs{D}^{ls}\big(f\theta\mathcal T \eta\big) +\abs{D}^{ls} (h \eta \theta)\\
	&\qquad+  \abs{D}^{ls}\abs{D}^{2s}_v   g - \big( \abs{D_v}^{(l+2)s} f\big) \theta\eta  -  \abs{D}^{ls}\big(\abs{D_v}^{2s} f \cdot \theta\eta\big) + \big(\abs{D_v}^{(l+2)s} f\big) \theta\eta.
\eals
All of these terms have the form of a non-local commutator as appears in Lemma \ref{lem:commutator}. We will correspondingly bound them employing this lemma, by interchanging the spatial and the temporal variable with the velocity variable, where applicable. 
First by Lemma \ref{lem:commutator}, and then the induction hypothesis \eqref{eq:ind-hypo} 
\bals
	 &\norm{\abs{D_v}^{ls}(f \theta\mathcal T\eta)}_{L^2}+  \Norm{\abs{D_x}^{\frac{ls}{1+2s}}(f \theta\mathcal T\eta)}_{L^2} +  \norm{\abs{D_t}^{\frac{l}{2}}(f \theta\mathcal T\eta)}_{L^2} \\
	 &\lesssim \delta^{-{2s}} \Big(\Norm{\abs{D_v}^{ls}f}_{L^2(Q_{2r})} + \Norm{\abs{D_x}^{\frac{ls}{1+2s}}f}_{L^2(Q_{2r})} +\Norm{\abs{D_t}^{\frac{l}{2}}f}_{L^2(Q_{2r})}\Big) \\
	 &\quad + \delta^{-(l+2)s} \norm{f}_{L^\infty(\R^{1+2d})} + \delta^{2-2s} \norm{\mathcal T \eta }_{C^2} \norm{f}_{L^2(Q_{2r})} + \delta^{1-s} \norm{\mathcal T \eta }_{C^1} \norm{f}_{L^2_{t, v}H_x^{(l-1)s}(Q_{2r})}  \\
	  &\quad+ \delta^{2-2s} \norm{\theta }_{C^2} \norm{f}_{L^2(Q_{2r})} + \delta^{1-s} \norm{\mathcal T \eta }_{C^1} \norm{f}_{L^2_{x, v}H_t^{(l-1)s}(Q_{2r})}  + \delta^{1-s} \norm{\theta }_{C^1} \norm{f}_{L^2_{t, x}H_v^{(l-1)s}(Q_{2r})} \\
	 &\lesssim \delta^{-{(l+2)s}} \norm{f}_{L^\infty(\R^{1+2d})} + \delta^{-(l+2)s} \norm{f}_{L^2(Q_{2r})}\\
	 &\quad+\delta^{-{(l+1)s}} \left(\norm{\abs{D_v}^{(l-2)s}h}_{L^2(Q_R)}+  \norm{\abs{D_x}^{\frac{(l-2)s}{1+2s}}h}_{L^2(Q_R)}+  \norm{\abs{D_t}^{\frac{l-2}{2}}h}_{L^2(Q_R)}+ \norm{h}_{L^\infty(\R^{1+2d})}\right).
\eals
Second, we bound using the commutator estimates of Lemma \ref{lem:commutator} and the induction hypothesis \eqref{eq:ind-hypo}
\bal\label{eq:aux-comms}
	&\norm{ \abs{D_v}^{(l+2)s} g - \big(\abs{D_v}^{(l+2)s} f\big) \theta}_{L^2(\R^{1+2d})} \\
	&\quad\lesssim  \left(\delta^{-2s} \norm{f}_{L^2(Q_R)} + \delta^{-(l+2)s} \norm{f}_{L^2( \mathcal I \times \Omega_x; L^\infty(\R^d))} + \delta^{-s} \norm{\abs{D_v}^{(l+1)s} f}_{L^2(Q_{2r})}\right)\\
	&\quad\lesssim \delta^{-(l+2)s}\norm{f}_{L^\infty(\R^{1+2d})} + \delta^{-(l+2)s} \norm{f}_{L^2(Q_{2r})}+\delta^{-{(l+1)s}}\norm{h}_{L^\infty(\R^{1+2d})}\\
	&\qquad+   \delta^{-{(l+1)s}}\left(\norm{\abs{D_v}^{(l-1)s}h}_{L^2(Q_R)}+  \norm{\abs{D_x}^{\frac{(l-1)s}{1+2s}}h}_{L^2(Q_R)}+  \Norm{\abs{D_t}^{\frac{l-1}{2}}h}_{L^2(Q_R)}\right).
\eal

Third, the next term 
\[
	 \abs{D_v}^{ls}\big(\abs{D_v}^{2s} f \cdot \theta\eta\big) - \big(\abs{D_v}^{(l+2)s} f\big) \theta\eta
\]
is again a commutator, so that using Lemma \ref{lem:commutator} and the induction hypothesis \eqref{eq:ind-hypo}, we find for the close part
\bals
	&\norm{ \int_{\abs{v-w} < \delta} \abs{D_v}^{2s} f(w)  \frac{(\theta(v) - \theta(w)}{\abs{v-w}^{d+ls}} \dd w }_{L^2(Q_{2r})} \\
	&\qquad\leq C \delta^{-2s}\norm{\abs{D_v}^{2s} f}_{L^2(Q_{2r})} + C \delta^{-s} \norm{\abs{D_v}^{2s} f}_{H^{(l-1)s}(Q_{2r})} \\
	&\qquad\leq C   \delta^{-(l+2)s}\norm{f}_{L^\infty(\R^{1+2d})} + C\delta^{-(l+2)s} \norm{f}_{L^2(Q_{2r})} +  \delta^{-{(l-1)s}} \norm{h}_{L^\infty(\R^{1+2d})}\\
	&\qquad\quad+ C\delta^{-{(l+1)s}} \Big(\Norm{\abs{D_v}^{(l-1)s}h}_{L^2(Q_R)}+  \Norm{\abs{D_x}^{\frac{(l-1)s}{1+2s}}h}_{L^2(Q_R)}+  \Norm{\abs{D_t}^{\frac{l-1}{2}}h}_{L^2(Q_R)}\Big).
\eals
For the far part, we use again the induction hypothesis \eqref{eq:ind-hypo} and bound
\begin{align*}
	&\left(\int_{Q_{2r}} \left( \int_{\abs{v-w} > \delta} \abs{D_v}^{2s} f(w)  \frac{(\theta(v) - \theta(w)}{\abs{v-w}^{d+ls}} \dd w \right)^2\dd v\right)^{\frac{1}{2}}\\
	&\qquad\leq \left(\int_{Q_{2r}}\left( \int_{\abs{v-w} > \delta} \left(\abs{D_v}^{2s} f(w) -\abs{D_v}^{2s} f(v)\right) \frac{(\theta(v) - \theta(w)\big)}{\abs{v-w}^{d+ls}} \dd w \right)^2\dd v\right)^{\frac{1}{2}} \\
	&\qquad\qquad+ C \norm{\theta}_{C^1}\delta^{-ls+1} \norm{\abs{D_v}^{2s} f}_{L^2(Q_{2r})}\\
	&\qquad\leq C \norm{\theta}_{C^1} \norm{\abs{D_v}^{(l+1)s} f}^2_{L^2(Q_{2r})} +  C \norm{\theta}_{C^1}\rho^{-ls+1} \norm{\abs{D_v}^{2s} f}_{L^2(Q_{2r})}\\
	&\qquad\lesssim \delta^{-{(l+2)s}} \norm{f}_{ L^\infty(\R^{1+2d})}+ \delta^{-{(l+2)s}} \norm{f}_{L^2(Q_{2r})}+ \delta^{-{(l-1)s}} \norm{h}_{L^\infty(\R^{1+2d})}\\
	 &\qquad\qquad+\delta^{-{(l+1)s}} \left(\norm{\abs{D_v}^{(l-2)s}h}_{L^2(Q_R)}+  \norm{\abs{D_x}^{\frac{(l-2)s}{1+2s}}h}_{L^2(Q_R)}+  \norm{\abs{D_t}^{\frac{l-2}{2}}h}_{L^2(Q_R)}\right) .
\end{align*}
Finally, the estimates are similar for the derivatives in space and time. We find
\[
	\abs{D_x}^{\frac{ls}{1+2s}}\abs{D_v}^{2s} g - \abs{D_x}^{\frac{ls}{1+2s}}\big(\abs{D_v}^{2s} f \cdot \theta\eta\big) =  
	\abs{D_x}^{\frac{ls}{1+2s}}\left( \int_{\R^d} f(t, x, w) \frac{\theta(w) - \theta(v)}{\abs{v-w}^{d+2s}} \dd w \cdot\eta(t, x) \right)
\]	
and
\[
	\abs{D_t}^{\frac{ls}{2s}} \abs{D_v}^{2s} g -  \abs{D_t}^{\frac{ls}{2s}}\big(\abs{D_v}^{2s} f \cdot \theta\eta\big) =  
	\abs{D_t}^{\frac{ls}{2s}}\left( \int_{\R^d} f(t, x, w) \frac{\theta(w) - \theta(v)}{\abs{v-w}^{d+2s}} \dd w \cdot\eta(t, x) \right).
\]
We use the bound we know on the commutator of order $2s$ in velocity \eqref{eq:aux-comms}, and the error term will be a commutator in space, which we can bound just like in Lemma \ref{lem:commutator} upon replacing the velocity variable by the spatial variable:
\bals
	&\norm{\abs{D_x}^{\frac{ls}{1+2s}}\abs{D_v}^{2s} g -   \abs{D_x}^{\frac{ls}{1+2s}}\big(\abs{D_v}^{2s} f \cdot \theta\eta\big)}_{L^2} \\
	&\leq C  \left(\delta^{-2s} \norm{\abs{D_x}^{\frac{ls}{1+2s}} f}_{L^2(Q_R)} + \delta^{-2s} \norm{\abs{D_x}^{\frac{ls}{1+2s}}f}_{L^2( \mathcal I \times \Omega_x; L^\infty(\R^d))} + \delta^{-s} \norm{\abs{D_x}^{\frac{ls}{1+2s}}\abs{D_v}^{s} f}_{L^2(Q_{2r})}\right)\\
	&\quad+C \left( \delta^{-2s} \norm{f}_{L^2(Q_R)} + \delta^{-(2+l)s} \norm{f}_{L^2_{t}(\mathcal I)L^\infty_{x, v}(\R^{2d})} +\delta^{-s} \norm{f}_{L^2_{t, v}H^{(l-1)s}_x(Q_{2r})}\right) \\
	&\quad+ C \left(\delta^{-(l+2)s} \norm{f}_{L^2( \mathcal I \times \Omega_v; L^\infty_x(\R^d))} + \delta^{-s} \norm{\abs{D_x}^{\frac{(l+1)s}{1+2s}} f}_{L^2(Q_{2r})}\right).
\eals
We then use the induction hypothesis \eqref{eq:ind-hypo}, so that
\bals
	&\norm{\abs{D_x}^{\frac{ls}{1+2s}}\abs{D_v}^{2s} g -   \abs{D_x}^{\frac{ls}{1+2s}}\big(\abs{D_v}^{2s} f \cdot \theta\eta\big)}_{L^2} \\
	&\qquad\leq  C\delta^{-(l+2)s}\norm{f}_{L^\infty(\R^{1+2d})}+C\delta^{-(l+2)s}\norm{f}_{L^2(Q_{2r})} + \delta^{-{(l-1)s}} \norm{h}_{L^\infty(\R^{1+2d})}\\
	&\qquad\qquad+ C \delta^{-{(l+1)s}} \left(\norm{\abs{D_v}^{(l-1)s}h}_{L^2(Q_R)}+  \norm{\abs{D_x}^{\frac{(l-1)s}{1+2s}}h}_{L^2(Q_R)}+  \Norm{\abs{D_t}^{\frac{l-1}{2}}h}_{L^2(Q_R)}\right).
\eals
The same argument applied to the temporal variable shows:
\bals
&\norm{\abs{D_t}^{\frac{ls}{2s}} \abs{D_v}^{2s} g -  \abs{D_t}^{\frac{ls}{2s}}\big(\abs{D_v}^{2s} f \cdot \theta\eta\big)}_{L^2} \\
&\qquad\leq  C\delta^{-(l+2)s}\norm{f}_{L^\infty(\R^{1+2d})} +C\delta^{-(l+2)s}\norm{f}_{L^2(Q_{2r})} + \delta^{-{(l-1)s}} \norm{h}_{L^\infty(\R^{1+2d})}\\
	&\qquad\qquad+ C \delta^{-{(l+1)s}} \left(\norm{\abs{D_v}^{(l-1)s}h}_{L^2(Q_R)}+  \norm{\abs{D_x}^{\frac{(l-1)s}{1+2s}}h}_{L^2(Q_R)}+  \Norm{\abs{D_t}^{\frac{l-1}{2}}h}_{L^2(Q_R)}\right).
\eals
We combine all these estimates for the right hand side and use that $f = g$ in $B_r$ so that we conclude the proof.
\end{proof}

\subsection{Kolmogorov equation: Fundamental solution}
Lastly, for the lower order perturbation arising with the freezing of coefficients, we will make use of the fundamental solution for the (fractional) Kolmogorov equation
\beq
	\mathcal T f = -(-\Delta_v)^s f + h, \quad (t, x, v) \in \R^{1+2d}
\label{eq:fracKolm}
\eeq
for some source term $h \in L^\infty$. In the non-fractional case set $s = 1$. This equation preserves the same Lie group structure as \eqref{eq:1.1loc_div}, \eqref{eq:1.1loc_nondiv} and \eqref{eq:1.1nonloc} and it admits the following fundamental solution \cite{kolmogorov} in case that $s = 1$:
\bals
	J(t, x, v) = \Big(\frac{\sqrt 3}{2\pi t^2}\Big)^d\exp\Big(\frac{-3\abs{x + \frac{tv}{2}}}{t^3}- \frac{\abs{v}^2}{4t}\Big), \quad t > 0,
\eals
and $J = 0$ for $t \leq 0$. 
In case that $s \in (0, 1)$ the fundamental solution is given by
\bals
	J(t, x, v) = c t^{-d(1+\frac{1}{s})} \mathcal J\Bigg(\frac{x}{t^{1+\frac{1}{2s}}}, \frac{v}{t^{\frac{1}{2s}}}\Bigg),
\eals
where $\mathcal J$ is given in Fourier variables as
\bals
	\hat{\mathcal J}(\varphi, \xi) = \exp\Bigg(-\int_0^1 \abs{\xi - \tau \varphi}^{2s} \dd t\Bigg).
\eals
Similarly to Proposition 2.1 of \cite{IM} we have
\begin{lemma}
Given $h \in L^\infty(\R\times \R^{2d})$ with compact support in time, the function 
\bals
	f(t, x, v) = \int_{\R\times\R^{2d}} h(\tilde t,\tilde x,\tilde v) J(t - \tilde t, x - \tilde x -(t-\tilde t)v, v - \tilde v) \dd \tilde t\dd \tilde x\dd \tilde v =: J\ast_{\textrm{kin}} h(z) 
\eals
solves \eqref{eq:fracKolm} in $\R\times \R^{2d}$. Moreover, for all $z_0 \in \R\times \R^{2d}$ and $r > 0$ there holds
\beqs
	\norm{J \ast_{\textrm{kin}}\mathbbm{1}_{Q_r(z_0)}}_{L^\infty(Q_r(z_0))} \leq C r^{2s},
\eeqs
for some universal constant $C$ depending on $d$.
\label{prop:2.1}
\end{lemma}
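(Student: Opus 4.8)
The plan is to reduce everything to two facts about the (fractional) Kolmogorov kernel $J$: that it is the fundamental solution of $\mathcal T+(-\Delta_v)^s$, and that it is a nonnegative kernel of unit mass. For $s=1$ the entire statement is \cite[Proposition 2.1]{IM}, so the only thing to do is to check that the proof there is insensitive to replacing the Gaussian by the kernel $J$ above. The argument then splits into two independent parts: (a) verifying that $f=J\ast_{\textrm{kin}}h$ solves \eqref{eq:fracKolm}, and (b) the $L^\infty$ bound.

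For (a) I would first record that, for fixed $\tilde z=(\tilde t,\tilde x,\tilde v)$, the kernel $K_{\tilde z}(z):=J\big(t-\tilde t,\,x-\tilde x-(t-\tilde t)v,\,v-\tilde v\big)$ solves $\mathcal T K_{\tilde z}+(-\Delta_v)^sK_{\tilde z}=0$ for $t>\tilde t$ and satisfies $K_{\tilde z}(t,\cdot,\cdot)\to\delta_{(\tilde x,\tilde v)}$ in $\R^{2d}$ as $t\downarrow\tilde t$. This is precisely the Galilean invariance \eqref{eq:galilean} of the operator, already exploited earlier in the paper, combined with $\mathcal TJ+(-\Delta_v)^sJ=\delta_0$: by the chain rule the transport operator $\mathcal T$ exactly absorbs the extra $v$-dependence coming from the shear $-(t-\tilde t)v$, so that $\mathcal T K_{\tilde z}$ reduces to the time derivative of $J$, and a Fourier computation in $(x,v)\mapsto(k,\xi)$ — where \eqref{eq:fracKolm} with $h=0$ becomes the damped transport equation $\partial_t\hat f-k\cdot\nabla_\xi\hat f+|\xi|^{2s}\hat f=0$, solved along the characteristics $\xi\mapsto\xi-\sigma k$ by $\hat J(t;k,\xi)=\exp\!\big(-\!\int_0^t|\xi-\sigma k|^{2s}\dd\sigma\big)$ in agreement with the profile $\hat{\mathcal J}$ above — shows that the multiplier produced by $(-\Delta_v)^s$ acting on $K_{\tilde z}$ cancels that time derivative. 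Then I would differentiate $f(z)=\int_{\R\times\R^{2d}}h(\tilde z)\,K_{\tilde z}(z)\dd\tilde z$ under the integral sign (legitimate because $h\in L^\infty$ has compact support in time and $J(\tau,\cdot,\cdot)\in L^1(\R^{2d})$ for $\tau>0$), splitting the $\tilde t$-integration at $\tilde t=t$: the interior contribution vanishes since $\mathcal TK_{\tilde z}+(-\Delta_v)^sK_{\tilde z}=0$ there, while the boundary term at $\tilde t=t$ gives $\int_{\R^{2d}}h(t,\tilde x,\tilde v)\,\delta_0(x-\tilde x,v-\tilde v)\dd\tilde x\dd\tilde v=h(z)$. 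This yields $\mathcal Tf+(-\Delta_v)^sf=h$, pointwise when $h$ is continuous and in the distributional sense in general.

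For (b), the two facts I need are $J\ge0$ and $\int_{\R^{2d}}J(\tau,y,\upsilon)\dd y\dd\upsilon=\hat J(\tau;0,0)=1$ for every $\tau>0$ (for $s=1$ this is the normalisation of the Gaussian). Consequently, for any $z=(t,x,v)$,
\beqs
	0\le J\ast_{\textrm{kin}}\mathbbm{1}_{Q_r(z_0)}(z)=\int_{\R\times\R^{2d}}\mathbbm{1}_{Q_r(z_0)}(\tilde z)\,J\big(t-\tilde t,\,x-\tilde x-(t-\tilde t)v,\,v-\tilde v\big)\dd\tilde t\dd\tilde x\dd\tilde v ,
\eeqs
and carrying out the $(\tilde x,\tilde v)$-integration first — a translation of $\R^{2d}$ with $(t,\tilde t,x,v)$ frozen — bounds the inner integral by $\int_{\R^{2d}}J(t-\tilde t,\cdot,\cdot)=1$, whereas the remaining $\tilde t$-integral is supported on $\{t_0-r^{2s}\le\tilde t\le t_0\}$, an interval of length $r^{2s}$. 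Hence $\norm{J\ast_{\textrm{kin}}\mathbbm{1}_{Q_r(z_0)}}_{L^\infty(\R^{1+2d})}\le r^{2s}$, which is stronger than the asserted bound on $Q_r(z_0)$, with a constant depending only on the normalisation of $J$ and hence only on $d$; in particular no translation or scaling reduction is needed here.

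The only non-routine ingredient is the construction of the fractional fundamental solution $J$ and the verification of its two properties used in (a) — solving $\mathcal TJ+(-\Delta_v)^sJ=\delta_0$ and attaining the Dirac initial datum — which I would carry out via the explicit Fourier representation above; this is where the hypothesis $s\in(0,1)$ really enters, guaranteeing that $J$ is a genuine smoothing kernel. Alternatively one may simply invoke the known construction of $J$ for the fractional Kolmogorov equation. Once $J$ is in hand, both the Duhamel argument in (a) and the estimate in (b) are soft.
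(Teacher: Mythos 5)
Your proof of the $L^\infty$ bound in (b) is correct but takes a different and in fact cleaner route than the paper. The paper's proof of Lemma \ref{prop:2.1} addresses only the $L^\infty$ estimate, and does so via the parabolic scaling of $J$: it substitutes $\tilde z \mapsto z_0 \circ \bar z_r$ and exploits the homogeneity $J(z_r) = r^{-2d(s+1)}J(z)$ to extract the factor $r^{2s}$, reducing matters to $r=1$; but the resulting reduction still implicitly requires a bound on $\norm{J\ast_{\textrm{kin}}\mathbbm{1}_{Q_1(z_0)}}_{L^\infty}$, which the paper leaves unsaid (``and conclude''). Your argument supplies exactly the estimate that is being swept under the rug there, and in a scaling-free way: integrating out $(\tilde x,\tilde v)$ first and using $J\ge 0$, $\int_{\R^{2d}}J(\tau,\cdot,\cdot)=1$ for $\tau>0$ gives $\norm{J\ast_{\textrm{kin}}\mathbbm{1}_{Q_r(z_0)}}_{L^\infty(\R^{1+2d})}\le r^{2s}$ directly, with $C=1$ and on all of $\R^{1+2d}$, not just on $Q_r(z_0)$. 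This is shorter and gives a sharper constant; the price is that you must verify the two facts $J\ge 0$ and unit mass for the fractional kernel, which you correctly flag as the only non-routine ingredient (the same facts are needed, implicitly, by the paper).

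For part (a), the paper offers no proof at all, so your Duhamel argument fills a real gap. Conceptually the right mechanism is the one you invoke (Galilean left-invariance of $\mathcal T + (-\Delta_v)^s$ combined with $\mathcal T J + (-\Delta_v)^s J = \delta_0$). Be careful, though, with the precise form of the kinetic convolution kernel: left-invariance gives the kernel $J(\tilde z^{-1}\circ z) = J\big(t-\tilde t,\, x-\tilde x-(t-\tilde t)\tilde v,\, v-\tilde v\big)$, with a shear by $\tilde v$, for which $\nabla_v$ acts only on the third slot and the verification is immediate. The formula printed in the lemma has $(t-\tilde t)v$ in the second slot, for which $\nabla_v$ also hits the space slot and the $(-\Delta_v)^s$ computation picks up cross terms that do not cancel (a Fourier check shows the multiplier becomes $|\xi+2(t-\tilde t)k|^{2s}$ rather than $|\xi|^{2s}$). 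So the cancellation you assert (``the transport operator exactly absorbs the extra $v$-dependence'') only goes through if the kernel is read with $\tilde v$, i.e.\ as $J(\tilde z^{-1}\circ z)$; as written in the lemma, it does not. You should either flag this as a typo in the statement or carry the verification through with the corrected argument.
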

\begin{proof}
Given $z = (t, x, v) \in Q_r(z_0)$ we compute the scaling of the fundamental solution stemming from the parabolicity of the equation
\bals
	J \ast_{\textrm{kin}}\mathbbm{1}_{Q_r(z_0)}(t, x, v) &= \int_{Q_r(z_0)} J(t - \tilde t, x - \tilde x -(t-\tilde t)v, v - \tilde v) \dd \tilde z\\
	&= r^{2s}\int_{Q_1(z_0)} J\Bigg(\frac{t}{r^{2s}} - \bar t, \frac{x}{r^{1+2s}} - \bar x -\Big(\frac{t}{r^{2s}}-\bar t\Big)v, \frac{v}{r} - \bar v\Bigg) \dd \bar z\\
	&= r^{2s}J\ast_{\textrm{kin}}\mathbbm{1}_{Q_1(z_0)}\Bigg(\frac{t}{r^{2s}}, \frac{x}{r^{1+2s}}, \frac{v}{r}\Bigg),
\eals
and conclude.
\end{proof}

\section{Campanato's inequality}\label{sec:campanato-inequality}

\subsection{Local (non-fractional) Campanato's inequality}\label{subsec:0_loc}
Let $0 < r < R$ and $z_0 \in \R^{1+2d}$. Assume $f$ solves \eqref{eq:kolmogorov_energy_loc} in $Q_R(z_0)$ for some constant diffusion coefficient $A$ satisfying \eqref{eq:unifellip} and zero source term $h = 0$. As the coefficients $A$ are constant, there is no distinction between the non-divergence and divergence form. Moreover, note that in this case we can assume $f \in C^\infty$ as we can approximate $f$ with a smooth solution by mollification respecting the Lie group structure. We want to combine \eqref{eq:campnonloc1} with the regularity estimates in Proposition \ref{prop:energy} to infer Campanto's inequality. Together with Poincaré's inequality this constitutes Campanato's approach to Schauder estimates.

We know from \eqref{eq:giaq1} that for any $f \in W^{m, p}$ there is a unique polynomial of degree $m-1$ such that the average of $f-p_{m-1}$ and all derivatives up to order $m-1$ vanishes. Thus, we can apply Poincaré's inequality in $L^2$ \cite[Proposition 3.12]{GiaquintaMartinazzi} to $f-{p_{m-1}}$ by subtracting off zero in form of the average of $f-p_{m-1}$ to bound it by the $L^2$ norm of $D(f-p_{m-1})$. Since this integrand is again averaging to zero, we apply Poincaré's inequality again. Repeating this process $m$-times, and then a fractional Poincaré inequality in the final step, see for example \cite[Equation 1.2]{HSV} or \cite[page 241]{ponce}, we find
\bal
	\int_{Q_r(z_0)} \abs{f - p_{m-1}}^2\dd z&\leq C r^{2m}\int_{Q_r(z_0)} \abs{D_v^m f}^2 \dd z+ Cr^{6\lfloor \frac{m}{3}\rfloor} \int_{Q_r(z_0)} \abs{D_x^{\lfloor\frac{m}{3}\rfloor} (f-p_{m-1})}^2 \dd z  \\
	&\quad\qquad+Cr^{4\lfloor \frac{m}{2}\rfloor}\int_{Q_r(z_0)} \abs{D_t^{\lfloor\frac{m}{2}\rfloor} (f-p_{m-1})}^2 \dd z \\
	&\quad\qquad+ C\sum_{\substack{i, j, k \geq 0\\i + j +k = m}}r^{2 \big(2\lfloor \frac{i}{2}\rfloor + 3 \lfloor \frac{j}{3}\rfloor + k\big)}\int_{Q_r(z_0)} \abs{D_t^{\lfloor\frac{i}{2}\rfloor}D_x^{\lfloor\frac{j}{3}\rfloor}D_v^{k}(f-p_{m-1})}^2 \dd z\\
	&\leq  C r^{2m} \int_{Q_{2r}(z_0)} \abs{D^m f}^2 \dd z,
\label{eq:aux0_loc}
\eal
where $D^m$ is a derivative in time, space or velocity of order $m$. 
To control the right hand side, we use \eqref{eq:campnonloc1}, Sobolev's embedding for some $k$ sufficiently large depending on $n$, and the regularity estimates of Proposition \ref{prop:nonfractional_energy} to get
\bal\label{eq:camp-first}
	\int_{Q_{2r}(z_0)} \abs{D^{m} f}^2 \dd z &\leq C r^n \norm{D^{m} f}_{L^\infty(Q_{2r}(z_0))}^2 \leq C r^n\norm{f}_{H^k(Q_{R/2}(z_0))}^2 \\
	&\leq C(n, k) \frac{r^n}{R^{n+2m}} \norm{f}^2_{L^2(Q_R(z_0))}.
\eal
Thus we deduce 
\bals
	\Norm{f - p_{m-1}}^2_{L^2(Q_r(z_0))} \leq C \left(\frac{r}{R}\right)^{n+2m}\norm{f}^2_{L^2(Q_R(z_0))},
\eals
where $C = C(n)$. This inequality is Campanato's (second) inequality. Now, dividing by $r^{n+2m}$ yields the Campanato norm on the left hand side: 
\bals
	[f]_{\mathcal L_{m-1}^{2, \lambda}(Q_r(z_0))}^2 = r^{-\lambda}\Norm{f - p_{m-1}}^2_{L^2(Q_r(z_0))} \leq C R^{-n-2m} \norm{f}^2_{L^2(Q_R(z_0))},
\eals
where 
\beqs
	\lambda = n + 2m.
\eeqs
\begin{remark}
As a consequence of \eqref{eq:camp-first}, we deduce that the only smooth solutions of \eqref{eq:kolmogorov_energy_loc} with constant coefficients that grow at most polynomially at infinity are kinetic polynomials: if we assume that a solution $f$ of \eqref{eq:kolmogorov_energy_loc} in $\R^{1+2d}$ satisfies 
\beqs
	\sup_{Q_R} f(z) \leq M R^{m-1}, \qquad \forall R \geq 1,
\eeqs
for some constant $M >0$ and $m \geq 1$, then as before we get with Poincaré's inequality, Sobolev embedding, and the regularity estimates for any $r >0$
\bals
	\int_{Q_r} \abs{f - p_{m-1}}^2\dd z&\leq C r^{2m} \int_{Q_{2r}} \abs{D^m f}^2 \dd z\leq C r^{n+2m} \Norm{D^m f}_{H^k(Q_{2r})}^2 \leq C \Big(\frac{r}{R}\Big)^{n+2m} \Norm{f}_{L^2(Q_{R})}^2,
\eals
where $p_{m-1}$ is some kinetic polynomial of degree $m-1$.
Due to the growth assumption on $f$, we thus find
\beqs
	\int_{Q_r} \abs{f - p_{m-1}}^2\dd z\leq C(r, n) R^{-n-2m}R^{2m -2+n},
\eeqs
which tends to $0$ as $R \to \infty$. Thus $f = p_{m-1}$ in $Q_r$. Since $r >0$ was arbitrary, we deduce $f$ is a polynomial of degree at most $m-1$ in $\R^{1+2d}$. In other words, a generalisation of Liouville's theorem holds. Note that a Liouville-type theorem has been used to derive Schauder estimates in the elliptic case by \cite[Lemma 1]{simon} and in the hypoelliptic case by \cite[Theorem 4.1]{ISschauder}.
\end{remark}

\subsection{Non-local (fractional) Campanato's inequality}\label{subsec:cc_nonloc}
As before, we want to combine the observation in \eqref{eq:campnonloc1} with the energy estimates derived in the last subsection to infer Campanto’s inequality. 
Let $0 < r < R$ and $z_0 \in \R^{1+2d}$. We consider the constant coefficient equation \eqref{eq:kolmogorov_energy} with zero source term in $Q_R(z_0)$. 
We have by combining \eqref{eq:giaq1} and the fractional Poincaré inequality, see \cite[page 241]{ponce}, \cite[equation (1.2)]{HSV}, or \cite[Section 1]{mouhot-russ-sire},
\bal
	&\int_{Q_r(z_0)} \abs{f - p_{2s}}^2\dd z\\
	&\leq C r^{2s} \Bigg(\int_{Q_r(z_0)} \Abs{D_t^{\frac{s}{2s}} (f-p_{2s})}^2 \dd z +\int_{Q_r(z_0)} \Abs{D_x^{\frac{s}{1+2s}} (f-p_{2s})}^2 \dd z +\int_{Q_r(z_0)} \Abs{D_v^s (f-p_{2s})}^2 \dd z\Bigg)\\
	&\leq C r^{4s} \Bigg(\int_{Q_r(z_0)} \Abs{D_t^{\frac{2s}{2s}} (f-p_{2s})}^2 \dd z +\int_{Q_r(z_0)} \Abs{D_x^{\frac{2s}{1+2s}} (f-p_{2s})}^2 \dd z +\int_{Q_r(z_0)} \Abs{D_v^{2s} (f-p_{2s})}^2 \dd z\\
	&\qquad + \int_{Q_r(z_0)} \Abs{D_t^{\frac{s}{2s}}D_x^{\frac{s}{1+2s}} (f-p_{2s})}^2 \dd z + \int_{Q_r(z_0)} \Abs{D_t^{\frac{s}{2s}}D_v^{s} (f-p_{2s})}^2 \dd z\\
	&\qquad + \int_{Q_r(z_0)} \Abs{D_v^{s}D_x^{\frac{s}{1+2s}} (f-p_{2s})}^2 \dd z\Bigg)\\
	&\leq C r^{6s} \int_{Q_r(z_0)} \Abs{D^{3s} f}^2 \dd z,
\label{eq:aux0}
\eal
where $D^{3s}$ is a differential of order $3s$ in time, space, or velocity. 
We use \eqref{eq:campnonloc1}, Sobolev's embedding for some $k$ sufficiently large depending on $s$ and $n$, and the energy estimates of Proposition \ref{prop:fractional_energy} to get
\bals
	\int_{Q_r(z_0)} \abs{D^{3s} f}^2 \dd z \leq r^n \Norm{D^{3s} f}_{L^\infty(Q_r(z_0))}^2 \leq C r^n\Norm{f}_{H^k(Q_{R/2}(z_0))}^2 \leq C(n, s, k) \frac{r^n}{R^{n+6s}} \norm{f}^2_{L^\infty(\R^{1+2d})}.
\eals
This can be seen as a non-local analogue of Campanto's inequality. Thus we deduce
\bals
	\Norm{f - p_{2s}}^2_{L^2(Q_r(z_0))} \leq C \left(\frac{r}{R}\right)^{n+6s}\norm{f}^2_{L^\infty(\R^{1+2d})},
\eals
with $C = C(n, s)$. Therefore, dividing by $r^{n+6s}$ yields the Campanato norm on the left hand side: 
\bals
	[f]_{\mathcal L_{2s}^{2, \lambda}(Q_r(z_0))}^2 = r^{-\lambda}\Norm{f - p_{2s}}^2_{L^2(Q_r(z_0))} \leq CR^{-n-6s}\norm{f}^2_{L^\infty(\R^{1+2d})},
\eals
where 
\bals	
	\lambda = n + 6s.
\eals

\section{Campanato's approach: the local (non-fractional) case}
\label{sec:schauder_nonfrac}

We freeze coefficients (also known as Korn's trick) to derive Schauder estimates for the general case. Let $f$ classically solve \eqref{eq:1.1loc_div} or \eqref{eq:1.1loc_nondiv}. Suppose $A = A(t, x,v)$ satisfies \eqref{eq:unifellip} and $h \in C_\ell^{m-3+\alpha}(Q_1)$. Assume that the diffusion coefficient satisfies $A  \in C_\ell^{m-3+\alpha}(Q_1)$. For the divergence form equation \eqref{eq:1.1loc_div} we additionally require $\nabla_v A  \in C_\ell^{m-3+\alpha}(Q_1)$.
 
Similarly to \cite{IM} we consider $0 < \rho \leq \frac{1}{2}$ to be determined and we pick $z_0, z_1 \in Q_1$ and $0 < r \leq 1$ such that $z_1 \in Q_r(z_0)$ and
\bals
	[f]_{C_\ell^{m-1+\alpha}(Q_{\frac{1}{4}})} \leq 2\frac{\abs{f(z_1) - p_{m-1}^{z_0}[f](z_1)}}{r^{m-1+\alpha}}.
\eals
We recall that the Taylor expansion of $f$ at $z_0$ of kinetic degree $m-1$ is given by
\bals
	p_{m-1}^{z_0}[f](z) = \sum_{j} \frac{a_j(z_0)}{j!}\big(t - t_0\big)^{j_0}&\big(x_1-(x_0)_1-(t-t_0)v_1\big)^{j_1} \cdots\big(x_d-(x_0)_d-(t-t_0)v_d\big)^{j_d}\\
	&\times\big(v_{1}-(v_0)_1\big)^{j_{d+1}}\cdots \big(v_d-(v_0)_d\big)^{j_{2d}},
\eals
where we require $j_0 \leq \lfloor\frac{m-1}{2}\rfloor$, $j_1 + \dots + j_d \leq \lfloor\frac{m-1}{3}\rfloor$ and $j_{d+1} + \dots + j_{2d} \leq m-1$. The coefficients can be computed and are given by
\beqs
	a_j(z_0) = (\partial_t + v\cdot \nabla_x)^{j_0}\partial_{x_1}^{j_1}\cdots\partial_{x_d}^{j_d}\partial_{v_1}^{j_{d+1}}\cdots \partial_{v_{d}}^{j_{2d}}f(z_0).
\eeqs

If $r \geq \rho$, we have, using Lemma \ref{lem:2.7},
\bals
	[f]_{C_\ell^{m-1+\alpha}(Q_{1/4})} &\leq 2\rho^{-(m-1+\alpha)}\Bigg\{2\norm{f}_{L^\infty(Q_r(z_0))} + \sum_{j}\Bigg[\rho^{2j_0}\norm{(\partial_t + v\cdot \nabla_x)^{j_0}f}_{L^\infty} \\	&\qquad\qquad\qquad\qquad+ \rho^{3(j_1 + \dots +j_d)}\norm{\partial_{x_1}^{j_1}\cdots\partial_{x_d}^{j_d}f}_{L^\infty}  +\rho^{(j_{d+1} + \dots +j_{2d})} \norm{\partial_{v_1}^{j_{d+1}}\cdots\partial_{v_d}^{j_{2d}}f}_{L^\infty}\Bigg]\Bigg\}\\
	&\leq \frac{1}{4} [f]_{C_\ell^{m-1+ \alpha}(Q_1)} + C(\rho) \norm{f}_{L^\infty(Q_1)}.
\eals

\subsection{Non-divergence form}\label{sec:nondiv-perturb}
Now we consider $r \leq \rho$ and a solution $f$ of \eqref{eq:1.1loc_nondiv}. Let $\eta \in C_c^\infty(\R^{1+2d})$ be a cut-off with $0 \leq \eta \leq 1$, such that $\eta = 1$ in $Q_\rho(z_0)$ and $\eta = 0$ outside $Q_{2\rho}(z_0)$. Let $\tilde f = f \eta$. With no loss of generality we set $z_0 = (0, 0, 0)$. We denote with $p_{2}^{(z_0)}[f]$ the Taylor polynomial of $f$ at $z_0$ with kinetic degree less or equal to $2$. To approximate the general case by the constant coefficient case we split
\bals
	\tilde f - p_{m-1}^{(0)}[\tilde f] = g_1 + g_2,
\eals
where $g_1$ solves
\beqs
	\partial_t g_1 + v\cdot \nabla_x g_1 - \sum_{i, j} a^{i, j}_{(0)}\partial^2_{v_iv_j} g_1 = 0,
\eeqs
for $a^{i, j}_{(0)} = a^{i, j}(z_1)$. Then $g_2$ solves
\beqs
	\partial_t g_2 + v\cdot \nabla_x g_2 -\sum_{i, j} a^{i, j}_{(0)}  \partial^2_{v_iv_j} g_2 = \tilde h -\left(\partial_t + v\cdot \nabla_x - \sum_{i, j} a^{i, j}_{(0)}  \partial^2_{v_iv_j}\right)  p_{m-1}^{(0)}[\tilde f],
\eeqs
where 
\beq\label{eq:tilde-h}
	\tilde h := \Bigg[\sum_{i, j} \Big(a^{i, j} - a^{i, j}_{(0)}\Big)\partial^2_{v_iv_j} f \Bigg] \eta + \sum_{i} \big(b^i \eta - 2 a^{i, j}_{(0)}\partial_{v_i}\eta\big)\partial_{v_i} f + \sum_{i, j} \big(c \eta + \mathcal T\eta - a^{i, j}_{(0)}\partial_{v_iv_j}^2 \eta\big)f + h \eta.
\eeq
Note that for $m =3$ we find 
\[
	\left(\partial_t + v\cdot \nabla_x - \sum_{i, j} a^{i, j}_{(0)}  \partial^2_{v_iv_j}\right)  p_{2}^{(0)}[\tilde f] = \tilde h(0, 0, 0),
\]
coinciding with the zeroth order Taylor expansion of $\tilde h$ around $z_0$. This remains true for larger $m$: this expression is the Taylor polynomial for $\tilde h$ of order $m-3$ around $z_0 = (0, 0, 0)$;
\[
\left(\partial_t + v\cdot \nabla_x - \sum_{i, j} a^{i, j}_{(0)}  \partial^2_{v_iv_j}\right)  p_{2}^{(0)}[\tilde f] = p_{m-3}^{(0)}[\tilde h].
\]

For $g_1$ we have by Subsection \ref{subsec:0_loc} 
\bals
	\int_{Q_r} \Abs{g_1 - p_{m-1}^{(0)}[g_1]}^2 \dd z &\leq C \Big(\frac{r}{R}\Big)^{n + 2m} \int_{Q_R} \abs{g_1}^2 \dd z \\
	&\leq C \Big(\frac{r}{R}\Big)^{n + 2m} \int_{Q_R} \abs{\tilde f-p_{m-1}^{(0)}[\tilde f]}^2 \dd z +  C \Big(\frac{r}{R}\Big)^{n + 2m} \int_{Q_R} \abs{g_2}^2 \dd z.
\eals
For $g_2$ we first perform a change of variables $g_{2, (0)}(t, x, v) := g_2\left(t, (A_{(0)})^{-\frac{1}{2}} x, (A_{(0)})^{-\frac{1}{2}} v\right)$ where $A_{(0)}$ is the constant diffusion coefficient $A_{(0)} =  \big(a^{i, j}_{(0)}\big)_{i, j}$. Then $g_{2, (0)}$ solves 
\bal\label{eq:change-var-A}
	\Bigg(\partial_t + v\cdot \nabla_x  -\sum_{i, j}  \partial^2_{v_iv_j}  \Bigg)g_{2, (0)} (t, x, v)  &= \Bigg(\partial_t   + v\cdot \nabla_x   -\sum_{i, j} a^{i, j}_{(0)} \partial^2_{v_iv_j}   \Bigg)g_2\Big(t, (A_{(0)})^{-\frac{1}{2}} x, (A_{(0)})^{-\frac{1}{2}} v\Big) \\
	&= \Big(\tilde h -\big( p_{m-3}^{(0)}[\tilde h]\big)\Big)_{(0)}\Big(t, (A_{(0)})^{-\frac{1}{2}} x, (A_{(0)})^{-\frac{1}{2}} v\Big)\\
	&=:  \Big(\tilde h_{(0)} - \big(p_{m-3}^{(0)}[\tilde h]\big)_{(0)}\Big)(t,  x,  v).
\eal
Thus, using the scaling of the fundamental solution as stated in Lemma \ref{prop:2.1},
\bals
	\int_{Q_r} \abs{g_{2, (0)}}^2 \dd z \leq C r^n \norm{g_{2, (0)}}_{L^\infty(Q_r)}^2 \leq C r^{n+ 2m + 2\alpha - 2}\left[{\tilde h}_{(0)}\right]^2_{C_\ell^{m-3 +\alpha}(Q_{r})}.
\eals
Since $\norm{g_{2, (0)}}_{L^2} \sim \norm{g_2}_{L^2}$ and $\left[{\tilde h}_{(0)}\right]^2_{C_\ell^{m-3 +\alpha}} \sim [\tilde h]^2_{C_\ell^{m-3 +\alpha}}$ up to a constant depending on $A^{(0)}$, we thus find for $R = c_0 r$ with $c_0 > 1$ to be determined
\bals
	\inf_{p \in \mathcal P_{m-1}} \int_{Q_r} \abs{\tilde f - p}^2 \dd z &\leq  \int_{Q_r}\abs{\tilde f - p_{m-1}^{(0)}[\tilde f] - p_{m-1}^{(0)}[g_1]}^2 \dd z\\
	&\leq \int_{Q_r} \Abs{g_1 - p_{m-1}^{(0)}[g_1]}^2 \dd z  + \int_{Q_r} \abs{g_2}^2 \dd z \\
	&\leq C \Big(\frac{r}{R}\Big)^{n + 2m} \int_{Q_R} \abs{\tilde f-p_{m-1}^{(0)}[\tilde f]}^2 \dd z +  C   \int_{Q_{c_0r}} \abs{g_2}^2 \dd z \\
	&\leq C \Big(\frac{r}{R}\Big)^{n + 2m +2\alpha -2}\Big(\frac{r}{R}\Big)^{2-2\alpha} \int_{Q_R} \abs{\tilde f-p_{m-1}^{(0)}[\tilde f]}^2 \dd z \\
	&\quad+  C (c_0r)^{n+ 2m + 2\alpha-2}[\tilde h]^2_{C_\ell^{m-3 + \alpha}(Q_{c_0r})}.
\eals
Equivalently, 
\bals
	\big[\tilde f\big]_{\mathcal L^{2, n +2m+ 2\alpha-2}_{m-1}(Q_r)} \leq C \Big(\frac{1}{c_0}\Big)^{2-2\alpha}\big[\tilde f\big]_{\mathcal L^{2, n +2m+ 2\alpha-2}_{m-1}(Q_R)} + C c_0^{n+2m + 2\alpha-2}[\tilde h]_{C_\ell^{m-3 + \alpha}(Q_{R})}.
\eals
Thus by the characterisation of Campanato-norms with Hölder-norms in Theorem \ref{thm:equicampholderhigh} we have
\bals
	\big[\tilde f\big]_{C_\ell^{m-1+ \alpha}(Q_r)} \leq C\Big(\frac{1}{c_0}\Big)^{2-2\alpha} \big[\tilde f\big]_{C_\ell^{m-1+ \alpha}(Q_{c_0r})} + Cc_0^{n+2m + 2\alpha-2}[\tilde h]_{C_\ell^{m-3+\alpha}(Q_{c_0r})}.
\eals
Since $A, B, c, h \in C_\ell^{m-3+\alpha}(Q_1)$ we therefore obtain
\bal\label{eq:aux12}
	&[f]_{C_\ell^{m-1+ \alpha}(Q_{1/4})} \\
	&\leq [\tilde f]_{C_\ell^{m-1+ \alpha}(Q_r)} \leq C\Big(\frac{1}{c_0}\Big)^{2-2\alpha} \big[\tilde f\big]_{C_\ell^{m-1+ \alpha}(Q_{c_0r})} + Cc_0^{n+2m + 2\alpha-2}[\tilde h]_{C_\ell^{m-3+\alpha}(Q_{c_0r})}\\
	&\leq  C\Big(\frac{1}{c_0}\Big)^{2-2\alpha}  \big[ f\big]_{C_\ell^{m-1+ \alpha}((Q_{2\rho}(z_0))}  + C(c_0)\Big[\sum_{i, j} \big(a^{i, j}_{(0)} - a^{i, j}\big)\partial^2_{v_iv_j}\tilde f\Big]_{C_\ell^{m-3+\alpha}(Q_{2\rho}(z_0))}  \\
	&\quad +C(c_0) [b^i \partial_{v_i} f]_{C_\ell^{m-3+\alpha}(Q_{2\rho}(z_0))} +C(c_0, \rho) [\partial_{v_i} f]_{C_\ell^{m-3+\alpha}(Q_{2\rho}(z_0))} \\
	&\quad+C(c_0)[cf]_{C_\ell^{m-3+\alpha}(Q_{2\rho}(z_0))} +C(c_0, \rho)[f]_{C_\ell^{m-3+\alpha}(Q_{2\rho}(z_0))} + C(c_0)[h]_{C_\ell^{m-3+\alpha}(Q_{2\rho}(z_0))} \\
	&\leq C\Big(\frac{1}{c_0}\Big)^{2-2\alpha}  \big[ f\big]_{C_\ell^{m-1+ \alpha}((Q_{2\rho}(z_0))} +C(c_0) \rho^{m-3+\alpha}[D^2_v f]_{C_\ell^{m-3+\alpha}(Q_{2\rho}(z_0))} \\
	&\quad+ C(c_0)\rho^{m-3+\alpha}[D_v f]_{C_\ell^{m-3+\alpha}(Q_{2\rho}(z_0))}  + C(c_0)\rho^{m-3+\alpha} [f]_{C_\ell^{m-3+\alpha}(Q_{2\rho}(z_0))}\\
	&\quad+ C(\rho, c_0)[f]_{C_\ell^{m-3+\alpha}(Q_{2\rho}(z_0))}+ C(c_0, \rho)[D_vf]_{C_\ell^{m-3+\alpha}(Q_{2\rho}(z_0))}+ C(c_0)[h]_{C_\ell^{m-3+\alpha}(Q_{2\rho})}\\
	&\leq C_0 \left(\frac{1}{c_0}\right)^{2-2\alpha} \big[ f\big]_{C_\ell^{m-1+ \alpha}((Q_{2\rho}(z_0))}+ \frac{1}{4}[f]_{C_\ell^{m-1+\alpha}(Q_{2\rho}(z_0))} + C(\rho) \norm{f}_{L^\infty(Q_{2\rho}(z_0))} \\
	&\quad + C_1(c_0) \rho^{m-1+\alpha}[f]_{C_\ell^{m-1+\alpha}(Q_{2\rho}(z_0))} + C [h]_{C_\ell^{m-3+\alpha}(Q_\rho)},
\eal
where we have used Lemma \ref{lem:2.7} and Proposition \ref{prop:interpolation}.
Choosing first $c_0$ such that $C_0 \left(\frac{1}{c_0}\right)^{2-2\alpha} \leq \frac{1}{16}$ and then  $\rho = \rho(c_0)  >0$ such that $\frac{1}{16}+ C_1(c_0) \rho^{m-1+\alpha} + \frac{1}{4} \leq \frac{1}{2}$, we find for some $\beta > 0$
\bal\label{eq:covering-aux}
	[f]_{C_\ell^{m-1+\alpha}(Q_{\rho/4})} &\leq C \rho^{-\beta} \norm{f}_{L^\infty(Q_{2\rho})} + C [h]_{C_\ell^{m-3+\alpha}(Q_{2\rho})} + \frac{1}{2} [f]_{C_\ell^{m-1+\alpha}(Q_{2\rho}(z_0))}.
\eal
A standard iteration argument implies
\bal\label{eq:covering-conclusio}
	[f]_{C_\ell^{m-1+\alpha}(Q_{\rho/4})} &\leq C \norm{f}_{L^\infty(Q_{2\rho})} + C [h]_{C_\ell^{m-3+\alpha}(Q_{2\rho})},
\eal
where $C$ depends on $n, m, \alpha, \lambda_0,$ and the Hölder norms of all coefficients: if we define $\Psi(r) := [f]_{C_\ell^{m-1+\alpha}(Q_{r}(z_0))}$, then \eqref{eq:covering-aux} yields
\beqs
	\Psi\Big(\frac{\rho}{4}\Big) \leq C_2  \left(\frac{7\rho}{4}\right)^{-\beta}\left(\norm{f}_{L^\infty(Q_{2\rho})} + [h]_{C_\ell^{m-3+\alpha}(Q_{2\rho})}\right)  + \varepsilon \Psi(2\rho)
\eeqs
for $0\leq \varepsilon < 1$, $C_2 >0$ and $\beta > 0$. For some $0 < \tau < 1$ we then introduce
\bals
	\begin{cases}
		r_0 :=\frac{\rho}{4}, \\
		r_{i+1} := r_i + (1-\tau) \tau^i \frac{7\rho}{4}, \quad i \geq 0.
	\end{cases}
\eals
Since 
\beqs
	\sum_{i=1}^\infty \tau^i = \frac{\tau}{1-\tau},
\eeqs
we have that $r_i < 2\rho$ and inductively we prove that
\beqs
	\Psi(r_0) \leq \varepsilon^k \Psi(r_k) + C_0 \big(\norm{f}_{L^\infty(Q_\rho)} + [h]_{C_\ell^{m-3+\alpha}(Q_\rho)}\big) (1-\tau)^{-\beta}  \Big(\frac{7\rho}{4}\Big)^{-\beta}\sum_{i=0}^{k-1} \varepsilon^i \tau^{-i\beta}.
\eeqs
We choose $\tau$ such that $\varepsilon \tau^{-\beta} < 1$ so that letting $k \to \infty$ we deduce \eqref{eq:covering-conclusio}.

\subsection{Divergence form}\label{sec:div-form-loc}
The case of divergence form equations follows is similar by modifying the $\tilde h$ in \eqref{eq:tilde-h} as follows
\beqs
	\tilde h := \Bigg[\sum_{i, j}\partial_{v_i} \Big(a^{i, j} - a^{i, j}_{(0)}\Big)\partial_{v_j} f \Bigg] \eta + \sum_{i} \big(b^i \eta - 2 a^{i, j}_{(0)}\partial_i\eta\big)\partial_{v_i} f + \sum_{i, j} \big(c \eta + \mathcal T\eta - a^{i, j}_{(0)}\partial_{v_iv_j}^2 \eta\big) f + h \eta.
\eeqs
Note that for \eqref{eq:aux12} we will require $\nabla_v A \in C^{m-3+\alpha}(Q_1)$.

\section{Campanato's approach: the non-local (fractional) case}
\label{sec:schauder_frac}

We consider a solution $f$ to \eqref{eq:1.1nonloc} in $Q_1$ of class $C_\ell^\gamma([-1, 0]\times B_1\times \R^d)$ and assume that the non-negative kernel satisfies the ellipticity assumptions \eqref{eq:upperbound}, \eqref{eq:coercivity} and the Hölder condition \eqref{eq:holder}. Moreover, we further assume that it either satisfies the non-divergence form symmetry \eqref{eq:symmetry}, or that it verifies the divergence form symmetry \eqref{eq:cancellation1}, \eqref{eq:cancellation2}, and the additional Hölder condition \eqref{eq:holder_div}.

Let $\eta \in C_c^\infty((-1, 0] \times B_1 \times \R^d)$ so that $\eta = 1$ in $Q_{\frac{3}{4}}$ and $\eta = 0$ outside $Q_1$. Let $\tilde f = f\eta$. We freeze coefficients and write $K_0(w) = K(0, 0, 0, w)$ for the constant coefficient kernel; its corresponding operator $\mathcal L_0$ satisfies \eqref{eq:cc_kernel}. We compute for any $z \in \R^d$
\bals
	\mathcal T \tilde f - \mathcal L_0 \tilde f = h \eta + A \cdot \eta + B + f \mathcal T \eta
\eals
with 
\bals
	A(z) := \int_{\R^d} \big(f(w) - f(v)\big)\big[K(t, x, v, w) - K_0(w)\big]\dd w
\eals
and
\bals
	B(z) := \int_{\R^d} \big(\eta(v) - \eta(w)\big)f(w)K_0(w)\dd w.
\eals

We write
\beqs
	\tilde f - p[\tilde f] = g_1 + g_2,
\eeqs
where $g_1$ solves
\beqs
	\mathcal T g_1 - \mathcal L_0 g_1 = 0,
\eeqs
and with $$p[f] := f(z_0) + (t - t_0)\big(\mathcal Tf(z_0) - \mathcal L_0 f(z_0)\big)$$ for some $z_0 \in \R^{1+2d}$. With no loss of generality set $z_0 = (0, 0, 0)$. In particular, $g_2$ solves
\bals
	\mathcal T g_2 - \mathcal L_0 g_2 &= \mathcal T (\tilde f - p[\tilde f] - g_1) - \mathcal L_0 (\tilde f - p[\tilde f] - g_1) \\
	&=   h\cdot \eta + A \cdot \eta + B + f \mathcal T \eta-  \mathcal T\tilde f(z_0) + \mathcal L_0 \tilde f(z_0) \\
	&= h\cdot \eta + A \cdot \eta + B + f\mathcal T\eta- \big(h\cdot \eta + A \cdot \eta + B + f \mathcal T\eta\big)(z_0)\\
	&= \tilde h - \tilde h(z_0),
\eals
where $\tilde h:= h \eta + A  \eta + B + f\mathcal T \eta$.
For $g_1$ we find with Subsection \ref{subsec:cc_nonloc} for $0 < r < 1< R$
\bals
	\int_{Q_r} \Abs{g_1 - p_{2s}^{(0)}[g_1]}^2 \dd z &\leq C \left(\frac{r}{R}\right)^{n + 6s} \norm{g_1}^2_{L^\infty(\R^{1+2d})} \\
	&\leq C \left(\frac{r}{R}\right)^{n + 6s} \norm{\tilde f(\cdot) - p[\tilde f]}^2_{L^\infty(\R^{1+2d})} +C \left(\frac{r}{R}\right)^{n + 6s} \norm{g_2}^2_{L^\infty(\R^{1+2d})},
\eals
where $r > 0$ is such that $Q_r \subset Q_{1/2}$.
For $g_2$ we first perform a change of variables $g_{2, (0)}(t, x, v) := g_2\left(t, \kappa_0^{-\frac{1}{2s}} x, \kappa_0^{-\frac{1}{2s}} v\right)$ where $\kappa_0$ is such that $K_0(w) = \frac{\kappa_0}{\abs{w}^{d+2s}}$. Then $g_{2, (0)}$ solves 
\bals
	\Big(\partial_t + v\cdot \nabla_x +(-\Delta_v)^s \Big)g_{2, (0)}(t, x, v)  &= \Big(\partial_t   + v\cdot \nabla_x   + \mathcal L_0 \Big)g_2\Big(t, \kappa_0^{-\frac{1}{2s}} x, \kappa_0^{-\frac{1}{2s}} v\Big)\\
	&= \Big(\tilde h - \tilde h(0, 0, 0)\Big)\Big(t, \kappa_0^{-\frac{1}{2}} x, \kappa_0^{-\frac{1}{2}} v\Big)\\
	&=:  \Big(\tilde h_{(0)} - \tilde h_{(0)}(0, 0, 0)\Big)(t,  x,  v).
\eals
Thus by Lemma \ref{prop:2.1}
\bals
	\int_{Q_r} \abs{g_{2, (0)}}^2 \dd z \leq C r^n \Norm{g_{2, (0)}}_{L^\infty(Q_r)}^2 \leq C r^{n+ 4s + 2\alpha}\big[\tilde h_{(0)}\big]^2_{C_\ell^{\alpha}(Q_{r})}.
\eals
Since $\Norm{g_{2, (0)}}_{L^2} \sim \norm{g_2}_{L^2}$ and $\big[\tilde h_{(0)}\big]^2_{C_\ell^{\alpha}} \sim [\tilde h]^2_{C_\ell^{\alpha}}$ up to a constant depending on $\kappa_0$, since $\tilde f$ vanishes outside $Q_1$, and using that $\tilde h$ is compactly supported in time and space, we thus find
\bals
	\inf_{p \in \mathcal P_{m-1}} &\int_{Q_r} \abs{\tilde f - p}^2 \dd z \\
	&\leq  \int_{Q_r}\Abs{\tilde f - p[\tilde f] - p_{2s}^{(0)}[g_1]}^2 \dd z\\
	&\leq C  \left(\frac{r}{R}\right)^{n + 6s}\norm{\tilde f(\cdot) - p[\tilde f]}^2_{L^\infty(\R^{1+2d})} + C\frac{r^{n+ 10s+2\alpha } }{R^{n+6s}}[\tilde h]_{C^{\alpha}_\ell(Q_{2r}^v \times \R^d)}^2+ Cr^{n+ 4s+ 2\alpha}[\tilde h]^2_{C_\ell^{\alpha}(Q_{r})}\\
	&\leq C \frac{r^{n + 6s}}{R^{n+4s-2\alpha}}[\tilde f]_{C^{2s+\alpha}(Q_{R})}^2 + Cr^{n+ 10s+2\alpha } [\tilde h]_{C^{\alpha}_\ell(Q_{2r}^v \times \R^d)}^2+ Cr^{n+ 4s+ 2\alpha}[\tilde h]^2_{C_\ell^{\alpha}(Q_{r})}\\
	&\leq Cr^{n +4s + 2\alpha} \Bigg(\left(\frac{r}{R}\right)^{2(s-\alpha)}[\tilde f]_{C^{2s+\alpha}(Q_{R})}^2+ [\tilde h]^2_{C_\ell^{\alpha}(Q_{R}^v \times \R^d)}\Bigg).
\eals
In the last inequality we used $\alpha < s$ since $\alpha = \frac{2s}{1+2s}\gamma < \frac{2s}{1+2s} \min(1, 2s)$. 
Equivalently, 
\bals
	[\tilde f]_{\mathcal L^{2, n +4s + 2\alpha}_{2s}(Q_r)} \leq C\left(\frac{r}{R}\right)^{2(s-\alpha)}[\tilde f]_{C^{2s+\alpha}(Q_{R})} + C [\tilde h]_{C_\ell^{\alpha}(Q_{R}^v \times \R^d)}.
\eals
Thus by the characterisation of Campanato norms with Hölder norms in Theorem \ref{thm:equicampholderhigh} we have for all $0< r < 1  < R$
\bals
	[\tilde f]_{C_\ell^{2s+ \alpha}(Q_r)} \leq C\left(\frac{r}{R}\right)^{2(s-\alpha)}[\tilde f]_{C^{2s+\alpha}(Q_{R})}+ C[\tilde h]_{C_\ell^{\alpha}(Q_{R}^v \times \R^d)}.
\eals

It remains to bound the $C^\alpha_\ell$-norm of $\tilde h = h \eta + A \eta + B + f \mathcal T \eta$. 
We claim 
\bal
	[A]_{C_\ell^\alpha(Q_{\frac{1}{2}})} \lesssim A_0\big(\norm{f}_{C_\ell^{2s+\alpha}(Q_1)} + \norm{f}_{C_\ell^{\gamma}((-1, 0] \times B_1 \times \R^d)}\big).
\label{eq:claim1}
\eal
To justify our claim, we write $A(z_1) - A(z_2) = I_1 + I_2$ with
\bals
	&I_1 = \int \big(f(z_2 \circ (0, 0, w)) - f(z_2)\big)\big[K_{z_1}(w) - K_{z_2}(w)\big] \dd w, \\
	&I_2 = \int \big(f(z_1 \circ (0, 0, w)) - f(z_1) - f(z_2 \circ (0, 0, w)) + f(z_2)\big)\big[K_{z_1}(w) - K_0(w)\big] \dd w.
\eals
For $I_1$ we distinguish the far and the close part and write $I_{11}$ and $I_{12}$ respectively. Then for the far part there holds with \eqref{eq:3.10holder}
\bals
	\abs{I_{11}} \leq \norm{f}_{L^\infty((-1, 0] \times B_1 \times \R^d)} \int_{\abs{w}\geq 1} \Abs{K_{z_1}(w) - K_{z_2}(w)} \dd w \lesssim A_0\norm{f}_{L^\infty((-1, 0] \times B_1 \times \R^d)}d_\ell(z_1, z_2)^\alpha.
\eals

For the close part we have in case of the \textit{non-divergence form symmetry} \eqref{eq:symmetry} and Lemma \ref{lem:2.7}
\bals
	\abs{I_{12}} &\leq \int_{\abs{w}\leq 1}\Abs{f(z_2 \circ (0, 0, w)) - p_{2s}^{z_2\circ(0, 0, w)}[f]} \Abs{K_{z_1}(w)- K_{z_2}(w)} \dd w \\
	&\quad+ \frac{1}{2} \int_{\abs{w}\leq 1}\Abs{p_{2s}^{z_2\circ(0, 0, w)}[f] +p_{2s}^{z_2\circ(0, 0, -w)}[f] - f(z_2)} \Abs{K_{z_1}(w)- K_{z_2}(w)} \dd w\\
	&\lesssim [f]_{C_\ell^{2s + \alpha}}\int_{\abs{w}\leq 1}\abs{w}^{2s+\alpha} \Abs{K_{z_1}(w)- K_{z_2}(w)} \dd w + \Abs{D_v^2 f} \int_{\abs{w}\leq 1}\abs{w}^{2}\Abs{K_{z_1}(w)- K_{z_2}(w)} \dd w\\
	&\lesssim A_0 \norm{f}_{C_\ell^{2s + \alpha}}d_\ell(z_1, z_2)^\alpha.
\eals

If instead we assume the \textit{divergence form symmetry} \eqref{eq:cancellation1} and \eqref{eq:cancellation2} we get
\bals
	\abs{I_{12}} &\leq  \abs{\int_{\abs{w}\leq 1}\left(f(z_2 \circ (0, 0, w)) - p_{2s}^{z_2\circ(0, 0, w)}[f]\right)\big(K_{z_1}(w)- K_{z_2}(w)\big) \dd w}\\
	&\quad+ \abs{ \int_{\abs{w}\leq 1}\left(p_{2s}^{z_2\circ(0, 0, w)}[f]  - f(z_2)\right)\big(K_{z_1}(w)- K_{z_2}(w)) \dd w}\\
	&\lesssim [f]_{C_\ell^{2s + \alpha}}\int_{\abs{w}\leq 1}\abs{w}^{2s+\alpha} \Abs{K_{z_1}(w)- K_{z_2}(w)} \dd w \\
	&\quad + \Abs{D_v f}\Big\vert\ \textrm{PV}\int_{\abs{w}\leq 1} w \big(K_{z_1}(w)- K_{z_2}(w)\big) \dd w\Big\vert\\\
	&\quad + \Abs{D_v^2 f} \int_{\abs{w}\leq 1}\abs{w}^{2}\Abs{K_{z_1}(w)- K_{z_2}(w)} \dd w\\
	&\lesssim A_0 \norm{f}_{C_\ell^{2s + \alpha}}d_\ell (z_1, z_2)^\alpha,
\eals
by assumption \eqref{eq:holder} and \eqref{eq:holder_div}.

To estimate $I_2$ we can use Lemma \ref{lem:3.7}. This proves the claim.

We further claim 
\bal
	[B]_{C_\ell^\alpha(Q_{2r}^v \times \R^d)} \lesssim \norm{f}_{C_\ell^{\gamma}((-1, 0] \times B_1 \times \R^d)}.
\label{eq:claim2}
\eal
For $z_2 \in Q_r$ we compute $B(z_2) - B(z_1) = J_1 + J_2$ with
\bals
	&J_1 = \int \big[\eta(z_1 \circ (0, 0, w)) - \eta(z_1) - \eta(z_2 \circ (0, 0, w))+ \eta(z_2)\big]f\big(z_1 \circ (0, 0, w)\big)K_0(w)\dd w, \\
	&J_2 = \int_{\abs{w} > r/4}\big[\eta(z_2 \circ (0, 0, w)) - \eta(z_2)\big]\big[f(z_1 \circ (0, 0, w)) - f(z_2 \circ (0, 0, w))\big]K_0(w)\dd w
\eals
Since $\eta$ is smooth we can apply Lemma \ref{lem:3.7} and get 
\bals
	\abs{J_1} \leq C\norm{f}_{L^\infty((-1, 0]\times B_1 \times \R^d)} d_\ell(z_1, z_2)^\alpha.
\eals
For $J_2$ we have
\bals
	\abs{J_2} \leq 2 \norm{\eta}_{L^\infty} [f]_{C_\ell^{\gamma}}\int_{\abs{w} > r/4} d_\ell(z_1\circ(0, 0, w), z_2\circ (0, 0, w))^\gamma K_0(w) \dd w.
\eals
Since $\alpha = \frac{2s\gamma}{1+2s}$ we have
\bals
	\abs{J_2} &\lesssim  [f]_{C_\ell^{\gamma}}\int_{\abs{w} > r/4} d_\ell(z_1\circ(0, 0, w), z_2\circ (0, 0, w))^\gamma K_0(w) \dd w \\
	&\lesssim [f]_{C_\ell^{\gamma}}\int_{\abs{w} > r/4} \big(d_\ell(z_1, z_2) + \abs{t_1 -t_2}^{\frac{1}{1+2s}}\abs{w}^{\frac{1}{1+2s}}\big)^\gamma K_0(w) \dd w \\
	&\lesssim [f]_{C_\ell^{\gamma}}\int_{\abs{w} > r/4} \big(1 + \abs{w}^{\frac{\gamma}{1+2s}}\big) d_\ell(z_1, z_2)^{\frac{2s\gamma}{1+2s}} K_0(w) \dd w \\
	&\lesssim_{\Lambda} [f]_{C_\ell^{\gamma}} d_\ell(z_1, z_2)^{\frac{2s\gamma}{1+2s}} =C  [f]_{C_\ell^{\gamma}} d_\ell(z_1, z_2)^\alpha,
\eals
where we used the upper bound on $K_0$ \eqref{eq:upperbound}. This proves the second claim \eqref{eq:claim2}.

By combining \eqref{eq:claim1} with \eqref{eq:claim2} and by choosing $R = c_0 r$ for some $c_0 > 1$ for any $0 < r$, we deduce for some $C_0 > 0$
\bal
	\norm{f}_{C_\ell^{2s + \alpha}(Q_{\frac{1}{4}})} \leq C(1+ A_0)\norm{f}_{C_\ell^{\gamma}((-1, 0] \times B_1\times \R^d)} + C_0\left(A_0+c_0^{-(s-\alpha)}\right)\norm{f}_{C_\ell^{2s + \alpha}(Q_1)} +C\norm{h}_{C_\ell^{\alpha}(Q_1)}.
\label{eq:prop5.4}
\eal
Without loss in generality we can assume that $A_0 < 1$, otherwise we scale the equation initially. Then we pick $c_0$ such that $C_0 \left(A_0+c_0^{-(s-\alpha)}\right)\leq \frac{1}{2}$. With the same iteration argument that was outlined in Subsection \ref{sec:nondiv-perturb} (which is a standard iteration argument), we conclude
\[
	\norm{f}_{C_\ell^{2s + \alpha}(Q_{\frac{1}{4}})} \leq C\Big(\norm{h}_{C_\ell^\alpha(Q_1)} + \norm{f}_{C_\ell^{\gamma}((-1, 0] \times B_1\times \R^d)}\Big),
\]
where $C$ depends on $s, d, \lambda_0, \Lambda_0, A_0$.


\appendix

\section{Hypoelliptic Operators}\label{app:hypo-sec}
\subsection{Toolbox}\label{app:hypo}
In this section, we briefly outline that our approach is robust enough to deal with general second order Kolmogorov equations of the form
\bal\label{eq:hypo}
	\mathscr L f(t, x) := \sum_{N-d \leq i, j \leq N} a_{i, j}(t, x) \partial_{x_ix_j} f(t, x) &+ \sum_{1 \leq i, j \leq N} \tilde b_{i, j} x_j  \partial_{x_i} f(t, x) - \partial_t  f(t, x) \\
	& + \sum_{N-d \leq i \leq N} b_i(t, x)  \partial_i f(t, x) + c(t, x) f(t, x) = h,
\eal
where $z = (t, x) = (t, x_0, x_1, \dots, x_\kappa) \in \R^{1+N}$, $\kappa \geq 1$ is the number of commutators, and $1 \leq d \leq N$. The velocity variable corresponds to the last entry $x_\kappa \in \R^{d}$ . The diffusion matrix $A(z) = \big(a_{i, j}(z)\big)_{N-d\leq i, j\leq N}$ is symmetric with real measurable entries, and uniformly elliptic \eqref{eq:unifellip}. The matrix $\tilde B = \big(\tilde b_{i, j}\big)_{1\leq i, j \leq N}$ has constant entries and satisfies suitable assumptions such that the \textit{principal part operator} $\mathscr K$ of $\mathscr L$ with respect to the kinetic degree, given by
\beq\label{eq:principal}
	\mathscr K f(t, x) = \sum_{N-d \leq i, j \leq N} \partial_{x_ix_j} f(t, x) + \sum_{1 \leq i, j \leq N} \tilde b_{i, j} x_j  \partial_{x_i} f(t, x) - \partial_t  f(t, x),
\eeq
is hypoelliptic, i.e. any distributional solution of $\mathscr K f = h$ is smooth whenever $h \in C^\infty$. In particular, this assumption coincides with $\tilde B$ having constant real entries and taking the form
\bal\label{eq:tildeB}
	\tilde B = 
	\begin{pmatrix}
		\ast & \tilde B_1 & 0 & \dots & 0 \\
		\ast & \ast & \tilde B_2 & \dots & 0 \\
		\vdots & \vdots & \vdots & \ddots & \vdots \\
		\ast & \ast & \ast &\dots & \tilde B_\kappa \\
		\ast & \ast & \ast &\dots & \ast
	\end{pmatrix},
\eal
where each $\tilde B_i$ is a $d_{i-1} \times d_{i}$ matrix of rank $d_i$ with $d := d_\kappa \geq d_{\kappa-1} \geq \dots \geq d_0 \geq 1$ and $\sum_{i = 0}^\kappa d_i = N$.
For further discussion on this operator, we refer the reader to \cite[Section 1 and 2]{lanconelli-polidoro}. We remark that the principal part operator $\mathscr K$ is still invariant under Galilean transformation \eqref{eq:galilean}. Moreover, $\mathscr K$ is invariant under the scaling given by
\beq\label{eq:scaling-hypo}
	(t, x_0, \dots, x_{\kappa}) \to (r^2t, r^3 x_0, \dots, r^{2\kappa +1}x_{\kappa-1}, r x_\kappa) =: z_r,
\eeq
for $r > 0$, where $\kappa\geq 1$ is the number of commutators, \textit{if and only if} all the $\ast$-blocks in $\tilde B$ are zero \cite[Proposition 2.2]{lanconelli-polidoro}. We denote the scaling invariant principal part by $\mathscr K_0$, and emphasise that it is of the form \eqref{eq:principal} with the matrix $\tilde B$ as in \eqref{eq:tildeB} where all the $\ast$-entries are zero. The cylinders will be defined respecting the scaling invariance, similar as above \eqref{eq:cylinder}. 

We briefly sketch how to obtain Schauder estimates for a solution $f$ of \eqref{eq:hypo} in $Q_1$. Note that the kinetic distance and the corresponding Hölder norms have to be defined more generally taking into account the scaling \eqref{eq:scaling-hypo}. 

First, the regularity estimates will be replaced by an argument of Hörmander \cite[Theorem 3.7]{hormander} as follows. Any solution $f$ of $\mathscr Kf = 0$ satisfies for $l \geq 1$
\beq\label{eq:hormander}
	\norm{D^l f}_{L^\infty(Q_{r}(z_0))} \leq C(l, N) \norm{f}_{L^2(Q_R(z_0))},
\eeq
where $D^l$ is a differential of order $l$. To see this, let $\delta$ be a multi-index such that $\abs\delta = l \geq 1$.
Let $G \subset L^2(Q_R(z_0))$ be defined as
\beqs
	G := \big\{ g \in L^2(Q_R(z_0)) \cap C^\infty(Q_R(z_0)) : \mathscr K g  = 0 \textrm{ in } Q_R(z_0)\big\}.
\eeqs	
Due to the hypoellipticity of $\mathscr K$  the subspace $G$ is closed in $L^2(Q_R(z_0))$. Define $\mathcal B : G \to C^0(Q_r(z_0))$ by $\mathcal B g = D^\delta g \vert_{Q_r(z_0)}$ for $\delta$ such that $\abs{\delta} = l \geq 0$. Then $\mathcal B$ has closed graph in $G \times C^0(Q_r(z_0))$, and thus, by virtue of the closed graph theorem we conclude \eqref{eq:hormander}. Then we derive Campanato's inequality \eqref{eq:camp-first} just as above in Subsection \ref{subsec:0_loc}. 

Second, the principal part operator $\mathscr K_0$ admits an explicit fundamental solution given in \cite[Equation (2.7)]{AR}. In particular, it satisfies for $r > 0$
\beq\label{eq:kolmo-scaling}
	\mathscr K_0 f_r = r^2 \big(\mathscr K_0f\big)_r, 
\eeq
where $f_r$ denotes the rescaled function $f_r(z) := f(z_r)$. Note that we do not require the scaling invariance to deduce the Schauder estimates. To see this, we denote the fundamental solution of $\mathscr K$ by $\Gamma$ and the fundamental solution of $\mathscr K_0$ by $\Gamma_0$, respectively. Then we can use the upper bound on $\Gamma$ by $\Gamma_0$, stated in \cite[Theorem 3.1]{lanconelli-polidoro},
\beq\label{eq:upperbound-fund-sol}
	\Gamma(z) \leq a \Gamma_0(z),
\eeq
for some $a > 0$. Due to \eqref{eq:kolmo-scaling} we then have the good scaling for $g_2$, where $g_2$ comes from the splitting of our solution $f - p_2^{(0)}[f] = g_1 + g_2$ as done in Section \ref{sec:schauder_nonfrac} above, with the polynomial $p_2^{(0)}[f]$ given in \eqref{eq:p2-hypo} below.
Alternatively, we can directly consider the scaling of the full matrix $\tilde B$ in \eqref{eq:tildeB}. According to \cite[Remark 3.2]{lanconelli-polidoro} and \cite[Remark 2.4]{lanconelli-pascucci-polidoro}, the $\ast$-blocks in \eqref{eq:tildeB} scale to some higher power of $r$ than the superdiagonal blocks. Thus, using $\tilde B = \tilde B_0 + \tilde B - \tilde B_0$, where $\tilde B_0$ corresponds to $\tilde B$ with all $\ast$-blocks equal to zero, we rewrite 
\beqs
	\mathscr K = \mathscr K_0  +  \sum_{1 \leq i, j \leq N} \big(\tilde b_{i, j} - \tilde b^0_{i, j} \big)x_j  \partial_{x_i} f,
\eeqs
so that 
\beqs
	\mathscr K^a_0 g_2 = \tilde h + \sum_{1 \leq i, j \leq N} \big(\tilde b^0_{i, j} - \tilde b_{i, j} \big)x_j  \partial_{x_i} f
\eeqs
with $\mathscr K^a_0$ as in \eqref{eq:Ka} but where $\tilde B_0$ replaces $\tilde B$, and with $\tilde h$ given in \eqref{eq:tilde-h-hypo}. The right hand side can be bounded as in Section \ref{sec:schauder_nonfrac} above, since the term $\sum_{1 \leq i, j \leq N} \big(\tilde b_{i, j} - \tilde b^0_{i, j} \big)x_j  \partial_{x_i} f$ scales like a lower order term due to \cite[Remark 3.2]{lanconelli-polidoro} and \cite[Remark 2.4]{lanconelli-pascucci-polidoro}. The details of the splitting are done for Dini-continuous coefficients in Subsection \ref{sec:dini} below.

\subsection{Hölder coefficients}
We have assembled the toolbox required for the Schauder estimates, and the argument of Section \ref{sec:schauder_nonfrac} goes through (with suitable modifications as outlined above in Subsection \ref{app:hypo}), so that we derive
\begin{theorem}[Schauder estimate for Kolmogorov operators]\label{thm:hypo}
Let $\alpha \in (0, 1)$ be given. Let $m \geq 3$ be some integer. Let $f$ solve \eqref{eq:hypo} in $Q_1$. Suppose $A \in C_\ell^{m-3+ \alpha}(Q_1)$ satisfies \eqref{eq:unifellip} for some $\lambda_0 > 0$, where $d = d_0$, and assume $B, c, h \in C_\ell^{m-3+\alpha}(Q_1)$. We further assume that the principal part operator $\mathscr K$ defined in \eqref{eq:principal} is hypoelliptic, i.e. $\tilde B$ is of the form \eqref{eq:tildeB}.
Then there holds
\beqs
	\norm{f}_{C_\ell^{m-1 + \alpha}(Q_{1/4})} \leq C \Big(\norm{f}_{L^\infty(Q_1)}  + \norm{h}_{C_\ell^{m-3+\alpha}(Q_1)}\Big),
\eeqs
for some $C$ depending on $N, \lambda_0, \alpha, \norm{A}_{C_\ell^{m-3+\alpha}},  \norm{B}_{C_\ell^{m-3+\alpha}},  \norm{c}_{C_\ell^{m-3+\alpha}}$.
\end{theorem}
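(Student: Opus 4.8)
The plan is to run the Campanato argument of Section~\ref{sec:schauder_nonfrac} essentially verbatim, substituting for the two kinetic-specific ingredients their hypoelliptic counterparts discussed in Subsection~\ref{app:hypo}: the regularity estimates of Proposition~\ref{prop:nonfractional_energy} are replaced by Hörmander's closed graph bound \eqref{eq:hormander}, and the explicit Kolmogorov fundamental solution is replaced by the fundamental solution $\Gamma$ of the principal part $\mathscr K$ together with the comparison $\Gamma \le a\Gamma_0$ from \eqref{eq:upperbound-fund-sol} and the exact scaling \eqref{eq:kolmo-scaling} of $\Gamma_0$. First I would establish Campanato's inequality for the frozen principal part. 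Let $A^{(0)}$ be a constant uniformly elliptic matrix, let $\mathscr K^a_0$ denote $\mathscr K_0$ with $\sum \partial_{x_ix_j}$ replaced by $\sum a^{(0)}_{ij}\partial_{x_ix_j}$, and note that a linear change of variables in the velocity block, as in \eqref{eq:change-var-A}, reduces $\mathscr K^a_0$ to $\mathscr K_0$. Since $\mathscr K_0$ is hypoelliptic, the set $G = \{g \in L^2(Q_R(z_0)) \cap C^\infty : \mathscr K_0 g = 0\}$ is closed in $L^2(Q_R(z_0))$, so the closed graph theorem gives \eqref{eq:hormander}: $\norm{D^l f}_{L^\infty(Q_r(z_0))} \le C\norm{f}_{L^2(Q_R(z_0))}$ for any solution of $\mathscr K_0 f = 0$ and any $l \ge 1$. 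Feeding this into the chain ``iterated Poincaré \eqref{eq:giaq1} $\to$ $L^\infty$-bound \eqref{eq:campnonloc1} $\to$ Sobolev'', exactly as in \eqref{eq:aux0_loc}--\eqref{eq:camp-first}, yields for solutions of $\mathscr K^a_0 f = 0$ in $Q_R(z_0)$ the estimate $\int_{Q_r(z_0)}\Abs{f - p_{m-1}}^2 \dd z \le C(r/R)^{n+2m}\int_{Q_R(z_0)}\abs{f}^2\dd z$, where $p_{m-1}$ is the polynomial determined by \eqref{eq:giaq1} and $n$ is the homogeneous dimension attached to the scaling \eqref{eq:scaling-hypo}.

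Next comes the freezing of coefficients. Fix $z_0 \in Q_{1/4}$, $0 < \rho \le \frac12$, a cut-off $\eta \in C_c^\infty$ equal to $1$ on $Q_\rho(z_0)$ and $0$ off $Q_{2\rho}(z_0)$, put $\tilde f = f\eta$, and decompose $\tilde f - p_2^{(0)}[\tilde f] = g_1 + g_2$ where $p_2^{(0)}[\tilde f]$ is the degree-$2$ Taylor polynomial of $\tilde f$ at $z_0$, $g_1$ solves $\mathscr K^a_0 g_1 = 0$ with $A^{(0)} = A(z_1)$, and $g_2$ solves $\mathscr K^a_0 g_2 = \tilde h - \tilde h(z_0)$; here $\tilde h$ collects the lower-order source $(A - A^{(0)})D^2_{x_\kappa}f\,\eta$, the $B$-, $c$- and $h\eta$ terms and the commutators of $\mathscr K^a_0$ with $\eta$, as in \eqref{eq:tilde-h}, together with the drift $\sum_{i,j}(\tilde b_{ij} - \tilde b^0_{ij})x_j\partial_{x_i}f$ moved to the right-hand side whenever $\tilde B$ is not already scaling invariant. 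For $g_1$ the inequality of the previous paragraph applies directly. For $g_2$, the Duhamel representation against $\Gamma$, the bound $\Gamma \le a\Gamma_0$ and the scaling \eqref{eq:kolmo-scaling} give $\norm{\Gamma_0 \ast_{\textrm{kin}} \mathbbm{1}_{Q_r}}_{L^\infty(Q_r)} \le Cr^2$ exactly as in Lemma~\ref{prop:2.1}, hence $\norm{g_2}_{L^\infty(Q_r)} \le Cr^{2}r^{m-3+\alpha}[\tilde h]_{C_\ell^{m-3+\alpha}(Q_r)}$ and $\int_{Q_r}\abs{g_2}^2\dd z \le Cr^{n+2m-2+2\alpha}[\tilde h]^2_{C_\ell^{m-3+\alpha}(Q_r)}$.

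Adding the two bounds and passing to the Campanato seminorm gives $[f]_{\mathcal L^{2,\,n+2m+2\alpha-2}_{m-1}(Q_r)} \le C\norm{f}_{L^2(Q_\rho(z_0))} + C[\tilde h]_{C_\ell^{m-3+\alpha}(Q_r)}$, and since $\tfrac{(n+2m+2\alpha-2)-n}{2} = m-1+\alpha$, Theorem~\ref{thm:equicampholderhigh} converts this to $[f]_{C_\ell^{m-1+\alpha}(Q_r)} \le C\norm{f}_{L^2} + C[\tilde h]_{C_\ell^{m-3+\alpha}}$. It remains to estimate $[\tilde h]_{C_\ell^{m-3+\alpha}}$: Hölder continuity of $A$ at $z_0$ makes $A - A^{(0)}$ of size $\lesssim \rho^{m-3+\alpha}$ in the relevant seminorm while $D^2_{x_\kappa}f$ has seminorm $\lesssim [f]_{C_\ell^{m-1+\alpha}}$ by Lemma~\ref{lem:2.7}; the coefficient products are controlled by the Hölder product rule; and the $\eta$-commutator terms produce $C(\rho)\norm{f}_{L^\infty}$ plus lower-order seminorms of $f$ that interpolate against $[f]_{C_\ell^{m-1+\alpha}}$ with arbitrarily small constant via Proposition~\ref{prop:interpolation}. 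Choosing $\rho$ small then yields, as in \eqref{eq:aux12}--\eqref{eq:covering-aux}, the inequality $[f]_{C_\ell^{m-1+\alpha}(Q_{\rho/4})} \le C\rho^{-\beta}\norm{f}_{L^\infty(Q_\rho)} + C[h]_{C_\ell^{m-3+\alpha}(Q_\rho)} + \tfrac12 [f]_{C_\ell^{m-1+\alpha}(Q_{2\rho}(z_0))}$ for some $\beta > 0$, after which the covering/iteration argument \eqref{eq:covering-aux}--\eqref{eq:covering-conclusio} absorbs the last term; adding $\norm{f}_{L^\infty(Q_1)}$ gives the full-norm bound of Theorem~\ref{thm:hypo}.

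The genuinely new point, compared with Section~\ref{sec:schauder_nonfrac}, is the treatment of the $\ast$-blocks of $\tilde B$ in \eqref{eq:tildeB}: moving from $\mathscr K$ to its scaling-invariant principal part $\mathscr K_0$ produces the drift $\sum_{i,j}(\tilde b_{ij} - \tilde b^0_{ij})x_j\partial_{x_i}f$, and I expect the main work to be checking that this term really is a lower-order perturbation in the freezing step --- concretely, that under the scaling \eqref{eq:scaling-hypo} the $\ast$-blocks pick up strictly higher powers of $r$ than the superdiagonal blocks (the content of the cited remarks of Lanconelli--Polidoro and Lanconelli--Pascucci--Polidoro), so that after rescaling to $Q_1$ these terms carry extra $\rho$-smallness and are absorbed together with the other lower-order terms. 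A secondary technical check is that the comparison $\Gamma \le a\Gamma_0$ of \eqref{eq:upperbound-fund-sol} still delivers $\norm{g_2}_{L^\infty(Q_r)} \lesssim r^{m-1+\alpha}[\tilde h]_{C_\ell^{m-3+\alpha}}$ despite $\Gamma$ itself not being scaling invariant --- which it does, since only the scaling of $\Gamma_0$ enters the estimate.
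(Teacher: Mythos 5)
Your proposal follows the paper's own sketch in Appendix~\ref{app:hypo-sec} essentially verbatim: Hörmander's closed-graph bound \eqref{eq:hormander} replaces the regularity estimates of Proposition~\ref{prop:nonfractional_energy}, the freezing/Duhamel/Campanato-to-Hölder chain of Section~\ref{sec:schauder_nonfrac} is re-run with the fundamental-solution scaling \eqref{eq:kolmo-scaling}, and the $\ast$-block drift is absorbed as a lower-order source using the Lanconelli--Polidoro scaling remarks. One small redundancy worth noting: the paper presents the comparison $\Gamma \le a\Gamma_0$ of \eqref{eq:upperbound-fund-sol} and the decomposition $\tilde B = \tilde B_0 + (\tilde B - \tilde B_0)$ as \emph{alternative} devices for handling the failure of scaling invariance, whereas you invoke both at once --- once you have moved $\sum_{i,j}(\tilde b_{ij}-\tilde b^0_{ij})x_j\partial_{x_i}f$ onto the right-hand side, $g_2$ solves an equation in the scaling-invariant operator $\mathscr K^a_0$, whose fundamental solution (after the $A^{(0)}$ change of variables) is $\Gamma_0$ itself, so the bound \eqref{eq:upperbound-fund-sol} is no longer needed.
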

Similar to Subsection \ref{sec:div-form-loc} the divergence form case just follows by realising that any divergence form equation can be written in non-divergence form plus an additional lower order term, provided that $\nabla_{x_\kappa} A \in C_\ell^{m-3+ \alpha}(Q_1)$. Finally, we can derive a Schauder-type estimate under less stringent assumptions assuming Dini-regularity instead of Hölder regularity, inspired from \cite{PSR}.

\subsection{Dini Coefficients}
\label{sec:dini}
We point out a structural peculiarity when we consider more generally Dini-regular coefficients $A, B, c$ and source $h$. We denote by $\omega_g$ the modulus of continuity of a function $g$ on a subset $Q \subset \R^{1+N}$, given by
\beqs
	\omega_g(\ln r) := \sup_{\substack{z_1, z_2 \in Q\\ d_\ell(z_1, z_2) < r}} \big\vert g(z_1) - g(z_2) \big\vert.
\eeqs
A function $g$ is said to be Dini-continuous in $Q$ if 
\beqs
	\int_0^1 \frac{\omega_g(\ln r)}{r} \dd r = \int_{-\infty}^0 \omega_g(\rho) \dd \rho < + \infty.
\eeqs
We aim to show:
\begin{theorem}\label{thm:dini}
Let $f$ solve \eqref{eq:hypo} in $Q_1$ such that $A$ is a symmetric, uniformly elliptic matrix with real measurable entries, and suppose $\tilde B$ has constant entries. Assume that the principal part operator $\mathscr K$ \eqref{eq:principal} is hypoelliptic, i.e. $\tilde B$ is of the form \eqref{eq:tildeB}. Suppose that the coefficients $A, B, c$ and the source $h$ are Dini-regular. Then, for any $z, z_0 \in \R^{1+N}$ such that $d_\ell(z, z_0) < 1/2$, $f$ satisfies
\bal\label{eq:dini}
	\big\vert D^2 f(z) &- D^2 f(z_0)\big\vert \\
	&\leq C  \Bigg(\int^{\ln d_\ell(z, z_0)}_{-\infty} \omega_{A}(\xi) \dd \xi + d_\ell(z, z_0)\int_{\ln d_\ell(z, z_0)}^0 \omega_{A}(\xi) e^{-\xi} \dd \xi\Bigg)\sum_{i, j} \sup_{Q_1} \big\vert \partial_{v_iv_j}^2 f\big\vert\\
	&\quad+ C\Bigg(d_\ell(z, z_0)+ \int^{\ln d_\ell(z, z_0)}_{-\infty} \omega_{c}(\xi) \dd \xi + d_\ell(z, z_0)\int_{\ln d_\ell(z, z_0)}^0 \omega_{c}(\xi) e^{-\xi} \dd \xi\Bigg)\sup_{Q_1} \big\vert f\big\vert \\
	&\quad+C\Bigg(\int^{\ln d_\ell(z, z_0)}_{-\infty} \omega_{B}(\xi)\dd \xi +d_\ell(z, z_0) \int_{\ln d_\ell(z, z_0)}^0 \omega_{B}(\xi)e^{-\xi} \dd \xi\Bigg)\sum_{i} \sup_{Q_1} \big\vert \partial_{v_i} f\big\vert \\
	&\quad+C\int^{\ln d_\ell(z, z_0)}_{-\infty} \omega_{h}(\xi)\dd \xi +C d_\ell(z, z_0) \int_{\ln d_\ell(z, z_0)}^0 \omega_{h}(\xi)e^{-\xi} \dd \xi+ Cd_\ell(z, z_0)\sup_{Q_1} \abs{h}.
\eal
Here $D^2$ is a differential of order $2$, and $C = C(N, \lambda_0)$. 
\end{theorem}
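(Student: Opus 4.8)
The plan is to establish \eqref{eq:dini} as an \emph{a priori} estimate: we assume that $f$ is already known to have bounded second derivatives on $Q_1$ (as furnished, for instance, by Theorem~\ref{thm:hypo} or by interior regularity), and we \emph{quantify} the modulus of continuity of $D^2f$. The argument is a scale-by-scale version of the freezing-of-coefficients scheme of Section~\ref{sec:schauder_nonfrac}: rather than iterating a single geometric-decay inequality to produce a Hölder power, we iterate it along dyadic radii $r_k=\theta^k r_0$ and keep track of the moduli $\omega_A,\omega_B,\omega_c,\omega_h$ at each scale, so that the constant in \eqref{eq:dini} becomes a Dini sum instead of a power. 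Throughout, the regularity estimate for the constant-coefficient equation is replaced by Hörmander's closed-graph bound \eqref{eq:hormander}, and the fundamental-solution estimates rely on the domination \eqref{eq:upperbound-fund-sol} of $\Gamma$ by the homogeneous $\Gamma_0$ together with the exact scaling \eqref{eq:kolmo-scaling}; the non-scale-invariant $\ast$-blocks of $\tilde B$ are absorbed as lower-order terms exactly as indicated in Subsection~\ref{app:hypo}. Via the equation \eqref{eq:hypo}, every order-$2$ kinetic differential of $f$ (i.e.\ $\partial_t f$ and $\mathcal Tf$) is controlled by $\partial^2_{v}f$, $\partial_v f$, $f$ and $h$, so it suffices to track the latter.

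\textbf{Step 1: the one-scale estimate.} Fix $z_0\in Q_{1/2}$ and for $0<r\le\tfrac12$ set $E(z_0,r):=\inf_{P\in\mathcal P_2}\norm{f-P}_{L^\infty(Q_r(z_0))}$. Following Section~\ref{sec:schauder_nonfrac}, we localise with a cut-off at scale $r$, subtract the degree-$2$ kinetic Taylor polynomial $p_2^{(0)}[f]$ of $f$ at $z_0$, and split the remainder as $g_1+g_2$, where $g_1$ solves the frozen equation $\mathscr K_0^a g_1=0$ and $g_2$ solves $\mathscr K_0^a g_2=\tilde h-\tilde h(z_0)$ with $\tilde h$ the hypoelliptic analogue of \eqref{eq:tilde-h}. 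For $g_1$, hypoellipticity and \eqref{eq:hormander} give $\norm{D^3g_1}_{L^\infty(Q_{r/2})}\lesssim r^{-3}\norm{g_1}_{L^\infty(Q_r)}$, hence by Taylor expansion $\inf_{P\in\mathcal P_2}\norm{g_1-P}_{L^\infty(Q_{\theta r})}\le C_0\theta^{3}\norm{g_1}_{L^\infty(Q_r)}$ for $\theta\in(0,\tfrac12)$; here the exponent $3$ is the smallest kinetic degree exceeding $2$. For $g_2$, the scaling \eqref{eq:kolmo-scaling} and the argument of Lemma~\ref{prop:2.1} give $\norm{g_2}_{L^\infty(Q_r)}\lesssim r^{2}\,\Omega(r)$, where with $\Omega(r):=\operatorname{osc}_{Q_r(z_0)}\tilde h$ one checks
\[ \Omega(r)\;\lesssim\;\omega_A(\ln r)\sum_{i,j}\sup_{Q_1}\abs{\partial^2_{v_iv_j}f}+\omega_B(\ln r)\sum_i\sup_{Q_1}\abs{\partial_{v_i}f}+\bigl(\omega_c(\ln r)+r\bigr)\sup_{Q_1}\abs f+\omega_h(\ln r)+r\sup_{Q_1}\abs h, \]
the bare powers of $r$ coming from the smooth cut-off terms in $\tilde h$ and from the value $h(z_0)$ entering $p_2^{(0)}[f]$. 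Using $\norm{g_1}_{L^\infty(Q_r)}\le E(z_0,r)+\norm{g_2}_{L^\infty(Q_r)}$ we arrive at
\[ E(z_0,\theta r)\;\le\;C_0\theta^{3}E(z_0,r)+C\,r^{2}\,\Omega(r). \]

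\textbf{Step 2: summation over scales.} Choose $\theta\in(0,\tfrac12)$ with $C_0\theta\le\tfrac14$, set $r_k:=\theta^k r_0$ with $r_0:=\tfrac12$ and $a_k:=r_k^{-2}E(z_0,r_k)$. Step~1 reads $a_{k+1}\le\tfrac14a_k+C\theta^{-2}\Omega(r_k)$, hence $a_k\le4^{-k}a_0+C\sum_{j<k}4^{j-k+1}\Omega(r_j)$ and $\sum_{k\ge0}a_k\le Ca_0+C\sum_{j\ge0}\Omega(r_j)$. Since each $\omega_g$ is non-decreasing, $\sum_{j\ge0}\omega_g(\ln r_j)\asymp\int_{-\infty}^{0}\omega_g(\xi)\,d\xi<\infty$ by the Dini hypothesis, so $\sum_k a_k<\infty$ and $a_k\to0$. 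The near-optimal polynomials $P_k\in\mathcal P_2$ at scale $r_k$ satisfy $\norm{P_{k+1}-P_k}_{L^\infty(Q_{r_{k+1}})}\le E(z_0,r_{k+1})+E(z_0,r_k)\lesssim r_k^2a_k$, so all their coefficients --- in particular the second-order ones --- are Cauchy; their limit defines $D^2f(z_0)$, and $\abs{D^2f(z_0)-D^2f_{r}(z_0)}\lesssim\sum_{r_k\le r}a_k\lesssim\sum_{r_k\le r}\Omega(r_k)$, where $D^2f_r(z_0)$ denotes the second-order coefficient of the polynomial at scale $\sim r$.

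\textbf{Step 3: the two-point modulus, and the main obstacle.} Given $z,z_0$ with $r:=d_\ell(z,z_0)<\tfrac12$, write
\[ \abs{D^2f(z)-D^2f(z_0)}\le\abs{D^2f(z_0)-D^2f_r(z_0)}+\abs{D^2f(z)-D^2f_r(z)}+\abs{D^2f_r(z_0)-D^2f_r(z)}. \]
The first two terms are bounded, by Step~2, by $\sum_{r_k\le r}\Omega(r_k)$, which --- collecting the $A$-, $B$-, $c$-, $h$-contributions and passing from the dyadic sum to an integral via monotonicity of the $\omega_g$ --- produces precisely the integrals $\int_{-\infty}^{\ln r}\omega_g(\xi)\,d\xi$ in \eqref{eq:dini}. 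For the third term one runs Step~1 \emph{upwards}: on the overlapping cylinders of radius $\rho$ with $r\le\rho\le\tfrac12$ the discrepancy between the degree-$2$ expansions of $f$ at $z$ and at $z_0$ is controlled, via the potential representation for $g_2$, by $\rho^{-1}\Omega(\rho)$ per scale; summing over $\rho=\theta^{-j}r$ and again using monotonicity of the $\omega_g$ yields the large-scale terms $r\int_{\ln r}^{0}\omega_g(\xi)e^{-\xi}\,d\xi$, together with the bare-$r$ terms carried along since Step~1; this gives \eqref{eq:dini}. Two points require care. First, the \emph{apparent circularity} in the freezing step: the commutator $(A-A(z_0))\partial^2_v(f-P_k)$ involves $D^2f$, whose continuity is what we are proving; this is exactly why the iteration is organised scale-by-scale, so that only the \emph{vanishing} oscillations $\omega_A(\ln r_k)\to0$ multiply the already-bounded $\sup_{Q_1}\abs{\partial^2_{v}f}$, and no smallness of $D^2f$ itself is needed. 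Second, and this is the real work, the bookkeeping in Step~3 that produces the large-scale integral with the sharp weight $e^{-\xi}$: this is the hypoelliptic counterpart of the classical estimate $\abs{D^2u(z)-D^2u(z_0)}\lesssim\int_0^{r}\omega(t)\tfrac{dt}{t}+r\int_r^{1}\omega(t)\tfrac{dt}{t^2}$ for Newtonian potentials of Dini densities, and it hinges on the pointwise domination \eqref{eq:upperbound-fund-sol} of $\Gamma$ by $\Gamma_0$ and the exact scaling \eqref{eq:kolmo-scaling}, the extra factor $\rho^{-1}$ (equivalently the weight $e^{-\xi}$) reflecting that $D^2$ applied to the fundamental solution is a Calderón--Zygmund, rather than an integrable, kernel.
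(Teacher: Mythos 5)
Your proposal takes a genuinely different route from the paper. You run the standard Campanato scheme — a one-scale decay inequality $E(\theta r)\le C_0\theta^3 E(r)+Cr^2\Omega(r)$ iterated along dyadic scales $r_k=\theta^k r_0$ — whereas the paper builds a \emph{telescoping family} $\{\tilde f_k\}$ of solutions to $\mathscr K^a \tilde f_k=\tilde h(z_0)$ in $\mathcal Q_k=Q_{\rho^k}$ and splits $\tilde f-p_2^{(0)}[\tilde f]=(\tilde f-\tilde f_k)+(\tilde f_k-p_2^{(0)}[\tilde f])$, the latter being $\sum_{l\ge k}(\tilde f_l-\tilde f_{l+1})$. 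In the paper the first piece is a potential, bounded via \eqref{eq:upperbound-fund-sol} and \eqref{eq:kolmo-scaling}, and yields the large-scale integral $\int_{\ln\rho}^0\omega\,e^{-\xi}\dd\xi$; the second piece is a sum of homogeneous constant-coefficient solutions, bounded by the Campanato inequality of Subsection~\ref{subsec:0_loc}, and yields $\rho^{-k}\int_{-\infty}^{\ln\rho}\omega\,\dd\xi$. This assigns each integral in \eqref{eq:dini} cleanly to one of the two pieces. Your bookkeeping does not line up in the same way, and contains two concrete gaps.

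First, the Step~2 claim $\sum_{r_k\le r}a_k\lesssim\sum_{r_k\le r}\Omega(r_k)$ is not what the iteration produces. Unrolling $a_{k+1}\le\tfrac14 a_k+C\theta^{-2}\Omega(r_k)$ and summing over $k\ge K$ with $r_K\sim r$ gives
\beqs
\sum_{k\ge K}a_k \;\lesssim\; 4^{-K}a_0 \;+\; \sum_{j<K}4^{j-K}\,\Omega(r_j)\;+\;\sum_{j\ge K}\Omega(r_j),
\eeqs
and only the last term is what you keep; the first two are precisely the bare-$r$ and the $d_\ell(z,z_0)\int_{\ln r}^{0}\omega\,e^{-\xi}\dd\xi$ contributions of \eqref{eq:dini}. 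Thus Step~2 already produces \emph{all three} groups of terms in the theorem, and Step~3's ``running Step~1 upwards'' misattributes the large-scale weight $e^{-\xi}$. Indeed the third term of your triangle split, $\abs{D^2f_r(z_0)-D^2f_r(z)}$, is not an upward iteration at all: $z$ and $z_0$ lie in a common cylinder of radius comparable to $r$, and comparing the two near-optimal degree-$2$ polynomials on their overlap gives the single-scale bound $\abs{D^2 P^{z_0}_r - D^2 P^z_r}\lesssim r^{-2}\big(E(z_0,r)+E(z,r)\big)\lesssim a_K(z_0)+a_K(z)$, which is again Step-2 data; no new potential estimate is needed there.

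Second, because the Hörmander constant $C_0>1$ in general, the constraint $C_0\theta\le\tfrac14$ forces $\theta<\tfrac14$, and then $4^{-K}=(r_K/r_0)^{\log 4/\abs{\log\theta}}$ with exponent strictly less than one; the contribution $a_0\cdot 4^{-K}$ is therefore of order $r^\delta\sup_{Q_1}\abs f$ with $\delta<1$, strictly weaker than the $d_\ell(z,z_0)\sup_{Q_1}\abs f$ prefactor in \eqref{eq:dini}. To recover the exact power you should not iterate a fixed-ratio decay; instead apply the constant-coefficient Campanato inequality of Subsection~\ref{subsec:0_loc} directly, namely $\int_{Q_r}\abs{g_1-p_2}^2\le C(r/R)^{n+6}\int_{Q_R}\abs{g_1}^2$ with $C$ uniform in $r/R$ — which is exactly the role played in the paper's proof by the piece $\tilde f_k-p_2^{(0)}[\tilde f]$.
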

In particular we recover Theorem 1.6 of \cite{PSR}. 
\begin{remark}
Theorem \ref{thm:dini} suggests that Dini continuity is the suitable notion of regularity for Schauder estimates. In particular, in Theorem \ref{thm:hypo}, we see that Hölder regular solutions $f$ are \textit{fixed points} of the Schauder estimates.
\end{remark}

For this purpose, we consider $0 <  \rho \leq 1$ to be determined and a solution $f$ of \eqref{eq:1.1loc_nondiv} in $Q_1$. 
Let $\eta \in C_c^\infty(\R^{1+2d})$ be a cut-off with $0 \leq \eta \leq 1$, such that $\eta = 1$ in $Q_\rho$ and $\eta = 0$ outside $Q_{2\rho}$. Let $\tilde f = f \cdot \eta$. With no loss in generality we set $z_0 = (0, 0, 0)$. We denote with $p_{2}^{(z_0)}[f]$ the Taylor polynomial of $f$ at $z_0$ with kinetic degree less or equal to $2$, given by
\bal\label{eq:p2-hypo}
	p_{2}^{z_0}[f](z) = f(z_0) &+ \sum_{N-d \leq i \leq N}  \partial_{x_i} f(z_0) \big(z^{(i)} - z_0^{(i)}\big) + \frac{1}{2} \sum_{N-d \leq i, j \leq N}  \partial^2_{x_ix_j} f(z_0) \big(z^{(i)} - z_0^{(i)}) (z^{(j)} - z_0^{(j)}\big) \\
	&+ \Bigg[\sum_{1 \leq i, j \leq N} \tilde b_{i, j} x_j  \partial_{x_i} f(z_0) - \partial_t  f(z_0)\Bigg] (t - t_0),
\eal
where $z^{(i)}$ denotes the element at index $i$.
We then write
\bal\label{eq:lim}
	\tilde f - p_{2}^{(0)}[\tilde f] = \tilde f - \tilde f_{k} + \tilde f_k - p_{2}^{(0)}[\tilde f],
\eal
where each $f_k$ solves 
\bal\label{eq:f-k}
	\mathscr K^a f_k = \tilde h(0, 0, 0),
\eal
in $\mathcal Q_k := Q_{\rho^{k}}$, with the constant coefficient operator $\mathscr K^a$ given by 
\beq\label{eq:Ka}
\mathscr K^a := \sum_{N-d\leq i, j\leq N} a^{(0)}_{i, j} \partial_{x_i x_j}^2 +  \sum_{1\leq i, j\leq N} \tilde b_{i, j} x_j\partial_{x_i} - \partial_t
\eeq
for $a_{(0)}^{i, j}  =a^{i, j}(z_0)$, and the right hand side $\tilde h$ given by
\beq\label{eq:tilde-h-hypo}
	\tilde h := \sum_{N-d \leq i, j\leq N} \Big(a_{i, j}^{(0)}-a_{i, j} \Big)\partial^2_{x_ix_j} f \cdot \eta  + \sum_{N-d \leq i\leq N} \big(2a_{i, j}^{(0)} \partial_{x_i}\eta -b_i \eta \big)\partial_{x_i} f +  (-c \eta +\mathscr K_a\eta) f + h \cdot\eta.
\eeq
In particular, there holds
\beq\label{eq:g-1}
	\mathscr K^a \big(\tilde f_k -\tilde f_{k+1}\big) = 0, \qquad \textrm{ in } \mathcal Q_{k+1},
\eeq
and 
\beq\label{eq:g-2}
	\mathscr K^a \big(\tilde f - \tilde f_{k}\big) = \tilde h - \tilde h(0, 0, 0), \qquad \textrm{ in } \mathcal Q_k.
\eeq
On the one hand, we first perform a constant change of variables to rewrite $\mathscr K^a$ in terms of $\mathscr K$, as was done in \eqref{eq:change-var-A}. Then, due to \eqref{eq:g-2}, the upper bound of the fundamental solution \eqref{eq:upperbound-fund-sol} and the scaling \eqref{eq:kolmo-scaling}, which extends Lemma \ref{prop:2.1}, we bound for any $k \geq 1$
\beqs
	\int_{\mathcal Q_{k+1}} \Abs{\tilde f - \tilde f_k}^2 \dd z \leq C \rho^{(n+4)(k+1)} \sup_{\mathcal Q_{k+1}} \big\vert \tilde h - \tilde h(0, 0, 0)\big\vert^2 \leq C  \rho^{(n+4)(k+1)}\omega^2_{\tilde h}(\rho^{k+1}).
\eeqs
Since $\tilde f_k = \tilde f_0 + \sum_{l=0}^{k-1} \tilde f_{l+1} - \tilde f_l$ we thus find
\bal\label{eq:lim-aux-2}
	\Bigg(\rho^{-(n+6)(k+1)}\int_{\mathcal Q_{k+1}} \Abs{\tilde f - \tilde f_k}^2 \dd z\Bigg)^{\frac{1}{2}} &\leq \Bigg(\sum_{l=0}^{k-1}\rho^{-(n+6)(l+1)}\int_{\mathcal Q_{l+1}} \Abs{\tilde f_{l+1} - \tilde f_l}^2 \dd z\Bigg)^{\frac{1}{2}} \\
	&\leq \Bigg\{\sum_{l=0}^{k-1}\rho^{-(n+6)(l+1)}\Bigg(\int_{\mathcal Q_{l+1}} \Abs{\tilde f_{l+1} - \tilde f}^2 + \Abs{\tilde f - \tilde f_l}^2 \dd z\Bigg)\Bigg\}^{\frac{1}{2}} \\
	&\leq C \sum_{l=0}^{k-1}\frac{\omega_{\tilde h}(\ln \rho^{l+1})}{\rho^{l+1}}\\
	&\leq C \int_{\ln \rho}^0 \omega_{\tilde h}(\xi) e^{-\xi} \dd \xi.
\eal
On the other hand, we note that $p_{2}^{(0)}[\tilde f] = \lim_{k\to \infty} \tilde f_k$. This is because $p_{2}^{(0)}[\tilde f]$ is the Taylor polynomial of $\tilde f$, so that 
\beqs
	\sup_{\mathcal Q_{k}} \big(\tilde f - p_2^{(0)}[\tilde f]\big) =  o(\rho^{2k}),
\eeqs
and we also refer to \cite[Equation (5.16)]{PSR}.
Moreover, due to \eqref{eq:g-2} we can use \eqref{eq:kolmo-scaling} so that overall we find
\bals
	\big\vert \tilde f_{k}(z) - p_2^{(0)}[\tilde f](z) \big\vert &\leq \sup_{\mathcal Q_{k}} \big\vert \tilde f_{k} - \tilde f\big\vert + \sup_{\mathcal Q_{k}} \big\vert \tilde f - p_2^{(0)}[\tilde f]\big\vert \\
	&\leq C\rho^{2k}  \sup_{\mathcal Q_{k}} \big\vert \tilde h - \tilde h(0, 0, 0)\big\vert + o(\rho^{2k})\\
	&\leq C\rho^{2k} \omega_{\tilde h}(\ln \rho^{k}) + o(\rho^{2k})\\
	&\leq o(\rho^{2k}).
\eals
Therefore, we may write
\beq\label{eq:sum}
	\tilde f_k - p_{2}^{(0)}[\tilde f] = \sum_{l = k}^\infty \tilde f_l - \tilde f_{l+1}.
\eeq
Due to \eqref{eq:f-k}, Subsection \ref{subsec:0_loc} (suitably making the replacements for the more general equation as outlined in Subsection \ref{app:hypo}), \eqref{eq:sum}, \eqref{eq:g-2} and \eqref{eq:kolmo-scaling}, we then find for $\tilde f_k - p_2^{(0)}[\tilde f]$
\bals
	\int_{\mathcal Q_{k+1}}\Abs{\tilde f_k - p_2^{(0)}[\tilde f] - p^{(0)}_2\big[\tilde f_k -p_2^{(0)}[\tilde f]\big]}^2 \dd z &\leq C \Big(\frac{\rho^{k+1}}{\rho^{k}}\Big)^{n + 6} \int_{\mathcal Q_{k}} \Abs{\tilde f_k - p_2^{(0)}[\tilde f]}^2 \dd z\\
	&= C \Big(\frac{\rho^{k+1}}{\rho^{k}}\Big)^{n + 6}  \sum_{l=k}^\infty \int_{\mathcal Q_{k}} \Abs{\tilde f_l - \tilde f_{l+1}}^2 \dd z\\
	&\leq C \Big(\frac{\rho^{k+1}}{\rho^{k}}\Big)^{n + 6}  \sum_{l=k}^\infty \Bigg( \int_{\mathcal Q_{l}}\Abs{\tilde f_l - \tilde f}^2\dd z +\int_{\mathcal Q_{l}}\Abs{\tilde f -  \tilde f_{l+1}}^2\dd z \Bigg)\\
	&\leq C \Big(\frac{\rho^{k+1}}{\rho^{k}}\Big)^{n + 6} \sum_{l=k}^\infty  \rho^{l(n + 4)}\omega^2_{\tilde h}(\ln\rho^{l})\\
	&\leq C\rho^{(k+1)(n + 6)} \rho^{-2k}\sum_{l=k}^\infty \omega^2_{\tilde h}(\ln\rho^{l}), 
\eals
or equivalently
\bal\label{eq:lim-aux-1}
	\Bigg(\rho^{-(n+6)(k+1)}\int_{\mathcal Q_{k+1}} \Abs{\tilde f_k - p_2^{(0)}[\tilde f] - p^{(0)}_2\big[\tilde f_k -p_2^{(0)}[\tilde f]\big]}^2 \dd z\Bigg)^{\frac{1}{2}}&\leq C\rho^{-k}\sum_{l=k}^\infty \omega_{\tilde h}(\ln \rho^{l}) \\
	&\leq C \rho^{-k} \int^{\ln \rho}_{-\infty} \omega_{\tilde h}(\xi)\dd \xi.
\eal

Thus due to \eqref{eq:lim}, \eqref{eq:lim-aux-2} and \eqref{eq:lim-aux-1} we conclude
\beqs
	\Bigg(\rho^{-(n+6)(k+1)}\int_{\mathcal Q_{k+1}} \Abs{\tilde f - p_2^{(0)}[\tilde f]}^2 \dd z\Bigg)^{\frac{1}{2}} \leq C \rho^{-(k+1)} \int^{\ln\rho}_{-\infty}\omega_{\tilde h}(\xi) \dd \xi + C \int_{\ln\rho}^0 \omega_{\tilde h}(\xi)e^{-\xi} \dd \xi.
\eeqs
The right hand side will further be bounded using the explicit form of $\tilde h$ in \eqref{eq:tilde-h-hypo}:
\bals
	\rho^{-(k+1)}\int^{\ln\rho}_{-\infty} &\omega_{\tilde h}(\xi) \dd \xi + \int_{\ln\rho}^0 \omega_{\tilde h}(\xi)e^{-\xi} \dd \xi \\
	&\lesssim  \Bigg(\rho^{-(k+1)}\int^{\ln\rho}_{-\infty} \omega_{A}(\xi) \dd \xi + \int_{\ln\rho}^0 \omega_{A}(\xi)e^\xi\dd \xi\Bigg)\sum_{1\leq i, j\leq d_0} \sup_{Q_1} \big\vert \partial_{x_ix_j}^2 f\big\vert \\
	&\quad+ \Bigg(1+\rho^{-(k+1)}\int^{\ln\rho}_{-\infty} \omega_{c}(\xi) \dd \xi + \int_{\ln\rho}^0\omega_{c}(\xi) e^{-\xi}\dd \xi\Bigg)\sup_{Q_1} \big\vert f\big\vert\\
	&\quad+\Bigg(\rho^{-(k+1)}\int^{\ln\rho}_{-\infty} \omega_{B}(\xi) \dd \xi + \int_{\ln\rho}^0 \omega_{B}(\xi)e^{-\xi} \dd \xi\Bigg)\sum_{1\leq i\leq d_0} \sup_{Q_1} \big\vert \partial_{x_i} f\big\vert \\
	&\quad+\rho^{-(k+1)}\int^{\ln\rho}_{-\infty}\omega_{h}(\xi) \dd \xi + \int_{\ln\rho}^0\omega_{h}(\xi)e^{-\xi} \dd \xi  + \sup_{Q_1} \abs{h}.
\eals
For the left hand side we find for $z, z_0$ such that $d_\ell(z, z_0) \leq 1/2$ upon choosing $\rho = d_\ell(z, z_0)$
\bals
	\frac{\big\vert D^2 f(z) - D^2 f(z_0)\big\vert^2}{d_\ell(z, z_0)^2} &\leq C [f]^2_{C^{2+1-}_\ell(Q_{\rho})}  \leq C \inf_{p \in \mathcal P_2} \rho^{-(n+6)}\int_{Q_{\rho}} \Abs{\tilde f - p}^2 \dd z\leq C \rho^{-(n+6)}\int_{Q_{\rho}} \Abs{\tilde f - p_2^{(0)}[\tilde f]}^2 \dd z,
\eals
where we used Lemma \ref{lem:2.7} and the characterisation of Campanato norms in Theorem \ref{thm:equicampholderhigh}. This concludes the proof of \eqref{eq:dini}.

\section{Relation between Hölder and Campanato spaces}\label{app:campanato}

This section is devoted to the proof of the equivalence between kinetic Campanato and Hölder spaces, as stated in Theorem \ref{thm:equicampholderhigh}. We follow Campanato's arguments from \cite{Camp}. We recall the notation $\Omega(z_0, r) := \Omega \cap Q_r(z_0)$ for any subset $\Omega \subset \R^n$. Throughout this section we will denote $\Omega = Q_R(\tilde z_0)$ as in the statement of Theorem \ref{thm:equicampholderhigh}.

\subsection{Auxiliary Result}

We start with a preliminary lemma, which in the elliptic case has first been derived by De Giorgi \cite[Lemma 2.1]{Camp}.  
\begin{lemma}
For a polynomial $P \in \mathcal P_k$, a real number $q \geq 1$, $z_0 \in \R^{1+2d}$, and $\rho > 0$ there exists a constant $c$ such that 
\beqs
	\Big\vert(\partial_t + v\cdot \nabla_x)^{j_0}\partial_{x_1}^{j_1}\cdots\partial_{x_d}^{j_d}\partial_{v_1}^{j_{d+1}}\cdots \partial_{v_{d}}^{j_{2d}}P(z)\big\vert_{z = z_0}\Big\vert^q \leq \frac{c}{\rho^{n+\abs{J}q}}\int_{Q_\rho(z_0)} \abs{P(z)}^q \dd z
\eeqs
where $\abs{J} = 2s\cdot j_0 + (1+2s) \abs{(j_1, \dots, j_d)} + \abs{(j_{d+1}, \dots, j_{2d})}$.
\label{lem:2.1}
\end{lemma}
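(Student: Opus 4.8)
The statement is the kinetic analogue of De Giorgi's classical lemma: a $q$-power norm over a cylinder controls all kinetic derivatives at the centre, with the correct scaling weight $\rho^{n+\abs{J}q}$. The plan is to reduce to a fixed reference cylinder by the scaling and left-translation invariances of the problem, and then invoke equivalence of norms on the finite-dimensional space $\mathcal P_k$.

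\emph{Step 1: Reduce to $z_0 = 0$ and $\rho = 1$.} First I would use the left-invariance of the group structure: the kinetic differential operators $\mathcal T = \partial_t + v\cdot\nabla_x$, $\nabla_x$, $\nabla_v$ behave well under the Galilean shift $z\mapsto z_0\circ z$ (indeed this is exactly the content used in the proof of Lemma \ref{lem:2.7}), and the cylinder $Q_\rho(z_0)$ is the left-translate by $z_0$ of $Q_\rho(0)$. So replacing $P$ by $\tilde P(z) := P(z_0\circ z)$, which is again a polynomial of kinetic degree $\le k$, it suffices to prove the inequality at $z_0 = 0$. Next I would rescale: set $P_\rho(z) := P(z_\rho)$ with $z_\rho = (\rho^{2s}t,\rho^{1+2s}x,\rho v)$. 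A change of variables gives $\int_{Q_1}\abs{P_\rho}^q\dd z = \rho^{-n}\int_{Q_\rho}\abs{P}^q\dd z$ since the Jacobian of $z\mapsto z_\rho$ has determinant $\rho^{n}$ (recall $n = 2s+2d(s+1)$ is the homogeneous dimension). Moreover, by the chain rule and the homogeneity of the monomials, the operator $D^J := \mathcal T^{j_0}\partial_{x_1}^{j_1}\cdots\partial_{v_d}^{j_{2d}}$ satisfies $(D^J P_\rho)(z) = \rho^{\abs{J}}(D^J P)(z_\rho)$; evaluating at $z = 0$ gives $\abs{D^J P_\rho(0)}^q = \rho^{\abs{J}q}\abs{D^J P(0)}^q$. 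Combining, the claimed inequality for $(P,z_0,\rho)$ is equivalent to the inequality for $(P_\rho,0,1)$.

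\emph{Step 2: The fixed-scale inequality via equivalence of norms.} It remains to show there is $c = c(k,n,q)$ with $\abs{D^J P(0)}^q \le c\int_{Q_1}\abs{P}^q\dd z$ for all $P\in\mathcal P_k$. The space $\mathcal P_k$ of kinetic polynomials of degree $\le k$ is finite-dimensional, so the map $P\mapsto\abs{D^J P(0)}$ and the map $P\mapsto\big(\int_{Q_1}\abs{P}^q\dd z\big)^{1/q}$ are both seminorms on it; the latter is in fact a norm, since a polynomial vanishing a.e. on the open set $Q_1$ is identically zero. Any seminorm on a finite-dimensional space is dominated by any norm, which yields the constant $c$. (This is precisely the "equivalence of norms in finite dimensional spaces" argument already invoked inside the proof of Lemma \ref{lem:2.7}.)

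\emph{Main obstacle.} The arguments are all soft, so the only thing requiring care is bookkeeping of the scaling: one must check that the power of $\rho$ produced by the Jacobian ($\rho^{n}$) and the power produced by differentiating $P_\rho$ ($\rho^{\abs{J}}$ per application, totalling $\rho^{\abs{J}}$ for the composite operator of kinetic order $\abs{J}$) combine to exactly $\rho^{n+\abs{J}q}$ after raising to the $q$-th power — and in particular that the mixed operator $\mathcal T^{j_0}\partial_x^{J_1}\partial_v^{J_2}$ really has kinetic order $\abs{J} = 2sj_0 + (1+2s)\abs{J_1}+\abs{J_2}$ under the scaling \eqref{eq:scaling}, which is the definition of kinetic degree and is the same computation recorded after Definition \ref{def:kin_distance}. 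No genuine analytic difficulty arises.
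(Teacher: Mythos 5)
Your proof is correct for the statement as literally written, and it follows a genuinely different and considerably more streamlined route than the paper's. You reduce by left-translation and kinetic scaling to the unit cylinder (checking the Jacobian $\rho^n$ and the homogeneity $D^J P_\rho(0)=\rho^{\abs J}D^JP(0)$ correctly), and then close by the soft observation that $P\mapsto\abs{D^JP(0)}$ is a seminorm on the finite-dimensional space $\mathcal P_k$ while $P\mapsto\norm{P}_{L^q(Q_1)}$ is a norm (a polynomial vanishing on an open set is zero), so the former is dominated by the latter. The paper instead normalises the coefficient vector, introduces a family $\mathcal F$ of cut-off densities supported in $Q_1$ with mass $\ge A$, and uses a direct-method/compactness argument to show $\gamma(A)=\min_{P\in\mathcal T_k,\,f\in\mathcal F}\int_{Q_1}\abs P^qf\dd z>0$, then transports back with the change of variables $T$. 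Your route skips all of that and is cleaner.

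The one thing worth flagging is that the paper's heavier machinery is not gratuitous: the $\mathcal F$-trick proves a \emph{stronger} estimate in which $Q_\rho(z_0)$ may be replaced by any measurable subset $E\subset Q_\rho(z_0)$ with $\abs E\ge A\rho^n$, with a constant depending only on $A$ (and $q,k,n$). That stronger form is the one actually invoked downstream — for instance in Lemma~\ref{lem:3.1}, Lemma~\ref{lem:3.3}, and eq.~\eqref{eq:3.6}, where the domain of integration is $\Omega(z_0,\rho)=\Omega\cap Q_\rho(z_0)$, which is a proper subset of $Q_\rho(z_0)$ when $z_0$ is near $\partial\Omega$. Replacing the integral by one over a smaller set makes the right-hand side smaller, so the version over the full cylinder does not imply the version over $\Omega(z_0,\rho)$. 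Your argument as written gives a constant $c$ that a priori depends on which fixed set $E\subset Q_1$ you integrate over; to recover uniformity over the family $\{E:\abs E\ge A\}$ you would need to reinsert a compactness step (e.g.\ a lower-semicontinuity argument identical in spirit to the paper's $\gamma(A)$), or observe that $\Omega(z_0,\rho)$ always contains a sub-cylinder $Q_{c\rho}(z_0')\subset\Omega$ of comparable size and apply your estimate there after a further translation. Neither addition is difficult, but it is needed before your lemma can be used as the paper uses it.
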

\begin{proof}
Let $\mathcal T_k \subset \mathcal P_k$ be the subset of $k$-degree polynomials such that 
\beq\label{eq:normalisation}
	\sum_{\abs{J} \leq k} \abs{a_j}^2 = 1,
\eeq
where we recall that $a_j$ are the coefficients of an element $p \in \mathcal P_k$, which can be written as in \eqref{eq:poly}.
Let $\mathcal F$ denote the class of measurable functions $f: \R^n \to [0, 1]$ compactly supported on $Q_1$ such that $\int_{\R^n} f(z) \dd z \geq A$, where $A = \abs{Q_\rho(z_0)} \rho^{-n}$. Let $\gamma(A) = \inf_{P \in \mathcal T_k, f \in \mathcal F} \int_{Q_1} \abs{P(z)}^q f(z) \dd z$.
We want to show that 
\beq
	\gamma(A) =\min_{P \in \mathcal T_k, f \in \mathcal F} \int_{Q_1} \abs{P(z)}^q f(z) \dd z.
\label{eq:claim}
\eeq
For any integer $m$ there exists $P_m \in \mathcal T_k$ and $f_m \in \mathcal F$ such that
\beq\label{eq:subseq}
	\gamma(A) \leq \int_{Q_1} \abs{P_m(z)}^q f_m(z) \dd z < \gamma(A) + \frac{1}{m}.
\eeq
Due to the normalisation \eqref{eq:normalisation} we can extract a subsequence $\{P_\nu\}$ of $\{P_m\}$ converging uniformly on compact subsets of $\R^n$ to $P^* \in \mathcal T_k$. Similarly, since $0 \leq f \leq 1$ we can extract another subsequence $\{f_\mu\}$ of $\{f_\nu\}$ converging weakly in $L^2(Q_1)$ to some $f^* \in \mathcal F$. The subsequence will still satisfy \eqref{eq:subseq}, so that taking the limit yields
\beqs
	\gamma(A) = \int_{Q_1} \abs{P^*(z)}^q f^*(z) \dd z.
\eeqs
This proves the claim \eqref{eq:claim}. It follows that $\gamma(A) > 0$. Moreover, for $z_0$ and $\rho$ such that $Q_\rho(z_0) \subset Q_1$, and for $P \in \mathcal T_k$ there holds
\beqs
	 \gamma(A) \leq \int_{Q_\rho(z_0)} P(z) \dd z. 
\eeqs
since $\abs{Q_\rho(z_0)} \geq A\rho^n$.
If $P \in \mathcal P_k$ then $P(z) \cdot \Big\{\sum_{\abs{J} \leq k} \abs{a_j}^2\Big\}^{-\frac{1}{2}} \in \mathcal T_k$ and thus $\Bigg\{\sum_{\abs{J} \leq k} \abs{a_j}^2\Bigg\}^{\frac{q}{2}}  \leq \frac{1}{\gamma(A)}\int_{Q_\rho(z_0)} \abs{P(z)}^q \dd z$,
or also
\beq
	 \abs{a_j}^q  \leq \frac{1}{\gamma(A)}\int_{Q_\rho(z_0)} \abs{P(z)}^q \dd z, \quad \forall \abs{J} \leq k.
\label{eq:2.11}
\eeq
Now let $P \in \mathcal P_k$. Denote with $(s, y, w) = T(t, x, v)$ the transformation respecting the Lie group structure
\beqs
	\tilde z := (s, y, w) = \Bigg(\frac{t-t_0}{\rho^{2s}},  \frac{x-x_0- (t-t_0)v_0}{\rho^{1+2s}},  \frac{v-v_0}{\rho}\Bigg) = \big(z_0^{-1} \circ z\big)_{\frac{1}{\rho}}.
\eeqs
Then
\bal
	\int_{Q_\rho(z_0)} \abs{P(z)}^q \dd z &= \rho^n \int_{T(Q_\rho(z_0))} \Abs{P(\rho^{2s}s + t_0, \rho^{1+2s}y + x_0 + (t-t_0)v_0, \rho w + v_0)}^q \dd \tilde z\\
	&= \rho^n \int_{T(Q_\rho(z_0))} \Abs{P\big(z_0 \circ \tilde z_\rho\big)}^q \dd \tilde z.
\label{eq:2.12}
\eal
We note that $T(Q_\rho(z_0)) \subset Q_1$, $\abs{T(Q_\rho(z_0))} = \rho^{-n}\int_{Q_\rho(z_0)}\dd z \geq A$ and for $J_1 := (j_1, \dots, j_d)$, $J_2 := (j_{d+1}, \dots, j_{2d})$
\beqs
	P\big(z_0 \circ \tilde z_\rho\big) = \sum_{\abs{J} \leq k}\frac{(\partial_t + v\cdot \nabla_x)^{j_0}\partial_{x_1}^{j_1}\cdots\partial_{x_d}^{j_d}\partial_{v_1}^{j_{d+1}}\cdots \partial_{v_{d}}^{j_{2d}}P(z)\vert_{z = z_0}}{j!} \rho^{2s\cdot j_0}\rho^{(1+2s)\cdot \abs{J_1}}\rho^{\abs{J_2}} \tilde z^j.
\eeqs
Equations \eqref{eq:2.11} and \eqref{eq:2.12} then imply
\begin{equation*}
	\Big\vert(\partial_t + v\cdot \nabla_x)^{j_0}\partial_{x_1}^{j_1}\cdots\partial_{x_d}^{j_d}\partial_{v_1}^{j_{d+1}}\cdots \partial_{v_{d}}^{j_{2d}}P(z)\vert_{z = z_0}\Big\vert^q \leq \frac{(j!)^q}{\rho^{n +q[2s j_0 + (1+2s)\abs{J_1} +\abs{J_2}]}\gamma(A)}\int_{Q_\rho(z_0)} \abs{P(z)}^q \dd z \quad \forall j.
\end{equation*}
\end{proof}

\subsection{Expansion of $f$}

We let $f \in \mathcal{L}^{q, \lambda}_k(\Omega)$. For all $z_0 \in \bar\Omega$ and for all $\rho \in [0, \textrm{diam }\Omega]$ we show the existence of a unique polynomial $P_k(z, z_0, \rho, f)$ such that
\beq	
	\inf_{p \in \mathcal P_k} \int_{\Omega(x_0, \rho)} \abs{f(z) - p(z)}^q \dd z = \int_{\Omega(x_0, \rho)} \abs{f(z) - P_k(z, z_0, \rho, f)}^q \dd z. 
\label{eq:3.1}
\eeq
In fact, $P_k(z, z_0, \rho, f)$ is the kinetic Taylor expansion of $f$ at $z_0$. Let $P \in \mathcal P_k$ and write
\beqs
	P(z) = \sum_{j \in \N^{1+2d}, \abs J\leq k} \frac{a_j(z_0)}{j!}(z-z_0)^j.
\eeqs
We denote 
\bals
	h(\{a_j\}) = \norm{f - P}_{L^q(\Omega(z_0, \rho))},
\eals
where $\Omega(z_0, \rho) = Q_R(\tilde z_0) \cap Q_\rho(z_0)$ with $Q_R(\tilde z_0)$ as in the statement of Theorem \ref{thm:equicampholderhigh}.
Note that $h$ is a non-negative continuous real function of the coefficients of $P$. The infimum of $h$ will be attained in a compact set containing the origin, so that the existence of $P_k$ follows standardly. The uniqueness of $P_k$ follows by uniform convexity of the Lebesgue spaces $L^q$. We will denote the coefficients of $P_k(z, z_0, \rho, f)$ with $a_j(z_0, \rho)$. Note that they are given by
\beq
	a_j(z_0, \rho, f) = (\partial_t + v\cdot \nabla_x)^{j_0}\partial_{x_1}^{j_1}\cdots\partial_{x_d}^{j_d}\partial_{v_1}^{j_{d+1}}\cdots \partial_{v_{d}}^{j_{2d}}P_k(z, z_0, \rho, f)\big\vert_{z = z_0}.
\label{eq:3.2}
\eeq
\begin{lemma}
For $f \in \mathcal L^{q, \lambda}_k(\Omega)$ there exists a constant $c(q, \lambda) > 0$ such that for any $z_0 \in \Omega$ and $0 < \rho \leq \textrm{diam }\Omega$ and $l \in \mathbb N_0$ there holds
\beqs
	\int_{\Omega(z_0, \rho 2^{-(l-1)})} \Abs{P_k(z, z_0, \rho2^{-l}, f) - P_k(z, z_0, \rho2^{-l-1}, f)}^q \dd z \leq c 2^{-l\lambda} \rho^\lambda [f]_{\mathcal L^{q, \lambda}_k}^q 
\eeqs
\label{lem:3.1}
\end{lemma}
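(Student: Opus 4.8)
The plan is to combine the minimality \eqref{eq:3.1} defining the polynomials $P^{(l)}:=P_k(\cdot,z_0,\rho_l,f)$, where $\rho_l:=\rho\,2^{-l}$, with the fact that the difference $P^{(l)}-P^{(l+1)}$ is again an element of $\mathcal P_k$ — a finite-dimensional space on which the $L^q$-mass over comparable kinetic cylinders is equivalent. Note that $\rho_{l-1}=2\rho_l$, so $\rho_{l-1}/\rho_{l+1}=4$.

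First I would estimate $P^{(l)}-P^{(l+1)}$ on the \emph{small} cylinder $\Omega(z_0,\rho_{l+1})$. Since $\Omega(z_0,\rho_{l+1})\subseteq\Omega(z_0,\rho_l)$, the defining property \eqref{eq:3.1} and the definition of $[f]_{\mathcal L^{q,\lambda}_k}$ in \ref{def:campanatoH} give
\[
\int_{\Omega(z_0,\rho_{l+1})}\Abs{f-P^{(l)}}^q\dd z\le\int_{\Omega(z_0,\rho_l)}\Abs{f-P^{(l)}}^q\dd z\le[f]^q_{\mathcal L^{q,\lambda}_k}\rho_l^\lambda,
\]
and likewise $\int_{\Omega(z_0,\rho_{l+1})}\Abs{f-P^{(l+1)}}^q\dd z\le[f]^q_{\mathcal L^{q,\lambda}_k}\rho_{l+1}^\lambda\le[f]^q_{\mathcal L^{q,\lambda}_k}\rho_l^\lambda$. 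The triangle inequality in $L^q(\Omega(z_0,\rho_{l+1}))$ then yields
\[
\int_{\Omega(z_0,\rho_{l+1})}\Abs{P^{(l)}-P^{(l+1)}}^q\dd z\le c_q\,[f]^q_{\mathcal L^{q,\lambda}_k}\rho_l^\lambda=c_q\,2^{-l\lambda}\rho^\lambda[f]^q_{\mathcal L^{q,\lambda}_k},
\]
which is the asserted bound, but integrated over $\Omega(z_0,\rho_{l+1})$ in place of $\Omega(z_0,\rho_{l-1})$.

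The remaining — and main — step is to transfer this from the small cylinder $\Omega(z_0,\rho_{l+1})$ up to the larger one $\Omega(z_0,\rho_{l-1})$, using that $P^{(l)}-P^{(l+1)}\in\mathcal P_k$. Here I would reuse the argument behind Lemma \ref{lem:2.1}: its proof shows that for $P\in\mathcal P_k$ the coefficients are controlled from below by $\int_E\abs{P}^q\dd z$ (after the affine rescaling $T$ used there) whenever $E$ is a measurable subset of the reference cylinder whose measure is at least a fixed dimensional fraction of it. As $\Omega=Q_R(\tilde z_0)$ is a kinetic cylinder, it enjoys the measure–density property $\abs{\Omega(z_0,\sigma)}\ge c_0\sigma^n$ for all $z_0\in\bar\Omega$ and $0<\sigma\le\mathrm{diam}\,\Omega$, so taking $E=\Omega(z_0,\rho_{l+1})$ gives, for the coefficients $a_j$ of $P^{(l)}-P^{(l+1)}$,
\[
\abs{a_j}^q\le\frac{c}{\rho_{l+1}^{\,n+\abs{J}q}}\int_{\Omega(z_0,\rho_{l+1})}\Abs{P^{(l)}-P^{(l+1)}}^q\dd z,
\]
with $\abs{J}$ the kinetic degree of the associated monomial. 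On $\Omega(z_0,\rho_{l-1})\subseteq Q_{\rho_{l-1}}(z_0)$ one has $\norm{z_0^{-1}\circ z}\le\rho_{l-1}$, hence $\Abs{P^{(l)}(z)-P^{(l+1)}(z)}\le C\sum_j\abs{a_j}\,\rho_{l-1}^{\abs{J}}$; using in addition $\abs{\Omega(z_0,\rho_{l-1})}\le C\rho_{l-1}^n$ and that the number of monomials of kinetic degree $\le k$ depends only on $n,k$, we obtain
\[
\int_{\Omega(z_0,\rho_{l-1})}\Abs{P^{(l)}-P^{(l+1)}}^q\dd z\le C\rho_{l-1}^n\sum_j\abs{a_j}^q\rho_{l-1}^{\abs{J}q}\le C\sum_j\Big(\tfrac{\rho_{l-1}}{\rho_{l+1}}\Big)^{n+\abs{J}q}\int_{\Omega(z_0,\rho_{l+1})}\Abs{P^{(l)}-P^{(l+1)}}^q\dd z.
\]
Since $\rho_{l-1}/\rho_{l+1}=4$, every factor $(\rho_{l-1}/\rho_{l+1})^{n+\abs{J}q}$ is bounded by a constant $C(n,k,q)$, and combining with the estimate of the previous paragraph proves the lemma with $c=c(q,\lambda)$, absorbing $n$ and the fixed degree $k$. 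The case $l=0$ needs only a passing remark: there $\rho_{l-1}=2\rho$ may exceed $\mathrm{diam}\,\Omega$, but then $\Omega(z_0,2\rho)$ is simply a subset of $\Omega$ and the same chain — monotonicity of the integral together with the supremum bound on the well-defined cylinder $Q_{2\rho}(z_0)$ — goes through unchanged. I expect the two spots requiring the most care to be the verification of the measure–density property of kinetic cylinders, which is what makes the coefficient bound of Lemma \ref{lem:2.1} available on $\Omega(z_0,\rho_{l+1})$, and the exponent bookkeeping under the fixed $4:1$ ratio of radii.
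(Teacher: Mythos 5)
Your proof is correct, and in fact more careful than the one the paper gives. The paper's proof integrates the pointwise triangle inequality directly over the \emph{large} cylinder $\Omega(z_0,\rho\, 2^{-(l-1)})$ and then bounds the two resulting integrals by $2^{-l\lambda}\rho^\lambda[f]^q_{\mathcal L^{q,\lambda}_k}$ and $2^{-(l+1)\lambda}\rho^\lambda[f]^q_{\mathcal L^{q,\lambda}_k}$; but this last step is not actually licensed by the Campanato seminorm, since $P_k(\cdot,z_0,\rho 2^{-l},f)$ is only the minimizer over $\Omega(z_0,\rho 2^{-l})$, so \eqref{eq:3.1} and Definition \ref{def:campanatoH} give
\[
\int_{\Omega(z_0,\rho 2^{-l})}\Abs{P_k(\cdot,z_0,\rho 2^{-l},f)-f}^q\dd z\le(\rho 2^{-l})^\lambda[f]^q_{\mathcal L^{q,\lambda}_k},
\]
and \emph{not} the analogous bound with the larger domain $\Omega(z_0,\rho 2^{-(l-1)})$. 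Every downstream use of Lemma \ref{lem:3.1} (in particular in the proof of Lemma \ref{lem:3.3}) only requires the integral over the smallest cylinder $\Omega(z_0,\rho 2^{-l-1})$, so the exponent $-(l-1)$ appearing in the statement and in the proof is almost certainly a typo for $-(l+1)$; under that correction the paper's one-step argument becomes valid and is exactly your Step 1. Your Step 2 then goes further and transfers the small-cylinder estimate to $\Omega(z_0,\rho 2^{-(l-1)})$ by controlling the coefficients of the degree-$k$ polynomial $P_k(\cdot,z_0,\rho 2^{-l},f)-P_k(\cdot,z_0,\rho 2^{-l-1},f)$ via Lemma \ref{lem:2.1} and the fixed ratio $4$ of radii, thereby proving the statement as literally written. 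The one caveat to flag — which is shared with the paper, e.g.\ in \eqref{eq:3.6} where Lemma \ref{lem:2.1} is likewise applied with $\Omega(z_0,\rho)$ in place of $Q_\rho(z_0)$ — is that invoking Lemma \ref{lem:2.1} on the set $\Omega(z_0,\rho_{l+1})$ rather than the full cylinder $Q_{\rho_{l+1}}(z_0)$ needs the measure-density bound $\abs{\Omega(z_0,\sigma)}\gtrsim\sigma^n$; for the one-sided (backward-in-time) kinetic cylinders this can fail when $z_0$ sits on the bottom time face of $\Omega$, a technicality the appendix leaves implicit and which is not a defect specific to your argument.
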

\begin{proof}
For all $z \in \Omega\big(z_0, \rho 2^{-(l-1)}\big)$ there holds
\bals
	\abs{P_k(z, z_0, \rho2^{-l}, f) - P_k(z, z_0, \rho2^{-l-1}, f)}^q \leq 2^q \abs{P_k(z, z_0, \rho2^{-l}, f) - f(z)}^q + 2^q\abs{P_k(z, z_0, \rho2^{-(l-1)}, f) - f(z)}^q
\eals
Thus
\bals
	\int_{\Omega(z_0, \rho 2^{-(l-1)})} \abs{P_k(z, z_0, \rho2^{-l}, f) - P_k(z, z_0, \rho2^{-l-1}, f)}^q \dd z &\leq 2^q [f]_{\mathcal L^{q, \lambda}_k}^q \big(2^{-l\lambda} \rho^\lambda + 2^{(-l-1)\lambda} \rho^\lambda\big) \\
	&= 2^q(1 + 2^{-\lambda}) 2^{-l\lambda} \rho^\lambda [f]_{\mathcal L^{q, \lambda}_k}^q.
\eals
\end{proof}
\begin{lemma}
Suppose $f \in \mathcal L^{q, \lambda}_k(\Omega)$. Then for any $z_0, z_1  \in \bar \Omega$ and for any multi-index $l$ such that $\abs L = k$ with $\abs L = 2s \cdot l_0 + (1+2s)\abs{L_1} + \abs{L_2}$ there holds
\bal
	\Abs{a_l(z_0, 2d_\ell(z_0, z_1), f) - a_l(z_1, 2d_\ell(z_0, z_1), f)}^q \leq c 2^{q + 1 + \lambda}[f]_{\mathcal L^{q, \lambda}_k}^q d_\ell(z_0, z_1)^{\lambda - n - kq},
\label{eq:3.4}
\eal
where $d_\ell$ is the kinetic distance defined in \ref{def:kin_distance}.
\end{lemma}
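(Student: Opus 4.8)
The plan is to compare, at the single common scale $\rho := 2 d_\ell(z_0, z_1)$, the two Campanato polynomials $P_k(\cdot, z_0, \rho, f)$ and $P_k(\cdot, z_1, \rho, f)$, exploiting that a kinetic derivative of order exactly $k$ kills every lower-order term and turns the leading term of any $P \in \mathcal P_k$ into a constant. This is the hypoelliptic analogue of the classical argument in \cite{Camp}.

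First I would record the algebraic reduction. Set $\rho := 2 d_\ell(z_0, z_1)$ and $P := P_k(\cdot, z_0, \rho, f) - P_k(\cdot, z_1, \rho, f) \in \mathcal P_k$. Let $D^L := (\partial_t + v\cdot\nabla_x)^{l_0}\partial_{x_1}^{l_1}\cdots\partial_{x_d}^{l_d}\partial_{v_1}^{l_{d+1}}\cdots\partial_{v_d}^{l_{2d}}$; since $|L| = k$ this operator has kinetic degree $-k$, so it maps $\mathcal P_k$ into the constants (its effect on a monomial of kinetic degree $\leq k$ is a constant, exactly as in the proof of Lemma \ref{lem:2.1}). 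Hence, by \eqref{eq:3.2}, $a_l(z_i, \rho, f) = D^L P_k(\cdot, z_i, \rho, f)$ is a constant for $i = 0, 1$, so that
\[
a_l(z_0, \rho, f) - a_l(z_1, \rho, f) = D^L\big(P_k(\cdot, z_0, \rho, f) - P_k(\cdot, z_1, \rho, f)\big) = D^L P.
\]
Thus the lemma reduces to controlling the leading coefficient of $P$, which is precisely the output of Lemma \ref{lem:2.1}, once $P$ is estimated in $L^q$ on a cylinder of size $\sim \rho$.

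Next I would produce that $L^q$-bound on $P$. The point is that there is $c_0 = c_0(d,s) \in (0,1]$ such that $d_\ell(z_0,z_1) \leq \rho/2$ forces $Q_{c_0\rho}(z_0) \subseteq Q_\rho(z_0) \cap Q_\rho(z_1)$, by the quasi-triangle inequality for $d_\ell$ and the comparability of kinetic cylinders with $d_\ell$-balls (cf. the remarks after Definition \ref{def:kin_distance} and \cite[Section 2]{ISschauder}); and that, since $\Omega = Q_R(\tilde z_0)$ is itself a kinetic cylinder and $z_0 \in \bar\Omega$, the set $E := \Omega \cap Q_{c_0\rho}(z_0)$ satisfies $|E| \geq c_1\rho^n$ with $c_1 = c_1(d,s) > 0$, for all $\rho \leq 2\,\mathrm{diam}\,\Omega$ (measure density of kinetic cylinders, interior and boundary points alike). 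Since $E \subseteq \Omega(z_0,\rho) \cap \Omega(z_1,\rho)$, the minimality \eqref{eq:3.1} of each $P_k(\cdot, z_i, \rho, f)$ together with $f \in \mathcal L^{q,\lambda}_k(\Omega)$ gives $\int_E |f - P_k(\cdot, z_i, \rho, f)|^q \dd z \leq [f]^q_{\mathcal L^{q,\lambda}_k}\rho^\lambda$ for $i = 0, 1$, whence $\int_E |P|^q \dd z \leq 2^q [f]^q_{\mathcal L^{q,\lambda}_k}\rho^\lambda$ by the triangle inequality in $L^q(E)$.

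Finally I would invoke Lemma \ref{lem:2.1}, whose proof applies verbatim with the cylinder $Q_{c_0\rho}(z_0)$ replaced by any measurable subset of measure $\geq c_1\rho^n$ (only the value of $\gamma(A)$ changes), in particular by $E$; using $|L| = k$ this yields
\[
|a_l(z_0, \rho, f) - a_l(z_1, \rho, f)|^q = |D^L P|^q \leq \frac{c}{(c_0\rho)^{n + kq}}\int_E |P|^q \dd z \leq \frac{c\,2^q}{c_0^{n+kq}}\,[f]^q_{\mathcal L^{q,\lambda}_k}\,\rho^{\lambda - n - kq}.
\]
Substituting $\rho = 2d_\ell(z_0,z_1)$ and absorbing $2^q$, $c_0^{-(n+kq)}$ and $2^{\lambda - n - kq}$ into a single constant of the form $c\,2^{q+1+\lambda}$ yields \eqref{eq:3.4}. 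The main obstacle is entirely the geometric input of the third paragraph — the overlap inclusion $Q_{c_0\rho}(z_0) \subseteq Q_\rho(z_0) \cap Q_\rho(z_1)$ and the measure-density bound $|\Omega \cap Q_{c_0\rho}(z_0)| \gtrsim \rho^n$ for $z_0 \in \bar\Omega$; once these structural facts about kinetic cylinders are in hand, the remainder is the routine $L^q$-triangle inequality combined with Lemma \ref{lem:2.1}.
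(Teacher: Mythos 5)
Your proposal is correct and follows essentially the same route as the paper: bound $P_k(\cdot,z_0,\cdot,f)-P_k(\cdot,z_1,\cdot,f)$ in $L^q$ on a common region via the triangle inequality and the Campanato seminorm, then extract the top coefficient with Lemma \ref{lem:2.1}, using that a kinetic derivative of degree $k$ maps $\mathcal P_k$ to constants. The only difference is that you make the geometric overlap inclusion and measure-density bound for $E=\Omega\cap Q_{c_0\rho}(z_0)$ explicit, whereas the paper tacitly asserts $\Omega(z_0,\rho)\subset I_\rho$ and applies Lemma \ref{lem:2.1} with an integral over $\Omega(z_0,\rho)$ rather than the full cylinder — a detail your version handles more carefully.
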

\begin{proof}
Let $z_0, z_1\in \bar \Omega$. We write $\rho = d_\ell(z_0, z_1)$ and $I_\rho = \Omega(z_0, 2\rho)\cap \Omega(z_1, 2\rho)$. Then we have
\bals
	\abs{P_k(z, z_0, 2\rho, f) - P_k(z, z_1, 2\rho, f)}^q \leq 2^q\abs{P_k(z, z_0, 2\rho, f) - f(z)}^q + 2^q\abs{P_k(z, z_1, 2\rho, f) - f(z)}^q.
\eals
Integrating over $\Omega(z_0, \rho) \subset I_\rho$ we obtain
\bal
	\int_{\Omega(z_0, \rho)} &\abs{P_k(z, z_0, 2\rho, f) - P_k(z, z_1, 2\rho, f)}^q \dd z \\
	&\leq 2^q\int_{\Omega(z_0, \rho)}\abs{P_k(z, z_0, 2\rho, f) - f(z)}^q \dd z + 2^q \int_{\Omega(z_0, \rho)} \abs{P_k(z, z_1, 2\rho, f) - f(z)}^q \dd z\\
	&\leq 2^{q + \lambda +1}\rho^\lambda[f]_{\mathcal L^{q, \lambda}_k}^q.
\label{eq:3.5}
\eal
On the other hand, by \eqref{eq:3.2}, and Lemma \ref{lem:2.1} applied to $P(z) = P_k(z, z_0, 2\rho, f) - P_k(z, z_1, 2\rho, f)$ and since the $k$-th derivative of a polynomial of degree $k$ is constant, we have
\bal
	\Abs{a_l\big(z_0, 2d_\ell(z_0, z_1), f\big) &- a_l\big(z_1, 2d_\ell(z_0, z_1),f\big)}^q \\
	&\leq c \rho^{-(n+kq)}\int_{\Omega(z_0, \rho)} \Abs{P_k(z, z_0, 2\rho, f) - P_k(z, z_1, 2\rho,f)}^q \dd z.
\label{eq:3.6}	
\eal
Finally, the combination of \eqref{eq:3.5} and \eqref{eq:3.6} implies \eqref{eq:3.4} and concludes the proof.
\end{proof}
\begin{lemma}
Let $f \in \mathcal L_k^{q, \lambda}(\Omega)$. Then there exists a constant $c$ such that for all $z_0 \in \bar \Omega, 0 < \rho \leq \textrm{diam }\Omega$, $i \in \N$ and multi-index $l \in \N^{1+2d}$ with $\abs{L} \leq k$ there holds
\bals
	\Abs{a_l(z_0, \rho, f) - a_l(z_0, \rho 2^{-i}, f)} \leq c [f]_{\mathcal L_k^{q, \lambda}}\sum_{m = 0}^{i-1} 2^{m\big(\frac{n + \abs{L}q - \lambda}{q}\big)}\rho^{\frac{\lambda - n - \abs{L}q}{q}}.
\eals
\label{lem:3.3}
\end{lemma}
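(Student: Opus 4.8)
The plan is to telescope over the dyadic radii $\rho 2^{-m}$, $m=0,\dots,i$, and to control each consecutive difference of the approximating polynomials by combining Lemma \ref{lem:2.1} (which reads off the size of a polynomial coefficient from an $L^q$-average of the polynomial) with Lemma \ref{lem:3.1} (which controls the $L^q$-size of the difference of two consecutive dyadic polynomials).

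\emph{Step 1: Telescoping.} Write
\beqs
	a_l(z_0, \rho, f) - a_l(z_0, \rho 2^{-i}, f) = \sum_{m=0}^{i-1} \big[a_l(z_0, \rho 2^{-m}, f) - a_l(z_0, \rho 2^{-m-1}, f)\big],
\eeqs
so by the triangle inequality it suffices to show that for each $m$,
\beqs
	\big\vert a_l(z_0, \rho 2^{-m}, f) - a_l(z_0, \rho 2^{-m-1}, f)\big\vert \leq c\, [f]_{\mathcal L^{q,\lambda}_k} \, 2^{m\big(\frac{n+\abs L q - \lambda}{q}\big)} \rho^{\frac{\lambda - n - \abs L q}{q}}.
\eeqs

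\emph{Step 2: Estimating one dyadic jump.} Fix $m$ and set $P^{(m)}(z) := P_k(z, z_0, \rho 2^{-m}, f) - P_k(z, z_0, \rho 2^{-m-1}, f) \in \mathcal P_k$. By the formula \eqref{eq:3.2} for the coefficients and linearity of the differential operator $(\partial_t+v\cdot\nabla_x)^{l_0}\partial_{x}^{L_1}\partial_v^{L_2}$, the quantity $a_l(z_0, \rho 2^{-m}, f) - a_l(z_0, \rho 2^{-m-1}, f)$ is exactly that derivative of $P^{(m)}$ evaluated at $z_0$. I would then apply Lemma \ref{lem:2.1} to $P^{(m)}$ with radius $\sigma := \rho 2^{-m-1}$ (choosing the smaller of the two radii avoids the edge case $m=0$ where $\rho 2^{-(m-1)}$ could exceed $\mathrm{diam}\,\Omega$), integrating over $\Omega(z_0,\sigma)$ rather than the full cylinder $Q_\sigma(z_0)$; this is legitimate because $\Omega = Q_R(\tilde z_0)$ is a kinetic cylinder, so $\lvert \Omega(z_0,\sigma)\rvert \geq c_0(n)\,\sigma^n$ for every $z_0\in\bar\Omega$ and $\sigma\le\mathrm{diam}\,\Omega$ — precisely the volume hypothesis used in the proof of Lemma \ref{lem:2.1}. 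This yields
\beqs
	\big\vert a_l(z_0, \rho 2^{-m}, f) - a_l(z_0, \rho 2^{-m-1}, f)\big\vert^q \leq \frac{c}{\sigma^{n + \abs L q}} \int_{\Omega(z_0, \rho 2^{-(m-1)})} \big\vert P^{(m)}(z)\big\vert^q \dd z,
\eeqs
where I have enlarged the domain of integration from $\Omega(z_0,\sigma)$ to $\Omega(z_0,\rho 2^{-(m-1)})$ (harmless since the integrand is nonnegative) in order to match the statement of Lemma \ref{lem:3.1}.

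\emph{Step 3: Conclusion.} Lemma \ref{lem:3.1} (with $l=m$) bounds the right-hand integral by $c\, 2^{-m\lambda}\rho^\lambda [f]_{\mathcal L^{q,\lambda}_k}^q$. Substituting $\sigma = \rho 2^{-m-1}$, so that $\sigma^{-(n+\abs L q)} = 2^{(m+1)(n+\abs L q)}\rho^{-(n+\abs L q)}$, gives
\beqs
	\big\vert a_l(z_0, \rho 2^{-m}, f) - a_l(z_0, \rho 2^{-m-1}, f)\big\vert^q \leq c\, 2^{m(n+\abs L q - \lambda)}\, \rho^{\lambda - n - \abs L q}\, [f]_{\mathcal L^{q,\lambda}_k}^q,
\eeqs
after absorbing the fixed factor $2^{n+\abs L q}$ into $c$. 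Taking $q$-th roots and summing over $m=0,\dots,i-1$ produces exactly the asserted inequality. The argument is essentially routine; the only points deserving attention are the use of Lemma \ref{lem:2.1} over the truncated set $\Omega(z_0,\sigma)$ (which needs the uniform volume-density property of the cylinder $\Omega$) and the bookkeeping of dyadic factors so that the exponent of $2^m$ comes out precisely as $\frac{n+\abs L q-\lambda}{q}$.
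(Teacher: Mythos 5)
Your proof is correct and follows the paper's argument exactly: telescope over dyadic radii, apply Lemma~\ref{lem:2.1} to the difference $P_k(\cdot,z_0,\rho 2^{-m},f)-P_k(\cdot,z_0,\rho 2^{-m-1},f)$ at the smaller radius $\rho 2^{-m-1}$, and close with Lemma~\ref{lem:3.1}. Your explicit remarks on working over the truncated set $\Omega(z_0,\sigma)$ and the uniform volume-density of the cylinder $\Omega$ simply spell out what the paper leaves implicit when it invokes Lemma~\ref{lem:2.1}.
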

\begin{proof}
We have
\bals	 
	\Abs{a_l(z_0, \rho, f) - a_l(z_0, \rho 2^{-i}, f)} \leq \sum_{m=0}^{i-1}\Abs{a_l(z_0, \rho2^{-m}, f) - a_l(z_0, \rho 2^{-m-1}, f)}.
\eals
Using the relation \eqref{eq:3.2} and applying Lemma \ref{lem:2.1} to $P_k(z, z_0, \rho2^{-m}, f) - P_k(z, z_0, \rho2^{-m-1}, f)$ we get
\bals
	&\Abs{a_l(z_0, \rho, f) - a_l(z_0, \rho 2^{-i}, f)} \\
	&\quad\leq c \rho^{-\frac{n}{q} - \abs{L}}\sum_{m=0}^{i-1}2^{(m+1)\big(\frac{n}{q} +\abs L\big)}\Bigg[\int_{\Omega(z_0, \rho2^{-m-1})} \abs{P_k(z, z_0, \rho2^{-m}, f) - P_k(z, z_0, \rho2^{-m-1}, f)}^q\dd z\Bigg]^{\frac{1}{q}}.
\eals
We conclude using Lemma \ref{lem:3.1}.
\end{proof}
Now we can prove the following useful lemma.
\begin{lemma}
Let $f \in \mathcal L_k^{q, \lambda}(\Omega)$ such that $n + \tilde kq < \lambda \leq n+ (\tilde k+1)q$ where $0 \leq\tilde k\leq k$. Then there exists functions $\{g_j(z_0)\}$ for $j \in \N^{1+2d}$ with $\abs J \leq \tilde k$ such that for all $0 < \rho \leq \textrm{diam }\bar \Omega, z_0 \in \bar \Omega$ there holds
\beq
	\Abs{a_j(z_0, \rho, f) - g_j(z_0)} \leq c(\lambda, q, k, n, B) \rho^{\frac{\lambda - n - \abs J q}{q}} [f]_{\mathcal L^{q, \lambda}_k}.
\label{eq:3.10}
\eeq
As a consequence, there holds
\beq
	\lim_{\rho \to 0}a_j(z_0, \rho, f) = g_j(z_0),
\label{eq:3.11}
\eeq
uniformly with respect to $z_0$.
\end{lemma}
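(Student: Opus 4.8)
The plan is to show that the family $\{a_j(z_0,\rho,f)\}_{\rho>0}$ is Cauchy as $\rho\to 0$, with a rate, and to identify the limit as $g_j(z_0)$. First I would fix $z_0\in\bar\Omega$ and a multi-index $j$ with $\abs{J}\leq\tilde k$, and apply Lemma \ref{lem:3.3} with $\rho$ replaced by $\rho$ and $i$ arbitrary to get
\beqs
	\Abs{a_j(z_0,\rho,f)-a_j(z_0,\rho 2^{-i},f)}\leq c\,[f]_{\mathcal L^{q,\lambda}_k}\sum_{m=0}^{i-1}2^{m\frac{n+\abs{J}q-\lambda}{q}}\rho^{\frac{\lambda-n-\abs{J}q}{q}}.
\eeqs
Since $\abs{J}\leq\tilde k$ and $\lambda>n+\tilde k q\geq n+\abs{J}q$, the exponent $\frac{n+\abs{J}q-\lambda}{q}$ is strictly negative, so the geometric series $\sum_{m\geq 0}2^{m\frac{n+\abs{J}q-\lambda}{q}}$ converges; bounding the partial sum by its limit gives a bound uniform in $i$:
\beqs
	\Abs{a_j(z_0,\rho,f)-a_j(z_0,\rho 2^{-i},f)}\leq c(\lambda,q,k,n)\,\rho^{\frac{\lambda-n-\abs{J}q}{q}}[f]_{\mathcal L^{q,\lambda}_k},\qquad\forall i\in\N.
\eeqs

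Next I would upgrade the dyadic statement to a statement for all $\rho'<\rho$. Given $0<\rho'\leq\rho$, choose $i$ so that $\rho 2^{-i-1}<\rho'\leq\rho 2^{-i}$; then combine the dyadic estimate above applied at scales $\rho$ and $\rho 2^{-i}$ with one more application of Lemma \ref{lem:3.1} (together with the identity \eqref{eq:3.2} and Lemma \ref{lem:2.1}, exactly as in the proof of Lemma \ref{lem:3.3}) to control $\Abs{a_j(z_0,\rho 2^{-i},f)-a_j(z_0,\rho',f)}$ by $c\,(\rho 2^{-i})^{\frac{\lambda-n-\abs{J}q}{q}}[f]_{\mathcal L^{q,\lambda}_k}\leq c\,(\rho')^{\frac{\lambda-n-\abs{J}q}{q}}[f]_{\mathcal L^{q,\lambda}_k}$, using $\rho 2^{-i}\sim\rho'$ and the positivity of the exponent. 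Hence $\{a_j(z_0,\rho,f)\}_\rho$ is Cauchy as $\rho\to 0$, and we define $g_j(z_0):=\lim_{\rho\to 0}a_j(z_0,\rho,f)$. Passing to the limit $\rho'\to 0$ in the displayed bound (with $\rho$ fixed) yields \eqref{eq:3.10}, and since the right-hand side of \eqref{eq:3.10} is independent of $z_0$ and tends to $0$ as $\rho\to 0$, the convergence in \eqref{eq:3.11} is uniform in $z_0\in\bar\Omega$. The constant depends only on $\lambda,q,k,n$ (and, through the cylinder $\Omega=Q_R(\tilde z_0)$ underlying Lemma \ref{lem:2.1}, on $B$, the constant $\gamma(A)$ there).

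The only genuine subtlety is that Lemma \ref{lem:3.3} as stated controls only $a_l(z_0,\rho,f)-a_l(z_0,\rho 2^{-i},f)$ along the dyadic sequence, so one must insert the intermediate step passing from a dyadic scale to an arbitrary scale $\rho'$; this is handled by the comparison of cylinders $Q_{\rho'}(z_0)\subset Q_{\rho 2^{-i}}(z_0)$ of comparable size together with Lemma \ref{lem:2.1}, and introduces no new ideas. Everything else is a routine summation of a convergent geometric series, the key point being precisely the hypothesis $\abs{J}\leq\tilde k$ forcing the exponent to be negative; for $\abs{J}=\tilde k+1,\dots,k$ the argument breaks down, which is why the functions $g_j$ are only produced for $\abs{J}\leq\tilde k$.
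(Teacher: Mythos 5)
Your proposal is correct and follows essentially the same route as the paper: both use Lemma \ref{lem:3.3} to get the dyadic Cauchy property, use Lemma \ref{lem:2.1} (via \eqref{eq:3.2}) to compare $a_j$ at two non-dyadic but comparable scales, define $g_j$ as the limit, and pass to the limit in the resulting uniform-in-$i$ bound to obtain \eqref{eq:3.10}. The only cosmetic difference is organizational: the paper first establishes the dyadic limit and then separately checks that the limit is independent of the starting scale $\rho$, whereas you fold the non-dyadic comparison directly into the Cauchy argument; and your citation of Lemma \ref{lem:3.1} for the non-dyadic step is slightly off --- what is actually needed there is Lemma \ref{lem:2.1} applied to $P_k(z,z_0,\rho 2^{-i})-P_k(z,z_0,\rho')$ over the smaller cylinder, which you also correctly invoke.
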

\begin{proof}
We show that the sequence $\{a_j(z_0, \rho 2^{-i}, f)\}$ converges in the limit $i \to \infty$. Let $i_1, i_2$ be two non-negative integers and assume without loss in generality that $i_2 > i_1$. With Lemma \ref{lem:3.3} we obtain
\bals
	\Abs{a_j(z_0, \rho 2^{-i_2}, f) - a_j(z_0, \rho 2^{-i_1},f)} \leq c [f]_{\mathcal L_k^{q, \lambda}}\sum_{m = i_1}^{i_2-1} 2^{m\big(\frac{n + \abs{J}q - \lambda}{q}\big)}\rho^{\frac{\lambda - n - \abs{J}q}{q}}.
\eals
Since $\abs{J} \leq p=\tilde k$ and $\lambda > n +\tilde kq$ the series $\sum_{m = 0}^{\infty} 2^{m\big(\frac{n + \abs{J}q - \lambda}{q}\big)}$ converges. Thus $\{a_j(z_0, \rho 2^{-i}, f)\}$ is a Cauchy sequence and hence converges as $i \to \infty$. 

We now show that the limit is uniform in $\rho$. Let $\rho_1$ and $\rho_2$ be such that $0 < \rho_1 \leq \rho_2 \leq\textrm{diam }\Omega$. With Lemma \ref{lem:2.1} we get
\bals
	\Abs{a_j(z_0, \rho_1 2^{-i}, f) - a_j(z_0, \rho_2 2^{-i}, f)}^q &\leq c \frac{2^{i(n+\abs J q)}}{\rho_1^{n+\abs J q}}\int_{\Omega(z_0, \rho_12^{-i})} \Abs{P_k(z, z_0, \rho_12^{-i}, f) - P_k(z, z_0, \rho_2 2^{-i}, f)}^q \dd z\\
	&\leq c \frac{2^{i(n+\abs J q)}}{\rho_1^{n+\abs J q}}\Bigg[\int_{\Omega(z_0, \rho_12^{-i})} \Abs{P_k(z, z_0, \rho_12^{-i}, f) - f(z)}^q \dd z\\
	&\qquad\qquad \qquad+ \int_{\Omega(z_0, \rho_2 2^{-i})} \Abs{P_k(z, z_0, \rho_22^{-i}, f) - f(z)}^q \dd z\Bigg]\\
	&\leq c 2^q \frac{\rho_1^\lambda + \rho_2^\lambda}{\rho_1^{n + \abs J q}} 2^{-i(\lambda - n - \abs J q)} [f]_{\mathcal L^{q, \lambda}_k} \rightarrow 0,
\eals
as $i \to \infty$ since $\lambda - n - \abs J q > 0$. 

Thus for $z_0 \in \bar \Omega, 0 < \rho \leq \textrm{diam }(\Omega)$ and $\abs J \leq \tilde k$ we can take
\beq
	g_j(z_0) = \lim_{i \to \infty} a_j(z_0, \rho 2^{-i}, f).
\label{eq:3.14}
\eeq
The sequence $g_j(z_0)$ is well-defined in $\bar \Omega$. Since the series $\sum_{m=0}^\infty 2^{m\big(\frac{n+\abs J q - \lambda}{q}\big)}$ converges, we deduce from Lemma \ref{lem:3.3}
\bal
	\Abs{a_j(z_0, \rho, f) - a_j(z_0, \rho 2^{-i}, f)} \leq c [f]_{\mathcal L_k^{q, \lambda}}\rho^{\frac{\lambda - n - \abs{J}q}{q}}.
\label{eq:3.15}
\eal
Combining \eqref{eq:3.14} and \eqref{eq:3.15} yields the result.
\end{proof}

\subsection{The function $g_j(z_0)$}
We have the following theorem.
\begin{theorem}
Let $f \in \mathcal L^{q, \lambda}_k(\Omega)$ with $n + kq < \lambda$. Then the functions $g_j(z_0)$ with $\abs J = k$ are Hölder continuous in $\bar \Omega$ and for any $z_1, z_2 \in \bar \Omega$ there holds
\beq
	\abs{g_j(z_1) -g_j(z_2)} \leq c [f]_{\mathcal L^{q, \lambda}_k}d_\ell(z_1, z_2)^{\frac{\lambda - n - kq}{q}}.
\label{eq:4.1}
\eeq
\label{thm:4.1}
\end{theorem}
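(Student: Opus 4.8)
The plan is to deduce \eqref{eq:4.1} directly from the two estimates already established: the bound \eqref{eq:3.4} comparing the coefficients of the two Taylor polynomials at a common scale, and the bound \eqref{eq:3.10} quantifying the convergence $a_j(z_0,\rho,f)\to g_j(z_0)$. Fix $z_1,z_2\in\bar\Omega$ with $z_1\neq z_2$, set $\rho:=d_\ell(z_1,z_2)$, and insert the intermediate values $a_j(z_1,2\rho,f)$ and $a_j(z_2,2\rho,f)$:
$$
	\Abs{g_j(z_1)-g_j(z_2)}\leq\Abs{g_j(z_1)-a_j(z_1,2\rho,f)}+\Abs{a_j(z_1,2\rho,f)-a_j(z_2,2\rho,f)}+\Abs{a_j(z_2,2\rho,f)-g_j(z_2)}.
$$
First I would bound the two outer terms with \eqref{eq:3.10}, applied with $\rho$ replaced by $2\rho$ and with $\abs{J}=k$; since $(2\rho)^{\frac{\lambda-n-kq}{q}}=2^{\frac{\lambda-n-kq}{q}}\rho^{\frac{\lambda-n-kq}{q}}$, each of them is at most $c\,[f]_{\mathcal L^{q,\lambda}_k}\,\rho^{\frac{\lambda-n-kq}{q}}$. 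Here the hypothesis $n+kq<\lambda$ is used so that the exponent $\frac{\lambda-n-kq}{q}$ is positive, which is precisely what makes $g_j$ genuinely Hölder rather than merely bounded.

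For the middle term I would invoke \eqref{eq:3.4} essentially verbatim: $2\rho=2d_\ell(z_1,z_2)$ is exactly the scale appearing there, so taking $q$-th roots of
$$
	\Abs{a_l(z_1,2d_\ell(z_1,z_2),f)-a_l(z_2,2d_\ell(z_1,z_2),f)}^q\leq c\,2^{q+1+\lambda}\,[f]_{\mathcal L^{q,\lambda}_k}^q\, d_\ell(z_1,z_2)^{\lambda-n-kq}
$$
gives $\Abs{a_j(z_1,2\rho,f)-a_j(z_2,2\rho,f)}\leq c\,[f]_{\mathcal L^{q,\lambda}_k}\,\rho^{\frac{\lambda-n-kq}{q}}$ as well. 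Summing the three contributions yields \eqref{eq:4.1} with a constant $c=c(q,\lambda,k,n)$, valid at least whenever $2\rho\leq\textrm{diam}\,\Omega$, which is the only regime in which the scale $2\rho$ in \eqref{eq:3.10} is admissible.

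The complementary regime $2d_\ell(z_1,z_2)>\textrm{diam}\,\Omega$ is then handled separately: choose $\rho_0\sim\textrm{diam}\,\Omega$ large enough that $Q_{\rho_0}(z_i)\supset\Omega$ for $i=1,2$, so that $\Omega(z_i,\rho_0)=\Omega$; by the uniqueness of the best $L^q(\Omega)$-approximation in $\mathcal P_k$ (the remark after \eqref{eq:3.1}), the polynomials $P_k(\cdot,z_1,\rho_0,f)$ and $P_k(\cdot,z_2,\rho_0,f)$ coincide, hence $a_j(z_1,\rho_0,f)=a_j(z_2,\rho_0,f)$. The middle term then drops out of the triangle inequality run at scale $\rho_0$, and \eqref{eq:3.10} bounds $\Abs{g_j(z_1)-g_j(z_2)}$ by $c\,[f]_{\mathcal L^{q,\lambda}_k}\,\rho_0^{\frac{\lambda-n-kq}{q}}\lesssim[f]_{\mathcal L^{q,\lambda}_k}\,d_\ell(z_1,z_2)^{\frac{\lambda-n-kq}{q}}$, using $d_\ell(z_1,z_2)\gtrsim\textrm{diam}\,\Omega\sim\rho_0$ and positivity of the exponent. (Equivalently, one may chain finitely many intermediate points in $\bar\Omega$ at pairwise distance $\leq\tfrac14\textrm{diam}\,\Omega$ and add the local estimate.)

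I do not expect a real obstacle: the substance was already carried out in Lemma \ref{lem:2.1} (the De Giorgi--type inverse bound for polynomials) and in the geometric-series estimates of Lemmas \ref{lem:3.1} and \ref{lem:3.3}, which forced $\lambda>n+kq$; given those, Theorem \ref{thm:4.1} is an assembly. The only subtlety is bookkeeping — one must keep the evaluation scale of the middle term exactly $2d_\ell(z_1,z_2)$ so that \eqref{eq:3.4} applies unchanged, and work throughout with $\abs{J}=k$, so that $a_j$ is the base-point-independent top-order coefficient of the Taylor polynomial and $\lambda-n-kq>0$.
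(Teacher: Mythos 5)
Your argument in the regime $2d_\ell(z_1,z_2)\leq\textrm{diam}\,\Omega$ is exactly the paper's: insert the intermediate values $a_j(z_i,2\rho,f)$, control the two outer differences by \eqref{eq:3.10} at scale $2\rho$ with $\abs{J}=k$, control the middle difference by \eqref{eq:3.4}, and add. That part is correct and complete, and the bookkeeping remarks you make (positivity of the exponent $\tfrac{\lambda-n-kq}{q}$, working with $\abs{J}=k$) are all on point.

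The primary handling you propose for the complementary regime $2d_\ell(z_1,z_2)>\textrm{diam}\,\Omega$, however, rests on a false geometric claim. You want $\rho_0\sim\textrm{diam}\,\Omega$ with $Q_{\rho_0}(z_i)\supset\Omega$ for $i=1,2$, so that $\Omega(z_1,\rho_0)=\Omega(z_2,\rho_0)=\Omega$ and the two best $L^q(\Omega)$-approximants in $\mathcal P_k$ coincide, killing the middle term. But the kinetic cylinder $Q_R(z_0)$ of \eqref{eq:cylinder} is \emph{one-sided in time}: it lies entirely in $\{t\leq t_0\}$. Thus unless $z_i$ sits at the very top of $\Omega$ in the $t$-variable, $Q_{\rho_0}(z_i)$ misses the slab $\{t>t_i\}\cap\Omega$ for every $\rho_0$, so $\Omega(z_i,\rho_0)\neq\Omega$ and there is no reason for $P_k(\cdot,z_1,\rho_0,f)$ and $P_k(\cdot,z_2,\rho_0,f)$ to agree. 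Your parenthetical fallback — chain $z_1$ to $z_2$ through finitely many points of $\bar\Omega$ at pairwise kinetic distance at most $\tfrac12\textrm{diam}\,\Omega$ and sum the near-regime estimate over the links — is correct, and it is exactly what the paper does (a polygon in $\bar\Omega$, see Figure~\ref{fig:polygon}). Keep the chaining argument and drop the $\Omega(z_i,\rho_0)=\Omega$ one; despite your claim, the two are not equivalent.
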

\begin{proof}
Take $z_1, z_2 \in \bar \Omega$ such that $\rho = d_\ell(z_1, z_2) \leq \frac{\textrm{diam }\Omega}{2}$. Then 
\bals
	\abs{g_j(z_1) -g_j(z_2)} \leq \abs{g_j(z_1) -a_j(z_1, 2\rho)} + \abs{g_j(z_2) -a_j(z_2, 2\rho)} + \abs{a_j(z_1, 2\rho) -a_j(z_2, 2\rho)}.
\eals
On the one hand, by \eqref{eq:3.10} we have
\bals
	\abs{g_j(z_1) -a_j(z_1, 2\rho)} \leq c 2^{\frac{\lambda - n - kq}{q}} \rho^{\frac{\lambda - n - kq}{q}}[f]_{\mathcal L^{q, \lambda}_k},
\eals
and
\bals
	\abs{g_j(z_2) -a_j(z_2, 2\rho)} \leq c 2^{\frac{\lambda - n - kq}{q}} \rho^{\frac{\lambda - n - kq}{q}}[f]_{\mathcal L^{q, \lambda}_k}.
\eals
On the other hand \eqref{eq:3.4} implies
\bals
	\abs{a_j(z_1, 2\rho) -a_j(z_2, 2\rho)} \leq c 2^{\frac{q+1+\lambda}{q}} \rho^{\frac{\lambda - n - kq}{q}} [f]_{\mathcal L^{q, \lambda}_k}.
\eals
This yields the result in case that $d_\ell(z_1, z_2) \leq \frac{\textrm{diam }\Omega}{2}$. 

In case that $d_\ell(z_1, z_2) > \frac{\textrm{diam }\Omega}{2}$ we can construct a polygon contained in $\bar \Omega$ with extremal points $z_1$ and $z_2$ and with sides of length smaller or equal to $\frac{\textrm{diam }\Omega}{2}$, see Figure \ref{fig:polygon}. The length of the sides can be bounded by $\textrm{diam }\Omega$ uniformly with respect to $z_1$ and $z_2$. Thus to conclude it suffices to apply \eqref{eq:4.1} to all points at the end of the sides of such a polygonal.
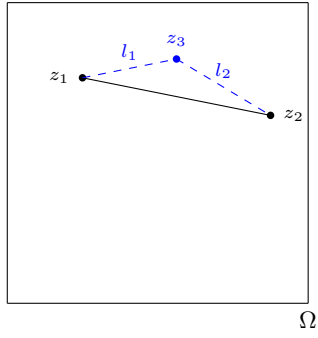
\begin{figure}\label{fig:polygon}
\begin{tikzpicture}[scale =0.5]
  \draw  (-4,0) -- (4,0); 
   \draw (-4,-8) -- (4,-8);
  \draw (-4,0) -- (-4,-8);
  \draw (4,0) -- (4,-8) node[anchor=north, scale=1] {$\Omega$};
  \node[fill,circle,inner sep=1pt,label={left:\footnotesize{$z_1$}}] at (-2, -2) {};
   \node[fill,circle,inner sep=1pt,label={right:\footnotesize{$z_2$}}] at (3, -3) {};
   \node[fill,circle,blue, inner sep=1pt,label={above:\textcolor{blue}{\footnotesize{$z_3$}}}] at (0.5, -1.5) {};
   \draw (-2, -2) --(3, -3);
   \draw[dashed, blue] (-2, -2) -- node[above]{\footnotesize{$l_1$}} (0.5, -1.5);
   \draw[dashed, blue] (0.5, -1.5) -- node[above]{\footnotesize{$l_2$}}(3, -3);
\end{tikzpicture}
\caption{In case that $d_\ell(z_1, z_2) > \frac{\textrm{diam }\Omega}{2}$ we construct a polygon with side lengths $l_1, l_2$ such that $l_1, l_2 \leq \frac{\textrm{diam }\Omega}{2}$.}
\end{figure}
\end{proof}
For the sequel, we denote by $(0)$ the $d$-tuple $(0, \dots, 0)$ and by $e_i$ the vector in $\R^{d}$ with the $i$-th coordinate equal to $1$ and else $0$. We also note that any polynomial degree $k \in \N + 2s\N$ can be written as $k = 2s \cdot k_0 +(1+2s)\cdot k_1+ k_2$ with $k_0, k_1, k_2 \in \N$.

\begin{theorem}
Let $f \in \mathcal L^{q, \lambda}_k(\Omega)$ with $k_0, k_1, k_2 \geq 1$ and $n + kq < \lambda$. Then for any multi-index $j\in \N^{1+2d}$ such that $\abs J \leq k$ the function $g_j$ has a first partial derivative in $\Omega$, and for any $z \in \Omega$ and $i = 1, \dots, d$ there holds
\bal
	&\mathcal T g_{j}(z) = g_{(j_0 + 1, J_1, J_2)}(z),\qquad \qquad j_0 \leq k_0 - 1, \abs{J_1} \leq k_1, \abs{J_2} \leq k_2 \\
	&\frac{\partial g_{j}(z)}{\partial x_i} = g_{(j_0, J_1 + e_i, J_2)}(z),\qquad  \qquad j_0 \leq k_0, \abs{J_1} \leq k_1 - 1,\abs{J_2} \leq k_2 \\
	&\frac{\partial g_{j}(z)}{\partial v_i} = g_{(0, J_1, J_2 + e_i)}(z), \qquad\qquad  j_0 = 0, \abs{J_1} \leq k_1, \abs{J_2} \leq k_2-1. 
\label{eq:4.6}
\eal
\label{thm:4.2}
\end{theorem}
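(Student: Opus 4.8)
The plan is to treat the three operators $D\in\{\mathcal T,\partial_{x_i},\partial_{v_i}\}$ uniformly. Write $\delta\in\{2s,\,1+2s,\,1\}$ for the kinetic degree of the variable $D$ differentiates, and let $j'$ be obtained from $j$ by incrementing the corresponding slot, namely $(j_0+1,J_1,J_2)$, $(j_0,J_1+e_i,J_2)$, or $(0,J_1,J_2+e_i)$, so that $\abs{J'}=\abs J+\delta$. Set $D^j:=\mathcal T^{j_0}\partial_{x_1}^{j_1}\cdots\partial_{v_d}^{j_{2d}}$, so that $a_j(z_0,\rho,f)=D^jP_k(z,z_0,\rho,f)\big\vert_{z=z_0}$ by \eqref{eq:3.2}, and put $\theta_l:=\frac{\lambda-n-\abs Lq}{q}$; the hypothesis $\lambda>n+kq$ gives $\theta_l>0$ whenever $\abs L\le k$, and the identity that makes everything cancel is $\theta_j=\delta+\theta_{j'}$. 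For fixed $z_0\in\Omega$ I would take $z_1$ to be the group translate of $z_0$ attached to $D$ — $z_0\circ(h,0,0)$, $z_0\circ(0,he_i,0)$, or $z_0\circ(0,0,he_i)$ — noting $d_\ell(z_0,z_1)=\abs h^{1/\delta}=:\rho$ and $\frac{d}{dh}\big\vert_{h=0}u(z_1)=Du(z_0)$ for smooth $u$ (for $D=\mathcal T$ this uses that $\mathcal T$ involves no $v$-derivatives, so $\mathcal T^mw$ and $(\partial_t+v_0\cdot\nabla_x)^mw$ agree at $z_0$). For $\abs h$ small, $Q_\rho(z_1)\subset\Omega$ and $Q_\rho(z_1)\subset Q_{\sigma\rho}(z_0)$ for a structural constant $\sigma$.

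The first step is to compare the minimizing polynomials at scale $\rho$ about $z_1$ and at scale $\sigma\rho$ about $z_0$: both $P_k(\cdot,z_1,\rho,f)$ and $P_k(\cdot,z_0,\sigma\rho,f)$ approximate $f$ in $L^q(Q_\rho(z_1))$ within $c[f]_{\mathcal L^{q,\lambda}_k}\rho^{\lambda/q}$ (by \eqref{eq:3.1} and the definition of the seminorm), so applying Lemma \ref{lem:2.1} to the degree-$k$ polynomial $P_k(\cdot,z_1,\rho,f)-P_k(\cdot,z_0,\sigma\rho,f)$ on $Q_\rho(z_1)$ yields
\beqs
	\Abs{a_j(z_1,\rho,f)-D^jP_k(z,z_0,\sigma\rho,f)\big\vert_{z=z_1}}\le c\,[f]_{\mathcal L^{q,\lambda}_k}\,\rho^{\theta_j}.
\eeqs
Next I would Taylor-expand: since $z\mapsto D^jP_k(z,z_0,\sigma\rho,f)$ is a polynomial and $h\mapsto z_1$ is affine, $h\mapsto D^jP_k(z,z_0,\sigma\rho,f)\big\vert_{z=z_1}$ is a polynomial in $h$ with exact expansion at $h=0$. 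Its value at $h=0$ is $a_j(z_0,\sigma\rho,f)$; using the commutation identity $DD^j=D^{j'}$ — which holds for $D=\mathcal T$ and $D=\partial_{x_i}$ unconditionally, but for $D=\partial_{v_i}$ only if $j_0=0$ since $[\partial_{v_i},\mathcal T]=\partial_{x_i}$ — together with \eqref{eq:3.2}, its first derivative at $h=0$ is $a_{j'}(z_0,\sigma\rho,f)$. The terms of order $m\ge2$ carry coefficients $a_l(z_0,\sigma\rho,f)$ with $\abs L=\abs J+m\delta$, which vanish for $\abs L>k$ and are bounded otherwise (they converge as $\rho\to0$), hence contribute $O(h^2)$ for $\abs h\le1$. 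This is exactly where the restrictions $j_0\le k_0-1$, $\abs{J_1}\le k_1-1$, and $j_0=0$ \& $\abs{J_2}\le k_2-1$ are used: they guarantee $\abs{J'}\le k$ and the identity $DD^j=D^{j'}$.

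Assembling the pieces, I would use the telescoping identity
\bals
	g_j(z_1)-g_j(z_0)-h\,g_{j'}(z_0)&=\big[g_j(z_1)-a_j(z_1,\rho,f)\big]+\big[a_j(z_1,\rho,f)-D^jP_k(z,z_0,\sigma\rho,f)\big\vert_{z=z_1}\big]\\
	&\quad+\big[D^jP_k(z,z_0,\sigma\rho,f)\big\vert_{z=z_1}-a_j(z_0,\sigma\rho,f)-h\,a_{j'}(z_0,\sigma\rho,f)\big]\\
	&\quad+\big[a_j(z_0,\sigma\rho,f)-g_j(z_0)\big]+h\big[a_{j'}(z_0,\sigma\rho,f)-g_{j'}(z_0)\big],
\eals
bounding the five brackets by $c[f]\rho^{\theta_j}$, $c[f]\rho^{\theta_j}$, $Ch^2$, $c[f]\rho^{\theta_j}$, and $c\abs h[f]\rho^{\theta_{j'}}$ respectively — the first, fourth and fifth by \eqref{eq:3.10}, the second by the estimate above, the third by the Taylor expansion. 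Taking $\abs h=\rho^\delta$ and using $\theta_j=\delta+\theta_{j'}$ gives
\beqs
	\Abs{g_j(z_1)-g_j(z_0)-h\,g_{j'}(z_0)}\le C\big(\rho^{\theta_j}+\rho^{2\delta}+\rho^{\delta+\theta_{j'}}\big)=C\rho^\delta\big(2\rho^{\theta_{j'}}+\rho^\delta\big),
\eeqs
so dividing by $\abs h=\rho^\delta$ and letting $h\to0$ (hence $\rho\to0$) the right-hand side vanishes, since $\theta_{j'}>0$ and $\delta>0$. Therefore $Dg_j(z_0)$ exists and equals $g_{j'}(z_0)$; as $\partial_{x_i},\partial_{v_i}$ are coordinate directions and $\partial_tg_j=\mathcal Tg_j-v\cdot\nabla_xg_j$, all first partials of $g_j$ exist in $\Omega$, which is \eqref{eq:4.6}. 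I expect the main obstacle to be purely organizational — tracking the quasi-metric containments $Q_\rho(z_1)\subset\Omega\cap Q_{\sigma\rho}(z_0)$, the cylinders on which Lemma \ref{lem:2.1} is applied, and the commutator bookkeeping that forces the index restrictions; the analytic content reduces to the cancellation $\theta_j=\delta+\theta_{j'}$ fed by the decay estimate \eqref{eq:3.10}.
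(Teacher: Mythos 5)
Your proof is correct, and it takes a genuinely different route from the paper's. Both arguments share the same ingredients — comparing minimizing polynomials centred at a point and at its translate via Lemma~\ref{lem:2.1}, and feeding in the decay estimate \eqref{eq:3.10} — but the overall organization differs. The paper proves the theorem by a descending induction on $j$: Theorem~\ref{thm:4.1} first gives H\"older continuity of the top-degree coefficients $g_l$ with $\abs L=k$; then, assuming continuity of $g_{(j_0+\delta,J_1,J_2)}$ for all $\delta\ge1$, it establishes the $\mathcal T$-difference quotient limit for $g_j$ (equations \eqref{eq:4.7}--\eqref{eq:4.15}), which yields continuity of $g_j$, and descends. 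The $\partial_x$ and $\partial_v$ cases are then treated as separate steps in a fixed order, with the $j_0=0$ restriction for $\partial_{v_i}$ coming from the same commutator $[\partial_{v_i},\mathcal T]=\partial_{x_i}$ you identify. You avoid the induction altogether: your telescoping decomposition isolates the second-order Taylor remainder, whose coefficients are precisely the $a_l(z_0,\sigma\rho,f)$ with $\abs L\le k$, and these are uniformly bounded directly from \eqref{eq:3.10} and the finiteness of the $g_l$ — no continuity of the higher-index $g_l$ is needed, only their existence. This lets you handle all three operators $\mathcal T,\partial_{x_i},\partial_{v_i}$ with one uniform computation, the only case distinction being the algebraic identity $DD^j=D^{j'}$ (which forces $j_0=0$ in the $\partial_{v_i}$ case). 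The trade-off: the paper's induction gives continuity of each $g_j$ along the way as a byproduct, which it reuses; your proof establishes pointwise differentiability directly and leaves continuity of $g_{j'}$ (when $\abs{J'}<k$) to follow separately from the same theorem applied at level $j'$. One small caveat worth flagging: your closing remark that "all first partials of $g_j$ exist in $\Omega$" via $\partial_t g_j=\mathcal Tg_j-v\cdot\nabla_x g_j$ implicitly requires both $j_0\le k_0-1$ and $\abs{J_1}\le k_1-1$; the theorem as stated only asserts the three relations under their respective index restrictions, so this extra sentence claims slightly more than is needed (or warranted for boundary indices).
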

\begin{proof}
For this proof we omit the dependency on $f$ in the coefficients $a_j(z_0, \rho, f)$ and $P_k(z, z_0, \rho, f)$ and simply write $a_j(z_0, \rho)$ and $P_k(z, z_0, \rho)$, respectively.

\textit{Step 1.} We will start proving the first line. We consider $j = (j_0, J_1, J_2)$ for $j_0 \leq k_0 - 1, \abs{J_1} = k_1, \abs{J_2} = k_2$. 
Theorem \ref{thm:4.1} proves that $g_{(k_0, J_1, J_2)}$ is Hölder continuous in a classical sense for $\abs{J_1} = k_1, \abs{J_2} = k_2$ and in particular continuous. Thus we may assume that $g_{(j_0 + \delta, J_1, J_2)}$ is continuous in $\bar \Omega$ for $\delta = 1, \dots, k_0 - j_0$. Let $z_0 \in \Omega$ and $\rho$ be such that $B_{\abs\rho}(z_0) \subset \Omega$. By \eqref{eq:3.2} we have
\bal
	\frac{a_{j}\big(z_0 + (\rho, (0), (0)), 2\abs\rho\big) - a_{j}(z_0, 2\abs \rho)}{\rho} &= \frac{D^{j}\big[P_{k}\big(z, z_0 +(\rho, (0), (0)), 2\abs\rho\big) -P_{k}(z, z_0, 2\abs\rho)\big]}{\rho}\\
	&\qquad- \sum_{\delta = 1}^{k_0 - j_0} \frac{(-1)^{\delta}}{\delta!}\rho^{\delta -1}a_{j}\big(z_0 + (\rho, (0), (0)), 2\abs \rho\big).
\label{eq:4.7}
\eal
With Lemma \ref{lem:2.1} and \eqref{eq:3.5} we obtain
\bal
	&\Bigg\vert \frac{D^{j}\big[P_k\big(z, z_0 +(\rho, (0), (0)), 2\abs\rho\big) -P_k(z, z_0, 2\abs\rho)\big]}{\rho}\Bigg\vert^q \\
	&\qquad\qquad\leq c {\abs\rho}^{-n - \abs{J} q}\int_{\Omega(z_0, \abs \rho)} \Big\vert P_k\big(z, z_0 +(\rho, (0), (0)), 2\abs\rho\big) -P_k(z, z_0, 2\abs\rho)\Big\vert^q \dd z\\
	&\qquad\qquad\leq c 2^{q+\lambda+1}\abs\rho^{\lambda - n - \abs{J} q}[f]_{\mathcal L^{q, \lambda}_k}.
\label{eq:4.8}
\eal
Moreover, for $1 \leq \delta \leq k_0 -j_0$ there holds
\bal
	&\Big\vert a_{(j_0 + \delta, J_1, J_2)}\big(z_0 + (\rho, (0), (0)), 2\abs\rho\big) - g_{(j_0 + \delta, J_1, J_2)}(z_0)\Big\vert\\
	&\qquad\qquad\leq \Abs{a_{(j_0 + \delta, J_1, J_2)}\big(z_0 + (\rho, (0), (0)), 2\abs\rho\big) - g_{(j_0 + \delta, J_1, J_2)}\big(z_0+(\rho, (0), (0))\big)} \\
	&\qquad\qquad\quad+ \Abs{g_{(j_0 + \delta, J_1, J_2)}\big(z_0 + (\rho, (0), (0))\big) - g_{(j_0+ \delta, J_1, J_2)}(z_0)}.
\label{eq:4.9}
\eal
Using \eqref{eq:3.10} we can estimate the first term on the right hand side of \eqref{eq:4.9} by
\bal
	\Big\vert a_{(j_0 + \delta, J_1, J_2)}\big(z_0 + (\rho, (0), (0)), 2\abs\rho\big) - g_{(j_0 + \delta, J_1, J_2)}&\big(z_0+(\rho, (0), (0))\big)\Big\vert  \\
	&\leq c 2^{\frac{\lambda - n -(\abs{J} + 2s\delta)}{q}}\abs{\rho}^{\frac{\lambda - n - (\abs{J} + 2s \delta)}{q}} [f]_{\mathcal L_k^{q, \lambda}}.
\label{eq:4.10}
\eal
From \eqref{eq:4.9} and \eqref{eq:4.10} and since by induction hypothesis $g_{(j_0 + \delta, J_1, J_2)}$ are continuous for $\delta = 1, \dots, k_0 - j_0$ we have
\beq
	\lim_{\rho \to 0} a_{(j + \delta, J_1, J_2)}\big(z_0+ (\rho, (0), (0)), 2\abs \rho\big) = g_{(j_0+ \delta, J_1, J_2)}(z_0) \qquad \delta = 1, \dots, k_0 - j_0.
\label{eq:4.11}
\eeq
Thus from \eqref{eq:4.7}, \eqref{eq:4.8} and \eqref{eq:4.11} we deduce that
\beqs
	\lim_{\rho \to 0} \frac{a_j\big(z_0 + (\rho, (0), (0)), 2\abs\rho\big) - a_{j}(z_0, 2\abs \rho)}{\rho} = g_{(j_0 + 1, J_1, J_2)}(z_0),
\eeqs
uniformly in $z_0$. Thus if we can show that
\beq
	\lim_{\rho \to 0} \frac{g_{j}\big(z_0 + (\rho, (0), (0))\big) - g_{j}(z_0)}{\rho} = \lim_{\rho \to 0} \frac{a_{j}\big(z_0 + (\rho, (0), (0)), 2\abs\rho\big) - a_{j}(z_0, 2\rho)}{\rho},
\label{eq:4.13}
\eeq 
then we can conclude the proof of the first line of \eqref{eq:4.6}.
We first notice that by \eqref{eq:3.10} 
\bal
	\Bigg\vert\frac{g_{j}\big(z_0 + (\rho, (0), (0))\big) - a_{j}\big(z_0+(\rho, (0), (0)), 2\abs\rho\big)}{\rho} \Bigg\vert\leq c2^{\frac{\lambda - n - \abs{J}q}{q}}\abs{\rho}^{\frac{\lambda - n}{q} -\abs{J} - 1}[f]_{\mathcal L^{q, \lambda}_k},
\label{eq:4.14}
\eal
and
\bal
	\Big\vert \frac{g_{j}(z_0) - a_{j}(z_0, 2\abs\rho)}{\rho}\Big\vert\leq c 2^{\frac{\lambda - n - \abs{J}q}{q}}\abs{\rho}^{\frac{\lambda - n}{q} - \abs{J} - 1}[f]_{\mathcal L^{q, \lambda}_k}.
\label{eq:4.15}
\eal
Thus with the triangle inequality \eqref{eq:4.14} and \eqref{eq:4.15} imply \eqref{eq:4.13}, which in turn implies the first line of\eqref{eq:4.6}.

\textit{Step 2.} To prove the second statement in \eqref{eq:4.6} we proceed as in Step 1. Now we consider $j_0 = 1, \dots, k_0, \abs{J_1} \leq k_1 - 1$ and $\abs{J_2} = k_2$. We have shown that $g_j$ is continuous for $j_0 = 1, \dots, k_0, \abs{J_1} = k_1, \abs{J_2} = k_2$. Assume then that $g_{(j_0, J_1 + \delta e_i, J_2)}$ is continuous in $\bar \Omega$ for $\delta = 1, \dots, k_1 - \abs{J_1}$. We again have by \eqref{eq:3.2}
\bal
	\frac{a_{j}\big(z_0 +\rho (0, e_i, (0)), 2\abs\rho\big) - a_{j}(z_0, 2\abs \rho)}{\rho} &= \frac{D^{j}\big[P_{k}\big(z, z_0 +\rho(0, e_i, (0)), 2\abs\rho\big) -P_{k}(z, z_0, 2\abs\rho)\big]}{\rho}\\
	&\quad- \sum_{\delta = 1}^{k_1 - \abs{J_1}} \frac{(-1)^{\delta}}{\delta!}\rho^{\delta -1}a_{j}\big(z_0 + \rho(0, e_i, (0)), 2\abs \rho\big).
\label{eq:4.7i}
\eal
The proof is exactly the same if we replace $(\rho, (0), (0))$ with $\rho(0, e_i, (0))$, $k_0- j_0$ with $k_1-\abs{J_1}$ and instead of $2s\delta$ in the exponent of \eqref{eq:4.10} we get $(1+2s)\delta$. 

\textit{Step 3.} To deduce the final statement in \eqref{eq:4.6} the ideas are the same but the statement only holds for $j_0 = 0$ since $\mathcal T$ and $D_v$ do not commute. Therefore it was important to prove the first statement first, since now we know that $g_j$ is continuous for $j_0 = 0, \abs{J_1} \leq k_1$ and $\abs{J_2} = k_2$. We now assume that $g_{(j_0, J_1, J_2 + \delta e_i)}$ is continuous in $\bar \Omega$ for $\delta = 1, \dots, k_2 - \abs{J_2}$. Replacing $(\rho, (0), (0))$ with $\rho(0, (0), e_i)$, $k_0- j_0$ with $k_2-\abs{J_2}$ and $2s\delta$ in the exponent of \eqref{eq:4.10} with $\delta$, but otherwise proceeding as above, we conclude.

Finally, combining the argument for the continuity of $g_j$ in all three steps yields the improvement in ranges of $\abs{J_1}$ and $\abs{J_2}$ as stated in the theorem. 
\end{proof}
As a corollary of Theorem \ref{thm:4.1} and \ref{thm:4.2} we get
\begin{theorem}\label{thm:4.3}
Let $f \in \mathcal L^{q, \lambda}_k(\Omega)$ with $n+ kq< \lambda$. Then the function $g_{(0)} \in C_\ell^{\beta}(\bar \Omega)$ where $\beta = \frac{\lambda - n}{q}$ and there holds
\beqs
	\mathcal T^{j_0}D_x^{J_1}D_v^{J_2} g_{(0)}(z) = g_j(z) \qquad \forall z \in \Omega, ~ \forall \abs J \leq k.
\eeqs
Recall $j = (j_0, J_1, J_2) \in \N^{1+2d}$ and $\abs{J} = 2s \cdot j_0 + (1+2s)\cdot \abs{J_1} + \abs{J_2}$. 
\end{theorem}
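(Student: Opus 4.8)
\textit{The derivative identities.} The first step is to iterate Theorem~\ref{thm:4.2}. Starting from $g_{(0)}=g_{(0,0,0)}$, I would apply the three formulae of \eqref{eq:4.6} in the order dictated by the constraints on the multi-indices: first the $v$-derivatives, then the $x$-derivatives, then the transport operator. Using the third identity of \eqref{eq:4.6} (valid while $j_0=0$) repeatedly gives $D_v^{J_2}g_{(0)}=g_{(0,0,J_2)}$ for $\abs{J_2}\le k_2$; then the second identity of \eqref{eq:4.6} gives $D_x^{J_1}g_{(0,0,J_2)}=g_{(0,J_1,J_2)}$ for $\abs{J_1}\le k_1$; and finally the first identity of \eqref{eq:4.6} gives $\mathcal T^{j_0}g_{(0,J_1,J_2)}=g_{(j_0,J_1,J_2)}=g_j$ for $j_0\le k_0$. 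Since $\mathcal T^{j_0}D_x^{J_1}D_v^{J_2}$ applies the $v$-derivatives first and $\mathcal T$ last, this is exactly the claimed identity $\mathcal T^{j_0}D_x^{J_1}D_v^{J_2}g_{(0)}=g_j$ for every multi-index $j$ with $\abs J\le k$; along the way all the $g_j$ are continuous on $\bar\Omega$ (for $\abs J=k$ by Theorem~\ref{thm:4.1}, and for $\abs J<k$ as first derivatives of continuous functions, which is how the induction in Theorem~\ref{thm:4.2} is set up).

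\textit{Reduction of the H\"older bound.} It remains to show $g_{(0)}\in C_\ell^\beta(\bar\Omega)$ with $\beta=\frac{\lambda-n}{q}$; since $\lambda>n+kq$ we have $\beta>k$, so in Definition~\ref{def:holder} the approximating polynomial may be taken in $\mathcal P_k$ (in the range $\lambda\le n+(k+1)q$ of Theorem~\ref{thm:equicampholderhigh} this exhausts all degrees $<\beta$; if $\lambda>n+(k+1)q$ the Campanato condition forces $f$, hence $g_{(0)}$, to be a polynomial of degree $\le k$ and the statement is immediate). For $z_0\in\bar\Omega$ let $p_{z_0}\in\mathcal P_k$ be the kinetic Taylor polynomial of $g_{(0)}$ at $z_0$, i.e. the one with coefficients $g_j(z_0)$, $\abs J\le k$, which is well defined by the previous step. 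For $0<r\le\textrm{diam }\Omega$ and $z\in Q_r(z_0)\cap\Omega$ I would split
\bals
	\abs{g_{(0)}(z)-p_{z_0}(z)}\le\abs{g_{(0)}(z)-P_k(z,z_0,r,f)}+\abs{P_k(z,z_0,r,f)-p_{z_0}(z)}.
\eals
The second term is immediate from \eqref{eq:3.10}: the monomial $(z_0^{-1}\circ z)^j$ has size $\lesssim r^{\abs J}$ on $Q_r(z_0)$, so this term is $\lesssim\sum_{\abs J\le k}\abs{a_j(z_0,r,f)-g_j(z_0)}\,r^{\abs J}\lesssim\sum_{\abs J\le k}r^{\beta-\abs J}r^{\abs J}[f]_{\mathcal L^{q,\lambda}_k}\lesssim r^\beta[f]_{\mathcal L^{q,\lambda}_k}$.

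\textit{The $L^q\to L^\infty$ upgrade, and the main obstacle.} The first term is the crux: one has only the averaged bound $\fint_{Q_\rho(z_0)\cap\Omega}\abs{f-P_k(\cdot,z_0,\rho,f)}^q\dd z\lesssim\rho^{q\beta}[f]^q_{\mathcal L^{q,\lambda}_k}$ (from $[f]_{\mathcal L^{q,\lambda}_k}$ and $\abs{Q_\rho}\sim\rho^n$), and it must be turned into a pointwise estimate at $z$ for the continuous representative. I would recenter at $z$ (using $d_\ell(z,z_0)\le r$, so $z$ sits well inside $\Omega$ at scale $\sim r$) and telescope over the dyadic radii $r2^{-i}$: by Lemma~\ref{lem:3.1} the polynomial increments $P_k(\cdot,z,r2^{-i},f)-P_k(\cdot,z,r2^{-i-1},f)$ have $L^q(Q_{r2^{-i}}(z)\cap\Omega)$-norm $\lesssim(r2^{-i})^{\lambda/q}[f]_{\mathcal L^{q,\lambda}_k}$, hence, being polynomials of kinetic degree $\le k$, their value at the center is $\lesssim(r2^{-i})^{(\lambda-n)/q}[f]_{\mathcal L^{q,\lambda}_k}=(r2^{-i})^\beta[f]_{\mathcal L^{q,\lambda}_k}$ by Lemma~\ref{lem:2.1}; summing the geometric series ($\beta>0$), using $a_{(0)}(z,r2^{-i},f)\to g_{(0)}(z)$ from \eqref{eq:3.11}, and adding one more term comparing $P_k(\cdot,z,r,f)$ with $P_k(\cdot,z_0,r,f)$ (bounded the same way), yields $\abs{g_{(0)}(z)-P_k(z,z_0,r,f)}\lesssim r^\beta[f]_{\mathcal L^{q,\lambda}_k}$. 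Combining the two terms gives $\norm{g_{(0)}-p_{z_0}}_{L^\infty(Q_r(z_0)\cap\Omega)}\lesssim r^\beta[f]_{\mathcal L^{q,\lambda}_k}$ for all $r$, i.e. $g_{(0)}\in C_\ell^\beta(\bar\Omega)$ with $[g_{(0)}]_{C_\ell^\beta}\lesssim[f]_{\mathcal L^{q,\lambda}_k}$; the $L^\infty$-bound on $g_{(0)}$ follows from $\abs{a_{(0)}(z_0,\textrm{diam }\Omega,f)}\lesssim\norm{f}_{L^q}$ together with the above. I expect the genuine difficulty to be precisely this upgrade — in particular applying the polynomial inverse estimate of Lemma~\ref{lem:2.1} on cylinders clipped by $\partial\Omega$; this is exactly why one works on $\Omega=Q_R(\tilde z_0)$, for which $\Omega\cap Q_\rho(z_0)$ always contains a full sub-cylinder of comparable size, so that Lemma~\ref{lem:2.1} applies up to a harmless constant. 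Finally, since $a_{(0)}(z_0,\rho,f)\to f(z_0)$ at every Lebesgue point of $f$ while also $a_{(0)}(z_0,\rho,f)\to g_{(0)}(z_0)$, one gets $g_{(0)}=f$ a.e., which together with the trivial reverse inclusion $C_\ell^\beta(\bar\Omega)\subset\mathcal L^{q,\lambda}_k(\Omega)$ (insert the polynomial of Definition~\ref{def:holder} into the Campanato integral) completes Theorem~\ref{thm:equicampholderhigh}.
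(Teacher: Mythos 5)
Your proof is correct. Since the paper states Theorem~\ref{thm:4.3} merely ``as a corollary of Theorem~\ref{thm:4.1} and~\ref{thm:4.2}'' without spelling out the argument, your write-up essentially supplies the omitted details, and they match what the paper intends.

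Two small remarks. For the derivative identities, the order of application (first $D_v$ while $j_0=0$, then $D_x$, then $\mathcal T$) is exactly right given the constraint $j_0=0$ in the third line of \eqref{eq:4.6}; it is worth noting this works precisely because $D_x$ commutes with both $D_v$ and $\mathcal T$, so once the velocity derivatives are exhausted the remaining operators can be applied in either order, and the convention $\mathcal T^{j_0}D_x^{J_1}D_v^{J_2}$ is compatible with the constraints in Theorem~\ref{thm:4.2}. For the Hölder bound, the paper's implicit route is ``Theorem~\ref{thm:4.1} for the top-order $g_j$, Theorem~\ref{thm:4.2} for the derivative structure, then Taylor's formula'' (this phrase appears in the proof of Theorem~\ref{thm:5.1}); you instead carry out the quantitative step via the Campanato toolkit directly, decomposing $g_{(0)}-p_{z_0}$ through the minimizing polynomial $P_k(\cdot,z_0,r,f)$ and combining \eqref{eq:3.10}, Lemma~\ref{lem:2.1} and Lemma~\ref{lem:3.1} (plus the recentering/telescoping to upgrade the $L^q$-bound to a pointwise one). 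This is logically equivalent to the Taylor-remainder step — indeed \eqref{eq:3.10} \emph{is} the kinetic Taylor-remainder estimate in Campanato form — and has the advantage of not requiring a separate path-integral argument along kinetic trajectories, which is awkward in this non-commutative setting. The only point needing a touch of care, which you flag yourself, is that when applying Lemma~\ref{lem:2.1} to the difference $P_k(\cdot,z,r,f)-P_k(\cdot,z_0,r,f)$ on the clipped domain $\Omega(z,r)$, one must have a full sub-cylinder of comparable size inside $\Omega\cap Q_r(z)\cap Q_r(z_0)$; this holds because $\Omega=Q_R(\tilde z_0)$ and $d_\ell(z,z_0)\leq r$, as you note. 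The final paragraph on $f=g_{(0)}$ a.e.\ and the reverse inclusion goes beyond Theorem~\ref{thm:4.3} (it belongs to Theorem~\ref{thm:5.1} and the converse direction of Theorem~\ref{thm:equicampholderhigh}), but it is correct.
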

\begin{remark}
For $f \in \mathcal L^{q, \lambda}_k(\Omega)$ with $n+ (k+1)q< \lambda$ we deduce from \eqref{eq:4.1} that $g_j$ with $\abs J = k$ are constant and thus by Theorem \ref{thm:4.3}, $g_{(0)}$ is a polynomial of kinetic degree at most $k$.
\label{rmk:4.2}
\end{remark}

\subsection{Comparing the Hölder norm and the Campanato norm}
\begin{theorem}
Let $f \in \mathcal L^{q, \lambda}_k(\Omega)$ with $n+ kq< \lambda \leq n + (k+1)q$. Then $f \in C_\ell^{\beta}(\Omega)$ where $\beta = \frac{\lambda - n}{q}$ and there holds
\beq
	[f]_{C_\ell^{\beta}} \leq c [f]_{\mathcal L^{q, \lambda}_k}.
\label{eq:5.1}
\eeq
If $\lambda > n + (k+1)q$ then $f$ is a polynomial of kinetic degree at most $k$. 
\label{thm:5.1}
\end{theorem}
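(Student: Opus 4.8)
The plan is to identify $f$ with the limit function $g_{(0)}$ constructed in the previous subsections and then transfer the quantitative information to the Hölder seminorm.

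\textbf{Step 1: $f = g_{(0)}$ a.e.} Fix a Lebesgue point $z_0$ of $f$ and $\rho$ small enough that $Q_\rho(z_0)\subset\Omega$. On the one hand $\fint_{Q_\rho(z_0)} f\,\mathrm dz \to f(z_0)$. On the other hand, by \eqref{eq:3.1} and the definition of the Campanato seminorm, $\int_{Q_\rho(z_0)}\abs{f - P_k(\cdot,z_0,\rho,f)}^q\,\mathrm dz \le \rho^\lambda [f]_{\mathcal L^{q,\lambda}_k}^q$, hence $\bigl(\fint_{Q_\rho(z_0)}\abs{f - P_k(\cdot,z_0,\rho,f)}^q\bigr)^{1/q} \lesssim \rho^{\beta}[f]_{\mathcal L^{q,\lambda}_k}\to 0$ since $\beta=\frac{\lambda-n}{q}>0$; therefore $\fint_{Q_\rho(z_0)} P_k(\cdot,z_0,\rho,f)\,\mathrm dz \to f(z_0)$. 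But expanding this average and using $\fint_{Q_\rho(z_0)}(z-z_0)^j\,\mathrm dz \lesssim \rho^{\abs{J}}$ for $\abs{J}>0$ together with the uniform convergence $a_j(z_0,\rho,f)\to g_j(z_0)$ from \eqref{eq:3.11}, the same average converges to $g_{(0)}(z_0)$. Thus $f(z_0)=g_{(0)}(z_0)$ at every Lebesgue point, so $f=g_{(0)}$ a.e., and we henceforth identify $f$ with the continuous representative $g_{(0)}$, which by Theorem \ref{thm:4.3} belongs to $C_\ell^{\beta}(\bar\Omega)$.

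\textbf{Step 2: the quantitative bound.} Fix $z_0\in\Omega$ and set $\tilde P_{z_0}(z):=\sum_{\abs{J}\le k}\frac{g_j(z_0)}{j!}(z-z_0)^j$, a polynomial of kinetic degree $\le k<\beta$, hence admissible in \eqref{eq:holderdef}. For $z\in\Omega$ with $r:=d_\ell(z,z_0)$ small we split
\[
	g_{(0)}(z)-\tilde P_{z_0}(z)=\bigl[g_{(0)}(z)-P_k(z,z_0,2\kappa r,f)\bigr]+\bigl[P_k(z,z_0,2\kappa r,f)-\tilde P_{z_0}(z)\bigr],
\]
where $\kappa$ is the fixed constant from the quasi-triangle inequality for $d_\ell$ (so that $Q_r(z)\subset Q_{2\kappa r}(z_0)$). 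The second bracket equals $\sum_{\abs{J}\le k}\frac{a_j(z_0,2\kappa r,f)-g_j(z_0)}{j!}(z-z_0)^j$; since $\abs{(z-z_0)^j}\lesssim r^{\abs{J}}$ on $Q_{2\kappa r}(z_0)$ and $\abs{a_j(z_0,2\kappa r,f)-g_j(z_0)}\lesssim r^{\frac{\lambda-n-\abs{J}q}{q}}[f]_{\mathcal L^{q,\lambda}_k}$ by \eqref{eq:3.10}, this bracket is $\lesssim r^{\frac{\lambda-n}{q}}[f]_{\mathcal L^{q,\lambda}_k}=r^{\beta}[f]_{\mathcal L^{q,\lambda}_k}$. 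For the first bracket, write $Q:=P_k(\cdot,z_0,2\kappa r,f)$ and $R:=P_k(\cdot,z,r,f)-Q$. On $Q_r(z)$ both $P_k(\cdot,z,r,f)$ and $Q$ approximate $f$: by \eqref{eq:3.1}, $\norm{f-P_k(\cdot,z,r,f)}_{L^q(Q_r(z))}\lesssim r^{\lambda/q}[f]_{\mathcal L^{q,\lambda}_k}$, and $\norm{f-Q}_{L^q(Q_r(z))}\le\norm{f-Q}_{L^q(Q_{2\kappa r}(z_0))}\lesssim r^{\lambda/q}[f]_{\mathcal L^{q,\lambda}_k}$; hence $\norm{R}_{L^q(Q_r(z))}\lesssim r^{\lambda/q}[f]_{\mathcal L^{q,\lambda}_k}$. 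Applying Lemma \ref{lem:2.1} with $\abs{J}=0$ (bounding the value at the centre $z$ of the polynomial $R$ by its $L^q$ average) gives $\abs{R(z)}\lesssim r^{-n/q}r^{\lambda/q}[f]_{\mathcal L^{q,\lambda}_k}=r^{\beta}[f]_{\mathcal L^{q,\lambda}_k}$. Combining this with $\abs{g_{(0)}(z)-a_{(0)}(z,r,f)}\lesssim r^{\beta}[f]_{\mathcal L^{q,\lambda}_k}$ (the case $\abs{J}=0$ of \eqref{eq:3.10}) and the identity $a_{(0)}(z,r,f)=P_k(z,z,r,f)$ evaluated at $z$, the first bracket is also $\lesssim r^{\beta}[f]_{\mathcal L^{q,\lambda}_k}$.

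\textbf{Step 3: conclusion.} Together, $\abs{f(z)-\tilde P_{z_0}(z)}\le c\,d_\ell(z,z_0)^{\beta}[f]_{\mathcal L^{q,\lambda}_k}$ for $z,z_0\in\Omega$ with $d_\ell(z,z_0)$ small; for points with $d_\ell(z,z_0)$ comparable to $\operatorname{diam}\Omega$ one chains the estimate along a polygonal path inside $\Omega=Q_R(\tilde z_0)$ exactly as in the proof of Theorem \ref{thm:4.1}. By the remark following Definition \ref{def:holder} this pointwise bound is equivalent to \eqref{eq:holderdef}, so $[f]_{C_\ell^{\beta}(\Omega)}\le c\,[f]_{\mathcal L^{q,\lambda}_k}$, which is \eqref{eq:5.1}. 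Finally, if $\lambda>n+(k+1)q$, then by Remark \ref{rmk:4.2} the function $g_{(0)}$ is a kinetic polynomial of degree at most $k$, and since $f=g_{(0)}$ a.e. the same holds for $f$. The main obstacle is precisely the passage in Step 2 from the $L^q$-averaged control supplied by $\mathcal L^{q,\lambda}_k$ to the pointwise control required by $C_\ell^{\beta}$: this relies on the inverse (Bernstein-type) estimate of Lemma \ref{lem:2.1} and on comparing the two best-approximating polynomials $P_k(\cdot,z,r,f)$ and $P_k(\cdot,z_0,2\kappa r,f)$ on the overlapping cylinder $Q_r(z)\subset Q_{2\kappa r}(z_0)$, which forces careful bookkeeping of the quasi-triangle constant for the kinetic distance.
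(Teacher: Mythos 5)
Your proposal follows the paper's overall strategy: Step 1 (identifying $f$ with $g_{(0)}$ almost everywhere via a Lebesgue point argument) is essentially the same as the paper's proof, which directly bounds $\Abs{a_{(0)}(z_0,\rho)-f(z_0)}$ by a three-term splitting and lets $\rho\to 0$, and Step 3 (polygonal chaining, Remark \ref{rmk:4.2} for $\lambda > n+(k+1)q$) also matches. The difference is in the quantitative Step 2: the paper disposes of the Hölder bound by invoking Theorem \ref{thm:4.1} (equation \eqref{eq:4.1}) and ``Taylor's formula'', relying on the derivative structure $\mathcal T^{j_0}D_x^{J_1}D_v^{J_2} g_{(0)} = g_j$ from Theorem \ref{thm:4.3} so that the remainder of the kinetic Taylor expansion is controlled by the Hölder continuity of the top-order coefficients; you instead supply a self-contained comparison argument, inserting the two best-approximating polynomials $P_k(\cdot,z,r,f)$ and $P_k(\cdot,z_0,2\kappa r,f)$ on the nested cylinders $Q_r(z)\subset Q_{2\kappa r}(z_0)$, controlling their difference in $L^q$ by the Campanato seminorm, and passing to a pointwise bound by the Bernstein-type inverse estimate of Lemma \ref{lem:2.1}, while the coefficient mismatch $a_j(z_0,2\kappa r,f)-g_j(z_0)$ is absorbed via \eqref{eq:3.10}. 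Your route avoids spelling out a kinetic Taylor remainder formula and only uses Lemma \ref{lem:2.1}, \eqref{eq:3.10} and the definition of $\mathcal L^{q,\lambda}_k$; the paper's is shorter to state but leans more heavily on the full force of Theorems \ref{thm:4.1} and \ref{thm:4.3}. Both are correct; your version makes the paper's terse final step explicit.
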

\begin{proof}
Due to Theorem \ref{thm:4.3} and Remark \ref{rmk:4.2} it suffices to show that $f(z) = g_{(0)}(z) = \lim_{\rho \to 0} a_{(0)}(z, \rho)$ for almost every $z \in \Omega$. Then \eqref{eq:5.1} follows from \eqref{eq:4.1} in Theorem \ref{thm:4.1} and Taylor's formula. 

Since $f \in L^q(\Omega)$ there holds for almost every $z_0 \in \Omega$ 
\beq
	\lim_{\rho \to 0} \frac{1}{\abs{\Omega(z_0, \rho)}} \int_{\Omega(z_0, \rho)} \abs{f(z) - f(z_0)}^q\dd z= 0.
\label{eq:5.2}
\eeq
Now let $z_0 \in \Omega$ be such that \eqref{eq:5.2} holds. Then for almost every $z \in \Omega$ we have
\bals
	\Abs{a_{(0)}(z_0, \rho) - f(z_0)}^q \leq c \Big( \Abs{P_k(z, z_0, \rho) - a_{(0)}(z_0, \rho)}^q + \Abs{P_k(z, z_0, \rho) - f(z)}^q + \abs{f(z) - f(z_0)}^q\Big).
\eals
Integrating this inequality over $\Omega(z_0, \rho)$ yields
\bal
	\Abs{a_{(0)}(z_0, \rho) - f(z_0)}^q &\leq \frac{c}{A_1\rho^n}\int_{\Omega(z_0, \rho)}\Abs{P_k(z, z_0, \rho) - a_{(0)}(z_0, \rho)}^q \dd z \\
	&\quad +  \frac{c}{A_1\rho^n}\int_{\Omega(z_0, \rho)} \Abs{P_k(z, z_0, \rho) - f(z)}^q \dd z+  \frac{c}{A_1\rho^n}\int_{\Omega(z_0, \rho)} \abs{f(z) - f(z_0)}^q\dd z.
\label{eq:5.3}
\eal
By definition of $\mathcal L^{q, \lambda}_k$ we have
\beqs
	\frac{c}{R^{-n}\abs{Q_R(\tilde z_0)}\rho^n}\int_{\Omega(z_0, \rho)}\Abs{P_k(z, z_0, \rho) - f(z)}^q \dd z \leq c \frac{\rho^{\lambda - n}}{R^{-n}\abs{Q_R(\tilde z_0)}}[f]_{\mathcal L^{q, \lambda}_k} \xrightarrow[\rho \to 0]{} 0.
\eeqs
Due to \eqref{eq:5.2} the last integral in \eqref{eq:5.3} vanishes as well in the limit $\rho \to 0$. Finally there holds
\bals
	 \frac{c}{R^{-n}\abs{Q_R(\tilde z_0)}\rho^n}\int_{\Omega(z_0, \rho)}\Abs{P_k(z, z_0, \rho) - a_{(0)}(z_0, \rho)}^q \dd z \leq c(n, q, k) \sum_{\substack{j \in \N^{1+2d}, \\ \abs{J} \leq k}}\Abs{a_j(z_0, \rho)}^q \rho^{\abs J q}. 
\eals
Due to \eqref{eq:3.11} this integral vanishes in the limit $\rho \to 0$, so that \eqref{eq:5.3} gives for almost every $z_0 \in \Omega$
\beqs
	\lim_{\rho \to 0} a_{(0)}(z_0, \rho) = f(z_0).
\eeqs
Equivalently, there holds $f(z) = g_{(0)}(z)$ almost everywhere in $\Omega$.
\end{proof}

\begin{proof}[Proof of Theorem \ref{thm:equicampholderhigh}]
If $f \in \mathcal L_k^{p, \lambda}(\Omega)$, then Theorem \ref{thm:5.1} yields $f \in C_\ell^{\beta}(\Omega)$ and the Hölder semi-norm is bounded above by the Campanato semi-norm \eqref{eq:5.1}.

Conversely, let $f \in C_\ell^{\beta}(\bar\Omega)$ and $P \in \mathcal P_k$ where $k = \rm{deg}_{\rm{kin}} P < \beta$. For $z \in Q_r(z_0) \cap \Omega$ we have 
\beqs
	\abs{f(z) - P(z)} \leq [f]_{C_\ell^{\beta}} r^{\beta}.
\eeqs
Thus for $\beta = \frac{\lambda - n}{p}$ there holds
\bals
	\frac{1}{r^\lambda}\int_{Q_r(z_0) \cap \Omega} \abs{f(z) - P(z)}^p \dd z\leq C  [f]^p_{C_\ell^{\beta}} r^{p\beta - \lambda +n} = C  [f]^p_{C_\ell^{\beta}}.
\eals
\end{proof}

\section{Interpolation Inequality for Hölder spaces}\label{app:interpolation}
For the sake of completeness, we prove Lemma \ref{prop:interpolation} following the arguments of Imbert-Silvestre \cite[Proposition 2.10]{ISschauder}.
\begin{proof}[Proof of Lemma \ref{prop:interpolation}]

It suffices to prove the statement for $\beta_3$ sufficiently close to $\beta_1$. Thus we assume that there exists only one element $\bar \beta \in \N +2s\N$ such that $\bar \beta \in [\beta_1, \beta_3)$.
We know that if $p^i_z \in \mathcal P_{\beta_i}$ is the polynomial expansion of $f$ at $z$ of order less than $\beta_i$ for all $i \in \{1, \dots, 3\}$, then for all $z \circ \xi \in Q_1$
\beq
	\Abs{f(z \circ \xi)- p^i_z(\xi)} \leq [f]_{C_\ell^{\beta_i}} \norm{\xi}^{\beta_i}, \quad i = 1, 2, 3.
\label{eq:aux2.6}
\eeq
The polynomials $p_z^i$ are of increasingly higher order. We assume that the difference of degree of homogeneity of $p_z^1$ and $p_z^3$ is at most one, so that $p_z^2$ coincides with either $p_z^1$ or $p_z^3$, depending on whether $\bar \beta \geq \beta_2$ or $\bar \beta< \beta_2$. If there is no $\bar \beta$ then all three polynomials coincide. Let us first assume therefore that there is exactly one $\bar \beta$. We have by subtracting \eqref{eq:aux2.6} for $i = 1, 3$ from each other
\beq
	\abs{p_z^3(\xi) - p_z^1(\xi)} \leq [f]_{C_\ell^{\beta_1}}\norm{\xi}^{\beta_1} + [f]_{C_\ell^{\beta_3}}\norm{\xi}^{\beta_3}.
\label{eq:aux2.7}
\eeq
For any $R \in (0, 1]$ and $z \in Q_1$ we pick $\xi_1 \in Q_1$ such that $\norm{\xi_1} \leq R$ and whenever $d_\ell(\xi_1, \xi) < cR$, then $\norm{\xi} \leq R$ and $z \circ \xi \in Q_1$ with some universal constant $c$. From \eqref{eq:aux2.7} we then have
\beqs
	 \sup_{\xi: d_\ell(\xi_1, \xi) \leq cR} \abs{p_z^3(\xi) - p_z^1(\xi)} \leq [f]_{C_\ell^{\beta_1}}R^{\beta_1} +  [f]_{C_\ell^{\beta_3}} R^{\beta_3}.
\eeqs
Since $p_z^3 - p_z^1$ is homogeneous of degree $\bar \beta$ we get by scaling
\beqs
	 \sup_{\xi: d_\ell((\xi_1)_{R^{-1}}, \xi) \leq c} \abs{p_z^3(\xi) - p_z^1(\xi)} \leq [f]_{C_\ell^{\beta_1}}R^{\beta_1 -\bar \beta} +  [f]_{C_\ell^{\beta_3}} R^{\beta_3 - \bar \beta}.
\eeqs
Using the triangle inequality from \cite[Prop. 2.2]{ISschauder} we can assure that whenever $\abs{\xi}\leq 1$ then $d_\ell\big((\xi_1)_{R^{-1}}, \xi\big) \leq C$ for some universal constant $C$. Since all norms on the space of polynomials are equivalent, we have
\bals
	\norm{p_z^3 - p_z^1} &= \sup_{\xi: \norm{\xi}\leq 1} \abs{p_z^3(\xi) - p_z^1(\xi)} \leq C\sup_{\xi: d_\ell\big((\xi_1)_{R^{-1}}, \xi\big) \leq c} \abs{p_z^3(\xi) - p_z^1(\xi)} \\
	&\leq C [f]_{C_\ell^{\beta_1}}R^{\beta_1-\bar\beta} +  C[f]_{C_\ell^{\beta_3}} R^{\beta_3 - \bar \beta}.
\eals
For $$R = \Bigg(\frac{[f]_{C_\ell^{\beta_1}}}{[f]_{C_\ell^{\beta_3}}}\Bigg)^{\frac{1}{\beta_3 - \beta_1}}$$ we obtain
\beqs
	\norm{p_z^3 - p_z^1}\leq C [f]_{C_\ell^{\beta_1}}^{\bar \theta}[f]_{C_\ell^{\beta_3}}^{1-\bar \theta} + [f]_{C_\ell^{\beta_1}},
\eeqs
where $\bar \beta =\bar \theta \beta_1 + (1-\bar\theta)\beta_3$.

Therefore we can estimate $f - p_z^2$. Assume first $\beta_2 \leq \bar \beta$. Then $p_z^2 = p_z^1$ and 
\beqs
	\abs{f(z \circ \xi) - p_z^2(\xi)} \leq \begin{cases} [f]_{C_\ell^{\beta_1}}\norm{\xi}^{\beta_1},\\ [f]_{C_\ell^{\beta_3}}\norm{\xi}^{\beta_3} + \left([f]_{C_\ell^{\beta_1}}^{\bar \theta}[f]_{C_\ell^{\beta_3}}^{1-\bar \theta} + [f]_{C_\ell^{\beta_1}}\right)\norm{\xi}^{\bar \beta}. \end{cases}
\eeqs
Now if $\norm{\xi} \geq R$ then
\beqs
	[f]_{C_\ell^{\beta_1}}\norm{\xi}^{\beta_1} \leq [f]_{C_\ell^{\beta_1}}^{ \theta}[f]_{C_\ell^{\beta_3}}^{1- \theta}\norm{\xi}^{\beta_2}.
\eeqs
Else if $\norm{\xi} < R$
\beqs
	[f]_{C_\ell^{\beta_3}}\norm{\xi}^{\beta_3} + \Big([f]_{C_\ell^{\beta_1}}^{\bar \theta}[f]_{C_\ell^{\beta_3}}^{1-\bar \theta} + [f]_{C_\ell^{\beta_1}}\Big)\norm{\xi}^{\bar \beta} \leq [f]_{C_\ell^{\beta_1}}^{ \theta}[f]_{C_\ell^{\beta_3}}^{1- \theta}\norm{\xi}^{\beta_2} + [f]_{C_\ell^{\beta_1}}\norm{\xi}^{\bar \beta}.
\eeqs
Thus we conclude $\abs{f(z \circ \xi) - p_z^2(\xi)} \leq [f]_{C_\ell^{\beta_1}}^{ \theta}[f]_{C_\ell^{\beta_3}}^{1- \theta}\norm{\xi}^{\beta_2} + [f]_{C_\ell^{\beta_1}}\norm{\xi}^{\beta_2}$. 

In case that $\bar \beta < \beta_2$ 
\beqs
	\abs{f(z \circ \xi) - p_z^2(\xi)} \leq \begin{cases} [f]_{C_\ell^{\beta_1}}\norm{\xi}^{\beta_1}+ \Big([f]_{C_\ell^{\beta_1}}^{\bar \theta}[f]_{C_\ell^{\beta_3}}^{1-\bar \theta} + [f]_{C_\ell^{\beta_1}}\Big)\norm{\xi}^{\bar\beta},\\ [f]_{C_\ell^{\beta_3}}\norm{\xi}^{\beta_3}. \end{cases}
\eeqs
and we conclude as above. 

In case that no $\bar \beta$ exists, then all polynomials coincide and we get
\beqs
	\abs{f(z \circ \xi) - p_z^2(\xi)} \leq [f]_{C_\ell^{\beta_1}}^{ \theta}[f]_{C_\ell^{\beta_3}}^{1- \theta}\norm{\xi}^{\beta_2}.
\eeqs	
\end{proof}

\section{Proof of Bouchut's Proposition}\label{app:bouchut}
For the sake of self-containment, we recall the proof of Proposition \ref{prop:bouchut} from \cite[Proposition 1.1]{Bouchut}.
\begin{proof}[Proof of Proposition \ref{prop:bouchut}]
We denote by $\hat f(\eta, k, v)$ the Fourier-transform of a solution $f$ of \eqref{eq:transport} in time $t$ and space $x$. Then $\hat f$ solves
\beqs
	i (\eta + v \cdot k) \hat f = \hat S. 
\eeqs
We introduce a smoothing sequence $\rho_1 \in C_c^\infty(\R^d)$ in velocity such that 
\beq\label{eq:properties-rho}
	\rho_\varepsilon(v) = \frac{1}{\varepsilon^d}\rho_1\Big(\frac{v}{\varepsilon}\Big), \qquad \int \rho_1 \dd v = 1, \qquad \int v^\alpha \rho_1 = 0 \textrm{ for } 1 \leq \abs\alpha < \abs\beta.
\eeq
For fixed $(\eta, k)$ we decompose 
\beq\label{eq:decomposition}
	\hat f(\eta, k, v) = \left(\rho_\varepsilon \ast_v \hat f\right)(\eta, k, v) + \left( \hat f -  \left(\rho_\varepsilon \ast_v \hat f\right)\right)(\eta, k, v),
\eeq
where $\ast_v$ denotes the convolution in velocity $v$. Then by the properties of $\rho$ \eqref{eq:properties-rho} we can bound $\abs{1 - \hat \rho_\varepsilon} \leq C_{d, \beta}\abs{\varepsilon v}^\beta$ so that
\beq\label{eq:bouchut-aux1}
	\Bigg\Vert\left( \hat f -  \left(\rho_\varepsilon \ast_v \hat f\right)\right)(\eta, k, \cdot)\Bigg\Vert_{L^2(\R^d)} \leq C_{d, \beta} \varepsilon^\beta\Norm{\abs{D_v}^\beta \hat f(\eta, k, \cdot)}_{L^2(\R^d)}.
\eeq	
For the first term in \eqref{eq:decomposition} we introduce $\lambda > 0$ such that 
\beqs
	\big(\lambda + i(\eta + v \cdot k)\big) \hat f(\eta, k, v) = \lambda \hat f(\eta, k, v) + \hat S(\eta, k v).
\eeqs
Equivalently, 
\beqs
	\hat f(\eta, k, v) = \frac{\lambda \hat f(\eta, k, v) + \hat S(\eta, k, v)}{\lambda + i(\eta + v \cdot k)}, 
\eeqs
which yields
\beqs
	\left(\rho_\varepsilon \ast_v \hat f\right)(\eta, k, v) = \int \frac{\lambda \hat f(\eta, k, \xi) + \hat S(\eta, k, \xi)}{\lambda + i(\eta + \xi \cdot k)} \rho_\varepsilon(v - \xi) \dd \xi.
\eeqs
Then we bound
\bals
	\Big\vert &\left(\rho_\varepsilon \ast_v \hat f\right)(\eta, k, v) \Big\vert \\
	&\leq \Big(\Norm{\hat f(\eta, k, \cdot) \abs{\rho_\varepsilon(v- \cdot)}^{\frac{1}{2}}}_{L^2(\R^d)}+ \lambda^{-1}\Norm{\hat S(\eta, k, \cdot) \abs{\rho_\varepsilon(v- \cdot)}^{\frac{1}{2}}}_{L^2(\R^d)}\Big)\Bigg(\int \frac{\abs{\rho_\varepsilon(v - \xi)}}{\abs{1 + i(\eta + \xi \cdot k)\lambda^{-1}}^2}  \dd \xi\Bigg)^{\frac{1}{2}}.
\eals
The last integral is estimated using $\abs{\rho_\varepsilon(v)} \leq C_{d,\beta} \varepsilon^{-d}\chi_{\abs v \leq \varepsilon}$, and decomposing $\xi = \tilde \xi \frac{k}{\abs{k}} + \xi^\perp$ with $\xi^\perp \cdot k = 0$, so that
\bals
	\int \frac{\abs{\rho_\varepsilon(v - \xi)}}{\abs{1 + i(\eta + \xi \cdot k)\lambda^{-1}}^2}  \dd \xi \leq C_{d, \beta} \frac{1}{\varepsilon} \int \frac{\chi_{\abs{\frac{v \cdot k}{\abs k} - \tilde \xi}< \varepsilon}}{\abs{1 + i(\eta + \tilde\xi \cdot k)\lambda^{-1}}^2} \dd \tilde \xi \leq C_{d, \beta} \frac{\lambda}{\varepsilon \abs{k}}.
\eals
Thus
\bals
	\Big\Vert\left(\rho_\varepsilon \ast_v \hat f\right)(\eta, k, \cdot)\Big\Vert_{L^2(\R^d)} \leq C_{d, \beta} \Big(\frac{\lambda}{\varepsilon \abs{k}}\Big)^{\frac{1}{2}} \Big(\Norm{\hat f(\eta, k, \cdot)}_{L^2(\R^d)}+ \lambda^{-1}\Norm{\hat S(\eta, k, \cdot)}_{L^2(\R^d)}\Big).
\eals
Choosing 
\beqs
	\lambda = \frac{\Norm{\hat S(\eta, k, \cdot)}_{L^2(\R^d)}}{\Norm{\hat f(\eta, k, \cdot)}_{L^2(\R^d)}}
\eeqs
yields
\beq\label{eq:bouchut-aux2}
	\Big\Vert\left(\rho_\varepsilon \ast_v \hat f\right)(\eta, k, \cdot)\Big\Vert_{L^2(\R^d)} \leq  \frac{C_{d, \beta}}{\sqrt{\varepsilon\abs k}} \Norm{\hat f(\eta, k, \cdot)}_{L^2(\R^d)}^{\frac{1}{2}}\Norm{\hat S(\eta, k, \cdot)}_{L^2(\R^d)}^{\frac{1}{2}}.
\eeq
Combining \eqref{eq:decomposition} with \eqref{eq:bouchut-aux1} and \eqref{eq:bouchut-aux2} yields
\beqs
	\Norm{\hat f(\eta, k, \cdot)}_{L^2(\R^d)} \leq \frac{C_{d, \beta}}{\sqrt{\varepsilon\abs k}} \Norm{\hat f(\eta, k, \cdot)}_{L^2(\R^d)}^{\frac{1}{2}}\Norm{\hat S(\eta, k, \cdot)}_{L^2(\R^d)}^{\frac{1}{2}} + C_{d, \beta} \varepsilon^\beta\Norm{\abs{D_v}^\beta \hat f(\eta, k, \cdot)}_{L^2(\R^d)}.
\eeqs
We finally optimise $\varepsilon$ so that
\beqs
	\Norm{\hat f(\eta, k, \cdot)}_{L^2(\R^d)} \leq \Bigg(\frac{1}{\abs k}\Norm{\hat f(\eta, k, \cdot)}_{L^2(\R^d)}\Norm{\hat S(\eta, k, \cdot)}_{L^2(\R^d)}\Bigg)^{\frac{\beta}{1+2\beta}}\Norm{\abs{D_v}^\beta \hat f(\eta, k, \cdot)}^{\frac{1}{1+2\beta}}_{L^2(\R^d)}.
\eeqs
Dividing by $\Norm{\hat f(\eta, k, \cdot)}_{L^2(\R^d)}^{\frac{\beta}{1+2\beta}}$ yields
\beqs
	\Norm{\hat f(\eta, k, \cdot)}_{L^2(\R^d)} \leq \Bigg(\frac{1}{\abs k}\Norm{\hat S(\eta, k, \cdot)}_{L^2(\R^d)}\Bigg)^{\frac{\beta}{1+\beta}}\Norm{\abs{D_v}^\beta \hat f(\eta, k, \cdot)}^{\frac{1}{1+\beta}}_{L^2(\R^d)},
\eeqs
which concludes the proof of \eqref{eq:bouchut} after integrating over $(\eta, k)$.
\end{proof}

\bigskip
\noindent {\bf Acknowledgements.} 
We thank Clément Mouhot for his continuous support and intuitive discussions on the subject. This work was supported by the Cambridge International \& Newnham College Scholarship from the Cambridge Trust.

\bibliographystyle{plain.bst}
\bibliography{Schauder}

\end{document}